\newtheorem{theorem}{Theorem}
\newtheorem{proposition}[theorem]{Proposition}
\newtheorem{lemma}[theorem]{Lemma}
\newtheorem{corollary}[theorem]{Corollary}
\newtheorem{definition}[theorem]{Definition}
\newtheorem{remark}[theorem]{Remark}
\newtheorem*{theorem*}{Theorem}
\def\XXint#1#2#3{{\setbox0=\hbox{$#1{#2#3}{\int}$ }
\vcenter{\hbox{$#2#3$ }}\kern-.6\wd0}}
\def\e{\varepsilon}
\def\dashint{\fint}
\newcommand{\supp}{\operatorname{supp}}
\newcommand{\dist}{\operatorname{dist}}
\newcommand{\RVE}{{\operatorname{RVE}}}
\newcommand{\Cov}{\operatorname{Cov}}
\newcommand{\R}{{\mathbb{R}}}
\newcommand{\Rd}{{\mathbb{R}^d}}
\newcommand{\Rmd}{{\mathbb{R}^{m\times d}}}
\newcommand{\Rmdd}{{\mathbb{R}^{m\times d\times d}}}
\newcommand{\Rddskew}{{\mathbb{R}^{d\times d}_{\rm skew}}}
\newcommand{\Rm}{{\mathbb{R}^{m}}}
\newcommand{\Hilbert}{{\operatorname{H}}}
\newcommand{\shom}{{\mathsf{hom}}}
\newcommand{\domain}{{\mathcal{O}}}
\newcommand{\Ahom}{A_{\operatorname{hom}}}
\newcommand{\ubar}{\bar u}
\newcommand{\SO}{\operatorname{SO}}
\newcommand{\Huloc}{H^1_{\mathrm{uloc}}}
\newcommand{\loc}{{\mathrm{loc}}}
\newcommand{\eps}{\varepsilon}
\definecolor{Yellow}{rgb}{0.95,0.9,0.0} 
\definecolor{Red}{rgb}{0.8,0.1,0.1}
\definecolor{Green}{rgb}{0.1,0.65,0.2}
\definecolor{Blue}{rgb}{0.1,0.1,0.8}
\definecolor{Purple}{rgb}{0.7,0.1,0.7}
\definecolor{Grey}{rgb}{0.6,0.6,0.6}
\begin{document}

\title[Optimal error estimates in nonlinear stochastic homogenization]{Optimal homogenization rates in stochastic homogenization of nonlinear uniformly elliptic equations and systems}

\author{Julian Fischer}
\address{(J. Fischer) IST Austria, Am Campus 1, 3400 Klosterneuburg, Austria}
\author{Stefan Neukamm}
\address{(S. Neukamm) TU Dresden, Faculty of Mathematics, 01062 Dresden}

\thanks{SN acknowledges partial support by the Deutsche Forschungsgemeinschaft (DFG, German Research Foundation) -- project number 405009441.}

\begin{abstract}
We derive optimal-order homogenization rates for random nonlinear elliptic PDEs with monotone nonlinearity in the uniformly elliptic case. More precisely, for a random monotone operator on $\mathbb{R}^d$ with
stationary law
(i.\,e.\ spatially homogeneous statistics)
and fast decay of correlations on scales larger than the microscale $\varepsilon>0$, we establish homogenization error estimates of the order $\varepsilon$ in case
$d\geq 3$, respectively of the order $\varepsilon |\log \varepsilon|^{1/2}$ in case $d=2$.
Previous results in nonlinear stochastic homogenization have been limited to a small algebraic rate of convergence $\varepsilon^\delta$.
We also establish error estimates for the approximation of the homogenized operator by the method of representative volumes of the order $(L/\varepsilon)^{-d/2}$ for a representative volume of size $L$.
Our results also hold in the case of systems for which a (small-scale) $C^{1,\alpha}$ regularity theory is available.
\end{abstract}

\keywords{random material, stochastic homogenization, convergence rate, representative volume, nonlinear elliptic equation, monotone operator}

\maketitle

\tableofcontents

\section{Introduction}

In the present work, we establish quantitative homogenization results with optimal rates for nonlinear elliptic PDEs of the form
\begin{align}
\label{Equation}
-\nabla \cdot (A_\varepsilon(x,\nabla u_\varepsilon)\big)=f
\quad\quad
\text{in }\Rd
,
\end{align}
where $A_\varepsilon$ is a random monotone operator whose correlations decay quickly on scales larger than a microscopic scale $\varepsilon$. For scalar problems and also certain systems, we obtain the optimal convergence rate $O(\varepsilon)$ of the solutions $u_\varepsilon$ towards the solution $u_\shom$ of the homogenized problem
\begin{align}
\label{EffectiveEquation}
-\nabla \cdot (A_\shom(\nabla u_\shom))=f
\quad\quad
\text{in }\Rd
\end{align}
in three or more spatial dimensions $d\geq 3$. In two spatial dimensions $d=2$, we obtain the optimal convergence rate $O(\varepsilon |\log \varepsilon|^{1/2})$ upon including a lower-order term in the PDEs \eqref{Equation} and \eqref{EffectiveEquation}.

Our results may be seen as the optimal quantitative counterpart in the case of $2$-growth to the qualitative stochastic homogenization theory for monotone systems developed by Dal~Maso and Modica \cite{DalMasoModica2,DalMasoModica}, respectively as the nonlinear counterpart of the optimal-order stochastic homogenization theory for linear elliptic equations developed by Gloria and Otto \cite{GloriaOtto,GloriaOtto2} and Gloria, Otto, and the second author \cite{GloriaNeukammOtto,GloriaNeukammOtto2}. Just like for \cite{DalMasoModica2,DalMasoModica}, a key motivation for our work is the homogenization of nonlinear materials.

In the context of random materials, the first -- and to date also the only -- homogenization rates for elliptic PDEs with monotone nonlinearity were obtained by Armstrong and Smart \cite{ArmstrongSmart}, Armstrong and Mourrat \cite{ArmstrongMourrat}, and Armstrong, Ferguson, and Kuusi~\cite{ArmstrongFergusonKuusi} in the form of a small algebraic convergence rate $\varepsilon^\delta$ for some $\delta>0$. The optimal convergence rates derived in the present work improve substantially upon their rate:
{We derive an error estimate of the form
\begin{align}
\label{ErrorEstimate}
||u-u_\shom||_{L^p(\Rd)}
\leq
\begin{cases}
\mathcal{C}(f) \, \varepsilon^{1/2} &\text{for }d=1,
\\
\mathcal{C}(f) \, \varepsilon |\log \varepsilon|^{1/2} &\text{for }d=2,
\\
\mathcal{C}(f) \, \varepsilon &\text{for }d\geq 3,
\end{cases}
\end{align}
with $p=\frac{2d}{d-2}$ for $d\geq 3$, and $p=2$ for $d=1,2$.
However, in contrast to the works of Armstrong et al.\ we make no attempt to reach optimal stochastic integrability:
While in our homogenization error estimate the random constant $\mathcal{C}(f)$ in \eqref{ErrorEstimate} has bounded stretched exponential moments in the sense
\begin{align*}
\mathbb{E}\bigg[\exp\bigg(\Big(\frac{\mathcal{C}(f)}{C(f)}\Big)^{\bar\nu} \bigg)\bigg]\leq 2
\end{align*}
for some universal constant $\bar \nu>0$ (which is in particular independent of the right-hand side $f$) and for some constant $C(f)=C(||f||_{L^1},||f||_{L^{d+1}})$, the homogenization error estimates for linear elliptic PDEs with optimal rate in \cite{ArmstrongKuusiMourrat,GloriaOttoNew} establish (essentially) Gaussian stochastic moments $\mathbb{E}[\exp(|\mathcal{C}_f/C(f,\mu)|^{2-\mu})]\leq 2$ for any $\mu>0$.
Likewise, the homogenization error estimates for monotone operators with non-optimal rate $\varepsilon^\delta$ of \cite{ArmstrongFergusonKuusi,ArmstrongMourrat,ArmstrongSmart} include optimal stochastic moment bounds.
}

Before providing a more detailed summary of our results, let us give a brief overview of the previous quantitative results in nonlinear stochastic homogenization. The first -- logarithmic -- rates of convergence in the stochastic homogenization of a nonlinear second-order elliptic PDE were obtained by Caffarelli and Souganidis \cite{CaffarelliSouganidis} in the setting of non-divergence form equations. Subsequently, a rate of convergence $\varepsilon^\delta$ has been derived both for equations in divergence form and non-divergence form by Armstrong and Smart \cite{ArmstrongSmart,ArmstrongSmart2} and Armstrong and Mourrat \cite{ArmstrongMourrat}.
In the homogenization of Hamilton-Jacobi equations, a rate of convergence of the order $\varepsilon^{1/8}$ has been obtained by Armstrong, Cardaliaguet, and Souganidis \cite{ArmstrongCardaliaguetSouganidisErrorEstimates}. For forced mean curvature flow, Armstrong and Cardaliaguet \cite{ArmstrongCardaliaguet} have derived a convergence rate of order $\varepsilon^{1/90}$.
These rates of convergence are all expected to be non-optimal (compare, for instance, the result for Hamilton-Jacobi equations to the rate of convergence $\varepsilon$ known in the periodic homogenization setting \cite{MitakeTranYu}).

To the best of our knowledge, the present work constitutes the first optimal-order convergence results for any nonlinear stochastic homogenization problem. However, we are aware of an independent work in preparation by Armstrong, Ferguson, and Kuusi~\cite{ArmstrongFergusonKuusi2}, which aims to address the same question.
In contrast to our work -- which is inspired by the approach to quantitative stochastic homogenization via spectral gap inequalities of \cite{GloriaOtto,GloriaOtto2,GloriaNeukammOtto,GloriaNeukammOtto2} -- the upcoming work \cite{ArmstrongFergusonKuusi2} relies on the approach of sub- and superadditive quantities of \cite{ArmstrongSmart,ArmstrongKuusiMourrat,ArmstrongFergusonKuusi}. Nevertheless, both our present work and the approach of \cite{ArmstrongFergusonKuusi2,ArmstrongFergusonKuusi} use the concept of correctors for the linearized PDE, see Section~\ref{SectionStrategy} for details.

Before turning to a more detailed description of our results, let us briefly comment on the theory of periodic homogenization of nonlinear elliptic equations.
A quantitative theory for the periodic homogenization of nonlinear monotone operators has recently been derived by Wang, Xu, and Zhao \cite{WangXuZhao}. A corresponding result for degenerate elliptic equations of $p$-Laplacian type may be found in \cite{WangXuZhao2}.
In the periodic homogenization of polyconvex integral functionals, the single-cell formula for the effective material law (which determines the effective material law by a variational problem on a single periodicity cell) may fail \cite{BarchiesiGloria,GeymonatMuellerTriantafyllidis}, a phenomenon associated with possible ``buckling'' of the microstructure. 
A related phenomenon of loss of ellipticity may occur in the periodic homogenization of linear elasticity \cite{BrianeFrancfort,FrancfortGloria,GloriaRuf,Gutierrez}. Note that polyconvex integral functionals occur naturally in the framework of nonlinear elasticity \cite{BallPolyconvex}; however, their Euler-Lagrange equations in general lack a monotone structure.
Nevertheless, in periodic homogenization of nonlinear elasticity the single-cell formula is valid for small deformations \cite{NeukammSchaeffner,NeukammSchaeffner2}, and rates of convergence may be derived.
 
\subsection{Summary of results}
\label{SectionSummary}

To summarize our results in a continuum mechanical language, we consider the effective macroscopic behavior of a nonlinear and microscopically heterogeneous material. We assume that the behavior of the nonlinear material is described by the solution $u_\varepsilon:\Rd\rightarrow \Rm$ of a second-order nonlinear elliptic system of the form
\begin{align*}
-\nabla \cdot \big(A_\varepsilon(x,\nabla u_\varepsilon)\big)=f
\end{align*}
for some random monotone operator $A_\varepsilon:\Rd\times \Rmd\rightarrow \Rmd$ with correlation length $\varepsilon$ and some right-hand side $f\in L^\frac{2d}{d+2}(\Rd;\Rm)$. We further assume that the random monotone operator $A_\varepsilon$ is of the form $A_\varepsilon(x,\xi):=A(\omega_\varepsilon(x),\xi)$, where $\omega_\varepsilon$ is a random field representing the random heterogeneities in the material; for each realization of the random material (i.\,e.\ each realization of the probability distribution), $\omega_\varepsilon$ selects at each point $x\in \Rd$ a local material law $A(\omega_\varepsilon(x),\cdot):\Rmd\rightarrow\Rmd$ from a family $A$ of potential material laws. Under some suitable additional conditions, the theory of stochastic homogenization shows that for small correlation lengths $\varepsilon\ll 1$ the above nonlinear elliptic system is well-approximated by a homogenized effective equation. The effective equation again takes the form of a nonlinear elliptic system, however now with a spatially homogeneous effective material law $A_\shom:\Rmd\rightarrow \Rmd$. It is our goal to provide an optimal-order estimate for the difference of the solution $u_\varepsilon$ to the solution $u_\shom$ of the effective equation
\begin{align*}
-\nabla \cdot \big(A_\shom(\nabla u_\shom)\big) = f,
\end{align*}
as well as to give an optimal-order error bound for the approximation of the effective material law $A_\shom$ by the method of representative volumes.

To be mathematically more precise, we consider a random field taking values in the unit ball of a Hilbert space $\omega_\varepsilon:\Omega\times \Rd \rightarrow \Hilbert\cap B_1$ and a family of monotone operators $A:(\Hilbert \cap B_1) \times \Rmd\rightarrow \Rmd$ indexed by the unit ball of this Hilbert space. We then define a random monotone operator $A_\varepsilon(x,\cdot):= A(\omega_\varepsilon(x),\cdot)$ by selecting a monotone operator from the family $A$ at each point $x\in \Rd$ according to the value of the random field $\omega_\varepsilon(x)$. Note that the property of $\omega_\varepsilon$ being Hilbert-space valued is not an essential point and just included for generality: Even the homogenization for a scalar-valued random field (and correspondingly a single-parameter family of monotone operators $A:(\mathbb{R}\cap B_1)\times \Rmd\rightarrow \Rmd$) would be highly relevant and just as difficult, as it could describe e.\,g.\ composite materials.

The conditions on the random field $\omega_\varepsilon$ and the family of monotone operators $A$ are as follows:
\begin{itemize}
\item We assume \emph{spatial statistical homogeneity of the material}: The statistics of the random material should not depend on the position in space. In terms of a mathematical formulation, this assumption corresponds to stationarity of the probability distribution of $\omega_\varepsilon$ under spatial translations.
\item We assume sufficiently fast \emph{decorrelation of the material properties on scales larger than a correlation length $\varepsilon$}. In terms of a mathematical formulation, we make this notion rigorous by assuming that a spectral gap inequality holds. More precisely, we shall assume that $\omega_\varepsilon$ itself is a Hilbert-space valued random field on $\Rd$ which satisfies a spectral gap inequality and on which the random monotone operator $A_\varepsilon$ depends in a pointwise way as $A_\varepsilon(x,\xi):=A(\omega_\varepsilon(x),\xi)$, where the map $A:(\Hilbert\cap B_1)\times \Rmd\rightarrow \Rmd$ is continuously differentiable and Lipschitz and where $\partial_\xi A$ is continuously differentiable and Lipschitz in its first variable.
\item We assume \emph{uniform coercivity and boundedness} of the monotone operator in the sense that $(A(\omega,\xi_2)-A(\omega,\xi_1)):(\xi_2-\xi_1)\geq \lambda |\xi_2-\xi_1|^2$ as well as $|A(\omega,\xi_2)-A(\omega,\xi_1)|\leq \Lambda |\xi_2-\xi_1|$ hold for all $\xi_1,\xi_2\in \Rmd$ and every $\omega\in \Hilbert\cap B_1$ for suitably chosen constants $0<\lambda<\Lambda<\infty$.
\item
For some of our results, we shall impose an additional condition, which essentially entails a $C^{1,\alpha}$ regularity theory for the equation \eqref{Equation} on the microscopic scale $\varepsilon$. Namely, we shall assume Lipschitz continuity of the random field $\omega_\varepsilon$ on the $\varepsilon$-scale with suitable stochastic moment bounds on the local Lipschitz norm and a uniform bound on the second derivative $\partial_\xi^2 A$, along with one of the following three conditions:
\begin{itemize}
\item Our problem consists of a single nonlinear monotone PDE, i.\,e.\ $m=1$.
\item We are in the two-dimensional case $d=2$.
\item Our system has Uhlenbeck structure, i.\,e.\ the nonlinearity has the structure $A(\omega,\xi)=\rho(\omega,|\xi|^2)\xi$ for some scalar function $\rho$, and the same is true for the homogenized operator.
\end{itemize}
\vspace{2mm}
\end{itemize}
Under these assumptions, we establish the following quantitative stochastic homogenization results with optimal rates for the nonlinear elliptic PDE \eqref{Equation}.
\begin{itemize}
\item 
The solution $u_\varepsilon$ to the nonlinear PDE with fluctuating random material law \eqref{Equation}
can be approximated by the solution $u_\shom$ to a homogenized effective PDE of the form \eqref{EffectiveEquation}. In case $d=2$ or $d=1$, we include a lower-order term in the PDEs, see Theorem~\ref{TheoremErrorEstimate2d}.
The homogenized effective material law is given by a monotone operator $A_\shom:\Rmd\rightarrow \Rmd$ which is independent of the spatial variable $x\in \Rd$ and satisfies analogous uniform ellipticity and boundedness properties.  {The error $u-u_\shom$ is estimated by
\begin{align*}
||u-u_\shom||_{L^p(\Rd)}
\leq
\begin{cases}
\mathcal{C}(f) \, \varepsilon &\text{for }d\geq 3,
\\
\mathcal{C}(f) \, \varepsilon |\log \varepsilon|^{1/2} &\text{for }d=2,
\\
\mathcal{C}(f) \, \varepsilon^{1/2} &\text{for }d=1,
\end{cases}
\end{align*}
with $p$ and $\mathcal{C}(f)$ as in \eqref{ErrorEstimate}.
Without the additional small-scale regularity assumption, we still achieve half of the rate of convergence $\varepsilon^{1/2}$ for $d\geq 3$, $\varepsilon^{1/2} |\log \varepsilon|^{1/4}$ for $d=2$, and $\varepsilon^{\frac13}$ for $d=1$, respectively ---} a result that we also establish for the Dirichlet problem in bounded domains.
\item The homogenized effective operator $A_\shom$ may be approximated by the method of representative volumes, and this approximation is subject to the following a priori error estimate: If a box of size $L\geq \varepsilon$ is chosen as the representative volume, the error estimate
\begin{align*}
\big|A_\shom^\RVE(\xi)-A_\shom(\xi)\big|\leq \mathcal{C}(L,\xi) (1+|\xi|)^C |\xi|
\bigg(\frac{L}{\varepsilon}\bigg)^{-d/2}
\end{align*}
holds true for every $\xi\in \Rmd$, where $A_\shom^\RVE$ denotes the approximation of $A_\shom$ by the method of representative volumes and where again $\mathcal{C}(L,\xi)$ denotes a random constant with bounded stretched exponential moments (independent of $L$, $\xi$, and $\varepsilon$).
The systematic error is of higher order
\begin{align*}
\big|\mathbb{E}\big[A_\shom^\RVE(\xi)\big]-A_\shom(\xi)\big| \leq C (1+|\xi|)^C |\xi| \bigg(\frac{L}{\varepsilon}\bigg)^{-d} \bigg|\log \frac{L}{\varepsilon} \bigg|^{d+2},
\end{align*}
at least in case $d\leq 4$ (which includes the physically relevant cases $d=2$ and $d=3$).
Without the additional small-scale regularity assumption, we achieve almost the same overall estimate $|A_\shom^\RVE(\xi)-A_\shom(\xi)|\leq \mathcal{C}(L,\xi) |\xi| (L/\varepsilon)^{-d/2} (\log \frac{L}{\varepsilon})^C$, but not the improved bound for the systematic error.
\end{itemize}
Note that the rates of convergence $||u_\varepsilon-u_\shom||_{L^{2d/(d-2)}}\leq \mathcal{C} \varepsilon$ in case $d\geq 3$ respectively $||u_\varepsilon-u_\shom||_{L^2}\leq \mathcal{C} \varepsilon |\log \varepsilon\smash{|^{1/2}}$ in case $d=2$ coincide with the optimal rate of convergence in the homogenization of linear elliptic PDEs, see e.\,g.\ \cite{ArmstrongKuusiMourrat,GloriaNeukammOtto,GloriaNeukammOtto2,GloriaOttoNew}. Similarly, the rate of convergence for the representative volume approximation $|A_\shom^\RVE(\xi)-A_\shom(\xi)|\leq \mathcal{C} (L/\varepsilon)^{-d/2}$ coincides with the corresponding optimal rate for linear elliptic PDEs, as does (essentially) the higher-order convergence rate for the systematic error. As linear elliptic PDEs may be regarded as a particular case of our nonlinear PDE \eqref{Equation}, our rates of convergence are optimal.

{Beyond the scope of the present paper -- but subject of current ongoing work by various authors, and building in parts on the results of the present work -- are problems like describing the fluctuations in solutions to random nonlinear elliptic PDEs or the quantitative homogenization of nonlinear elliptic PDEs with $p$ growth in the case $p\neq 2$.
}

\subsection{Examples}

To illustrate our results, let us mention two examples of random nonlinear elliptic PDEs and systems to which our theorems apply, as well as an important class of random fields $\omega_\varepsilon$ which satisfy our assumptions.

We first give an example for the random field $\omega_\varepsilon$. Let $\theta:\mathbb{R}^k\rightarrow \mathbb{R}^k\cap B_1$ be any Lipschitz map taking values only in the unit ball. Let $Y_\varepsilon:\Rd\rightarrow \mathbb{R}^k$ be any stationary Gaussian random field whose correlations decay sufficiently quickly in the sense
\begin{align*}
\left|\Cov\big[Y_\varepsilon(x),Y_\varepsilon(y)\big]\right| \lesssim \frac{1}{1+\big(\frac{|x-y|}{\varepsilon}\big)^{d+\delta}}
\end{align*}
for some $\delta>0$. Set $\Hilbert:=\mathbb{R}^k$. Then the random field $\omega_\varepsilon:\Rd\rightarrow \Hilbert$ defined by
\begin{align*}
\omega_\varepsilon(x):=\theta(Y_\varepsilon(x))
\end{align*}
satisfies a spectral gap inequality with correlation length $\varepsilon$ in the sense of Definition~\ref{DefinitionSpectralGap}; for a proof see e.\,g.\  \cite{DuerinckxGloriaFunctionalInequality}. As stationarity is immediate, any such $\omega_\varepsilon$ satisfies our key assumptions on the random field \hyperlink{P1}{(P1)} and \hyperlink{P2}{(P2)} stated in Section~\ref{SectionAssumptions} below.
Note in particular that the spectral gap assumption allows for the presence of (sufficiently quickly decaying, namely integrable) long-range correlations. Typical realizations for two such random fields are depicted in Figure~\ref{FigureRandomField}.

To state the first example of a random monotone operator satisfying our assumptions, consider any two deterministic spatially homogeneous monotone operators $A_1:\Rmd\rightarrow \Rmd$ and $A_2:\Rmd\rightarrow \Rmd$ subject to the ellipticity and Lipschitz continuity assumptions \hyperlink{A1}{(A1)} and \hyperlink{A2}{(A2)}. Furthermore, consider any random field $\omega_\varepsilon:\mathbb{R}^d\rightarrow [0,1]$.
Then the operator
\begin{align*}
A_\varepsilon(x,\xi):=\omega_\varepsilon(x) A_1(\xi) + (1-\omega_\varepsilon(x)) A_2(\xi) 
\end{align*}
satisfies our assumptions \hyperlink{A1}{(A1)}--\hyperlink{A3}{(A3)}. Note that this operator corresponds to the PDE
\begin{align*}
-\nabla \cdot \Big(\omega_\varepsilon(x) A_1(\nabla u)+(1-\omega_\varepsilon(x)) A_2(\nabla u)\Big)=f.
\end{align*}
The additional small-scale regularity assumption \hyperlink{R}{(R)} is satisfied whenever the operators $A_1$ and $A_2$ have uniformly bounded second derivatives, the random field $\omega_\varepsilon$ is regular enough, and one of the three following conditions holds: The equation is scalar ($m=1$), the spatial dimension is at most two ($d\leq 2$), or both $A_1$ and $A_2$ as well as the homogenized operator $A_\shom$ are of Uhlenbeck structure.

As a second simple example of a monotone operator, consider for any random field $\omega_\varepsilon:\Rd\rightarrow [0,1]$ the operator
\begin{align*}
A_\varepsilon(x,\xi):=\frac{1+|\xi|^2}{1+(1+\omega_\varepsilon(x)) |\xi|^2} \xi.
\end{align*}
It satisfies our assumptions, possibly with the exception of the additional regularity condition \hyperlink{R}{(R)}. Note that this operator corresponds to the PDE
\begin{align*}
-\nabla \cdot \bigg(\frac{1+|\nabla u|^2}{1+\omega_\varepsilon(x) |\nabla u|^2} \nabla u\bigg)=f.
\end{align*}
The additional small-scale regularity assumption -- stated in \hyperlink{R}{(R)} below -- is satisfied in the scalar case $m=1$ as well as in the low-dimensional case $d\leq 2$, provided that the random field $\omega_\varepsilon$ is sufficiently smooth on the microscopic scale $\varepsilon$.

\begin{figure}
\includegraphics[scale=1.2]{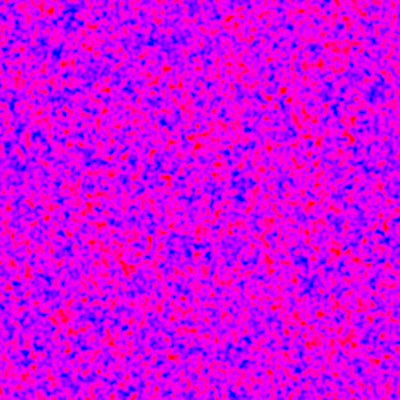}
~~~~~
\includegraphics[scale=1.2]{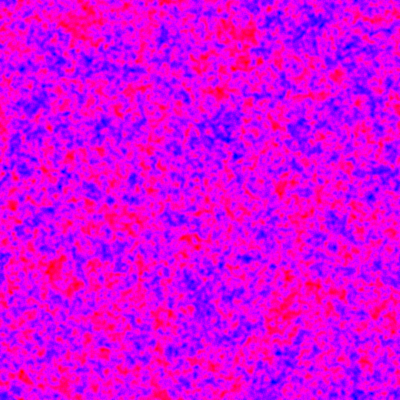}
\caption{Typical realizations of random fields obtained by applying a nonlinear map pointwise to a stationary Gaussian random field with only short-range correlations (left), respectively to a stationary Gaussian random field with barely integrable correlations (right).\label{FigureRandomField}}
\end{figure}

\subsection{Notation}

The number of spatial dimensions will be denoted by $d\in \mathbb{N}$.
For a measurable function $u$, we denote by $\nabla u$ its (weak) spatial derivative. For a function of two variables $A(\omega,\xi)$, we denote its partial derivatives by $\partial_\omega A$ and $\partial_\xi A$. For a function $f:\Rd\rightarrow\R$, we denote by $\partial_i f$ its partial derivative with respect to the coordinate $i$. For a matrix-valued function $M:\Rd\rightarrow \Rmd$, we denote by $\nabla \cdot M$ its divergence with respect to the second index, i.\,e.\ $(\nabla \cdot M)_i = \sum_{j=1}^d \partial_j M_{ij}$.

Throughout the paper, we use standard notation for Sobolev spaces. In particular, we denote by $H^1(\Rd)$ the space of all measurable functions $u:\Rd\rightarrow \mathbb{R}$ whose weak spatial derivative $\nabla u$ exists and which satisfy $||u||_{H^1}:=(\int_\Rd |u|^2 + |\nabla u|^2 \,dx)^{1/2}<\infty$. Similarly, we denote by $H^1(\Rd;\Rm)$ the space of $\Rm$-valued vector fields with the analogous properties and the analogous norm. For $d\geq 3$, we denote by $\dot H^1(\Rd;\Rm)$ the space of all measurable functions $u$ with $||u||_{\dot H^1}:=(\int_\Rd |\nabla u|^2 \,dx)^{1/2} + ||u||_{L^{2d/(d-2)}}<\infty$.
By $H^1_{\rm loc}(\Rd)$ we denote the space of all measurable functions $u:\Rd\rightarrow \mathbb{R}$ for which all restrictions $u|_{B_r}$ to finite balls ($0<r<\infty$) belong to $H^1(B_r)$. For a box $[0,L]^d$, we denote by $H^1_{\rm per}([0,L]^d)$ the closure in the $H^1([0,L]^d)$ norm of the smooth $L$-periodic functions.
By $\Huloc(\Rd)$, we denote the space of measurable functions $u$ whose weak derivative $\nabla u$ exists and which satisfy the bound $||u||_{\Huloc}:=\sup_{x\in \Rd} (\int_{B_1(x)} |\nabla u|^2 + |u|^2 \,d\tilde x)^{1/2}<\infty$.

In order not to overburden notation, we shall frequently suppress the dependence on the spatial variable $x$ in many expressions, for instance we will write $A(\omega_\varepsilon,\xi)$ or $A(\omega_\varepsilon,\nabla u)$ instead of $A(\omega_\varepsilon(x),\xi)$ respectively $A(\omega_\varepsilon(x),\nabla u(x))$. By an expression like $\partial_\xi A(\omega,\xi) \Xi$, we denote the derivative of $A$ with respect to the second variable at the point $(\omega,\xi)$ evaluated in direction $\Xi$. Similarly, by $\partial_\omega \partial_\xi A(\omega,\xi)\delta \omega \Xi$ we denote the second mixed derivative of $A$ with respect to its two variables at the point $(\omega,\xi)$ evaluated in directions $\delta\omega$ and $\Xi$. We use notation like $\delta F$ or $\delta\omega_\varepsilon$ to indicate infinitesimal changes (i.\,e.\ differentials) of various quantities and functions.

For two numbers $a,b\in \R$, we denote by $a\wedge b$ the minimum of $a$ and $b$. We write $a\sim b$ to indicate that two constants $a,b\in (0,\infty)$ are of a similar order of magnitude.
For a matrix $M\in \Rmd$, we denote by $|M|:=\sum_{i,j}|M_{ij}|^2$ its Frobenius norm. By $\Rddskew$ we denote the set of skew-symmetric matrices of dimension $d\times d$.

By $B_r(x)$ we denote the ball of radius $r$ centered at $x$. By $B_r$ we will denote the ball of radius $r$ around the origin. For two sets $A,B\subset \Rd$ and a point $x\in \Rd$, we denote by $A+B:=\{a+b:A\in A,b\in B\}$ their Minkowski sum, respectively by $x+A$ the translation of $A$ by $x$.
By $\Hilbert$ we will denote a Hilbert space; we will denote its unit ball by $\Hilbert\cap B_1$.

By $C$ and $c$ we will denote -- typically large respectively typically small -- nonnegative constants, whose precise value may change from occurrence to occurrence but which only depend on a certain set of parameters.
For a set $M$, we denote by $\sharp M$ the number of its elements.

We write $\omega_\varepsilon \sim \mathbb{P}$ to indicate that a random field $\omega_\varepsilon$ is distributed according to the probability distribution $\mathbb{P}$. For two random variables or random fields $X$ and $Y$, we write $X\sim Y$ to indicate that their laws coincide.

Whenever we use the terms ``coefficient field'' or ``monotone operator'', we shall implicitly assume measurability.

\section{Main results}

Before stating our main results and the precise setting, let us introduce the key objects in the homogenization of nonlinear elliptic PDEs and systems with monotone nonlinearity. To fix a physical setting, we will here give an outline of the meaning of the objects in the context of electric fields and associated currents. However, a major motivation for the present work -- and in particular for the choice to include the case of nonlinear elliptic systems -- stems from the homogenization of nonlinear elastic materials. While in this context the monotone structure is lost \cite{BallPolyconvex}, it may be retained for small deformations \cite{ContiEtAlCauchyBorn,FrieseckeTheilCauchyBorn,NeukammSchaeffner,
NeukammSchaeffner2,ZhangMatchingConvex}. A corresponding result in the context of stochastic homogenization will be established in \cite{DNRS}. The homogenization of monotone operators may also be viewed as a simple but necessary first step towards a possible quantitative homogenization theory for nonlinear elastic materials for larger deformations.

In the context of electric fields and currents, we are concerned with a scalar equation (i.\,e.\ $m=1$); the functions $u$ and $u_\shom$ in \eqref{Equation} and \eqref{EffectiveEquation} correspond (up to a sign) to the electric potential in the heterogeneous material respectively in the homogenized picture. Their gradients $\nabla u$ respectively $\nabla u_\shom$ are the associated electric fields. The monotone functions $A(\omega_\varepsilon(x),\xi)$ respectively $A_\shom(\xi)$ are the material law and describe the electric current created by a given electric field $\xi$. Note that in contrast to existing optimal results in stochastic homogenization, the material law may be nonlinear in $\xi$. Finally, the PDEs \eqref{Equation} and \eqref{EffectiveEquation} correspond to prescribing the sources and sinks of the electric current.

The central object in the quantitative homogenization of elliptic PDEs is the \emph{homogenization corrector} $\phi_\xi$, which in our context is an $\Rm$-valued random field on $\Rd$. It provides a bridge between the microscopic (heterogeneous) and the macroscopic (homogenized) picture: For a given constant macroscopic field gradient $\xi\in \Rmd$, the corrector $\phi_\xi$ provides the correction yielding the associated microscopic field gradient $\xi+\nabla \phi_\xi$. The corrector $\phi_\xi$ is defined as the unique (up to a constant) sublinearly growing distributional solution to the PDE
\begin{align}
\label{EquationCorrector}
-\nabla \cdot (A(\omega_\varepsilon,\xi+\nabla \phi_\xi))=0,
\end{align}
see Definition~\ref{D:corr} for the precise definition.  Note that \emph{a priori}, similarly to the linear elliptic case, the existence and uniqueness of such a solution is unclear. {In the setting of our main results, by our choice of scaling the corrector $\phi_\xi$ is expected to display fluctuations on the length scale $\varepsilon$ with typical gradients of the order $|\nabla \phi_\xi|\sim |\xi|$; furthermore, in case $d\geq 3$ the typical magnitude of the corrector is of order $|\phi_\xi|\sim |\xi| \varepsilon$.}

The effective (homogenized) material law $A_\shom$ (see Definition~\ref{D:corr} for the precise definition) may be determined in terms of the homogenization corrector: In principle, at each point $x\in \Rd$ the microscopic material law $A(\omega_\varepsilon(x),\cdot):\Rmd\rightarrow \Rmd$ associates a current $A(\omega_\varepsilon(x),\xi_\varepsilon)$ to a given electric field $\xi_\varepsilon$; likewise, the effective macroscopic material law $A_\shom(\cdot):\Rmd\rightarrow \Rmd$ associates a current $A_\shom(\xi)$ to a given macroscopic electric field $\xi$. As the macroscopic current corresponds to an ``averaged'' microscopic current, the macroscopic material law should be obtained by averaging the microscopic flux. More precisely, the homogenization corrector $\phi_\xi$ associates a microscopic electric field $\xi+\nabla \phi_\xi$ to a given macroscopic electric field $\xi$; therefore the macroscopic current $A_\shom(\xi)$ should be given by the ``average'' of the microscopic current $A(\omega_\varepsilon,\xi+\nabla \phi_\xi)$. In our setting, due to stationarity and ergodicity ``averaging'' corresponds to taking the expected value at an arbitrary point $x\in \Rd$ (and we will suppress the point $x\in \Rd$ in the notation). In other words, we have
\begin{align*}
A_\shom(\xi)=\mathbb{E}[A(\omega_\varepsilon,\xi+\nabla \phi_\xi)]
\stackrel{a.\,s.}{=} \lim_{r\rightarrow \infty} \fint_{B_r} A(\omega_\varepsilon(x),\xi+\nabla \phi_\xi(x)) \,dx.
\end{align*}

Our main results on the quantitative approximation of the solution $u_\varepsilon$ to the nonlinear elliptic PDE with randomly fluctuating material law
\begin{align*}
-\nabla \cdot (A(\omega_\varepsilon,\nabla u_\varepsilon))=f
\end{align*}
by the solution $u_\shom$ to the homogenized equation
\begin{align*}
-\nabla \cdot (A_\shom(\nabla u_\shom))=f
\end{align*}
are stated in Theorem~\ref{TheoremErrorEstimate} and Theorem~\ref{TheoremErrorEstimate2d} in the case of the full space $\mathbb{R}^d$. The case of a bounded domain -- however with a lower rate of convergence -- is considered in Theorem~\ref{TheoremErrorEstimateDomains}.

Our second main result -- the error estimates for the approximation of the effective material law $A_\shom$ by the method of representative volumes -- is stated in Theorem~\ref{T:RVE} and Corollary~\ref{CorollaryTotalRVE}.

\subsection{Assumptions and setting}
\label{SectionAssumptions}
We denote by $d\in \mathbb{N}$ the spatial dimension and by $m\in \mathbb{N}$ the system size; in particular, the case $m=1$ corresponds to a scalar PDE.
Let $\lambda$ and $\Lambda$, $0<\lambda\leq \Lambda<\infty$, denote ellipticity and boundedness constants. Let $H$ and $\Hilbert \cap B_1$ denote a Hilbert space and the open unit ball in $H$, respectively. We denote by
\begin{align*}
A:\Hilbert \times \Rmd\rightarrow \Rmd
\end{align*}
a family of operators, indexed by a parameter in $\Hilbert$. We require $A$ to satisfy the following conditions:
\begin{itemize}
\item[\hypertarget{A1}{(A1)}] Each operator $A(\omega,\cdot)$ in the family is monotone in the second variable in the sense
  \begin{align*}
    \big(A(\omega,\xi_2)-A(\omega,\xi_1)\big)\cdot (\xi_2-\xi_1) \geq \lambda |\xi_2-\xi_1|^2
  \end{align*}
  for every parameter $\omega \in \Hilbert$ and all $\xi_1,\xi_2\in \Rmd$.
\item[\hypertarget{A2}{(A2)}] Each operator $A(\omega,\cdot)$ is continuously differentiable in the second variable and Lipschitz in the sense
  \begin{align*}
    |A(\omega,\xi_2)-A(\omega,\xi_1)| \leq \Lambda |\xi_2-\xi_1|
  \end{align*}
for every parameter $\omega \in \Hilbert$ and all $\xi_1,\xi_2\in \Rmd$. Furthermore, we have $A(\omega,0)=0$ for every parameter $\omega\in \Hilbert$.
\item[\hypertarget{A3}{(A3)}] The operator $A(\omega,\xi)$ and its derivative $\partial_\xi A(\omega,\xi)$ are continuously differentiable in the parameter $\omega$ with bounded derivative in the sense
\begin{equation*}
|\partial_\omega A(\omega,\xi)| \leq \Lambda |\xi|,\qquad|\partial_\omega \partial_\xi A(\omega,\xi)| \leq \Lambda,
\end{equation*}
for every $\omega \in\Hilbert$ and all $\xi\in \Rmd$. Here, $\partial_\omega$ and $\partial_\xi$ denote the Fr\'echet derivative with respect to the first variable and the partial derivative with respect to the second variable, respectively. Furthermore, $|\cdot|$ denotes the operator norm on $\Hilbert\rightarrow \Rmd$ and $\Hilbert\times \Rmd \rightarrow \Rmd$, respectively. 
\end{itemize}

Throughout our paper, we will reserve the term \textit{parameter field} for a measurable function $\widetilde\omega:\R^d\to\Hilbert\cap \smash{B_1}$. With help of the operator family $A$, we may associate to each parameter field $\smash{\widetilde\omega}$  a space-dependent monotone material law $\smash{\Rd}\ni x\mapsto A(\widetilde\omega(x),\cdot)$. We denote the space of all parameter fields by $\Omega$ and equip $\Omega$ with the $L^1_{loc}(\Rd;\Hilbert)$ topology.
We then equip the space of parameter fields $\Omega$ with a probability measure $\mathbb P$ and write $\omega_\varepsilon:\R^d\to\Hilbert\cap B_1$ to denote a random parameter field sampled with $\mathbb P$.

It will be our second key assumption that the probability measure $\mathbb P$ describes a stationary random field with correlation length $\varepsilon$ (which is also the reason why we include the index ``$\varepsilon$'' in our notation). To be precise, we impose the following conditions on $\mathbb P$:
\begin{itemize}
\item[\hypertarget{P1}{(P1)}] $\mathbb{P}$ is \textit{stationary} in the sense that the probability distribution of $\omega_\varepsilon(\cdot+y)$ coincides with the probability distribution of $\omega_\varepsilon(\cdot)$ for all $y\in \mathbb{R}^d$. From a physical viewpoint this corresponds to the assumption of statistical spatial homogeneity of the random material: While each sample of the random material is typically spatially heterogeneous, the underlying probability distribution is spatially homogeneous. 
\item[\hypertarget{P2}{(P2)}] $\mathbb{P}$ features \textit{fast decorrelation on scales $\geq\varepsilon$} in the sense of the spectral gap assumption of Definition~\ref{DefinitionSpectralGap} below. Here, and throughout the paper $0<\varepsilon\leq 1$ is fixed and denotes the \textit{correlation length} of the material. Note that this corresponds to a quantitative assumption of ergodicity by assuming a decorrelation in the coefficient field $\omega_\varepsilon$ on scales $\geq \varepsilon$. 
\end{itemize}
Under the previous conditions homogenization occurs (in fact \hyperlink{P2}{(P2)} can be weakend to qualitative assumption of ergodicity). In particular, we may introduce the corrector of stochastic homogenization and define a homogenized monotone operator (i.\,e.\ a homogenized material law) $\Ahom$ as follows.
{Note that we suppress the (implicit) dependence of quantities like the corrector $\phi_\xi$ on the correlation length $\varepsilon$ in order to not overburden notation.
}
\begin{definition}[Corrector and homogenized operator]\label{D:corr}
  Let the assumptions \hyperlink{A1}{(A1)}--\hyperlink{A3}{(A3)} and \hyperlink{P1}{(P1)}--\hyperlink{P2}{(P2)} be in place. Then for all $\xi\in\Rmd$ there exists a unique random field $\phi_\xi:\Omega\times \Rd \rightarrow \Rm$, called the \emph{corrector} associated with $\xi$, with the following properties:
  \begin{enumerate}[(a)]
  \item For $\mathbb P$-almost every realization of the random field $\omega_\varepsilon$ the corrector $\phi_\xi(\omega_\e,\cdot)$ has the regularity $\phi_\xi(\omega_\e,\cdot)\in H^1_{\rm loc}(\Rd;\Rm)$, satisfies $\fint_{B_1}\phi_\xi(\omega_\varepsilon,\cdot)\,dx=0$, and solves the corrector equation \eqref{EquationCorrector} in the sense of distributions.
  \item The gradient of the corrector $\nabla\phi_\xi$ is stationary in the sense that 
    \begin{equation*}
      \nabla\phi_\xi(\omega_\e,\cdot+y)=\nabla\phi_\xi(\omega_\e(\cdot+y),\cdot)\quad\text{a.e. in }\Rd
    \end{equation*}
    holds for $\mathbb P$-a.e.~$\omega_\varepsilon$ and all $y\in\Rd$.
  \item The gradient of the corrector $\nabla\phi_\xi$ has finite second moments and vanishing expectation, that is
    \begin{equation*}
      \mathbb E\big[\nabla\phi_\xi\big]=0,\qquad       \mathbb E\big[|\nabla\phi_\xi|^2\big]<\infty.
    \end{equation*}
  \item The corrector $\mathbb P$-almost surely grows sublinearly at infinity in the sense
  \begin{align*}
  \lim_{R\rightarrow \infty } \frac{1}{R^2} \fint_{B_R} |\phi_\xi(\omega_\e,x)|^2 \,dx =0.
  \end{align*}
  \end{enumerate}
  Moreover, for each $\xi\in\Rmd$ we may define
  \begin{equation}\label{eq:homogop}
    A_{\shom}(\xi):=\mathbb E\big[A(\omega_\varepsilon,\xi+\nabla\phi_\xi)\big],
  \end{equation}
  where the right-hand side in this definition is independent of the spatial coordinate $x$.
  The map $A_{\shom}:\Rmd\to\Rmd$ is called the \emph{effective operator} or the \emph{effective material law}.
\end{definition}
We shall see that the homogenized material law $A_\shom:\Rmd\rightarrow \Rmd$  defined by \eqref{eq:homogop} inherits the monotone structure from the heterogeneous material law $A(\omega_\varepsilon,\cdot)$, see Theorem~\ref{TheoremStructureProperties} below. 
\smallskip

For \emph{some} of our results we will assume that
the following microscopic regularity condition is satisfied. Note that the condition essentially implies a small-scale $\smash{C^{1,\alpha}}$ regularity theory (i.\,e.\ a $\smash{C^{1,\alpha}}$ theory on the $\varepsilon$ scale) for the heterogeneous equation and a global $C^{1,\alpha}$ regularity theory for the homogenized (effective) equation.
\begin{itemize}
\item[\hypertarget{R}{(R)}] Suppose that at least one of the following three conditions is satisfied:
\begin{itemize}
\item The equation is scalar (i.\,e.\ $m=1$).
\item The number of spatial dimension is at most two (i.\,e.\ $d\leq 2$).
\item The system is of Uhlenbeck structure in the sense that there exists a function $\rho:\Hilbert\cap B_1\times \mathbb{R}_0^+ \rightarrow \mathbb{R}_0^+$ with $A(\omega,\xi)=\rho(\omega,|\xi|^2)\xi$ for all $\omega\in \Hilbert\cap B_1$ and all $\xi\in \Rmd$; furthermore, the effective operator given by \eqref{eq:homogop} is also of Uhlenbeck structure.
\end{itemize}
Suppose in addition that the second derivative of $A$ with respect to the second variable exists and satisfies the bound $|\partial_\xi\partial_\xi A(\omega,\xi)|\leq \Lambda$ for all $\omega \in \Hilbert\cap B_1$ and all $\xi\in \Rmd$.

Suppose furthermore that the random field $\omega_\varepsilon$ is Lipschitz regular on small scales in the following sense: There exists a random field $\mathcal{C}$ with uniformly bounded stretched exponential moments $\mathbb{E}[\exp(\nu |\mathcal{C}(x)|^\nu)]\leq 2$ for all $x\in \Rd$ for some $\nu>0$ such that
\begin{align*}
\sup_{y\in B_\varepsilon(x)} |\nabla \omega_\varepsilon(y)| \leq \mathcal{C}(x) \varepsilon^{-1}
\end{align*}
holds for all $x\in \Rd$.
\end{itemize}

\subsection{Optimal-order homogenization error estimates}
Our first main result is an optimal-order estimate on the homogenization error in the stochastic homogenization of nonlinear uniformly elliptic PDEs (and systems) with monotone nonlinearity.
Note that our rate of convergence coincides with the optimal rate of convergence for linear elliptic PDEs and systems \cite{ArmstrongKuusiMourrat,GloriaNeukammOtto,GloriaNeukammOtto2,GloriaOtto,
GloriaOttoNew}, which form a subclass of the class of elliptic PDEs with monotone nonlinearity. In the theorems below, we present our homogenization errors in ``a posteriori form'', i.e.~with norms of $\nabla u_{\shom}$ exlicitly appearing on the right-hand side of the error estimates. These estimates can be combined with classical regularity results for uniformly elliptic monotone systems with constant coefficients, see~Remarks~\ref{R:T2},~\ref{R:T3},~\ref{R:T4}.

\begin{theorem}[Optimal-order estimates for the homogenization error for $d\geq 3$]
\label{TheoremErrorEstimate}
Let $d\geq 3$. Let the assumptions \hyperlink{A1}{(A1)}--\hyperlink{A3}{(A3)} and \hyperlink{P1}{(P1)}--\hyperlink{P2}{(P2)} be in place. Suppose furthermore that the small-scale regularity condition \hyperlink{R}{(R)} holds.
Let the effective (homogenized) monotone operator $A_\shom:\Rmd\rightarrow \Rmd$ be given by the defining formula \eqref{eq:homogop}.
Let $u_{\shom}\in H^2(\Rd;\Rm)\cap W^{1,\infty}(\Rd;\Rm)$ and let $u_\e\in H^1(\Rd;\Rm)$ be the unique weak solution to
\begin{align*}
  -\nabla \cdot (A(\omega_\varepsilon,\nabla u_\varepsilon))=  -\nabla \cdot (A_\shom(\nabla u_\shom))\qquad\text{in }\R^d
\end{align*}
in a distributional sense.  Then the estimate
\begin{align*}
  ||u_\varepsilon-u_\shom||_{L^{2d/(d-2)}(\Rd)}
  \leq\,
  \mathcal C\,\widehat C(\nabla u_{\shom})\,\e
\end{align*}
holds, where
\begin{equation*}
  \widehat C(\nabla u_{\shom})=(1+\sup_{x\in\Rd}|\nabla u_{\shom}|)^C\|\nabla u_{\shom}\|_{H^1(\Rd)}
\end{equation*}
and where $\mathcal C=\mathcal{C}(\omega_\varepsilon)$ is a random constant whose values may depend on $\omega_\varepsilon$ and $\nabla u_{\shom}$, but whose stretched exponential stochastic moments are uniformly estimated by
\begin{align*}
  \mathbb{E}\bigg[\exp\bigg(\frac{\mathcal{C}^{\bar\nu}}{C}\bigg)\bigg]\leq 2.
\end{align*}
Above, $\bar\nu,C>0$ depend only on $d$, $m$, $\lambda$, $\Lambda$, $\rho$, and on $\nu$ from assumption \hyperlink{R}{(R)}.
\end{theorem}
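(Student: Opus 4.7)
The strategy is a quantitative two-scale expansion in the spirit of \cite{GloriaOtto,GloriaNeukammOtto2}, adapted to the nonlinear setting by employing both the nonlinear corrector $\phi_\xi$ and its linearization $\partial_\xi\phi_\xi$. I would introduce the two-scale ansatz
\[
z_\varepsilon(x) := u_\shom(x) + \phi_{\nabla u_\shom(x)}(\omega_\varepsilon, x),
\]
in which the parameter of the corrector is frozen to the local macroscopic gradient. By the chain rule $\nabla z_\varepsilon = \nabla u_\shom + (\nabla\phi_\xi)|_{\xi=\nabla u_\shom} + (\partial_\xi\phi_\xi)|_{\xi=\nabla u_\shom}\nabla^2 u_\shom$, so the linearized corrector appears automatically. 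This object satisfies a linear corrector problem whose coefficient field is $\partial_\xi A(\omega_\varepsilon,\xi+\nabla\phi_\xi)$, and its existence, stationarity, and moment bounds need to be developed in parallel with the nonlinear theory.

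Set $e_\varepsilon := u_\varepsilon - z_\varepsilon$. Testing the equation satisfied by $e_\varepsilon$ against itself and invoking the monotonicity (A1) reduces the proof to bounding the residual $\int (A(\omega_\varepsilon,\nabla z_\varepsilon) - A_\shom(\nabla u_\shom))\cdot\nabla e_\varepsilon\,dx$. The central algebraic observation is that for frozen $\xi$ one has $\nabla\cdot (A(\omega_\varepsilon,\xi+\nabla\phi_\xi)-A_\shom(\xi))=0$, so that a skew-symmetric flux corrector $\sigma_\xi$ with $\nabla\cdot\sigma_\xi = A(\omega_\varepsilon,\xi+\nabla\phi_\xi)-A_\shom(\xi)$ exists. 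Combining this with a Taylor expansion of $A$ around $\nabla u_\shom + \nabla\phi_{\nabla u_\shom}$, the residual splits into (i) a divergence-form term $\nabla\cdot\sigma_{\nabla u_\shom(x)}(x)$ that, after integration by parts against $\nabla e_\varepsilon$ and exploiting the skew-symmetry, produces commutators proportional to $\nabla^2 u_\shom$ contracted with stationary quantities like $\sigma_\xi$ and $\partial_\xi\sigma_\xi$; and (ii) a nonlinear Taylor remainder that is pointwise quadratic in $\nabla\phi_{\nabla u_\shom}$, together with linear terms in $\phi\cdot\nabla^2 u_\shom$ and $\partial_\xi\phi\cdot\nabla^2 u_\shom$. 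Each piece carries an explicit factor of $\varepsilon$, either through $\|\phi_\xi\|_{L^\infty}\sim\varepsilon|\xi|$ and the analogous bound for $\sigma_\xi$ in $d\geq 3$, or through the microscopic $\varepsilon$-scaling inherited from $\omega_\varepsilon$.

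The required quantitative corrector estimates would be obtained from the spectral-gap assumption (P2) via the standard spectral-gap calculus. In $d\geq 3$, one obtains pointwise bounds $|\phi_\xi(x)| + |\sigma_\xi(x)|\lesssim \mathcal{C}(x)|\xi|\varepsilon$ with stretched exponential stochastic moments on $\mathcal{C}(x)$, while $\nabla\phi_\xi$, $\partial_\xi\phi_\xi$, and $\nabla\partial_\xi\phi_\xi$ are stationary with uniformly bounded stretched exponential moments. The small-scale regularity assumption (R) plays two essential roles: it provides $C^{1,\alpha}$ regularity on the $\varepsilon$-scale for the nonlinear corrector equation (so that $\nabla\phi_\xi$ is pointwise controlled, which is crucial for bounding the quadratic Taylor remainder), and it ensures that the linear equation governing $\partial_\xi\phi_\xi$ has coefficients regular enough for Meyers- and Schauder-type perturbation estimates. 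The three structural alternatives in (R) --- scalar, two-dimensional, or Uhlenbeck --- are precisely those admitting such $C^{1,\alpha}$ theory for the homogeneous limit, which can then be transferred to the heterogeneous problem by standard perturbation.

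Putting the pieces together in the energy estimate yields $\|\nabla e_\varepsilon\|_{L^2(\Rd)}\leq \mathcal{C}\,\widehat{C}(\nabla u_\shom)\,\varepsilon$, and the Sobolev inequality (valid because $d\geq 3$) delivers the claimed $L^{2d/(d-2)}$ bound on $e_\varepsilon$; adding the corrector contribution $\|\phi_{\nabla u_\shom}\|_{L^{2d/(d-2)}}$, itself $O(\varepsilon)$ by the pointwise bound on $\phi_\xi$ and the Sobolev estimate for $\nabla u_\shom$, concludes the bound on $u_\varepsilon-u_\shom$. The main obstacle is the control of the quadratic Taylor remainder at \emph{optimal} order: this requires $L^\infty$ (or very high $L^p$) bounds on $\nabla\phi_\xi$, which is exactly where assumption (R) is indispensable --- without it, the best one can hope for is the halved rate $\varepsilon^{1/2}$ mentioned in the introduction. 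A secondary technical difficulty is propagating stretched exponential stochastic integrability through the analysis of the linearized corrector, whose coefficients depend on the nonlinear corrector and hence require an intertwined bootstrap.
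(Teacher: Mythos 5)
Your proposal is conceptually sound and identifies all the right ingredients (flux corrector, linearized correctors $\partial_\xi\phi_\xi,\partial_\xi\sigma_\xi$, small-scale $C^{1,\alpha}$ regularity from \hyperlink{R}{(R)}, spectral-gap corrector estimates, energy estimate plus Sobolev). However, the route differs from the paper's in one essential design choice: you base the argument on the classical two-scale ansatz $z_\varepsilon = u_\shom + \phi_{\nabla u_\shom(x)}(\omega_\varepsilon,x)$ with the corrector parameter frozen to the pointwise value $\nabla u_\shom(x)$, while the paper (see Proposition~\ref{PropositionTwoScaleExpansion} and the remark preceding it) deliberately replaces this by a piecewise affine ansatz $\hat u_\e = u_\shom + \sum_{k\in K}\eta_k\phi_{\xi_k}$ built on a partition of unity at scale $\delta=\varepsilon$, with $\xi_k$ the local averages of $\nabla u_\shom$. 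The point of the piecewise-constant variant is that the residual then involves only \emph{finite differences} $\phi_{\xi_1}-\phi_{\xi_2}$, $\sigma_{\xi_1}-\sigma_{\xi_2}$ between nearby grid slopes (estimated in Corollary~\ref{CorollaryImprovedCorrectorDifferenceBounds}), whereas your residual formula \eqref{TwoScaleResidual} requires making sense of $\partial_\xi\phi_\xi|_{\xi=\nabla u_\shom(x)}$ and $\partial_\xi\sigma_\xi|_{\xi=\nabla u_\shom(x)}$ as functions of $x$ and $\omega_\varepsilon$ simultaneously. This raises genuine joint-measurability issues in $(\xi,x,\omega_\varepsilon)$ for the corrector family and its $\xi$-derivative, which the paper explicitly flags and defers to a companion paper; your sketch does not address them. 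The piecewise-constant approach circumvents this entirely at the cost of a telescoping sum over the partition and a commutator argument for the partition-of-unity cutoffs.

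There is also a local slip in your accounting of the residual. You describe a ``quadratic Taylor remainder in $\nabla\phi_{\nabla u_\shom}$''; but $\nabla_x\phi_\xi$ is an $O(1)$ quantity (comparable to $|\xi|$), not $O(\varepsilon)$, so a term genuinely quadratic in $\nabla\phi_{\nabla u_\shom}$ would destroy the $O(\varepsilon)$ rate. The quadratic remainder that actually appears in this approach is quadratic in the \emph{small} quantity $\partial_\xi\phi_{\nabla u_\shom}\,\nabla^2 u_\shom$ (or, in the paper's finite-difference version, in $\nabla\phi_{\xi_k}-\nabla\phi_{\xi_\ell}$), and it is here that the second-derivative bound $|\partial_\xi^2A|\leq\Lambda$ from \hyperlink{R}{(R)} together with the moment bounds on $\nabla\phi_{\xi,\Xi}^T$ (Proposition~\ref{PropositionLinearizedCorrectorEstimate} via Lemma~\ref{PointwiseRegularityLinearized}) is used. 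That is precisely where the prefactor $(1+\sup|\nabla u_\shom|)^C$ in $\widehat C(\nabla u_\shom)$ enters. If you want to pursue the frozen-$\xi$ ansatz, you would need to rederive these bounds in a pointwise-in-$\xi$ form and establish the measurable differentiability of $\xi\mapsto\phi_\xi$; the paper's discretization avoids both.
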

Using classical regularity theory to estimate $\widehat C(\nabla u_{\shom})$, the theorem implies the following homogenization error estimate formulated just in terms of the data.
\begin{remark}\label{R:T2}
  In the situation of Theorem~\ref{TheoremErrorEstimate}, additionally suppose that $A_\shom$ satisfies the regularity condition \hyperlink{R}{(R)}, and let $u_{\shom}\in H^1(\Rd;\Rm)$ be the unique weak solution to
  \begin{equation*}
    -\nabla\cdot (A_{\shom}(\nabla u_{\shom}))=\nabla\cdot g\qquad\text{in }\Rd,
  \end{equation*}
  where  $g\in H^1(\Rd;\Rmd)$ with $\nabla g\in L^p(\Rd;\Rmdd)$ for some $p>d$. With help of the regularity condition \hyperlink{R}{(R)} we can show that
  \begin{equation*}
    \widehat C(\nabla u_{\shom})\leq C\big(1+\|g\|_{L^2(\Rd)}+\|\nabla g\|_{L^p(\Rd)}\big)^C\|g\|_{H^1(\Rd)},
  \end{equation*}
  (see the end of Appendix~\ref{SectionRegularity} for details). Thus, the estimate of Theorem~\ref{TheoremErrorEstimate} can be upgraded to
  \begin{equation*}
    ||u_\varepsilon-u_\shom||_{L^{2d/(d-2)}(\Rd)}
    \leq\,\mathcal C\,C\,\big(1+\|g\|_{L^2(\Rd)}+\|\nabla g\|_{L^p(\Rd)}\big)^C\|g\|_{H^1(\Rd)}\,\e.
  \end{equation*}
  In particular, for all $1\leq \theta<\infty$ we obtain
  \begin{equation*}
    \mathbb E\big[||u_\varepsilon-u_\shom||_{L^{2d/(d-2)}(\Rd)}^\theta\big]^{\frac1\theta}
    \leq\, \theta^\frac{1}{\bar\nu} C\big(1+\|g\|_{L^2(\Rd)}+\|\nabla g\|_{L^p(\Rd)}\big)^C\|g\|_{H^1(\Rd)}\,\e.
  \end{equation*}
  Above, $\bar\nu,C>0$ only depend on $d,\lambda,\Lambda,\rho,p$ and $\nu$.  
\end{remark}

In the case of low dimension $d\leq 2$ the rate of convergence becomes limited by the central limit theorem scaling. In particular, as for linear elliptic equations, the case $d=2$ is critical, leading to a logarithmic correction. Furthermore, even for the Poisson equation -- which may be regarded as a very particular case of our PDEs \eqref{Equation} or \eqref{EffectiveEquation} -- the gradient of solutions of the whole-space problem may fail to be square-integrable. For this reason we include a lower order term in our PDEs.

\begin{theorem}[Optimal-order estimates for the homogenization error for $d=2$ and $d=1$]
  \label{TheoremErrorEstimate2d}
  Let $d=2$ or $d=1$ and let otherwise the assumptions of Theorem~\ref{TheoremErrorEstimate} be in place.
  Let $u_{\shom}\in H^2(\Rd;\Rm)\cap W^{1,\infty}(\Rd;\Rm)$ and let $u_\e\in H^1(\Rd;\Rm)$ be the unique weak solution to 
  \begin{align*}
    -\nabla \cdot (A(\omega_\varepsilon,\nabla u_\varepsilon)) +u_\e=  -\nabla \cdot (A_\shom(\nabla u_\shom))+u_\shom\qquad\text{in }\R^d
  \end{align*}
  in a distributional sense.  Then the estimate
  \begin{align*}
    ||u_\varepsilon-u_\shom||_{L^2(\Rd)}
    \leq
    \mathcal C\,\widehat C(\nabla u_{\shom})\,\left\{\begin{aligned}
        &\varepsilon^{1/2} &\text{for }d=1,
      \\
      &\varepsilon |\log \varepsilon|^{1/2} &\text{for }d=2
    \end{aligned}\right.
  \end{align*}
  holds, where  $\widehat C(\nabla u_{\shom})$ and $\mathcal C$ are defined as in Theorem~\ref{TheoremErrorEstimate}.
\end{theorem}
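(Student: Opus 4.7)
The plan is to follow the two-scale expansion argument behind Theorem~\ref{TheoremErrorEstimate}, with two adaptations needed for low dimensions: the added lower-order term $+u_\varepsilon$ in the PDE replaces the Sobolev inequality (which is unavailable at $p=2$ for $d\leq 2$) in closing the energy estimate on the whole space $\Rd$, and the dimension-dependent growth of the correctors provides the extra $|\log\varepsilon|^{1/2}$ in $d=2$, respectively forces the half-rate $\varepsilon^{1/2}$ in $d=1$ via the CLT scaling.

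First, I would introduce the two-scale approximation
\begin{align*}
w_\varepsilon(x):=u_\shom(x)+\phi_{\nabla u_\shom(x)}(\omega_\varepsilon,x),
\end{align*}
where $\phi_\xi$ is the corrector from Definition~\ref{D:corr} evaluated at the spatially varying macroscopic gradient $\xi=\nabla u_\shom(x)$. Using the companion flux corrector $\sigma_\xi\in\Rddskew$ (a skew-symmetric potential with $\nabla\cdot\sigma_\xi=A(\omega_\varepsilon,\xi+\nabla\phi_\xi)-A_\shom(\xi)$, constructed earlier in the paper) together with a first-order Taylor expansion of $A$, a direct computation yields the error PDE
\begin{align*}
-\nabla\cdot\bigl(A(\omega_\varepsilon,\nabla u_\varepsilon)-A(\omega_\varepsilon,\nabla w_\varepsilon)\bigr)+(u_\varepsilon-w_\varepsilon)=\nabla\cdot\mathcal{R}_1+\mathcal{R}_2,
\end{align*}
where $\mathcal{R}_1$ consists of products of $\sigma_{\nabla u_\shom}$, $\phi_{\nabla u_\shom}$ and the linearized corrector $(\partial_\xi\phi)_{\nabla u_\shom}$ against $\nabla^2u_\shom$, while $\mathcal{R}_2=-\phi_{\nabla u_\shom(\cdot)}(\cdot)$ accounts for the lower-order mismatch between $u_\shom$ and $w_\varepsilon$.

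Second, I would test the error PDE with $u_\varepsilon-w_\varepsilon$. Uniform monotonicity \hyperlink{A1}{(A1)} on the left and Young's inequality on the right produce
\begin{align*}
\lambda\|\nabla(u_\varepsilon-w_\varepsilon)\|_{L^2(\Rd)}^2+\|u_\varepsilon-w_\varepsilon\|_{L^2(\Rd)}^2\leq C\bigl(\|\mathcal{R}_1\|_{L^2(\Rd)}^2+\|\mathcal{R}_2\|_{L^2(\Rd)}^2\bigr).
\end{align*}
The coercive mass term on the left — precisely the contribution of the added lower-order term in the PDE — closes the estimate in $L^2(\Rd)$ without any Sobolev embedding. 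The triangle inequality $\|u_\varepsilon-u_\shom\|_{L^2}\leq\|u_\varepsilon-w_\varepsilon\|_{L^2}+\|\phi_{\nabla u_\shom(\cdot)}(\cdot)\|_{L^2(\Rd)}$ then reduces the problem to estimating the $L^2$ norms of $\phi_{\nabla u_\shom}$, $\sigma_{\nabla u_\shom}$, and their $\xi$-derivatives weighted against $\nabla^2 u_\shom$. The new input compared to Theorem~\ref{TheoremErrorEstimate} is the dimension-dependent scaling of pointwise moments derived from the spectral gap assumption \hyperlink{P2}{(P2)},
\begin{align*}
\mathbb{E}\bigl[|\phi_\xi(x)|^2\bigr]^{1/2}+\mathbb{E}\bigl[|\sigma_\xi(x)|^2\bigr]^{1/2}\lesssim|\xi|\cdot\begin{cases}\varepsilon^{1/2}(1+|x|)^{1/2}&d=1,\\ \varepsilon\,|\log\varepsilon|^{1/2}&d=2,\end{cases}
\end{align*}
together with analogous bounds for $(\partial_\xi\phi)_\xi,(\partial_\xi\sigma)_\xi$ and stretched exponential stochastic integrability. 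Combining these with the bounds $\|\nabla u_\shom\|_{L^\infty}$, $\|\nabla u_\shom\|_{L^2}$, $\|\nabla^2 u_\shom\|_{L^2}$ encoded in $\widehat{C}(\nabla u_\shom)$ gives the claimed rates.

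The main obstacle I expect is recovering the sharp single logarithm in $d=2$: a naive $H^{-1}$-type pairing of $\mathcal{R}_2$ against the error would introduce a second logarithmic factor and yield $\varepsilon|\log\varepsilon|$ rather than $\varepsilon|\log\varepsilon|^{1/2}$. The remedy is to handle $\mathcal{R}_1$ via its divergence structure against $\nabla(u_\varepsilon-w_\varepsilon)$ and to absorb $\mathcal{R}_2$ directly in $L^2$ against $u_\varepsilon-w_\varepsilon$, so that the dimension-dependent logarithm appears exactly once. A secondary technical issue is the nonlinear dependence $\xi\mapsto\phi_\xi$, since $\xi=\nabla u_\shom(x)$ varies in space; this is where the linearized-corrector machinery and the small-scale regularity hypothesis \hyperlink{R}{(R)} (through $C^{1,\alpha}$ bounds on $\phi_\xi$) become essential for controlling the Lipschitz-type expansion of $\phi_{\nabla u_\shom(x)}$ in its first argument.
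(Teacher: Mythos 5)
Your proposal takes a different route from the paper's, and this difference is where the gap lies. You build the two-scale approximation by composing the corrector with the slowly varying slope, $w_\varepsilon(x)=u_\shom(x)+\phi_{\nabla u_\shom(x)}(\omega_\varepsilon,x)$, so that the lower-order mismatch is $\mathcal{R}_2=-\phi_{\nabla u_\shom(\cdot)}(\cdot)$, and you reduce the theorem to bounding $\|\phi_{\nabla u_\shom(\cdot)}(\cdot)\|_{L^2(\Rd)}$. The paper deliberately avoids this form (it flags ``measurability subtleties'' for $\phi_{\nabla u_\shom(x)}(\omega_\varepsilon,x)$ and defers them elsewhere) and instead uses the piecewise-affine expansion of Proposition~\ref{PropositionTwoScaleExpansion}: $\hat u_\e=\ubar+\sum_{k\in K}\eta_k\phi_k$ with a partition of unity at scale $\delta\sim\varepsilon$ and \emph{locally recentered} correctors $\phi_k:=\phi_{\xi_k}-\fint_{B_\varepsilon(k)}\phi_{\xi_k}\,dx$. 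This recentering is not cosmetic in $d\leq 2$: it is exactly what makes the $L^2$ estimate on the mismatch go through.

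The concrete gap is that $\|\mathcal{R}_2\|_{L^2(\Rd)}$ is not controlled by $\widehat C(\nabla u_\shom)$ in your formulation. The corrector $\phi_\xi$ from Definition~\ref{D:corr} is anchored once, by $\fint_{B_1}\phi_\xi\,dx=0$, and in $d\leq 2$ is not a stationary random field but grows away from the anchor; indeed Corollary~\ref{CorollaryImprovedCorrectorDifferenceBounds} gives
\begin{align*}
\bigg(\fint_{B_{2\varepsilon}(x_0)}|\phi_\xi|^2\,dx\bigg)^{1/2}\leq\mathcal{C}(x_0)\,|\xi|\,\varepsilon\,
\begin{cases}(|x_0|/\varepsilon+1)^{1/2}&d=1,\\ |\log(|x_0|/\varepsilon+2)|^{1/2}&d=2.\end{cases}
\end{align*}
Plugging this into $\|\phi_{\nabla u_\shom(\cdot)}(\cdot)\|_{L^2(\Rd)}^2$ yields, up to randoms, $\varepsilon\int_\Rd|\nabla u_\shom(x)|^2(1+|x|)\,dx$ for $d=1$, respectively $\varepsilon^2\int_\Rd|\nabla u_\shom(x)|^2\log(2+|x|/\varepsilon)\,dx$ for $d=2$. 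These weighted integrals are not bounded by $\|\nabla u_\shom\|_{H^1}$ and $\|\nabla u_\shom\|_\infty$ alone — $\nabla u_\shom\in H^1\cap L^\infty$ gives no decay rate. So the moment bound you cite, $\mathbb{E}[|\phi_\xi(x)|^2]^{1/2}\lesssim|\xi|\varepsilon^{1/2}(1+|x|)^{1/2}$ in $d=1$, is correct, but precisely because of the $(1+|x|)^{1/2}$ factor your triangle-inequality step does not close with the stated prefactor $\widehat C(\nabla u_\shom)$. The paper's recentering $\phi_k=\phi_{\xi_k}-\fint_{B_\varepsilon(k)}\phi_{\xi_k}$ removes this growth: by stationarity, $\phi_k$ restricted to $B_{2\delta}(k)$ is of size $\mathcal{C}_k|\xi_k|(\varepsilon\delta)^{1/2}$ uniformly in $k$, which is why \eqref{S:eq10010010} gives an $L^2(\Rd)$ bound weighted only against $\|\nabla u_\shom\|_{L^2}$. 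Your remaining observations are aligned with the paper: the mass term closes the energy estimate in $L^2$ without Sobolev embedding; $\mathcal{R}_1$ is paired against $\nabla(u_\varepsilon-w_\varepsilon)$ and $\mathcal{R}_2$ directly in $L^2$ against $u_\varepsilon-w_\varepsilon$; and the hypothesis \hyperlink{R}{(R)} enters through the corrector-difference estimates. To repair the proposal you would either have to prove the measurability and pointwise recentered estimates for the smooth two-scale expansion (which the paper declines to do here) or switch to the piecewise-affine expansion as the paper does.
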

Again, we may make use of classical regularity results to obtain a homogenization error estimate in terms of only the data.
\begin{remark}\label{R:T3}
  In the situation of Theorem~\ref{TheoremErrorEstimate2d} suppose that $u_{\shom}\in H^1(\Rd;\Rm)$ is the unique weak solution to
  \begin{equation*}
   -\nabla\cdot (A_{\shom}(\nabla u_{\shom}))+u_{\shom}=\nabla\cdot g\qquad\text{in }\Rd,
  \end{equation*}
  where  $g\in H^1(\Rd;\Rmd)$ with $\nabla g\in L^p(\Rd;\Rmdd)$ for some $p>d$. Since $d\leq 2$, by appealing to Meyers' estimate we can show that
  \begin{equation*}
    \widehat C(\nabla u_{\shom})\leq C\big(1+\|g\|_{H^1(\Rd)}+\|\nabla g\|_{L^p(\Rd)}\big)^C\|g\|_{H^1(\Rd)},
  \end{equation*}
  (see the end of Appendix~\ref{SectionRegularity} for details). Thus, the estimate of Theorem~\ref{TheoremErrorEstimate2d} can be upgraded to
  \begin{align*}
    &||u_\varepsilon-u_\shom||_{L^{2}(\Rd)}\\
    &\leq\,\mathcal C\,C\,\big(1+\|g\|_{H^1(\Rd)}+\|\nabla g\|_{L^p(\Rd)}\big)^C\|g\|_{H^1(\Rd)}\,\left\{\begin{aligned}
        &\varepsilon^{1/2} &\text{for }d=1,
        \\
        &\varepsilon |\log \varepsilon|^{1/2} &\text{for }d=2.
      \end{aligned}\right.
  \end{align*}
  In particular, for all $1\leq \theta<\infty$ we obtain
  \begin{align*}
    &\mathbb E\big[||u_\varepsilon-u_\shom||_{L^{2}(\Rd)}^\theta\big]^{\frac1\theta}\\
    &\leq\, \theta^\frac{1}{\bar\nu} C\big(1+\|g\|_{H^1(\Rd)}+\|\nabla g\|_{L^p(\Rd)}\big)^C\|g\|_{H^1(\Rd)}\,\left\{\begin{aligned}
        &\varepsilon^{1/2} &\text{for }d=1,
      \\
      &\varepsilon |\log \varepsilon|^{1/2} &\text{for }d=2.
    \end{aligned}\right.
  \end{align*}
  Above, $\bar\nu,C>0$ only depend on $d,\lambda,\Lambda,\rho,p$ and $\nu$.  
\end{remark}

\begin{remark}
    Instead of the right-hand side  $\nabla\cdot g$, we may consider in Theorem~\ref{TheoremErrorEstimate} and Theorem~\ref{TheoremErrorEstimate2d} a right-hand side $f\in L^1(\Rd;\Rm)\cap L^{d+1}(\Rd;\Rm)$ with $q>d$ by solving for $-\Delta v=f$ and setting $g:=-\nabla v$. In this way we recover the homogenization error estimate in the form of \eqref{ErrorEstimate} as claimed in the introduction.
\end{remark}

In the absence of the small-scale regularity condition \hyperlink{R}{(R)}, we still obtain half of the optimal rate of convergence. However, we also directly obtain this result in bounded domains. Note that in order to recover the optimal rates of convergence from Theorem~\ref{TheoremErrorEstimate} and Theorem~\ref{TheoremErrorEstimate2d} also for the Dirichlet problem on bounded domains, one would need to construct boundary correctors, as done for linear elliptic PDEs for instance in \cite{AvellanedaLin} in the setting of periodic homogenization or in \cite{ArmstrongKuusiMourratBook,FischerRaithel} in the setting of stochastic homogenization.
\begin{theorem}[Estimates for the homogenization error on bounded domains]
  \label{TheoremErrorEstimateDomains}
  Let $d\geq 1$, let $\domain\subset \Rd$ be a bounded $C^1$-domain or a convex Lipschitz domain, and let the assumptions \hyperlink{A1}{(A1)}--\hyperlink{A3}{(A3)} and \hyperlink{P1}{(P1)}--\hyperlink{P2}{(P2)} be in place. Define $A_\shom$ as in formula \eqref{eq:homogop}. Let $u_{\shom}\in H^2(\domain;\Rm)$ and let $u_\e\in u_{\shom}+H^1_0(\domain;\Rm)$ be the unique weak solution to
\begin{align*}
  -\nabla \cdot (A(\omega_\varepsilon,\nabla u_\varepsilon)) =  -\nabla \cdot (A_\shom(\nabla u_\shom))\qquad\text{in }\domain
\end{align*}
in a distributional sense. Then 
\begin{align*}
&||u_\varepsilon-u_\shom||_{L^2(\domain)}\leq\mathcal C\,C(\domain)\,
\,\|\nabla u_{\shom}\|_{H^1(\domain)}\,                 \left\{\begin{aligned}
                     &\e^{\frac13}&\text{for }d=1,\\
                     &\e^\frac12|\log\e|^{\frac14}&\text{for }d=2,\\
                     &\e^\frac12&\text{for }d\geq 3.
                 \end{aligned}\right.
\end{align*}
Here, $\mathcal C=\mathcal{C}(\omega_\varepsilon)$ denotes a random constant whose values may depend on $\omega_\varepsilon$, $\domain$, and $\nabla u_{\shom}$, but whose (stretched exponential) stochastic moments are uniformly estimated by
\begin{align*}
  \mathbb{E}\bigg[\exp\bigg(\frac{\mathcal{C}^{\bar\nu}}{C}\bigg)\bigg]\leq 2.
\end{align*}
Above, $\bar\nu,C>0$ depend only on $d$, $m$, $\lambda$, $\Lambda$, and $\rho$, and $C(\domain)$ additionally depends on $\domain$.
\end{theorem}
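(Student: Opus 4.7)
The plan is a two-scale expansion adapted to the Dirichlet problem. Because we forego the small-scale regularity \hyperlink{R}{(R)} and pay for a boundary layer, only half of the whole-space rate of Theorems~\ref{TheoremErrorEstimate}--\ref{TheoremErrorEstimate2d} is targeted. Let $\eta_\delta \in C_c^\infty(\domain)$ be a standard cutoff with $\eta_\delta \equiv 1$ on $\domain_\delta := \{x \in \domain : \dist(x,\partial\domain) > \delta\}$ and $|\nabla \eta_\delta| \lesssim \delta^{-1}$, for $\delta = \delta(\varepsilon) > 0$ to be optimized. Define the corrected ansatz
\begin{equation*}
v_\varepsilon(x) := u_\shom(x) + \eta_\delta(x)\,\phi_{\nabla u_\shom(x)}(x),
\end{equation*}
where the $x$-dependent corrector is interpreted either through a partition of unity on a $\xi$-grid (exploiting the Lipschitz dependence of $\phi_\xi$ on $\xi$ established elsewhere in the paper) or through a linearized-corrector calculus. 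Since $\eta_\delta$ vanishes on $\partial\domain$, the difference $w_\varepsilon := u_\varepsilon - v_\varepsilon$ lies in $H^1_0(\domain;\Rm)$ and is an admissible test function against $-\nabla \cdot (A(\omega_\varepsilon,\nabla u_\varepsilon) - A_\shom(\nabla u_\shom)) = 0$.

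Monotonicity \hyperlink{A1}{(A1)} then yields
\begin{equation*}
\lambda \int_\domain |\nabla w_\varepsilon|^2 \, dx
\leq \int_\domain \bigl(A(\omega_\varepsilon,\nabla v_\varepsilon) - A_\shom(\nabla u_\shom)\bigr) \cdot \nabla w_\varepsilon \, dx,
\end{equation*}
which I would split into an interior part on $\domain_\delta$ and a boundary-layer part on $\domain \setminus \domain_\delta$. On $\domain_\delta$, where $\eta_\delta \equiv 1$, assumption \hyperlink{A2}{(A2)} gives
\begin{equation*}
A(\omega_\varepsilon,\nabla v_\varepsilon) - A_\shom(\nabla u_\shom)
= \bigl[A(\omega_\varepsilon,\,\nabla u_\shom + \nabla\phi_{\nabla u_\shom}) - A_\shom(\nabla u_\shom)\bigr] + R_\varepsilon,
\end{equation*}
where $R_\varepsilon$ collects the error from the spatial variation of the parameter $\xi(x) = \nabla u_\shom(x)$ and is controlled pointwise by $\Lambda\,|\partial_\xi \phi_{\xi(x)}|\,|\nabla^2 u_\shom|$. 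By the defining formula for $A_\shom$ the bracketed quantity has vanishing expectation, and via the standard flux corrector $\sigma_\xi$ it can be written as $\nabla \cdot \sigma_{\nabla u_\shom(\cdot)}$ up to terms again absorbable into $R_\varepsilon$. An integration by parts then transfers one derivative from $w_\varepsilon$ to $u_\shom$, trading $\nabla w_\varepsilon$ against $\sigma \cdot \nabla^2 u_\shom$.

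The stretched-exponential moment bounds on $\phi_\xi$, $\sigma_\xi$, and on the $\xi$-linearization derived in earlier sections then yield, by Cauchy--Schwarz and stationarity, an interior contribution of order $\varepsilon\,\|\nabla u_\shom\|_{H^1(\domain)}$ in $d \geq 3$, with $\varepsilon|\log \varepsilon|^{1/2}$ in $d=2$ and $\varepsilon^{1/2}$ in $d=1$. The boundary strip contributes two terms: $\delta^{-1}\|\phi\|_{L^2(\domain \setminus \domain_\delta)}\|\nabla w_\varepsilon\|_{L^2}$ arising from $\nabla\eta_\delta\cdot\phi$, and $\|\nabla u_\shom\|_{L^2(\domain \setminus \domain_\delta)}\|\nabla w_\varepsilon\|_{L^2}$ arising from the uncorrected flux. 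Since $\domain$ is $C^1$ or convex Lipschitz, a Hardy/trace estimate gives $\|\nabla u_\shom\|_{L^2(\domain \setminus \domain_\delta)} \lesssim \sqrt{\delta}\,\|\nabla u_\shom\|_{H^1(\domain)}$, and this is where the geometric constant $C(\domain)$ enters. Absorbing $\|\nabla w_\varepsilon\|_{L^2}$ into the left-hand side via Young's inequality and optimizing $\delta$ (with dimension-dependent corrections around $\delta \sim \varepsilon$) yields $\|\nabla w_\varepsilon\|_{L^2(\domain)}$ bounded by a random constant with stretched exponential moments times $\varepsilon^{1/2}$, $\varepsilon^{1/2}|\log\varepsilon|^{1/4}$, or $\varepsilon^{1/3}$ respectively; Poincar\'e on $\domain$, combined with the trivial estimate $\|v_\varepsilon - u_\shom\|_{L^2(\domain)} \leq \|\phi\|_{L^2(\domain)}$, then gives the claimed $L^2$ bound on $u_\varepsilon - u_\shom$. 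The main obstacle is the remainder $R_\varepsilon$: without \hyperlink{R}{(R)} one lacks pointwise control on $\phi$ and only has local $L^p$ bounds, and it is precisely this deficiency, together with the boundary layer, that forces the loss of half a power of $\varepsilon$ and produces the sharper low-dimensional rates.
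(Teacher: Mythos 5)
Your overall plan --- a two-scale expansion with a boundary cutoff to produce an admissible test function in $H^1_0(\domain)$, an energy estimate via monotonicity, and an optimization of the cutoff scale --- is structurally the same as the paper's. Two concrete gaps, however, prevent the claimed rates from emerging.

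First, your interior estimate implicitly invokes the regularity assumption \hyperlink{R}{(R)}, which Theorem~\ref{TheoremErrorEstimateDomains} does not assume. You bound the remainder $R_\varepsilon$ by $\Lambda\,|\partial_\xi\phi_{\xi(x)}|\,|\nabla^2 u_\shom|$ and claim an interior contribution of order $\varepsilon\,\|\nabla u_\shom\|_{H^1}$ for $d\geq3$; but the bound $|\partial_\xi\phi_\xi|\lesssim\varepsilon$ is precisely Proposition~\ref{PropositionLinearizedCorrectorEstimate}, which requires \hyperlink{R}{(R)}. Without \hyperlink{R}{(R)}, the paper passes to a piecewise-constant approximation of $\nabla u_\shom$ on a discretization scale $\delta$ (Lemma~\ref{L:discretization}, Proposition~\ref{PropositionTwoScaleExpansion}) and estimates the resulting differences crudely by the triangle inequality $|\phi_\ell-\phi_k|\leq|\phi_\ell|+|\phi_k|$, paying a factor $\delta^{-1}$; the interior residual then scales as $(\varepsilon/\delta)^2$ for $d\geq3$ (with a logarithmic correction for $d=2$, and as $\varepsilon/\delta$ for $d=1$), not as $\varepsilon^2$. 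Your closing sentence does acknowledge that the lack of \hyperlink{R}{(R)} is what causes the rate loss, but this contradicts the order-$\varepsilon$ interior claim stated just above.

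Second, you use a single scale $\delta$ both as the discretization scale and as the boundary-cutoff scale, and the resulting balance falls short. After correcting the interior bound one has, for $d\geq3$, $\|\nabla w_\varepsilon\|_{L^2(\domain)}^2\lesssim \delta + (\varepsilon/\delta)^2 + \varepsilon^2/\delta$, optimized at $\delta=\varepsilon^{2/3}$ and giving only $\|\nabla w_\varepsilon\|_{L^2}\lesssim\varepsilon^{1/3}$, strictly weaker than the theorem's $\varepsilon^{1/2}$ (for $d=1$ one obtains $\varepsilon^{1/4}$ instead of $\varepsilon^{1/3}$). The paper decouples the two scales: it discretizes $\nabla u_\shom$ at scale $\delta$ but modifies the ansatz near $\partial\domain$ with a cutoff at a smaller scale $\tau$, arriving at $\|\nabla w_\varepsilon\|_{L^2}^2\lesssim\tau+\delta^2+(\varepsilon/\delta)^2\mu_d+(\varepsilon^2/\tau)\mu_d$ with $\mu_d$ equal to $\delta/\varepsilon$, $|\log(\delta/\varepsilon)|$, $1$ in $d=1,2,\geq3$. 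The choice $\tau=\delta^2$ followed by $\delta=\varepsilon^{1/2}$ for $d\geq3$ (analogously $\delta=\varepsilon^{1/3}$ for $d=1$ and $\delta=\varepsilon^{1/2}|\log\varepsilon|^{1/4}$ for $d=2$) recovers the stated rates. This two-scale optimization over both the discretization and the boundary-cutoff scale is the essential missing ingredient in your argument.
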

\begin{remark}\label{R:T4}
  In the situation of Theorem~\ref{TheoremErrorEstimateDomains} additionally suppose that $\domain$ is of class $C^{1,1}$. Let $u_{\rm Dir}\in H^{2}(\domain;\Rm)$, $f\in L^2(\domain;\Rm)$, $g\in H^1(\domain;\Rmd)$, and let $u_{\hom}\in u_{\rm Dir}+ H^1_0(\domain;\Rm)$ be the unique weak solution to the boundary value problem
  \begin{equation*}
    -\nabla\cdot (A_{\shom}(\nabla u_{\shom}))=f+\nabla\cdot g\qquad\text{in }\domain
  \end{equation*}
  in a distributional sense. Then $u_{\shom}\in H^2(\domain;\R^m)$ holds with the estimate
  \begin{equation*}
    \|u_{\shom}\|_{H^2(\domain)}\leq C\big(\|f\|_{L^2(\domain)}+\|g\|_{H^1(\domain)}+\|u_{\rm Dir}\|_{H^2(\domain)}\big),
  \end{equation*}
  where $C$ only depends on $d,m,\lambda,\Lambda$ and $\domain$, see \cite[Theorem 5.2]{NeffKnees08}. We thus can upgrade the estimate of Theorem~\ref{TheoremErrorEstimateDomains} to
  \begin{align*}
    &||u_\varepsilon-u_\shom||_{L^2(\domain)}
    \\&~~~~~~
    \leq\mathcal C\,C\,\big(\|f\|_{L^2(\domain)}+\|g\|_{H^1(\domain)}+\|u_{\rm Dir}\|_{H^2(\domain)}\big)
      \left\{\begin{aligned}
          &\e^{\frac13}&\text{for }d=1,\\
          &\e^\frac12|\log\e|^{\frac14}&\text{for }d=2,\\
          &\e^\frac12&\text{for }d\geq 3.
        \end{aligned}\right.
  \end{align*}
\end{remark}
\begin{remark}
  An error estimate analogous to Theorem~\ref{TheoremErrorEstimateDomains} also holds on the full space $\mathbb{R}^d$, provided that in case $d\geq 3$ we measure the error $u_\varepsilon-u_\shom$ in the $L^{2d/(d-2)}$ norm as in Theorem~\ref{TheoremErrorEstimate} and assume additionally $f\in L^1\cap L^{\frac{2d}{d+2}}$, and provided that in case $d=1,2$ we include a massive term in the equation as in Theorem~\ref{TheoremErrorEstimate2d}.
\end{remark}
\begin{remark}[Error estimate for a piecewise affine two-scale expansion]\label{R:two-scale}
  In the proof of these theorems we also establish an $H^1$-error estimate for a two-scale expansion of $\nabla u_{\shom}$. More precisely, we consider a two-scale expansion of the form
  \begin{align}
    \hat u_\e=u_\shom+\sum_{k\in K}\eta_k\phi_k,
    \label{TwoScaleDef}
  \end{align}
  where $\{\eta_k\}_{k\in K}$ is a smooth partition of unity that only depends on the domain and a suitably chosen discretization scale $\delta$ (with $\e\leq\delta\ll 1$), and $\phi_k$ is the corrector associated with the local approximation $\xi_k=\frac{1}{\int_{\domain}\eta_k\,dx}\int_{\domain}\nabla u_{\shom}\eta_k\,dx$, see Proposition~\ref{PropositionTwoScaleExpansion} below. (We remark that there is no $\e$ in front of the $\phi_k$ in \eqref{TwoScaleDef}, since we define the correctors with respect to the original coefficients that oscillate on scale $\e$ and not --- as it is often done --- for rescaled coefficients.) In the situation of Theorem~\ref{TheoremErrorEstimate} and Theorem~\ref{TheoremErrorEstimate2d} we show that 
  \begin{equation*}
    \|\nabla u_\e-\nabla\hat u_\e\|_{L^2(\R^d)}\leq \mathcal C\widehat C(\nabla u_{\shom})
    \left\{\begin{aligned}
        &\e^{\frac12}&\text{for }d=1,\\
        &\e|\log\e|^{\frac12}&\text{for }d=2,\\
        &\e&\text{for }d\geq 3,
      \end{aligned}\right.
  \end{equation*}
  see Step~2 in the proof of  Theorem~\ref{TheoremErrorEstimate} and Theorem~\ref{TheoremErrorEstimate2d}. For bounded domains, we also obtain an estimate for the two-scale expansion with suboptimal scaling and away from the boundary.
\end{remark}

We next establish several structural properties of the homogenized operator $A_\shom$, including in particular the statement that the homogenized operator $A_\shom$ inherits the monotone structure from $A_\varepsilon$.
Note that these properties are actually true even in the context of qualitative stochastic homogenization, and their proof is fairly elementary.
The monotonicity of the effective operator $A_\shom$ has (essentially) already been established by Dal~Maso and Modica \cite{DalMasoModica,DalMasoModica2}, at least in the setting of convex integral functionals. We also give sufficient criteria for frame-indifference and isotropy of the homogenized operator $A_\shom$.
\begin{theorem}[Structure properties of the homogenized equation]
\label{TheoremStructureProperties}
Let the assumptions \hyperlink{A1}{(A1)}--\hyperlink{A3}{(A3)} and \hyperlink{P1}{(P1)}--\hyperlink{P2}{(P2)} be in place.
Define the effective (homogenized) material law $A_\shom:\Rmd\rightarrow \Rmd$ by \eqref{eq:homogop}. Then $A_\shom$ has the following properties:
\begin{itemize}
\item[a)] The map $A_\shom$ is monotone and Lipschitz continuous in the sense that
\begin{align*}
\big(A_\shom(\xi_2)-A_\shom(\xi_1)\big) \cdot (\xi_2-\xi_1)
&\geq \lambda |\xi_2-\xi_1|^2
\end{align*}
and
\begin{align*}
|A_\shom(\xi_2)-A_\shom(\xi_1)|\leq C(d,m)\frac{\Lambda^2}{\lambda} |\xi_2-\xi_1|
\end{align*}
hold for all $\xi_1,\xi_2\in \Rmd$. Furthermore, if in addition the condition \hyperlink{R}{(R)} holds for $A$, $A_\shom$ is continuously differentiable and satisfies $A_\shom(0)=0$.
\item[b)] If the operator $A$ is frame-indifferent in the sense $A(\omega,O\xi)=O A(\omega,\xi)$ for all $O\in \operatorname{SO}(m)$, all $\xi\in \Rmd$, and all $\omega\in \Hilbert$, the operator $A_\shom$ inherits the frame-indifference in the sense $A_\shom(O\xi)=O A_\shom(\xi)$ for all $O\in \operatorname{SO}(m)$ and all $\xi\in \Rmd$.
\item[c)] If the law of the operator $A_\varepsilon(x,\xi):=A(\omega_\varepsilon(x),\xi)$ is isotropic in the sense that the law of the rotated operator $A_\varepsilon^O(x,\xi):=A_\varepsilon(x,\xi O)O$ coincides with the law of $A_\varepsilon$ for all $O\in \operatorname{SO}(d)$, the operator $A_\shom$ inherits the isotropy in the sense $A_\shom(\xi O)=A_\shom(\xi)O$ for all $O\in \operatorname{SO}(d)$ and all $\xi\in \Rmd$.
\end{itemize}
\end{theorem}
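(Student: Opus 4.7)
For part (a), my plan is to derive monotonicity and Lipschitz continuity of $A_\shom$ directly from the corrector equations in Definition~\ref{D:corr}. The key preliminary step is the \emph{energy identity}
\begin{equation*}
\mathbb{E}\big[(A(\omega_\varepsilon,\xi_2+\nabla\phi_{\xi_2})-A(\omega_\varepsilon,\xi_1+\nabla\phi_{\xi_1}))\cdot(\nabla\phi_{\xi_2}-\nabla\phi_{\xi_1})\big]=0,
\end{equation*}
which I would obtain by testing the two corrector equations against a cutoff of $\phi_{\xi_2}-\phi_{\xi_1}$ on $B_R$ and passing to the limit $R\to\infty$, exploiting sublinear growth of the correctors together with stationarity and the finite second moment of their gradients. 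Adding this identity to the trivial expansion $(A_\shom(\xi_2)-A_\shom(\xi_1))\cdot(\xi_2-\xi_1)=\mathbb{E}[(A(\omega_\varepsilon,\xi_2+\nabla\phi_{\xi_2})-A(\omega_\varepsilon,\xi_1+\nabla\phi_{\xi_1}))\cdot(\xi_2-\xi_1)]$, monotonicity (A1) combined with Jensen's inequality (via $\mathbb{E}[\nabla\phi_\xi]=0$) gives the lower bound $\lambda|\xi_2-\xi_1|^2$. Running the same identity with Cauchy--Schwarz and the Lipschitz bound (A2) first produces the a priori estimate $\mathbb{E}[|\nabla\phi_{\xi_2}-\nabla\phi_{\xi_1}|^2]\leq(\Lambda/\lambda)^2|\xi_2-\xi_1|^2$, and a second application of (A2) yields the Lipschitz bound on $A_\shom$ with constant $\Lambda^2/\lambda$. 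The identity $A_\shom(0)=0$ is immediate from uniqueness (so $\phi_0\equiv 0$) and $A(\omega,0)=0$.

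To upgrade to continuous differentiability under (R), I will linearize the corrector equation in $\xi$. The directional derivative $\psi_\xi[\eta]$ of $\phi_\xi$ in direction $\eta\in\Rmd$ should solve the uniformly elliptic \emph{linear} corrector equation
\begin{equation*}
-\nabla\cdot\big(\partial_\xi A(\omega_\varepsilon,\xi+\nabla\phi_\xi)(\eta+\nabla\psi_\xi[\eta])\big)=0,
\end{equation*}
which is well-posed since under (R) the coefficient $\partial_\xi A(\omega_\varepsilon,\xi+\nabla\phi_\xi)$ is stationary with spectrum in $[\lambda,\Lambda]$. A perturbation estimate on the difference quotient $h^{-1}(\phi_{\xi+h\eta}-\phi_\xi)-\psi_\xi[\eta]$, using the uniform bound $|\partial_\xi^2 A|\leq\Lambda$ from (R) together with the gradient Lipschitz estimate for correctors from the previous paragraph, yields Fr\'echet differentiability of $\xi\mapsto A_\shom(\xi)$; continuity of $\xi\mapsto\partial_\xi A_\shom(\xi)$ then follows from the continuity in $\xi$ of the linearized coefficient via the same perturbation argument.

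Parts (b) and (c) reduce to uniqueness of the corrector. For (b), the frame-indifference assumption $A(\omega,O\xi)=OA(\omega,\xi)$ implies that $O\phi_\xi$ is an $\Rm$-valued field with stationary gradient, vanishing mean, and sublinear growth that solves the corrector equation with slope $O\xi$; uniqueness then forces $\phi_{O\xi}=O\phi_\xi$ and \eqref{eq:homogop} immediately gives $A_\shom(O\xi)=OA_\shom(\xi)$. For (c), the $\SO(d)$-action mixes space and the gradient index, so the candidate corrector must include a spatial rotation: I would set $\tilde\phi(x):=\phi_\xi(Ox)$, for which $\nabla\tilde\phi(x)=\nabla\phi_\xi(Ox)O$ and hence $\xi O+\nabla\tilde\phi(x)=(\xi+\nabla\phi_\xi(Ox))O$. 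Combining the algebraic divergence-transformation rule $\nabla_x\cdot[B(Ox)O]=(\nabla_y\cdot B)(Ox)$ with the isotropy assumption (which forces both $A^O\sim A$ and, implicitly, rotation-invariance in law of the underlying random field) shows that $\tilde\phi$ solves in law the corrector equation for slope $\xi O$; substituting the identity $\nabla\tilde\phi\sim\nabla\phi_{\xi O}$ into \eqref{eq:homogop} and exploiting the same isotropy of the law produces $A_\shom(\xi O)=A_\shom(\xi)O$.

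I expect the main obstacle to be the energy identity in part (a): the difference of correctors has no a priori global integrability -- only its gradient is stationary with finite second moment -- so cancelling the boundary terms in the cutoff argument requires combining sublinear growth of $\phi_\xi$, Cauchy--Schwarz, and $\mathbb{E}[|\nabla\phi_\xi|^2]<\infty$ carefully in order to pass to the limit $R\to\infty$. Once this identity is in place, all remaining structural properties -- Lipschitz, differentiability, frame-indifference, and isotropy -- follow from algebraic manipulations and direct applications of corrector uniqueness.
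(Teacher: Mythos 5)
Your plan matches the paper's approach for parts (b) and (c) and for the monotonicity/Lipschitz portion of (a). Packaging the cut-off argument as an energy identity and then splitting terms is a tidy way to run the paper's computation, and it yields the marginally cleaner constant $\Lambda^2/\lambda$ in place of $C(d,m)\Lambda^2/\lambda$. You also correctly observe that the isotropy hypothesis must, at least implicitly, govern the spatial rotation of the random field as well: the paper's proof uses the rotated operator $A(\omega_\varepsilon(Vx),\cdot\,V^{-1})V$ acting on \emph{both} arguments, and your ``(implicitly, rotation-invariance in law of the underlying random field)'' catches that subtlety. Your argument for the energy identity itself (boundary term of order $r^{-(d+1)}\cdot r^{d/2}\cdot o(r^{(d+2)/2})\to 0$ via sublinear growth, Cauchy--Schwarz, and the finite second moment of the gradient) matches the paper's reasoning.

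The $C^1$ portion of (a), however, has a real gap. Differencing the linearized corrector equations for slopes $\xi_1$ and $\xi_2$ and taking the energy estimate produces
\begin{equation*}
\mathbb{E}\big[|\nabla\psi_{\xi_1}[\Xi]-\nabla\psi_{\xi_2}[\Xi]|^2\big]\leq C\,\mathbb{E}\big[|a_{\xi_1}-a_{\xi_2}|^2\,|\Xi+\nabla\psi_{\xi_1}[\Xi]|^2\big],\qquad a_\xi:=\partial_\xi A(\omega_\varepsilon,\xi+\nabla\phi_\xi).
\end{equation*}
With only $L^2(\Omega)$ control on the second factor this product cannot be decoupled: Cauchy--Schwarz exhausts your information, and dominated convergence fails because the would-be dominating function $4\Lambda^2|\Xi+\nabla\psi_{\xi_1}[\Xi]|^2$ moves with $\xi_1$. (Take $|a_{\xi_1}-a_{\xi_2}|^2=\mathbf{1}_{A_n}$ with $\mathbb{P}(A_n)\to 0$ while $|\nabla\psi_{\xi_1}[\Xi]|^2$ concentrates on $A_n$ for the counterexample shape.) The missing ingredient for continuity of $\partial_\xi A_\shom$ --- which is also what underwrites the upgrade from directional to Fr\'echet differentiability --- is a Meyers higher-integrability estimate $\mathbb{E}[|\Xi+\nabla\psi_\xi[\Xi]|^p]\leq C|\Xi|^p$ for some $p>2$; one then H\"olders with exponents $p/2$ and $p/(p-2)$ and obtains H\"older continuity $\leq C|\xi_1-\xi_2|^{2(p-2)/p}|\Xi|^2$. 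The paper invokes Meyers explicitly at exactly this step; your phrase ``via the same perturbation argument'' elides it. By contrast, your step for (Gateaux) differentiability alone is sound, since there the dominating function $4\Lambda^2|\Xi+\nabla\psi_\xi[\Xi]|^2$ at the \emph{fixed} base point $\xi$ is available and dominated convergence closes the argument.
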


\subsection{Optimal-order error estimates for the approximation of the homogenized operator by periodic RVEs}

To perform numerical simulations based on the homogenized PDE \eqref{EffectiveEquation}, the effective material law $A_\shom$ must be determined. The theoretical expression \eqref{eq:homogop} for the effective material law $A_\shom$ is essentially an average over all possible realizations of the random material; it is therefore a quantity that is not directly computable. 
To numerically determine the effective (homogenized) material law $A_\shom$ in practice, the method of representative volumes is typically employed: A finite sample of the random medium is chosen, say, a cube $[0,L]^d$ of side length $L\gg \varepsilon$, and the cell formula from homogenization theory is evaluated on this sample.

In most cases, a representative volume of mesoscopic size $\varepsilon\ll L\ll 1$ is sufficient to approximate the effective material law $A_\shom$ well. For this reason, at least in the case of a clear separation of scales $\varepsilon\ll 1$ numerical simulations based on the homogenized equation \eqref{EffectiveEquation} and the RVE method are several to many orders of magnitude faster than numerical simulations based on the original PDE \eqref{Equation}.

In our next theorem, we establish a~priori error estimates for the approximation of the effective material law $A_\shom$ by the method of representative volumes. Our a~priori error estimates are of optimal order in the physically relevant cases $d\leq 4$, at least if the issue of boundary layers on the RVE is addressed appropriately.
More precisely, we show that the homogenized coefficients $A_\shom$ may be approximated by the representative volume element method on an RVE of size $L$ up to an error of the order of the natural fluctuations $\smash{(\frac{L}{\e})^{-d/2}}$. This rate of convergence coincides with the optimal rate in the case of linear elliptic PDEs and systems, see \cite{GloriaOtto,GloriaOtto2}.

There exist various options to define the RVE approximation, see e.\,g.\ \cite[Section~1.4]{FischerVarianceReduction} for a discussion in the linear elliptic setting. In the present work, we shall consider the case of periodic RVEs: One considers an \emph{$L$-periodic approximation} $\mathbb P_{L}$ of the probability distribution $\mathbb{P}$ of the random field $\omega_\varepsilon$, that is an $L$-periodic variant $\omega_{\varepsilon,L}$ of the random field $\omega_\varepsilon$ (see below for a precise definition), and imposes periodic boundary conditions on the RVE boundary $\partial [0,L]^d$.

Before rigorously defining the notion of periodization, we first note that to any periodic parameter field $\widetilde \omega_L$ one can unambiguously (and deterministically) associate a homogenized coefficient via the classical periodic homogenization formula.
\begin{definition}[Periodic RVE approximation]
\label{DefRVEApproximation}
Let $A:\Hilbert\times \Rmd\rightarrow \Rmd$ be a family of monotone operators satisfying \hyperlink{A1}{(A1)}--\hyperlink{A3}{(A3)}. To any $L$-periodic parameter field $\widetilde\omega_L$ and any $\xi\in\Rmd$ we associate the RVE approximation $A^{\RVE,L}(\widetilde \omega_L,\xi)$ for the effective coefficient $A_\shom(\xi)$ given by
\begin{equation*}
  A^{\RVE,L}(\widetilde\omega_L,\xi):=\fint_{[0,L]^d} A(\widetilde\omega_L,\xi+\nabla\phi_\xi)\,dx,
\end{equation*}
where the (periodic) corrector $\phi_\xi=\phi_\xi(\widetilde\omega_L,\cdot)$ is defined as the unique solution in $H^1_{\operatorname{per}}([0,L]^d;\R^m)$ with vanishing mean $\fint_{[0,L]^d}\phi_\xi \,dx=0$ to the corrector equation
\begin{equation*}
  -\nabla\cdot (A(\widetilde\omega_L(x),\xi+\nabla\phi_\xi))=0\qquad\text{in }\R^d.
\end{equation*}
\end{definition}
We next define our notion of $L$-periodic approximation of the coefficient field $\omega_\varepsilon$. The main condition will be that the statistics of $\omega_\varepsilon$ and $\omega_{\varepsilon,L}$ must coincide on balls of the form $B_{\frac L 4}(x_0)$ around any $x_0\in \Rd$.
\begin{definition}[$L$-periodic approximation of $\mathbb P$]\label{D:Lperproxy}
Let $\mathbb{P}$ satisfy the assumptions \hyperlink{P1}{(P1)} and \hyperlink{P2}{(P2)}. Let $L\geq \varepsilon$. We say that $\mathbb P_{L}$ is an $L$-periodic approximation to $\mathbb P$, if $\mathbb P_{L}$ is stationary in the sense of \hyperlink{P1}{(P1)}, concentrates on  $L$-periodic parameter fields, satisfies the periodic spectral gap of Definition~\ref{DefinitionSpectralGap}b, and the following property holds: For $\mathbb P$-a.e. random field $\omega_\varepsilon$ and $\mathbb P_{L}$-a.e. random field $\omega_{\varepsilon,L}$ we have
    \begin{equation*}
      \omega_\varepsilon\sim\mathbb P\text{ and }\omega_{\varepsilon,L}\sim\mathbb P_{L}\qquad\Rightarrow\qquad \omega_\varepsilon\vert_{B_{\frac L4}}\sim \omega_{\varepsilon,L}\vert_{B_{\frac L4}},
    \end{equation*}
    i.e., if $\omega_\varepsilon$ is distributed with $\mathbb P$, and $\omega_{\varepsilon,L}$ is distributed with $\mathbb P_{L}$, then the restricted random fields $\omega_\varepsilon\vert_{B_{\frac L4}}$ and $\omega_{\varepsilon,L}\vert_{B_{\frac L4}}$ have the same distribution.
    
For such an $L$-periodic approximation $\mathbb{P}_L$, we abbreviate the representative volume approximation as introduced in Definition~\ref{DefRVEApproximation} by $A^{\RVE,L}(\xi):=A^{\RVE,L}(\omega_{\varepsilon,L},\xi)$.
\end{definition}

Note, however, that the existence of such an $L$-periodic approximation $\smash{\mathbb{P}_L}$ of $\mathbb{P}$ has to be proven on a case-by-case basis, depending on the probability distribution. To give one example, for a random field $\omega_\varepsilon$ of the form $\omega_\varepsilon(x):=\theta(Y_\varepsilon(x))$, where $Y_\varepsilon$ is a stationary Gaussian random field arising as the convolution $Y_\varepsilon(x):=(\beta_\varepsilon \ast W)(x)$ of white noise $W$ with a kernel $\beta_\varepsilon$ with $\supp \beta_\varepsilon\subset B_\varepsilon$, one may construct an $L$-periodic approximation simply by replacing the white noise $W$ by $L$-periodic white noise $W_L$, i.\,e.\ by defining $\omega_{\varepsilon,L}(x):=\theta((\beta_\varepsilon\ast W_L)(x))$.
{We remark that for more complex probability distributions it may become necessary to generalize the notion of periodization of Definition~\ref{D:Lperproxy}, e.\,g.\ by relaxing the condition of identical distributions $\smash{\omega_\varepsilon\vert_{B_{L/4}}\sim \omega_{\varepsilon,L}\vert_{B_{L/4}}}$ to the two distributions just being sufficiently similar (in a suitable metric). In such a case, our proofs for Theorem~\ref{T:RVE} below would need to be adapted in a suitable way.}

For the approximation of the effective material law $A_\shom$ by an $L$-periodic representative volume in the sense of Definition~\ref{DefRVEApproximation}--\ref{D:Lperproxy}, we establish the following a~priori error estimate. 
Again, our rates of convergence coincide with the optimal rates of convergence for linear elliptic PDEs and systems \cite{GloriaNeukammOtto,GloriaNeukammOtto2,GloriaOtto}.
\begin{theorem}[Error estimate for periodic RVEs]\label{T:RVE}
  Let $A:\Hilbert\times\Rmd\rightarrow \Rmd$ satisfy the assumptions \hyperlink{A1}{(A1)} -- \hyperlink{A3}{(A3)} and let $\mathbb{P}$ satisfy the assumptions \hyperlink{P1}{(P1)}--\hyperlink{P2}{(P2)}. Let $L\geq 2\varepsilon$, let $\mathbb{P}_L$ be an $L$-periodic approximation of $\mathbb{P}$ in the sense of Definition~\ref{D:Lperproxy}, and denote by $A^{\RVE,L}(\xi)$ the corresponding representative volume approximation for the homogenized material law $A_\shom(\xi)$.
  \begin{enumerate}[(a)]
  \item (Estimate on random fluctuations). For all $\xi\in\Rmd$ we have the estimate on random fluctuations
    \begin{equation*}
      \big|A^{\RVE, L}(\xi)-\mathbb{E}_L\left[A^{\RVE,L}(\xi)\right]\big|\leq\mathcal C|\xi|\left(\frac L\varepsilon\right)^{-\frac d2},
    \end{equation*}
    where $\mathcal C$ denotes a random variable that satisfies a stretched exponential moment bound uniformly in $L$, i.\,e.\ there exists $C=C(d,m,\lambda,\Lambda,\rho)$ such that
    \begin{equation*}
      \mathbb E_L\left[\exp\bigg(\frac{\mathcal C^{1/C}}{C}\bigg)\right]\leq 2.
    \end{equation*}
  \item (Estimate for the systematic error). Suppose that $\mathbb{P}$ and $\mathbb{P}_L$ additionally satisfy the small-scale regularity condition \hyperlink{R}{(R)}. Then for any $\xi\in \Rmd$ the systematic error of the representative volume method is of higher order in the sense
    \begin{equation*}
      \big|\mathbb{E}_L\big[A^{\RVE,L}(\xi)\big]-\Ahom(\xi)\big|
      \leq C(1+|\xi|)^C|\xi|\left(\frac{L}{\e}\right)^{-(d\wedge 4)}\bigg|\log\left(\frac{L}{\e}\right)\bigg|^{\alpha_d}.
    \end{equation*}
    Here, $\alpha_d$ is given by
    \begin{equation*}
      \alpha_d:=d\wedge 4+
      \begin{cases}
        2 &\text{for }d\in\{2,4\},\\
        0 &\text{for }d=3,\ d=1,\text{ and }d\geq 5
      \end{cases}
    \end{equation*}
    for some $C=C(d,m,\lambda,\Lambda,\nu,\rho)$.
    In dimension $d\leq 4$ the estimate is optimal (up to the logarithmic factor). Without the small-scale regularity condition \hyperlink{R}{(R)}, the suboptimal estimate
    \begin{equation*}
      \big|\mathbb{E}_L\big[A^{\RVE,L}(\xi)\big]-\Ahom(\xi)\big| \leq C|\xi|\left(\frac{L}{\e}\right)^{-\frac{d\wedge 4}{2}}
    \bigg|\log\left(\frac L\e\right)\bigg|^{\alpha_d}
    \end{equation*}
    holds, where
    \begin{equation*}
      \alpha_d:=\frac{d\wedge 4}{2}+
      \begin{cases}
        \frac{1}{2} &\text{for }d\in\{2,4\},\\
        0 &\text{for }d=3,\ d=1,\text{ and }d\geq 5.
      \end{cases}
    \end{equation*}
  \end{enumerate}
\end{theorem}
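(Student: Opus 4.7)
\textbf{Proof proposal for Theorem~\ref{T:RVE}.}

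For part (a), the plan is to apply the periodic spectral gap inequality (Definition~\ref{DefinitionSpectralGap}b) to the random variable $F(\omega_{\varepsilon,L}):=A^{\RVE,L}(\xi)=\fint_{[0,L]^d} A(\omega_{\varepsilon,L},\xi+\nabla\phi_\xi)\,dx$. The main task is to compute and bound the functional (Malliavin-type) derivative $\frac{\partial F}{\partial\omega_{\varepsilon,L}}$. A chain rule calculation yields two contributions: the direct dependence via $\partial_\omega A(\omega_{\varepsilon,L},\xi+\nabla\phi_\xi)$, and the indirect dependence through the sensitivity of the corrector. Differentiating the corrector equation in direction $\delta\omega$ shows that the linearized corrector $\delta\phi_\xi$ solves
\begin{equation*}
-\nabla\cdot\bigl(\partial_\xi A(\omega_{\varepsilon,L},\xi+\nabla\phi_\xi)\nabla(\delta\phi_\xi)\bigr)=\nabla\cdot\bigl(\partial_\omega A(\omega_{\varepsilon,L},\xi+\nabla\phi_\xi)\,\delta\omega\bigr)
\end{equation*}
with periodic boundary conditions. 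Using assumption \hyperlink{A1}{(A1)} the left-hand operator is uniformly elliptic, so duality with the test function dictated by $F$ (representing its derivative against $\delta\omega$) together with energy estimates bounds the vertical derivative pointwise by $\lesssim |\xi|\,\mathbf{1}_{B_\varepsilon(y)}$ (up to averaging on the $\varepsilon$-scale). Inserting this into the spectral gap yields $\Var F\lesssim |\xi|^2(L/\varepsilon)^{-d}$, and stretched exponential moments are obtained through the standard iteration of sensitivity estimates, as pioneered in the linear setting by Gloria-Otto and Duerinckx-Gloria.

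For part (b), the plan is to perform a second-order sensitivity analysis. The $O((L/\varepsilon)^{-d/2})$ scaling is merely the size of the fluctuations, so the systematic error is quadratic in these fluctuations; this leads one naturally to expect the enhanced rate $(L/\varepsilon)^{-d}$ (saturating at $(L/\varepsilon)^{-4}$ above dimension $4$ due to the failure of integrability of the second-order Green's function decay). Concretely, I would couple $\omega_\varepsilon\sim\mathbb{P}$ with $\omega_{\varepsilon,L}\sim\mathbb{P}_L$ so that they agree on $B_{L/4}$, and Taylor-expand $A(\omega_{\varepsilon,L},\xi+\nabla\phi_\xi^{L})$ to second order around $A(\omega_\varepsilon,\xi+\nabla\phi_\xi^{\infty})$, where $\phi_\xi^L$ is the periodic and $\phi_\xi^\infty$ the whole-space corrector. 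By stationarity and definition of $\Ahom$, the first-order term vanishes in expectation upon taking the cell average; the quadratic term is estimated using the linearized corrector together with the corrector gradient bounds obtained in part (a). Decay of the linearized Green's function — which is where the small-scale regularity \hyperlink{R}{(R)} enters, providing the $C^{1,\alpha}$ theory that yields Lipschitz-type estimates at scale $\varepsilon$ — then gives the final $(L/\varepsilon)^{-(d\wedge 4)}$ bound, with the logarithmic corrections $\alpha_d$ arising from integrating borderline-integrable decay in critical dimensions $d=2,4$.

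The main obstacle is part (b). The nonlinear setting requires that the second-order Taylor expansion close without leaving large remainders: one must show that the ``cross terms'' involving second derivatives $\partial_\omega\partial_\xi A$ and $\partial_\xi^2 A$ can be absorbed using the energy estimates for the linearized corrector and the decay of the linearized Green's function. This in turn rests crucially on the $C^{1,\alpha}$ regularity afforded by \hyperlink{R}{(R)}; without it, one loses control of $\nabla\phi_\xi$ in pointwise norms and can only close the argument at the level of squared fluctuations, which explains the half-rate bound $(L/\varepsilon)^{-(d\wedge 4)/2}$ stated in the second half of part (b). A secondary technical difficulty is bookkeeping the moments: the coupling between the random constants from part (a) and the deterministic bound from part (b) must be handled so that the final estimate for the systematic error is genuinely deterministic (in expectation), which requires that the quadratic term in the expansion be bounded by quantities with summable — not merely stretched exponential — moments, and this is where the factors $(1+|\xi|)^C$ come in.
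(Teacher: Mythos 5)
Your proposal for part~(a) has the right skeleton (compute the vertical derivative, separate the direct $\partial_\omega A$ contribution from the indirect one through $\delta\phi_\xi$, use a dual problem, insert into the spectral gap), but the claimed pointwise bound $\bigl|\tfrac{\partial F}{\partial\omega}\bigr|\lesssim |\xi|\,\mathbf 1_{B_\varepsilon(y)}$ is incorrect. After eliminating $\delta\phi_\xi$ via the adjoint linearized corrector $h$ solving $-\nabla\cdot(a_\xi^{*}\nabla h)=\nabla\cdot(a_\xi^{*}\Xi)$ on the torus, the sensitivity is $\tfrac{\partial F}{\partial\omega}\sim L^{-d}\,\partial_\omega A(\omega,\xi+\nabla\phi_\xi)\cdot(\Xi+\nabla h)$, which is bounded by $L^{-d}|\xi+\nabla\phi_\xi|(|\Xi|+|\nabla h|)$ and is \emph{not} a pointwise-bounded field: $|\nabla h|$ is only controlled in $L^2$ (energy) and slightly better ($L^p$ for $p>2$ via Meyers), while $|\xi+\nabla\phi_\xi|$ requires the hole-filling control $\fint_{B_\varepsilon(x)}|\xi+\nabla\phi_\xi|^2\lesssim|\xi|^2(r_{*,\xi}(x)/\varepsilon)^{d-\delta}$ together with the stretched-exponential moment bound on the minimal radius $r_{*,\xi}$. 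Without both Meyers and the hole-filling exponent gain $\delta>0$ (which is what makes the moments on $r_{*,\xi}$ close by buckling), the argument does not close.

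For part~(b) your proposal takes a genuinely different route — a direct coupling of $\phi_\xi^{L}$ (periodic) and $\phi_\xi^\infty$ (whole-space) plus a second-order Taylor expansion — but this route has a real gap: the fields $\omega_{\varepsilon,L}$ and $\omega_\varepsilon$ agree only on $B_{L/4}$, and since the operator $-\nabla\cdot(a_\xi\nabla\cdot)$ has only polynomial (not exponential) decay, the difference $\nabla\phi_\xi^L-\nabla\phi_\xi^\infty$ on $B_{L/4}$ is \emph{not} small enough to make the systematic error $O((L/\varepsilon)^{-d\wedge4})$. The paper sidesteps this with a massive regularization: it introduces the \emph{localized} RVE approximation $A^{\RVE,\eta,T}$ built from the $T$-massive correctors $\phi_\xi^T,\phi_{\xi,\Xi}^{*,T}$ (Definition~\ref{D:RVET}), which depend on the coefficient field only up to scale $\sqrt{T}$ with exponential error (Lemma~\ref{L:exploc}). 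It then splits the systematic error into two coupling errors (exponentially small in $L/\sqrt T$ by Lemma~\ref{L:couperror}) and two systematic localization errors (polynomially small in $\varepsilon/\sqrt T$ by Proposition~\ref{P:RVEsystem}), and optimizes $T\sim (L/\log(L/\varepsilon))^2$. Two further ingredients you are missing and which are essential to obtain the \emph{higher-order} rate: (i) the extra correction term $-\tfrac1T\phi_\xi^T\phi_{\xi,\Xi}^{*,T}$ in the definition of $A^{\RVE,\eta,T}$, which cancels the leading order of the localization error so that the systematic error becomes quadratic; and (ii) the telescopic sum over $T\to2T\to\cdots$ rather than a single second-order expansion, with the error at each step controlled via the corrector localization estimates (Lemmas~\ref{L:syscorr},~\ref{C:syscorr},~\ref{L:syslincorr}). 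Your instinct that the systematic error is quadratic in the fluctuations and that $(R)$ supplies the needed $C^{1,\alpha}$ control for the linearized corrector is correct, but without the massive localization and the corrected RVE formula the argument does not close.
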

In particular, we derive the following overall a~priori error estimate for the method of representative volumes.
\begin{corollary}[Total $L^2$-error for periodic RVEs]
\label{CorollaryTotalRVE}
Let assumptions \hyperlink{A1}{(A1)} -- \hyperlink{A3}{(A3)} and \hyperlink{P1}{(P1)} -- \hyperlink{P2}{(P2)} be in place. Let $L\geq\varepsilon$. Let $\mathbb P_{L}$ be a $L$-periodic approximation to $\mathbb P$ in the sense of Definition~\ref{D:Lperproxy}.
  \begin{enumerate}[(a)]
  \item If the small-scale regularity condition \hyperlink{R}{(R)} is satisfied, then for $2\leq d\leq 7$ we obtain the optimal estimate
    \begin{equation*}
      \mathbb E_L\left[|A^{\RVE,L}(\xi)-\Ahom(\xi)|^2\right]^\frac12\leq C(1+|\xi|)^C|\xi|\left(\frac L\varepsilon\right)^{-\frac d2}.
    \end{equation*}
    Moreover, for $d>7$ we obtain the suboptimal estimate
    \begin{equation*}
      \mathbb E_L\left[|A^{\RVE,L}(\xi)-\Ahom(\xi)|^2\right]^\frac12\leq C(1+|\xi|)^C|\xi|\left(\frac L\varepsilon\right)^{-4}\bigg|\log\left(\frac{L}{\e}\right)\bigg|^{C(d)}.
    \end{equation*}
  \item If \hyperlink{R}{(R)} is not satisfied, then for any $d\geq 2$ we obtain the subotimal estimate
    \begin{equation*}
      \mathbb E_L\left[|A^{\RVE,L}(\xi)-\Ahom(\xi)|^2\right]^\frac12\leq C|\xi|\left(\frac L\varepsilon\right)^{-\frac {d\wedge 4}{2}}\bigg|\log\left(\frac{L}{\e}\right)\bigg|^{C(d)}.
    \end{equation*}
    For $d\leq 4$ this estimate is optimal except for the logarithmic factor.
  \end{enumerate}
  In the above estimates, we have $C=C(d,m,\lambda,\Lambda,\rho)$.
\end{corollary}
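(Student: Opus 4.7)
The plan is to combine the two estimates of Theorem~\ref{T:RVE} via a triangle inequality in $L^2(\mathbb P_L)$ and carefully track which of the two contributions dominates depending on the dimension. Writing
\begin{equation*}
A^{\RVE,L}(\xi)-\Ahom(\xi)=\bigl(A^{\RVE,L}(\xi)-\mathbb E_L[A^{\RVE,L}(\xi)]\bigr)+\bigl(\mathbb E_L[A^{\RVE,L}(\xi)]-\Ahom(\xi)\bigr),
\end{equation*}
we obtain
\begin{equation*}
\mathbb E_L\bigl[|A^{\RVE,L}(\xi)-\Ahom(\xi)|^2\bigr]^{1/2}\leq \mathbb E_L\bigl[|A^{\RVE,L}(\xi)-\mathbb E_L[A^{\RVE,L}(\xi)]|^2\bigr]^{1/2}+\bigl|\mathbb E_L[A^{\RVE,L}(\xi)]-\Ahom(\xi)\bigr|.
\end{equation*}

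First, I would control the random fluctuation term. Theorem~\ref{T:RVE}(a) provides a pointwise bound by $\mathcal C\,|\xi|\,(L/\varepsilon)^{-d/2}$, where $\mathcal C$ has a uniform stretched exponential moment bound $\mathbb E_L[\exp(\mathcal C^{1/C}/C)]\leq 2$. Standard integration of the stretched exponential tail (a direct calculation using the inequality $x^{2C}\leq C'(1+e^{x^{1/C}/C})$ for $x\geq 0$) yields a universal bound $\mathbb E_L[\mathcal C^2]^{1/2}\leq C'=C'(d,m,\lambda,\Lambda,\rho)$, hence
\begin{equation*}
\mathbb E_L\bigl[|A^{\RVE,L}(\xi)-\mathbb E_L[A^{\RVE,L}(\xi)]|^2\bigr]^{1/2}\leq C\,|\xi|\,(L/\varepsilon)^{-d/2}.
\end{equation*}
This is the (deterministic) contribution of optimal CLT scaling. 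It is already absorbed into the $(1+|\xi|)^C|\xi|$ prefactor appearing in the statement, so no $|\xi|$-growth is introduced by this step.

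Next, I insert the systematic error estimate of Theorem~\ref{T:RVE}(b). In case \hyperlink{R}{(R)} holds, the systematic contribution is bounded by $C(1+|\xi|)^C|\xi|(L/\varepsilon)^{-(d\wedge 4)}|\log(L/\varepsilon)|^{\alpha_d}$ with $\alpha_d$ as stated. Comparing it to the fluctuation rate $(L/\varepsilon)^{-d/2}$, the fluctuation dominates (modulo the logarithmic factor) precisely when $d/2\leq d\wedge 4$, with the log being killed by any positive algebraic gap. A short case analysis gives: for $2\leq d\leq 7$ we have $d/2<d\wedge 4$ (strictly), so the fluctuation term dominates the systematic one for all $L/\varepsilon\geq 2$ and we obtain the optimal $(L/\varepsilon)^{-d/2}$ rate claimed in (a); for $d\geq 8$, the systematic term $(L/\varepsilon)^{-4}|\log(L/\varepsilon)|^{C(d)}$ takes over, giving the second estimate in (a). In the case without \hyperlink{R}{(R)}, the systematic error from Theorem~\ref{T:RVE}(b) is of order $(L/\varepsilon)^{-(d\wedge 4)/2}|\log(L/\varepsilon)|^{\alpha_d}$ and already matches (up to the log) the fluctuation rate, so summing the two yields the single estimate in (b) for all $d\geq 2$.

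There is essentially no obstacle: both quantitative ingredients are supplied by Theorem~\ref{T:RVE}, and the only work is (i) converting the stretched exponential moment bound on $\mathcal C$ into an $L^2$ bound via the elementary tail computation above, and (ii) a case distinction to identify the dominant term in each dimension, which produces the thresholds $d\leq 7$ vs.\ $d\geq 8$ in (a) and the uniform bound in (b). The only slightly delicate point is checking that in the optimal regime $d\leq 7$ of part~(a) the logarithmic factor attached to the systematic error is absorbed by the algebraic gap between $d/2$ and $d\wedge 4$, which is automatic because $|\log(L/\varepsilon)|^{C(d)}(L/\varepsilon)^{-(d\wedge 4-d/2)}$ is bounded uniformly in $L/\varepsilon\geq 2$.
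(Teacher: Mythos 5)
Your proof is correct and matches the approach that the corollary implicitly calls for: it is the straightforward combination of the bias/variance decomposition with the two estimates from Theorem~\ref{T:RVE}, plus the elementary tail computation converting the stretched-exponential moment bound on $\mathcal C$ into an $L^2$ bound and the dimension-by-dimension comparison of rates $(L/\varepsilon)^{-d/2}$ versus $(L/\varepsilon)^{-(d\wedge 4)}$ (with \hyperlink{R}{(R)}) or $(L/\varepsilon)^{-(d\wedge 4)/2}$ (without), which produces exactly the thresholds $d\leq 7$ versus $d>7$ in (a) and the single bound in (b).
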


{Let us briefly comment on the prefactor $(1+|\xi|)^C$ in the error estimates of Corollary~\ref{CorollaryTotalRVE}. It originates from the corresponding prefactor in the estimates on the linearized correctors in Proposition~\ref{PropositionLinearizedCorrectorEstimate}. On a mathematical level, this prefactor reflects the fact that a large macroscopic slope $\xi$ may increase the sensitivity of the correctors $\phi_\xi$ and $\sigma_\xi$ with respect to perturbations in the coefficient field (at least given just our assumptions \hyperlink{A1}{(A1)} -- \hyperlink{A3}{(A3)} and \hyperlink{P1}{(P1)} -- \hyperlink{P2}{(P2)}), as an inspection of \eqref{FunctionalSensitivityEquation} and \eqref{EquationDual2Linearized} in the proof of Lemma~\ref{LemmaEstimateLinearFunctionalsLinearized} reveals. It may be possible to mitigate this prefactor by imposing a sharper (but still natural) condition like $|\partial_\omega \partial_\xi A(\omega,\xi)|\leq \frac{\Lambda}{1+|\xi|}$ (as opposed to just $|\partial_\omega \partial_\xi A|\leq \Lambda$ in \hyperlink{A3}{(A3)}); however, this would require to rigorously establish a nontrivial nonlocal cancellation in \eqref{FunctionalSensitivityEquation} and \eqref{EquationDual2Linearized}. For this reason, it is beyond the scope of our paper.
}
\subsection{The decorrelation assumption: spectral gap inequality}
We finally state and discuss the quantitative assumption on the decorrelation of the random field $\omega_\varepsilon$. The majority of results in the present work require the correlations of the random parameter field $\omega_\varepsilon$ to decay on scales larger than $\varepsilon$ in a quantified way. This quantified decay is enforced by assuming that the probability distribution $\mathbb P$ satisfies the following spectral gap inequality. As already mentioned previously, this spectral gap inequality holds for example for random fields of the form $\omega_\varepsilon(x)=\beta(\widetilde\omega_\varepsilon(x))$, where $\beta:\mathbb{R}\rightarrow (-1,1)$ is a $1$-Lipschitz function and where $\widetilde\omega_\varepsilon$ is a stationary Gaussian random field subject to a covariance estimate of the form
\begin{align*}
\big|\Cov\big[\widetilde\omega_\varepsilon(x),\widetilde\omega_\varepsilon(y)\big]\big|\leq C~ \bigg(\frac{\varepsilon}{\varepsilon+|x-y|}\bigg)^{d+\kappa}
\end{align*}
for all $x,y\in \Rd$ for some $\kappa>0$ and some $C<\infty$. The derivation is standard and may be found e.\,g.\ in \cite{DuerinckxGloriaFunctionalInequality}.
\begin{definition}[Spectral gap inequality encoding fast decorrelation on scales $\geq \varepsilon$]
~\\
\vspace{-9mm}
\label{DefinitionSpectralGap}
\begin{enumerate}
\item[(a)] We say that the probability distribution $\mathbb P$ of random fields $\omega_\varepsilon$ satisfies a spectral gap inequality with correlation length $\e$ and constant $\rho>0$ if any random variable $F=F(\omega_\varepsilon)$ satisfies the estimate
\begin{align}
  \label{SpectralGapInequality}
  \mathbb{E}\Big[\big|F-\mathbb{E}\big[F\big]\big|^2\Big]
  \leq \frac{\varepsilon^d}{\rho^2}
  &\mathbb{E}
    \Bigg[\int_\Rd \bigg(\fint_{B_\varepsilon(x)} \bigg|\frac{\partial F}{\partial \omega_\varepsilon}\bigg| \,d\tilde x \bigg)^2 \,dx\Bigg].
\end{align}
Here, $\fint_{B_\varepsilon(x)} \big|\frac{\partial F}{\partial \omega_\varepsilon}(\omega_\varepsilon)\big|\,d\tilde x$ stands short for
\begin{equation*}
  \sup_{\delta\omega_\varepsilon}\limsup\limits_{t\to 0}\frac{|F(\omega_\varepsilon+t\delta\omega_\varepsilon)-F(\omega_\varepsilon)|}{t},
\end{equation*}
where the $\sup$ runs over all random fields $\delta\omega_\varepsilon:\Rd\to H$ supported in $B_\e(x)$ with $\|\delta\omega_\varepsilon\|_{L^\infty(\Rd)}\leq 1$.
\item[(b)] Let $L\geq \varepsilon$ and let $\mathbb P_L$ be a probability distribution of $L$-periodic random fields $\omega_{\varepsilon,L}$. We say that $\mathbb P_L$ satisfies a periodic spectral gap inequality with correlation length $\e$ and constant $\rho>0$ if any random variable $F=F(\omega_{\varepsilon,L})$ satisfies the estimate
  \begin{align}
    \label{SpectralGapInequality-periodic}
    \mathbb{E}_L\Big[\big|F-\mathbb{E}_L\big[F\big]\big|^2\Big]
    \leq \frac{\varepsilon^d}{\rho^2}
    &\mathbb{E}_L
    \Bigg[\int_{[0,L)^d} \bigg(\fint_{B_\varepsilon(x)} \bigg|\frac{\partial F}{\partial \omega_{\varepsilon,L}}\bigg|\,d\tilde x \bigg)^2 \,dx\Bigg].
    \end{align}
  Here, $\fint_{B_\varepsilon(x)} \big|\frac{\partial F}{\partial \omega_{\varepsilon,L}}(\omega_{\varepsilon,L})\big|\,d\tilde x$ stands short for
\begin{equation*}
  \sup_{\delta\omega_{\varepsilon,L}}\limsup\limits_{t\to 0}\frac{|F(\omega_{\varepsilon,L}+t\delta\omega_{\varepsilon,L})-F(\omega_{\varepsilon,L})|}{t},
\end{equation*}
where the $\sup$ runs over all $L$-periodic random fields $\delta\omega_{\varepsilon,L}:\Rd\to H$ with $\|\delta\omega_{\varepsilon,L}\|_{L^\infty([0,L)^d)}\leq 1$ and support in $B_\e(x)+L\mathbb Z^d$.
\end{enumerate}
\end{definition}
For a reader not familiar with the concept of spectral gap inequalities, it may be instructive to inserting the spatial average $\smash{F(\omega_\varepsilon):=\fint_{B_r} \omega_\varepsilon \,dx}$ into the definition \eqref{SpectralGapInequality}. For this particular choice, the Fr\'echet derivative is given by $\smash{\frac{\partial F}{\partial\omega_\varepsilon}=\frac{1}{|B_r|} \chi_{B_r}}$ and the expression on the right-hand side in \eqref{SpectralGapInequality} is of the order $(\frac{\varepsilon}{r})^d$. Thus, the fluctuations of the spatial average $\smash{F(\omega_\varepsilon)=\fint_{B_r} \omega_\varepsilon(x) \,dx}$ are (at most) of the order $\smash{(\frac{\varepsilon}{r})^{d/2}}$, just as expected when averaging $\smash{(\frac{r}{\varepsilon})^d}$ independent random variables of similar variance. However, the importance of spectral gap inequalities in stochastic homogenization \cite{GloriaNeukammOttoInventiones,NaddafSpencer,GloriaNeukammOtto} stems from the fact that they also entail fluctuation bounds for appropriate \emph{nonlinear} functionals of the random field $\omega_\varepsilon$, as the homogenization corrector $\phi_\xi$ is a nonlinear function of the random field $\omega_\varepsilon$ (even in the setting of linear elliptic PDEs). To give a simple example for fluctuation bounds for a nonlinear functional of $\omega_\varepsilon$, the reader will easily verify that the spectral gap inequality \eqref{SpectralGapInequality} also implies a fluctuation bound of the order $\smash{(\frac{\varepsilon}{r})^{d/2}}$ for a quantity like $\smash{f\big(\fint_{B_r} g(\omega_\varepsilon(x)) \,dx\big)}$ with two $1$-Lipschitz functions $f,g:\mathbb{R}\rightarrow \mathbb{R}$.

{We remark that in the present paper, we only consider the spectral gap inequality of Definition~\ref{DefinitionSpectralGap}; it is satisfied for many random fields taking continuum values. For random fields taking only a discrete set of values, a different notion of spectral gap inequality phrased in terms of finite differences (also called $\operatorname{osc}$ spectral gap) is available; see for instance \cite{DuerinckxGloriaFunctionalInequality}. Proving corrector estimates given an $\operatorname{osc}$ spectral gap inequality would in particular require a nontrivial additionabl decay result for the difference between the homogenization corrector and the corrector for a locally perturbed operator.

For strongly correlated random fields (with non-integrable tails), the spectral gap inequality of Definition~\ref{DefinitionSpectralGap} does not hold; instead, an appropriate multiscale spectral gap inequality has been derived \cite{DuerinckxGloriaFunctionalInequality}. In this setting, our strategy for obtaining corrector estimates does not apply directly, but would require an additional regularity ingredient analogous to \cite{GloriaNeukammOtto2}.
}

\medskip

\section{Strategy and intermediate results}
\label{SectionStrategy}
\subsection{Key objects: Localized correctors, the two-scale expansion, and linearized correctors}

Before turning to the description of our strategy, we introduce the central objects for our approach. Besides the homogenization corrector $\phi_\xi$ defined in Definition~\ref{D:corr}, these key objects include the flux corrector $\sigma_\xi$, localized versions $\phi_\xi^T$ and $\sigma_\xi^T$ of the corrector and the flux corrector, as well as the homogenization correctors $\phi_{\xi,\Xi}^T$ and flux correctors $\sigma_{\xi,\Xi}^T$ for an associated linearized PDE. At the end of the section we also state optimal stochastic moment bounds for these correctors, which will play a central role in the derivation of our main results.

The \emph{flux corrector} $\sigma_\xi$ is an important quantity in the quantitative homogenization theory of elliptic PDEs, see \cite{GloriaNeukammOtto} for a first reference in the context of quantitative stochastic homogenization. Recall that the corrector $\phi_\xi$ provides the connection between a given macroscopic constant field gradient $\xi$ and the corresponding microscopic field gradient $\xi+\nabla \phi_\xi$. Similarly, the flux corrector $\sigma_\xi:\Rd\rightarrow \Rm \otimes \Rddskew$ provides a ``vector potential'' for the difference between the ``microscopic'' and the ``macroscopic'' flux in the sense
\begin{align}
\label{EquationFluxCorrector}
\nabla \cdot \sigma_{\xi} = A(\omega_\varepsilon,\xi+\nabla \phi_\xi) - A_\shom(\xi).
\end{align}
The central importance of the flux corrector $\sigma_\xi$ is that it allows for an elementary representation of the error of the two-scale expansion, see below.
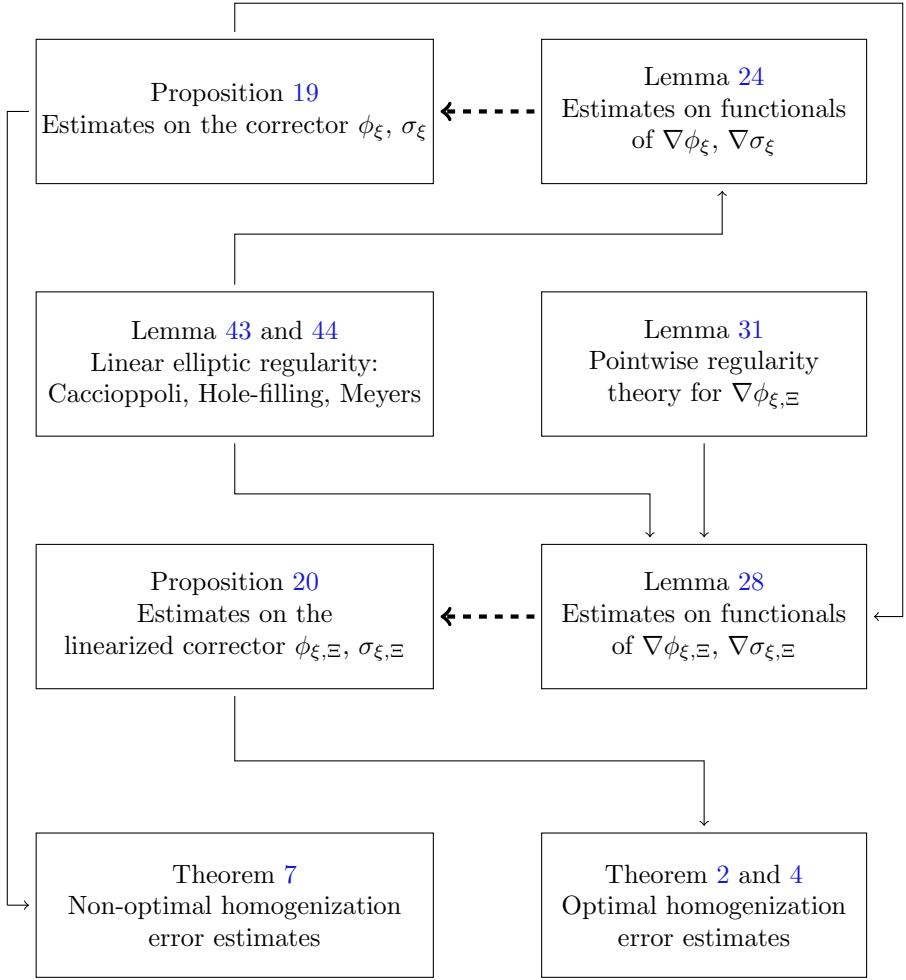
\begin{figure}
\begin{tikzpicture}[every text node part/.style={align=center},scale=0.96]
\draw (7.5,0) rectangle node{Lemma~\ref{LemmaEstimateLinearFunctionals} \\ Estimates on functionals \\ of $\nabla\phi_\xi$, $\nabla\sigma_\xi$} (12,2);
\draw (0.5,0) rectangle node{Proposition~\ref{PropositionCorrectorEstimate} \\ Estimates on the corrector $\phi_\xi$, $\sigma_\xi$} (6,2);
\draw (7.5,-3.5) rectangle node{Lemma~\ref{PointwiseRegularityLinearized} \\ Pointwise regularity\\ theory for $\nabla\phi_{\xi,\Xi}$} (12,-1.5);
\draw[->] (9.75,-3.6) -- (9.75,-4.9);
\draw[->] (3.25,-3.6) -- (3.25,-4.3) -- (9.0,-4.3) -- (9.0,-4.9);
\draw[->] (3.25,-1.4) -- (3.25,-0.7) -- (10.0,-0.7) -- (10.0,-0.1);
%
\draw[->] (3.25,2.1) -- (3.25,2.5) -- (12.5,2.5) -- (12.5,-6) -- (12.1,-6);
\draw[ultra thick,dashed,->] (7.4,1) -- (6.1,1);
\draw[ultra thick,dashed,->] (7.4,-6) -- (6.1,-6);
\draw (0.4,1) -- (0.1,1) -- (0.1,-10);
\draw[->] (0.1,-10) -- (0.4,-10);
\draw[->] (3.25,-7.1) -- (3.25,-8) -- (9.75,-8) -- (9.75,-8.9);
\draw (0.5,-3.5) rectangle node{Lemma~\ref{LemmaCaccioppoliHoleFilling} and \ref{LemmaWeightedMeyers} \\ Linear elliptic regularity:\\Caccioppoli, Hole-filling, Meyers} (6,-1.5);
\draw (0.5,-7) rectangle node{Proposition~\ref{PropositionLinearizedCorrectorEstimate} \\ Estimates on the \\ linearized corrector $\phi_{\xi,\Xi}$, $\sigma_{\xi,\Xi}$} (6,-5);
\draw (7.5,-7) rectangle node{Lemma~\ref{LemmaEstimateLinearFunctionalsLinearized}\\ Estimates on functionals \\ of $\nabla\phi_{\xi,\Xi}$, $\nabla\sigma_{\xi,\Xi}$} (12,-5);
\draw (7.5,-11) rectangle node{Theorem~\ref{TheoremErrorEstimate} and \ref{TheoremErrorEstimate2d}\\ Optimal homogenization\\error estimates} (12,-9);
\draw (0.5,-11) rectangle node{Theorem~\ref{TheoremErrorEstimateDomains}\\ Non-optimal homogenization\\error estimates} (6,-9);
\end{tikzpicture}
\caption{The structure of the proof of the main results. Note that the dashed arrows each involve an application of Lemma~\ref{LemmaMultiscaleDecomposition} and each result in an estimate for the corresponding minimal radii $r_{*,\xi}$ respectively $r_{*,\xi,\Xi}$ (see Lemma~\ref{MomentsMinimalRadius} and Lemma~\ref{MomentsMinimalRadiusLinearized}). For simplicity, we have omitted the localization parameter $T$ and the associated technicalities throughout the diagram.}
\end{figure}

The flux corrector $\sigma_\xi$ is a $d-1$-form, i.\,e.\ it satisfies the skew-symmetry condition $\sigma_\xi(x)\in \Rm \otimes \Rddskew$ for each $x\in \Rd$. As $\sigma_\xi$ is a ``vector potential'' (a $d-1$-form), the equation \eqref{EquationFluxCorrector} only defines $\sigma_\xi$ up to gauge invariance. It is standard to fix the gauge by requiring $\sigma_\xi$ to satisfy
\begin{align}
\label{EquationFluxCorrectorGauge}
-\Delta \sigma_{\xi,jk} &= \partial_j (A(\omega_\varepsilon,\xi+\nabla \phi_\xi)\cdot e_k)-\partial_k (A(\omega_\varepsilon,\xi+\nabla \phi_\xi)\cdot e_j).
\end{align}
\begin{definition}[Flux corrector]\label{D:fluxcorr}
  Let the assumptions \hyperlink{A1}{(A1)}--\hyperlink{A3}{(A3)} and \hyperlink{P1}{(P1)}--\hyperlink{P2}{(P2)} be in place. Then for all $\xi\in\Rmd$ there exists a unique random field $\sigma_\xi:\Omega\times \Rd \rightarrow \Rm \otimes \Rddskew$, called the \emph{flux corrector} associated with $\xi$, with the following properties:
  \begin{enumerate}[(a)]
  \item For $\mathbb P$-almost every realization of the random field $\omega_\varepsilon$ the flux corrector $\sigma_\xi(\omega_\e,\cdot)$ has the regularity $\sigma_\xi(\omega_\e,\cdot)\in H^1_{\rm loc}(\Rd;\Rm)$, satisfies $\fint_{B_1}\sigma_\xi(\omega_\varepsilon,\cdot)\,dx=0$, and solves the equations \eqref{EquationFluxCorrector} and \eqref{EquationFluxCorrectorGauge} in the sense of distributions.
  \item The gradient of the flux corrector $\nabla\sigma_\xi$ is stationary in the sense that 
    \begin{equation*}
      \nabla\sigma_\xi(\omega_\e,\cdot+y)=\nabla\sigma_\xi(\omega_\e(\cdot+y),\cdot)\quad\text{a.e. in }\Rd
    \end{equation*}
    holds for $\mathbb P$-a.e.~$\omega_\varepsilon$ and all $y\in\Rd$.
  \item The gradient of the flux corrector $\nabla\sigma_\xi$ has finite second moments and vanishing expectation, that is
    \begin{equation*}
      \mathbb E\big[\nabla\sigma_\xi\big]=0,\qquad       \mathbb E\big[|\nabla\sigma_\xi|^2\big]<\infty.
    \end{equation*}
  \item The flux corrector $\mathbb P$-almost surely grows sublinearly at infinity in the sense
  \begin{align*}
  \lim_{R\rightarrow \infty } \frac{1}{R^2} \fint_{B_R} |\sigma_\xi(\omega_\e,x)|^2 \,dx =0.
  \end{align*}
  \end{enumerate}
\end{definition}

\paragraph{\it Localized correctors.} It is cumbersome to work \textit{directly} with the corrector $\phi_\xi$ and flux corrector $\sigma_\xi$ in a rigorous manner; in  particular, since we need to consider derivatives of $(\phi_\xi,\sigma_\xi)$ with respect to compactly supported perturbations of the parameter field $\omega_\varepsilon$. For this reason, we shall frequently work with the \emph{localized correctors} $\phi_\xi^T$ and $\sigma_\xi^T$, which for $T>0$ and \textit{any} parameter field $\widetilde\omega:\R^d\to\Hilbert\cap B_1$ can be defined unambiguously on a purely deterministic level.
\begin{lemma}[Existence of localized correctors]\label{L:localized}
  Let assumptions \hyperlink{A1}{(A1)}--\hyperlink{A3}{(A3)} be in place.  Let  $\widetilde\omega:\R^d\to\Hilbert\cap B_1$ be a parameter field, $T\geq1$, and $\xi\in \Rmd$. There exist unique vector
fields $\phi_\xi^T:=\phi_\xi^T(\widetilde\omega,\cdot)$ and unique tensor fields $\sigma_{\xi}^T:=\{\sigma_{\xi,jk}^T(\widetilde\omega,\cdot)\}_{jk}$ with
  \begin{align*}
    \phi_\xi^T\in H^1_{\rm uloc}(\Rd;\Rm),\qquad \sigma_\xi^T\in &H^1_{\rm uloc}(\Rd;\Rm\times\R^{d\times d}_{\rm skew}),
  \end{align*}
  which solve the PDEs
  \begin{subequations}
    \begin{eqnarray}
      \label{EquationLocalizedCorrector}
      -\nabla \cdot (A(\widetilde\omega,\xi+\nabla \phi_\xi^T))+\frac{1}{T} \phi_\xi^T&=&0,\\
      q^T_\xi&:=&A(\widetilde\omega,\xi+\nabla\phi_\xi^T),\\
      \label{EquationLocalizedFluxCorrector}
      -\Delta \sigma_{\xi,jk}^T + \frac{1}{T} \sigma_{\xi,jk}^T &=& \partial_j q^T_{\xi,k}-\partial_k q^T_{\xi,j},
    \end{eqnarray}
  \end{subequations}
  in a distributional sense in $\Rd$. Furthermore, the map $\widetilde \omega \mapsto (\phi_\xi^T,\sigma_\xi^T)$ is continuous as a map $L^\infty(\Hilbert)\rightarrow\Huloc$.
\end{lemma}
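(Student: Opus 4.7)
The plan is to construct $\phi_\xi^T$ first on bounded Dirichlet balls $B_R$ via monotone-operator theory, then pass to the limit $R\to\infty$ using uniform $\Huloc$-bounds obtained from test functions carrying an exponential weight. The massive term $\tfrac{1}{T}\phi_\xi^T$ is essential: without it, the constant $\xi$ acts as a non-integrable source and no $H^1(\R^d)$ solution exists, but with it the exponential-weight estimates close as soon as the decay rate $\gamma$ is chosen small compared to $T^{-1/2}$.

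For the existence of $\phi_\xi^T$, I would fix $R\gg 1$ and let $\phi_R\in H^1_0(B_R;\R^m)$ be the unique weak solution of $-\nabla\cdot A(\widetilde\omega,\xi+\nabla\phi_R)+\tfrac{1}{T}\phi_R=0$ in $B_R$. By assumptions \hyperlink{A1}{(A1)}, \hyperlink{A2}{(A2)} together with $A(\cdot,0)=0$ and the massive term, the underlying operator $u\mapsto -\nabla\cdot A(\widetilde\omega,\xi+\nabla u)+\tfrac{1}{T}u$ is monotone, coercive, and bounded on $H^1_0(B_R;\R^m)$, so Browder--Minty yields a unique $\phi_R$. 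To obtain $R$-uniform $\Huloc$ estimates I would test with $\eta^2\phi_R$, where $\eta(x):=e^{-\gamma|x-x_0|}$ for an arbitrary centre $x_0\in\R^d$, exploiting $|\nabla\eta|\leq\gamma\eta$ and the monotonicity bound $A(\widetilde\omega,\xi+\nabla\phi_R)\cdot\nabla\phi_R\geq\lambda|\nabla\phi_R|^2-\Lambda|\xi||\nabla\phi_R|$. After Young absorption the estimate schematically reads
\begin{equation*}
\int_{\R^d}\eta^2\bigl(|\nabla\phi_R|^2+T^{-1}|\phi_R|^2\bigr)\,dx\lesssim |\xi|^2\gamma^{-d}+C\gamma^2\int_{\R^d}\eta^2|\phi_R|^2\,dx,
\end{equation*}
and choosing $\gamma^2<\tfrac{1}{2CT}$ absorbs the last term into the left-hand side. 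This gives an $R$- and $x_0$-uniform bound on $\|\phi_R\|_{H^1(B_1(x_0))}$; weak compactness together with Minty's trick to handle the nonlinearity in the limit then produces $\phi_\xi^T\in\Huloc$ solving the PDE on all of $\R^d$.

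Uniqueness in $\Huloc$ and continuity of $\widetilde\omega\mapsto\phi_\xi^T$ from $L^\infty(\R^d;H)$ to $\Huloc$ both follow by rerunning the same exponential-weight test on the equation for the difference of two solutions (respectively, of two solutions corresponding to two different parameter fields). For continuity, assumption \hyperlink{A3}{(A3)} controls the inhomogeneity produced by the difference of the two $A$'s by $\Lambda\|\widetilde\omega_1-\widetilde\omega_2\|_{L^\infty}\bigl(|\xi|+|\nabla\phi_\xi^T(\widetilde\omega_1)|\bigr)$, yielding Lipschitz-type dependence on $\widetilde\omega$. With $\phi_\xi^T$ in hand and $q_\xi^T\in L^2_{\rm uloc}$ by \hyperlink{A2}{(A2)}, the flux corrector $\sigma_{\xi,jk}^T$ is constructed componentwise from the \emph{linear} massive Helmholtz problem \eqref{EquationLocalizedFluxCorrector} by an entirely analogous but simpler argument; skew-symmetry of $\sigma^T_{\xi,\cdot\cdot}$ is then automatic from the skew structure of the right-hand side combined with uniqueness.

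The main technical obstacle is the simultaneous handling of the non-integrable data (the constant $\xi$) and the need for $R$-independent $\Huloc$ bounds. This is precisely where the tight balance between the exponential-weight parameter $\gamma$ and the massive parameter $T$ is indispensable, and it is also the reason why this approach yields only the \emph{localized} correctors (finite $T$) rather than the $T=\infty$ correctors of Definition~\ref{D:corr}, whose construction requires the separate stationarity/ergodicity argument.
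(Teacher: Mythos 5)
Your proposal is correct and matches the overall scheme of the paper's brief remark after the lemma statement: approximate, establish $R$-uniform $\Huloc$ bounds via exponentially weighted test functions with decay rate $\gamma\lesssim T^{-1/2}$ (the paper packages this as Lemma~\ref{L:exploc} and Remark~\ref{R:exponentialloc}), and pass to the limit. However, you choose a genuinely different approximation scheme. You truncate the \emph{domain} by posing Dirichlet problems on balls $B_R$; the paper instead truncates the \emph{data}, solving $-\nabla\cdot(A(\widetilde\omega,\chi_{B_r(x_0)}\xi+\nabla w_r))+\tfrac{1}{T}w_r=0$ on all of $\Rd$ with $w_r\in H^1(\Rd;\Rm)$. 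The difference is not cosmetic. With data truncation, $w_r$ and $w_{r'}$ both live on $\Rd$, so the difference solves a linear(ized) massive equation with forcing $F:=A(\widetilde\omega,\chi_{B_{r'}}\xi+\nabla w_r)-A(\widetilde\omega,\chi_{B_r}\xi+\nabla w_r)$ supported in $B_{r'}\setminus B_r$ with $|F|\leq\Lambda|\xi|$, and the exponential-localization estimate immediately makes $(w_r)_r$ \emph{Cauchy} in $\Huloc$; the nonlinearity never has to be passed through a weak limit. Your domain truncation instead requires weak compactness plus Minty's trick, because $\phi_R\in H^1_0(B_R)$ and $\phi_{R'}\in H^1_0(B_{R'})$ are not both admissible test functions in each other's equations, which blocks a direct Cauchy estimate. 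The Minty step does close here (Rellich gives strong $L^2_{\rm loc}$ convergence of $\phi_R$, and testing with $\phi_R\psi$ handles the quadratic energy term), but it is an extra ingredient the paper's route avoids. Everything else in your sketch --- Browder--Minty on $B_R$ using \hyperlink{A1}{(A1)}, \hyperlink{A2}{(A2)}, $A(\cdot,0)=0$ and the massive term; the weighted estimate with absorption for $\gamma^2\lesssim 1/T$; uniqueness and the Lipschitz-type $\widetilde\omega$-continuity via \hyperlink{A3}{(A3)}; and the simpler linear construction of $\sigma_\xi^T$ --- is correct and parallels the paper's indication.
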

Note that the additional term $\smash{\frac{1}{T} \phi_\xi^T}$ introduces an exponential localization effect. As a consequence, existence and uniqueness of $\smash{\phi_\xi^T}$ follow by standard arguments. For example, existence follows by considering the sequence of solutions to the PDEs $-\nabla \cdot (A(\widetilde\omega,\chi_{B_r(x_0)}\xi+\nabla w_r))+\frac{1}{T} w_r=0$ (which admit a unique solution $w_r\in H^1$ by standard monotone operator theory) and passing to the limit $r \rightarrow \infty$; the exponential localization of Lemma~\ref{L:exploc} below then yields the convergence and the independence of the limit from the choice of $x_0$. The argument for $\sigma_{\xi}^T$ is similar. For a detailed proof see \cite{DNRS}.

Note also that for a stationary random field $\omega_\varepsilon$, by uniqueness the associated localized homogenization corrector $\phi_\xi^T$ and the localized flux corrector $\sigma_\xi^T$ inherit the property of stationarity.

The localized correctors approximate the original correctors $(\phi_\xi,\sigma_\xi)$ in the sense that
\begin{align}
\label{limtlocalized}
(\nabla\phi_\xi^T,\nabla\sigma_\xi^T)\to(\nabla\phi_\xi,\nabla\sigma_\xi)\qquad\text{in }L^2(\Omega\times B_r)~\text{for any }r>0
\end{align}
in the limit $T\rightarrow \infty$; for $d\geq 3$, we even have the convergence of $\phi_\xi^T$ and $\sigma_\xi^T$ itself to stationary limits $\phi_\xi$ and $\sigma_\xi$ (which however may differ from the $\phi_\xi$ and $\sigma_\xi$ from Definition~\ref{D:corr} by an additive constant). A proof of these facts is provided in Lemma~\ref{LemmaCorrectorMassiveLimit}.

\smallskip

\paragraph{\it Two-scale expansion.} The correctors $(\phi_\xi,\sigma_\xi)$ provide a link between the gradient of the homogenized solution $\nabla u_\shom$ and the gradient of the solution to the random PDE  $\nabla u_\varepsilon$ to \eqref{Equation} via the \emph{two-scale expansion}: Indeed, the two-scale expansion
\begin{equation}\label{eq:two-scaleexpansion}
  \hat u_\e(x):=u_\shom(x)+\phi_{\nabla u_\shom(x)}(x)
\end{equation}
is a (formal) approximate solution to the equation with microscopically varying material law in the sense
\begin{align}
\label{TwoScaleExpansionError}
-\nabla \cdot (A(\omega_\varepsilon,\nabla \hat u_\e))
=f - \nabla \cdot R
\end{align}
with the residual $R$ given by
\begin{equation}\label{TwoScaleResidual}
R=(\partial_\xi A(\omega_\varepsilon,\xi+\nabla \phi_\xi) \partial_\xi \phi_\xi - \partial_\xi \sigma_\xi)|_{\xi=\nabla u_\shom(x)} :\nabla^2 u_\shom.
\end{equation}
If $\varepsilon$ is much smaller than the scale on which $\nabla u_\shom$ changes, the residual $R$ will be small (as we will typically have $\partial_\xi \phi_\xi\sim \varepsilon$ and $\partial_\xi \sigma_\xi \sim \varepsilon$, see Proposition~\ref{PropositionLinearizedCorrectorEstimate} for a more precise statement). In this case, taking the difference of \eqref{Equation} and \eqref{TwoScaleExpansionError} one obtains
\begin{align*}
-\nabla \cdot \big(A(\omega_\varepsilon,\nabla u_\varepsilon)-A(\omega_\varepsilon,\nabla \hat u_\e)\big)=\nabla \cdot R\qquad \text{ with $R\sim \e$,}
\end{align*}
which by virtue of the monotonicity of the material law $A$ (see \hyperlink{A1}{(A1)}--\hyperlink{A2}{(A2)}) gives rise to an estimate on $\nabla u_\varepsilon-\nabla \hat u_\e$. From this estimate, one may then derive a bound on $u_\varepsilon-u_\shom$. For a derivation of the error expression in the two-scale expansion in the form \eqref{TwoScaleExpansionError}, which leads to some subtleties regarding measurability, we refer to the forthcoming paper \cite{DNRS}. In our result, we will replace the term $\partial_\xi \phi_\xi |_{\xi=\nabla u_\shom}$ in the two-scale expansion by a piecewise constant approximation for $u_\shom$ (with interpolation); for this variant of the two-scale expansion, essentially finite differences of the form $\phi_{\xi_1}-\phi_{\xi_2}$ appear in the error expression. Details may be found in Proposition~\ref{PropositionTwoScaleExpansion} respectively in its proof.\smallskip

\paragraph{\it Linearized correctors.} In the error expression \eqref{TwoScaleResidual} for the residual of the two-scale expansion the derivatives $\partial_\xi \phi_\xi$ and $\partial_\xi \sigma_\xi$ of the corrector and the flux corrector with respect to the field $\xi$ appear. For this reason, we need optimal-order estimates on these derivatives in order to derive an optimal-order error bound for $u_\varepsilon-u_\shom$. More precisely, in our variant of the two-scale expansion we need improved estimates for finite differences $\phi_{\xi_1}-\phi_{\xi_2}$ which reflect both the magnitude of the difference $|\xi_1-\xi_2|$ and the decorrelation in order to derive optimal-order error estimates. Note, however, that this is basically an equivalent problem to estimating the derivatives $\partial_\xi \phi_\xi$.
It is interesting to observe that the derivatives $\partial_\xi \phi_\xi$ and $\partial_\xi \sigma_\xi$ are at the same time the homogenization corrector and the flux corrector for the PDE linearized around the field $\xi+\nabla \phi_\xi$: With the coefficient field
\begin{equation*}
a_\xi(x)=a_\xi(\omega_\varepsilon,x):=\partial_\xi A(\omega_\varepsilon(x),\xi+\nabla \phi_\xi(\omega_\varepsilon,x)),
\end{equation*}
we deduce by differentiating \eqref{EquationCorrector} and \eqref{EquationFluxCorrectorGauge} that the equalities $\partial_{\xi}\phi_\xi \Xi=\phi_{\xi,\Xi}$ and $\partial_{\xi}\sigma_\xi \Xi=\sigma_{\xi,\Xi}$ hold, where $\phi_{\xi,\Xi}$ and $\sigma_{\xi,\Xi}$ are defined as the solution to the PDEs
\begin{subequations}
  \begin{align}
    \label{EquationLinearizedCorrector}
    -\nabla \cdot (a_\xi(x)(\Xi+\nabla \phi_{\xi,\Xi}))=&0,\\
    q_{\xi,\Xi}=&a_\xi(\Xi+\nabla \phi_{\xi,\Xi}),\\
    \label{EquationLinearizedFluxCorrector}
    -\Delta \sigma_{\xi,\Xi,jk} = &\partial_j
    q_{\xi,\Xi,k}-\partial_kq_{\xi,\Xi,j},
  \end{align}
\end{subequations}
i.\,e.\ $\phi_{\xi,\Xi}$ and $\sigma_{\xi,\Xi}$  are the homogenization correctors for the linear elliptic PDE with random coefficient field $a_{\xi}$.

In our analysis we will first estimate the differences $\phi_{\xi_1}^T-\phi_{\xi_2}^T$ of the localized correctors and obtain the estimate on the differences $\phi_{\xi_1}-\phi_{\xi_2}$ by passing to the limit $T\rightarrow \infty$. For this reason, we introduce the linearized localized correctors $\phi_{\xi,\Xi}^T$ and $\sigma_{\xi,\Xi}^T$
as the unique solutions in $\Huloc$ to the PDEs
\begin{subequations}
  \begin{align}
    \label{EquationLocalizedCorrectorLinearized}
    -\nabla \cdot (a_{\xi}^T(\Xi+\nabla \phi_{\xi,\Xi}^T))+\frac1T\phi_{\xi,\Xi}^T=&0,\\
    q_{\xi,\Xi}^T=&a_\xi^T(\Xi+\nabla \phi^T_{\xi,\Xi}),\\
    \label{EquationLocalizedFluxCorrectorLinearized}
    -\Delta \sigma_{\xi,\Xi,jk}^T +\frac1T\sigma_{\xi,\Xi,jk}^T= &\partial_j
    q_{\xi,\Xi,k}^T-\partial_kq_{\xi,\Xi,j}^T,
  \end{align}
\end{subequations}
where we have abbreviated
\begin{equation*}
  a^T_\xi(x)=a_\xi^T(\omega_\varepsilon,x):=\partial_\xi A(\omega_\varepsilon(x),\xi+\nabla\phi^T_\xi(\omega_\varepsilon,x)).
\end{equation*}
By differentiating \eqref{EquationLocalizedCorrector} and \eqref{EquationLocalizedFluxCorrector}, we identify $\partial_\xi \phi_\xi^T \Xi=\phi_{\xi,\Xi}^T$ and $\partial_\xi \sigma_\xi^T \Xi=\sigma_{\xi,\Xi}^T$, a fact made rigorous in Lemma~\ref{LemmaCorrectorDifferentiability}.

Note that by our assumption of monotonicity and Lipschitz continuity of $A$ \hyperlink{A1}{(A1)}--\hyperlink{A2}{(A2)}, the linearized coefficient fields $a_\xi$ and $a^T_\xi$ are uniformly elliptic and bounded random coefficient fields with a stationary and ergodic distribution. Therefore, the existence (and the notion) of solution to \eqref{EquationLinearizedCorrector} -- \eqref{EquationLinearizedFluxCorrector} and \eqref{EquationLocalizedCorrectorLinearized} -- \eqref{EquationLocalizedFluxCorrectorLinearized} would follow by the linear theory of stochastic homogenization (see e.\,g.\ \cite[Lemma~1]{GloriaNeukammOtto}), a fact that we however do not need to use:
Again, the existence of solutions to the localized corrector problems \eqref{EquationLocalizedCorrectorLinearized} and \eqref{EquationLocalizedFluxCorrectorLinearized} is evident for any parameter field $\tilde \omega$; see Lemma~\ref{LemmaCorrectorDifferentiability} for a proof.

The linearized corrector problem \eqref{EquationLinearizedCorrector} has also been central for the homogenization result for the linearized equation by Armstrong, Ferguson, and Kuusi \cite{ArmstrongFergusonKuusi}, and some of our lemmas are analogues of results from \cite{ArmstrongFergusonKuusi}: For instance, our differentiability result for the corrector $\smash{\phi_\xi^T}$ in Lemma~\ref{LemmaCorrectorDifferentiability} is essentially analogous to \cite[Lemma~2.4]{ArmstrongFergusonKuusi}. Furthermore, in the proofs of \cite{ArmstrongFergusonKuusi} small-scale regularity estimates similar to our estimate \eqref{EstimateLinearCorrectorLinfty} have been employed. However, in contrast to \cite{ArmstrongFergusonKuusi} we establish optimal-order estimates on the linearized correctors $\smash{\phi_{\xi,\Xi}^T}$ (see Proposition~\ref{PropositionLinearizedCorrectorEstimate} below), a result that will be of key importance for our main theorems.

\smallskip

\paragraph{\it Corrector estimates.} In view of the form of the formula for the residual \eqref{TwoScaleResidual} of the two-scale expansion it is clear that a key ingredient in the quantification of the homogenization error are estimates on the correctors. In this section we state estimates on the localized correctors and its linearizations.

As the typical size of the correctors $\phi_\xi^T$ and $\phi_{\xi,\Xi}^T$ will be at least of order $\varepsilon$ due to small-scale fluctuations, the corrector bounds alone cannot capture the decay of fluctuations in $d\geq 3$ optimally. For $d\geq 3$, it is therefore useful to state estimates also for associated vector potentials for the correctors $\phi_\xi^T$ and $\phi_{\xi,\Xi}^T$. In case $d\geq 3$, we introduce a (vector) potential $\smash{\theta_\xi^T:\Rd\rightarrow \Rmd}$ for the corrector $\smash{\phi_\xi^T}$ as the (up to an additive constant unique) sublinearly growing solution to the equation
\begin{subequations}
\begin{align}
\label{EquationPotentialFieldGauge}
\Delta \theta_{\xi,i}^T = \partial_i \phi_\xi^T
\end{align}
which entails
\begin{align}
\label{EquationPotentialField}
\nabla \cdot \theta_\xi^T = \phi_\xi^T,
\end{align}
\end{subequations}
as well as the corresponding quantity $\theta_{\xi,\Xi}^T$ defined by
\begin{subequations}
\begin{align}
\label{EquationPotentialFieldLinearizedGauge}
\Delta \theta_{\xi,\Xi,i}^T = \partial_i \phi_{\xi,\Xi}^T
\end{align}
which yields
\begin{align}
\label{EquationPotentialFieldLinearized}
\nabla \cdot \theta_{\xi,\Xi}^T = \phi_{\xi,\Xi}^T.
\end{align}
\end{subequations}
Note that as the equations \eqref{EquationPotentialFieldGauge} and \eqref{EquationPotentialFieldLinearizedGauge} feature a non-decaying right-hand side but lack a massive term, their solvability is not guaranteed for arbitrary parameter fields $\tilde \omega$. For random fields $\omega_\varepsilon$ subject to \hyperlink{P1}{(P1)}--\hyperlink{P2}{(P2)}, in $d\geq 3$ the existence of solutions to \eqref{EquationPotentialFieldGauge} and \eqref{EquationPotentialFieldLinearizedGauge} is a consequence of standard methods in qualitative stochastic homogenization, see e.\,g.\ \cite{GloriaNeukammOtto}.

\begin{proposition}[Estimates on the homogenization corrector for the nonlinear monotone PDE]
\label{PropositionCorrectorEstimate}
Let the assumptions \hyperlink{A1}{(A1)}--\hyperlink{A3}{(A3)} and \hyperlink{P1}{(P1)}--\hyperlink{P2}{(P2)} be in place. Then the localized homogenization correctors $\phi_\xi^T$ and the localized flux correctors $\sigma_\xi^T$ -- defined as the (unique) solution in $\Huloc$ to the PDEs \eqref{EquationLocalizedCorrector} respectively \eqref{EquationLocalizedFluxCorrector} -- are subject to the estimate
\begin{align}
\label{CorrectorEstimate}
&
\bigg(\fint_{B_r(x_0)} \bigg|\phi_\xi^T-\fint_{B_\varepsilon(x_0)} \phi_\xi^T(\tilde x) \,d\tilde x\bigg|^2 \,dx\bigg)^{1/2}
\\
\nonumber
&
+\bigg(\fint_{B_r(x_0)} \bigg|\sigma_\xi^T-\fint_{B_\varepsilon(x_0)} \sigma_\xi^T(\tilde x) \,d\tilde x\bigg|^2 \,dx\bigg)^{1/2}
\leq
\mathcal{C}(x_0) |\xi| \varepsilon\begin{cases}
(r/\varepsilon)^{1/2} &\text{for }d=1,
\\
 \big|\log \frac{r}{\varepsilon}\big|^{1/2} &\text{for }d=2,
\\
1 &\text{for }d\geq 3
\end{cases}
\end{align}
for any $r\geq 2\varepsilon$, any $x_0\in \Rd$, any $T\geq 2\varepsilon^2$, and any $\xi\in \Rmd$.
Furthermore, for $d\geq 3$ we even have
\begin{subequations}
\begin{align}
\label{CorrectorEstimateWithoutAverage}
&\bigg(\fint_{B_r(x_0)} \big|\phi_\xi^T\big|^2 \,dx\bigg)^{1/2}
+\bigg(\fint_{B_r(x_0)} \big|\sigma_\xi^T\big|^2 \,dx\bigg)^{1/2}
\leq
\mathcal{C}(x_0) |\xi| \varepsilon
\end{align}
for any $r\geq 2\varepsilon$, any $x_0\in \Rd$, any $T\geq 2\varepsilon^2$, and any $\xi\in \Rmd$,
while for $d=1$ and $d=2$ we have
\begin{align}
\label{CorrectorEstimateAveragesLowd}
&\bigg|\fint_{B_r(x_0)} \phi_{\xi}^T \,dx\bigg|
+
\bigg|\fint_{B_r(x_0)} \sigma_{\xi}^T \,dx\bigg|
\leq
\mathcal{C}(x_0) |\xi|\varepsilon
\begin{cases}
(\sqrt{T}/\varepsilon)^{1/2} &\text{for }d=1,
\\
\big|\log \frac{\sqrt{T}}{\varepsilon}\big|^{1/2} &\text{for }d=2.
\end{cases}
\end{align}
\end{subequations}
Here and below, {$\mathcal C=\mathcal C(\omega_\e,x_0)$ denotes a nonnegative stationary random field that, in particular, depends on $\xi$, the localization parameter $T$ and the radius $r$,} but whose (stretched exponential) stochastic moments are uniformly estimated by
\begin{align*}
\mathbb{E}\bigg[\exp\bigg(\frac{\mathcal{C}^{\bar \nu}}{C}\bigg)\bigg]\leq 2
\end{align*}
for some $\bar \nu>0$ and $C>0$ depending only on $d$, $m$, $\lambda$, $\Lambda$, and $\rho$ (in particular, independently of $T$).

In the case of three and more spatial dimensions $d\geq 3$, the potential field $\theta_{\xi}^T$ exists and satisfies the estimate
\begin{align}
\label{PotentialFieldEstimate}
\bigg(\fint_{B_r(x_0)} \bigg|\theta_{\xi}^T-\fint_{B_r(x_0)} \theta_{\xi}^T(\tilde x) \,d\tilde x\bigg|^2 \,dx\bigg)^{1/2}
\leq \mathcal{C}(x_0) |\xi| \varepsilon^2 
\begin{cases}
(r/\varepsilon)^{1/2} &\text{for }d=3,
\\
\big|\log \frac{r}{\varepsilon}\big|^{1/2} &\text{for }d=4,
\\
1&\text{for }d\geq 5
\end{cases}
\end{align}
for any $r\geq 2\varepsilon$, any $x_0\in \Rd$, any $T\geq 2\varepsilon^2$, and any $\xi\in \Rmd$.

Furthermore, the estimates \eqref{CorrectorEstimate}, \eqref{CorrectorEstimateWithoutAverage}, and \eqref{PotentialFieldEstimate} also hold for the $L^p$ norm in place of the $L^2$ norm as long as $2\leq p\leq \frac{2d}{d-2}$ in case $d\geq 3$ and $2\leq p<\infty$ in case $d\leq 2$, up to the following modifications: The constant $\bar \nu$ may now also depend on $p$, and the factor $\smash{|\log (r/\varepsilon)|^{1/2}}$ in \eqref{CorrectorEstimate} and \eqref{PotentialFieldEstimate} is replaced by $|\log (r/\varepsilon)|$ in the cases $d=2$ respectively $d=4$. 
\end{proposition}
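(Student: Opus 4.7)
The plan is to reduce Proposition~\ref{PropositionCorrectorEstimate} to the estimate on linear functionals of $\nabla\phi_\xi^T$ and $\nabla\sigma_\xi^T$ in Lemma~\ref{LemmaEstimateLinearFunctionals}: once a CLT-scale bound of order $(\varepsilon/r)^{d/2}|\xi|$ for $F(\omega_\varepsilon):=\int \zeta\cdot\nabla\phi_\xi^T\,dx$ (with $\zeta$ a smooth test field of scale $r$) has been established, the displayed bounds on averages of $\phi_\xi^T$, $\sigma_\xi^T$, and $\theta_\xi^T$ follow by choosing $\zeta$ to be the Bogovskii correction of an appropriate mollified indicator. Lemma~\ref{LemmaEstimateLinearFunctionals} itself is obtained by inserting $F$ into the spectral gap \hyperlink{P2}{(P2)}, computing $\partial F/\partial\omega_\varepsilon$ by linearising the corrector problem, and upgrading the resulting variance bound to a stretched exponential bound by the standard Herbst-type concentration argument that comes with the spectral gap framework.

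The sensitivity calculation is as follows. Perturbing $\omega_\varepsilon$ by $\delta\omega_\varepsilon$ supported in $B_\varepsilon(x_0)$ and linearising \eqref{EquationLocalizedCorrector} yields
\begin{equation*}
  -\nabla\cdot\bigl(a_\xi^T\,\nabla\delta\phi_\xi^T\bigr)+\tfrac{1}{T}\delta\phi_\xi^T=\nabla\cdot\bigl(\partial_\omega A(\omega_\varepsilon,\xi+\nabla\phi_\xi^T)\,\delta\omega_\varepsilon\bigr),
\end{equation*}
with $a_\xi^T$ uniformly elliptic and bounded by \hyperlink{A1}{(A1)}--\hyperlink{A2}{(A2)} and a right-hand side bounded pointwise by $\Lambda|\xi+\nabla\phi_\xi^T||\delta\omega_\varepsilon|$ by \hyperlink{A3}{(A3)}. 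Dualising against the massive equation $-\nabla\cdot(a_\xi^{T,*}\nabla w)+\tfrac{1}{T}w=\nabla\cdot\zeta$ converts the sensitivity of $F$ into the pointwise bound $|\partial F/\partial\omega_\varepsilon|\lesssim|\xi+\nabla\phi_\xi^T||\nabla w|\,\chi_{B_\varepsilon(x_0)}$. Plugging this into the spectral gap produces a weighted product of the flux $|\xi+\nabla\phi_\xi^T|$ and the dual gradient $|\nabla w|$; the latter is governed by a \emph{linear} uniformly elliptic equation and therefore obeys the deterministic Caccioppoli, hole-filling and Meyers estimates of Lemmas~\ref{LemmaCaccioppoliHoleFilling}--\ref{LemmaWeightedMeyers}, while the former is globally controlled by the a~priori energy bound $\|\nabla\phi_\xi^T\|_{\Huloc}\lesssim|\xi|$. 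Combined with the multiscale decomposition of Lemma~\ref{LemmaMultiscaleDecomposition}, this yields the desired CLT-scale bound.

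From these functional bounds one recovers the stated estimates by duality pairings with Bogovskii/Newtonian-kernel correctors and summation over dyadic annuli; the decay of the kernel determines the dimension-dependent factor — bounded in $d\geq 3$, logarithmic in $d=2$, $(r/\varepsilon)^{1/2}$ in $d=1$ — and gives \eqref{CorrectorEstimate}, with the same scheme applied to \eqref{EquationLocalizedFluxCorrector} handling $\sigma_\xi^T$. In $d\geq 3$ the series converges without subtracting a mean, yielding the stationary bound \eqref{CorrectorEstimateWithoutAverage}. The massive-average bound \eqref{CorrectorEstimateAveragesLowd} in $d\leq 2$ is instead obtained by testing \eqref{EquationLocalizedCorrector} against a smooth cutoff of $B_r(x_0)$ and exploiting the massive term, which produces the factor $\sqrt{T}$. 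The potential field bound \eqref{PotentialFieldEstimate} comes from applying the same scheme to $\theta_\xi^T$: equation \eqref{EquationPotentialFieldGauge} trades one derivative for one power of $\varepsilon$, shifting the critical dimension upward by two and giving the logarithm at $d=4$ and the $(r/\varepsilon)^{1/2}$ factor at $d=3$. The $L^p$-variants follow by interpolation with the small-scale Meyers integrability of $\nabla\phi_\xi^T$, with the residual logarithmic loss at $d\in\{2,4\}$ reflecting the non-optimal Meyers exponent.

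The main obstacle is that $a_\xi^T$ is itself a nonlocal functional of $\omega_\varepsilon$: perturbations of $\omega_\varepsilon$ inside $B_\varepsilon(x_0)$ do not translate into perturbations of a fixed linear coefficient, but instead propagate globally through the change of $a_\xi^T$. The sensitivity calculation above sidesteps this issue precisely because at the level of \emph{first-order} variations the nonlocal propagation of $\delta\omega_\varepsilon$ through $a_\xi^T$ is already contained in the linearised PDE and needs no separate treatment. Any estimate that would require differentiating $a_\xi^T$ itself — for instance the sharper estimates on the linearised correctors in Proposition~\ref{PropositionLinearizedCorrectorEstimate} — must additionally control $\partial a_\xi^T/\partial\omega_\varepsilon$, which is exactly why that step requires the small-scale regularity assumption \hyperlink{R}{(R)} and the pointwise regularity theory of Lemma~\ref{PointwiseRegularityLinearized}. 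It is for this reason that Proposition~\ref{PropositionCorrectorEstimate} is placed first in the dependency graph, and its conclusions can be obtained using only \hyperlink{A1}{(A1)}--\hyperlink{A3}{(A3)} and \hyperlink{P1}{(P1)}--\hyperlink{P2}{(P2)}.
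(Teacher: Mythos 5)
Your overall roadmap---pass through Lemma~\ref{LemmaEstimateLinearFunctionals}, then feed the functional bounds into the multiscale decomposition and the average estimates---is aligned with the paper's proof. But the argument contains a decisive gap at the point where you close the sensitivity estimate, and you have omitted what is actually the crux of the whole section.

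You write that after dualising, the spectral gap produces a weighted product of $|\xi+\nabla\phi_\xi^T|$ and $|\nabla w|$, and that the former ``is globally controlled by the a~priori energy bound $\|\nabla\phi_\xi^T\|_{\Huloc}\lesssim|\xi|$.'' This is false, and it is exactly the obstruction the paper is built to overcome. The quantity that enters the spectral gap is the small-scale average $\fint_{B_\varepsilon(x)}|\xi+\nabla\phi_\xi^T|^2\,d\tilde x$. The $\Huloc$ bound controls averages over balls of unit size, so the best it gives on scale $\varepsilon$ is
\begin{equation*}
  \fint_{B_\varepsilon(x)}|\xi+\nabla\phi_\xi^T|^2\,d\tilde x\;\leq\;\frac{1}{|B_\varepsilon|}\int_{B_1(x)}|\nabla\phi_\xi^T|^2\,d\tilde x+C|\xi|^2\;\lesssim\;\varepsilon^{-d}|\xi|^2,
\end{equation*}
which destroys the factor $(\varepsilon/r)^{d/2}$ you need and yields nothing. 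Worse, a deterministic uniform bound $\fint_{B_\varepsilon(x)}|\xi+\nabla\phi_\xi^T|^2\lesssim|\xi|^2$ simply does not hold in stochastic homogenization: rare local configurations of $\omega_\varepsilon$ can make the corrector gradient arbitrarily large on small scales (the paper makes this explicit in Section~\ref{SectionStrategy}). The paper's actual mechanism is the minimal radius $r_{*,T,\xi}(x)$ introduced in \eqref{Definitionrstar}, the hole-filling improvement \eqref{EstimateGradByrstar}, which gives $\fint_{B_\varepsilon(x)}|\xi+\nabla\phi_\xi^T|^2\leq C|\xi|^2(r_{*,T,\xi}(x)/\varepsilon)^{d-\delta}$ with $\delta>0$, and---crucially---the moment bound $\mathbb{E}[(r_{*,T,\xi}/\varepsilon)^q]^{1/q}\lesssim q^C$ of Lemma~\ref{MomentsMinimalRadius}. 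That moment bound is not a consequence of the sensitivity estimate; it is obtained by feeding the functional bounds of Lemma~\ref{LemmaEstimateLinearFunctionals} and Lemma~\ref{LemmaCorrectorAverages} into the defining event $\{r_{*,T,\xi}=R\}$ via Lemma~\ref{LemmaWavelets} and then making the resulting inequality \emph{buckle}: the gain $\delta>0$ from hole-filling is precisely what lets the implicit inequality in $\mathbb{E}[(r_{*,T,\xi}/\varepsilon)^q]$ close. You never mention this self-improving step, yet without it the hypotheses of Lemma~\ref{LemmaMultiscaleDecomposition} (specifically the bound on $\fint_{B_\varepsilon(x_0)}|\nabla u|^2$ in \eqref{IntLemma.AssumptionGradient}) cannot be verified, and the whole chain collapses.

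Two smaller points. First, your explanation that the extra logarithm at $d\in\{2,4\}$ in the $L^p$ versions ``reflects the non-optimal Meyers exponent'' is not the mechanism: it is already built into Lemma~\ref{LemmaMultiscaleDecomposition}, where the critical case $\gamma=1$ yields $\sqrt{\log}$ for $p=2$ but $\log$ for $p>2$, and has nothing to do with the Meyers exponent. Second, for \eqref{CorrectorEstimateAveragesLowd} the paper does not test \eqref{EquationLocalizedCorrector} against a cutoff; it replaces the dual right-hand side in the sensitivity calculation by the non-divergence-form term $\frac{1}{|B_R|}\chi_{B_R}$ and obtains the factor $\sqrt{T}$ from the weighted Meyers estimate for the massive dual PDE (see \eqref{EstimateCorrectorAverage}). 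Your description would give a similar scaling heuristically, but is not what supplies the required stretched-exponential moment bounds.
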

Under the additional small-scale regularity assumption \hyperlink{R}{(R)}, we establish the following estimates on the homogenization corrector $\phi_{\xi,\Xi}^T$ associated with the linearized operator $-\nabla \cdot \smash{\big( \partial_\xi A(\omega_\varepsilon,\xi+\nabla \phi_\xi^T) \nabla \,\cdot\,\big)}$.
Recall that this homogenization corrector $\phi_{\xi,\Xi}^T$ is equivalently given as the derivative of the homogenization corrector $\phi_\xi^T$ with respect to $\xi$ in direction $\Xi$, see Lemma~\ref{LemmaCorrectorDifferentiability}.
\begin{proposition}[Estimates on the homogenization corrector for the linearized PDE]
\label{PropositionLinearizedCorrectorEstimate}
Let the assumptions \hyperlink{A1}{(A1)}--\hyperlink{A3}{(A3)} and \hyperlink{P1}{(P1)}--\hyperlink{P2}{(P2)} be in place. Suppose furthermore that the small-scale regularity condition \hyperlink{R}{(R)} holds.
Then the homogenization corrector for the linearized equation $\phi_{\xi,\Xi}^T=\partial_\xi \phi_\xi^T \Xi$ and the corresponding flux corrector $\sigma_{\xi,\Xi}^T=\partial_\xi \sigma_{\xi}^T \Xi$ given as the solutions to the PDEs \eqref{EquationLocalizedCorrectorLinearized} and \eqref{EquationLocalizedFluxCorrectorLinearized} are subject to the estimates
\begin{align}
\label{LinearCorrectorEstimate}
&
\bigg(\fint_{B_r(x_0)} \bigg|\phi_{\xi,\Xi}^T-\fint_{B_\varepsilon(x_0)} \phi_{\xi,\Xi}^T(\tilde x) \,d\tilde x\bigg|^2 \,dx\bigg)^{1/2}
\\&~
\nonumber
+\bigg(\fint_{B_r(x_0)} \bigg|\sigma_{\xi,\Xi}^T-\fint_{B_\varepsilon(x_0)} \sigma_{\xi,\Xi}^T(\tilde x) \,d\tilde x\bigg|^2 \,dx\bigg)^{1/2}
\\&~~~~~~~~~~~~~~~~
\nonumber
\leq
\mathcal{C}(x_0) (1+|\xi|)^C |\Xi| \varepsilon
\begin{cases}
(r/\varepsilon)^{1/2} &\text{for }d=1,
\\
\big|\log \frac{r}{\varepsilon}\big|^{1/2} &\text{for }d=2,
\\
1 &\text{for }d\geq 3,
\end{cases}
\end{align}
for any $r\geq 2\varepsilon$, any $x_0\in \Rd$, any $T\geq 2\varepsilon^2$, and any $\xi,\Xi\in \Rmd$.
Furthermore, for $d\geq 3$ we even have
\begin{align}
\label{LinearCorrectorEstimateWithoutAverage}
&\bigg(\fint_{B_r(x_0)} \big|\phi_{\xi,\Xi}^T\big|^p \,dx\bigg)^{1/p}
+\bigg(\fint_{B_r(x_0)} \big|\sigma_{\xi,\Xi}^T\big|^p \,dx\bigg)^{1/p}
\leq
\mathcal{C}(x_0)  (1+|\xi|)^C |\Xi| \, \varepsilon
\end{align}
for any $r\geq 2\varepsilon$, any $x_0\in \Rd$, any $p\in [2,\frac{2d}{(d-2)_+}]\cap [2,\infty)$, any $T\geq 2\varepsilon^2$, and any $\xi,\Xi\in \Rmd$,
while for $d=1$ and $d=2$ we have
\begin{align*}
&\bigg|\fint_{B_\varepsilon(x_0)} \phi_{\xi,\Xi}^T \,dx\bigg|
+
\bigg|\fint_{B_\varepsilon(x_0)} \sigma_{\xi,\Xi}^T \,dx\bigg|
\leq
\mathcal{C}(x_0) (1+|\xi|)^C |\Xi| \varepsilon 
\begin{cases}
(\sqrt{T}/\varepsilon)^{1/2} &\text{for }d=1,
\\
\big|\log \frac{\sqrt{T}}{\varepsilon}\big|^{1/2} &\text{for }d=2.
\end{cases}
\end{align*}
Here and below, {$\mathcal C=\mathcal C(\omega_\e,x_0)$ denotes a nonnegative, stationary random field that, in particular, depends on $\xi,\,\Xi$, the localization parameter $T$, and the radius $r$, }but whose (stretched exponential) stochastic moments are uniformly estimated by
\begin{align*}
\mathbb{E}\bigg[\exp\bigg(\frac{\mathcal{C}^{\bar \nu}}{C}\bigg)\bigg]\leq 2
\end{align*}
for some $\bar \nu>0$ and some $C$. The constants $\bar \nu$ and $C$ depend only on $d$, $m$, $\lambda$, $\Lambda$, $\nu$, $p$, and $\rho$ (in particular, they are independent of $T$).

In the case of three and more spatial dimensions $d\geq 3$, the potential field $\theta_{\xi,\Xi}^T$ exists and satisfies the estimate
\begin{align}
\label{LinearPotentialFieldEstimate}
&\bigg(\fint_{B_r(x_0)} \bigg|\theta_{\xi,\Xi}^T-\fint_{B_r(x_0)} \theta_{\xi,\Xi}^T(\tilde x) \,d\tilde x\bigg|^2 \,dx\bigg)^{1/2}
\\&~~~~~~~~~~~~~~~~~~~~~~~~~~~~~~~~
\nonumber
\leq
\mathcal{C}(x_0) (1+|\xi|)^C |\Xi| \varepsilon^2
\begin{cases}
(r/\varepsilon)^{1/2} &\text{for }d=3,
\\
\big|\log \frac{r}{\varepsilon}\big|^{1/2} &\text{for }d=4,
\\
1 &\text{for }d\geq 5
\end{cases}
\end{align}
for any $r\geq 2\varepsilon$, any $x_0\in \Rd$, any $T\geq 2\varepsilon^2$, and any $\xi,\Xi\in \Rmd$.

Furthermore, all of these estimates also hold for the correctors associated with the adjoint coefficient field $a_{\xi}^{T,*}$.

Finally, the estimates \eqref{LinearCorrectorEstimate}, \eqref{LinearCorrectorEstimateWithoutAverage}, and \eqref{LinearPotentialFieldEstimate} also hold for the $L^p$ norm with $2\leq p\leq \frac{2d}{(d-2)_+}$ and $p<\infty$ in place of the $L^2$ norm, up to the following modifications: The constant $\bar \nu$ may now also depend on $p$, and the factor $\smash{|\log (r/\varepsilon)|^{1/2}}$ in \eqref{LinearCorrectorEstimate} and \eqref{LinearPotentialFieldEstimate} is replaced by $|\log (r/\varepsilon)|$ in the cases $d=2$ respectively $d=4$. 
\end{proposition}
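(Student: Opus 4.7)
The plan is to derive the linearized corrector bounds by the spectral gap / sensitivity method, using the already-established nonlinear corrector bounds of Proposition~\ref{PropositionCorrectorEstimate} as input. By duality, the $L^2$-oscillation of $\phi_{\xi,\Xi}^T$ on $B_r(x_0)$ is dominated by a supremum of linear functionals $F(\omega_\varepsilon):=\int g\cdot\nabla\phi_{\xi,\Xi}^T\,dx$ against deterministic test vector fields $g$ with prescribed support and mean-free structure. Such functional estimates are the content of Lemma~\ref{LemmaEstimateLinearFunctionalsLinearized}; I would first prove that lemma and then combine it with the multiscale decomposition of Lemma~\ref{LemmaMultiscaleDecomposition} and the bound on the minimal radius $r_{*,\xi,\Xi}$ of Lemma~\ref{MomentsMinimalRadiusLinearized} to assemble the estimates \eqref{LinearCorrectorEstimate} and \eqref{LinearCorrectorEstimateWithoutAverage}. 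The same scheme, read against $\nabla\sigma_{\xi,\Xi}^T$ through the gauge-fixing formula \eqref{EquationLocalizedFluxCorrectorLinearized}, covers the flux corrector; the potential field $\theta_{\xi,\Xi}^T$ in $d\geq 3$ is then obtained by shifting a derivative onto the Green's function of $-\Delta$ in the relation $\Delta\theta_{\xi,\Xi,i}^T=\partial_i\phi_{\xi,\Xi}^T$, which gains one extra factor of $\varepsilon$ and shifts the dimension thresholds from $d\in\{2,3\}$ to $d\in\{4,5\}$.

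The functional bound underlying Lemma~\ref{LemmaEstimateLinearFunctionalsLinearized} is proved by applying the spectral gap inequality \eqref{SpectralGapInequality} to $F(\omega_\varepsilon)$. Computing the Fréchet derivative amounts to differentiating \eqref{EquationLocalizedCorrectorLinearized} with respect to $\omega_\varepsilon$: if $\delta\omega_\varepsilon$ is supported in $B_\varepsilon(x)$, then $\delta\phi_{\xi,\Xi}^T$ solves a localized linearized corrector equation with right-hand side $\nabla\cdot\bigl((\delta a_\xi^T)(\Xi+\nabla\phi_{\xi,\Xi}^T)\bigr)$. The perturbation $\delta a_\xi^T$ of the linearized coefficient splits into a direct contribution $\partial_\omega\partial_\xi A\,\delta\omega_\varepsilon$, which is supported in $B_\varepsilon(x)$, and an indirect contribution $\partial_\xi^2 A\,\nabla\delta\phi_\xi^T$ coming from the dependence of $a_\xi^T$ on the nonlinear corrector $\phi_\xi^T$, which is nonlocal. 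Representing $F$ through the dual problem, namely the adjoint localized linearized corrector equation $-\nabla\cdot(a_\xi^{T,*}\nabla v)+T^{-1}v=\nabla\cdot g$, the sensitivity becomes a pairing of $\nabla v$ against $\delta a_\xi^T(\Xi+\nabla\phi_{\xi,\Xi}^T)$, so it suffices to bound this pairing by quantities controlled by Proposition~\ref{PropositionCorrectorEstimate} together with the pointwise regularity of $\nabla\phi_{\xi,\Xi}^T$ from Lemma~\ref{PointwiseRegularityLinearized}.

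The main obstacle is precisely the nonlocal, indirect piece of $\delta a_\xi^T$: a perturbation of $\omega_\varepsilon$ in $B_\varepsilon(x)$ alters $\nabla\phi_\xi^T$ on all of $\R^d$, and the resulting change must be integrated against a Green-function-type weight coming from the dual field $\nabla v$. This is where the small-scale $C^{1,\alpha}$ regularity assumption \hyperlink{R}{(R)} becomes indispensable: it furnishes $L^\infty$ bounds on $\nabla\phi_\xi^T$ and $\nabla\phi_{\xi,\Xi}^T$ on $\varepsilon$-balls and allows Lemma~\ref{PointwiseRegularityLinearized} to convert local $L^2$ averages of $\nabla\phi_{\xi,\Xi}^T$ into pointwise bounds. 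The $L^\infty$ bounds on gradients scale with $|\xi|$ and $|\Xi|$ respectively, and each time such a factor enters through a sensitivity we pay a power of $(1+|\xi|)$; together with the linearity in $\Xi$ this produces the prefactor $(1+|\xi|)^C|\Xi|$. The needed control of $\nabla\delta\phi_\xi^T$ in this argument is itself obtained by the parallel sensitivity analysis used for Proposition~\ref{PropositionCorrectorEstimate}, i.e. by a second iteration of the same scheme applied to the nonlinear corrector.

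Finally, the auxiliary statements follow with modest extra work. The adjoint assertion is symmetric, since $a_\xi^{T,*}$ inherits uniform ellipticity and boundedness from $a_\xi^T$, so the same functional estimate is valid for the adjoint correctors. The $L^p$-extension for $p\in[2,2d/(d-2)_+]$ is obtained by plugging a Meyers-type self-improvement on the dual equation (Lemma~\ref{LemmaWeightedMeyers}) into the above scheme, which only worsens the logarithm in $d=2,4$ by a power. The low-dimensional cases $d=1,2$ exploit that the massive term $T^{-1}\phi_{\xi,\Xi}^T$ in \eqref{EquationLocalizedCorrectorLinearized} acts as an effective cutoff at scale $\sqrt T$, so averages $\fint_{B_\varepsilon(x_0)}\phi_{\xi,\Xi}^T\,dx$ are bounded by the same sensitivity computation truncated at scale $\sqrt T$, which accounts for the $(\sqrt T/\varepsilon)^{1/2}$ and $|\log(\sqrt T/\varepsilon)|^{1/2}$ factors. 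The potential-field estimate \eqref{LinearPotentialFieldEstimate} comes from Calderón--Zygmund-type representation of $\theta_{\xi,\Xi}^T$ through the Green's function of $-\Delta$ applied to $\phi_{\xi,\Xi}^T$, using the $L^p$ corrector bounds just established.
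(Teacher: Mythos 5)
Your proposal is correct and takes essentially the same route as the paper: the published proof of this proposition is simply a pointer to the proof of Proposition~\ref{PropositionCorrectorEstimate}, replacing Lemma~\ref{LemmaEstimateLinearFunctionals} by Lemma~\ref{LemmaEstimateLinearFunctionalsLinearized}, Lemma~\ref{LemmaCorrectorAverages} by Lemma~\ref{LemmaCorrectorAveragesLinearized}, Lemma~\ref{MomentsMinimalRadius} by Lemma~\ref{MomentsMinimalRadiusLinearized}, and \eqref{EstimateGradByrstar} by \eqref{EstimateGradByrstar2}, and you have reconstructed precisely that chain. Your added sketch of the sensitivity formula (splitting $\delta a_\xi^T$ into a local $\partial_\omega\partial_\xi A\,\delta\omega_\varepsilon$ piece and a nonlocal $\partial_\xi^2 A\,\nabla\delta\phi_\xi^T$ piece handled by a second dual field together with the nonlinear sensitivity identity) accurately reflects the content of Lemma~\ref{LemmaEstimateLinearFunctionalsLinearized}, including the role of the small-scale regularity condition \hyperlink{R}{(R)} via Lemma~\ref{PointwiseRegularityLinearized} and the origin of the $(1+|\xi|)^C$ prefactor.
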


A key consequence of our estimates on the linearized correctors $\phi_{\xi,\Xi}^T$ and $\sigma_{\xi,\Xi}^T$ is the following estimate on differences of the correctors $\phi_\xi$ and $\sigma_\xi$ for different values of $\xi$.
\begin{corollary}
\label{CorollaryImprovedCorrectorDifferenceBounds}
Let the assumptions \hyperlink{A1}{(A1)}--\hyperlink{A3}{(A3)} and \hyperlink{P1}{(P1)}--\hyperlink{P2}{(P2)} be in place. Then for $T\rightarrow \infty$
the centered correctors $\phi_\xi^T-\fint_{B_\varepsilon(0)} \phi_\xi^T \,d\tilde x$ and $\sigma_\xi^T-\fint_{B_\varepsilon(0)} \sigma_\xi^T \,d\tilde x$
converge to {solutions $\phi_\xi$ and $\sigma_\xi$ to \eqref{EquationCorrector} and \eqref{EquationFluxCorrector} in the sense of Definition~\ref{D:corr} und Definition~\ref{D:fluxcorr}, respectively.}
Furthermore:
\begin{itemize}
\item For $d\geq 3$, we have
  \begin{align}
    \label{CorrectorBound}
    \bigg(\fint_{B_r(x_0)} \big|\phi_\xi\big|^p +  \big|\sigma_\xi\big|^p \,dx\bigg)^{1/p}
    \leq
    \mathcal{C}(x_0) |\xi| \varepsilon
  \end{align}
for any $r\geq 2\varepsilon$, any $x_0\in \Rd$, any $p\in [2,\frac{2d}{d-2}]$, and any $\xi\in \Rmd$.
Here, {$\mathcal C=\mathcal C(\omega_\e,x_0)$ denotes a nonnegative random field depending on $\xi$ and the radius $r$} with bounded stretched exponential moments in the sense
  \begin{align*}
\mathbb{E}\bigg[\exp\bigg(\frac{\mathcal{C}^{\bar \nu}}{C}\bigg)\bigg]\leq 2
  \end{align*}
  for some $\bar \nu>0$ and some $C>0$ depending only on $d$, $m$, $\lambda$, $\Lambda$, $p$, and $\rho$.
\item  For $d=1,2$, we have
\begin{align}
\label{CorrectorBound2d}
&\bigg(\fint_{B_r(x_0)} \big|\phi_\xi\big|^2 + \big|\sigma_\xi\big|^2 \,dx\bigg)^{1/2}
\leq
\mathcal{C}(x_0) |\xi| \varepsilon
 \begin{cases}
\big(\frac{r+|x_0|}{\varepsilon}\big)^{1/2} &\text{for }d=1,
\\
\big|\log \frac{r+|x_0|}{\varepsilon}\big|^{1/2} &\text{for }d=2
\end{cases}
\end{align}
for any $r\geq 2\varepsilon$, any $x_0\in \Rd$, and any $\xi\in \Rmd$.
\end{itemize}
If additionally the small-scale regularity condition \hyperlink{R}{(R)} holds, then the difference of homogenization correctors $\phi_{\xi_1}-\phi_{\xi_2}$ and $\sigma_{\xi_1}-\sigma_{\xi_2}$ is estimated by
\begin{align}
\label{DifferenceBound}
&
~~~\bigg(\fint_{B_r(x_0)} \big|\phi_{\xi_1}-\phi_{\xi_2}\big|^2 + \big|\sigma_{\xi_1}-\sigma_{\xi_2}\big|^2 \,dx\bigg)^{1/2}
\\&~~~~~~~~~
\nonumber
\leq
\mathcal{C}(x_0) (1+|\xi_1|^C+|\xi_2|^C) |\xi_1-\xi_2| \varepsilon 
\begin{cases}
\big(\frac{|x_0|}{\e}+1\big)^{1/2} &\text{for }d=1,
\\
\big|\log \big(\frac{|x_0|}{\varepsilon}+2\big)\big|^{1/2} &\text{for }d=2,
\\
1&\text{for }d\geq 3
\end{cases}
\end{align}
for any $\xi_1,\xi_2\in \Rmd$, any $r\geq 2\varepsilon$, and any $x_0\in \Rd$, where $\mathcal C=\mathcal C(\omega_\e,x_0)$ denotes a nonnegative, stationary  random field {depending on $\omega_\e,\xi_1,\xi_2,x_0,r$ satisfying the stretched exponential moment bound}
\begin{align*}
\mathbb{E}\bigg[\exp\bigg(\frac{\mathcal{C}^{\bar \nu}}{C}\bigg)\bigg]\leq 2
\end{align*}
for some $\bar \nu>0$ and some $C>0$. Here, $\bar \nu$ and $C$ depend only on $d$, $m$, $\lambda$, $\Lambda$, $\rho$, and $\nu$.
\end{corollary}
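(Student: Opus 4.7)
The plan is to obtain both the convergence and the estimates by passing the $T$-uniform corrector bounds of Proposition~\ref{PropositionCorrectorEstimate} and Proposition~\ref{PropositionLinearizedCorrectorEstimate} to the limit $T\to\infty$, and to convert the linearized corrector bounds into estimates on finite differences via the differentiability identity $\partial_\xi \phi_\xi^T\cdot\Xi=\phi_{\xi,\Xi}^T$ of Lemma~\ref{LemmaCorrectorDifferentiability}.

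First I would construct the limits and identify them with the solutions of Definition~\ref{D:corr} and Definition~\ref{D:fluxcorr}. The $L^2_{\loc}$-convergence \eqref{limtlocalized} of gradients (from Lemma~\ref{LemmaCorrectorMassiveLimit}) combined with the $T$-uniform centered bound \eqref{CorrectorEstimate} yields, via Poincar\'e, an $H^1_{\loc}$-limit of $\phi_\xi^T-\fint_{B_\varepsilon(0)}\phi_\xi^T$. Distributional passage in \eqref{EquationLocalizedCorrector} gives \eqref{EquationCorrector}; stationarity of the gradient, vanishing expectation, and finite second moment of $\nabla\phi_\xi$ are inherited from the localized objects; sublinear growth follows from the uniform centered bound by sending first $T\to\infty$ and then $r\to\infty$. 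After subtracting a further constant to enforce $\fint_{B_1}\phi_\xi=0$, uniqueness in Definition~\ref{D:corr} identifies the limit; the flux corrector $\sigma_\xi$ is treated analogously using \eqref{EquationLocalizedFluxCorrector} and \eqref{EquationFluxCorrector}--\eqref{EquationFluxCorrectorGauge}.

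To establish \eqref{CorrectorBound} and \eqref{CorrectorBound2d} I would pass the $T$-uniform estimates of Proposition~\ref{PropositionCorrectorEstimate} to the limit, Fatou's lemma preserving the stretched exponential moment bounds. For $d\geq 3$ the bound \eqref{CorrectorEstimateWithoutAverage} needs no centering and transfers directly; the $L^p$-range $2\leq p\leq \tfrac{2d}{d-2}$ is the $L^p$-variant at the end of Proposition~\ref{PropositionCorrectorEstimate}. In $d=1,2$, the centered bound \eqref{CorrectorEstimate} in the limit controls $\phi_\xi-\fint_{B_\varepsilon(x_0)}\phi_\xi$ in $L^2(B_r(x_0))$; to transfer this to the normalization $\fint_{B_1}\phi_\xi=0$, I would estimate $\fint_{B_\varepsilon(x_0)}\phi_\xi-\fint_{B_1}\phi_\xi$ by a chaining argument along a sequence of overlapping $\varepsilon$-balls connecting the origin to $x_0$, each increment being controlled by $\mathcal{C}|\xi|\varepsilon$ via Poincar\'e and \eqref{CorrectorEstimate}. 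A dyadic aggregation produces the $|\log|^{1/2}$ factor in $d=2$, while in $d=1$ a maximal-inequality argument exploiting the spatial decorrelation enforced by the spectral gap \hyperlink{P2}{(P2)} aggregates the $\sim|x_0|/\varepsilon$ stretched-exponential increments into a single $(|x_0|/\varepsilon)^{1/2}$-scaled random constant.

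For the difference estimate \eqref{DifferenceBound}, I invoke Lemma~\ref{LemmaCorrectorDifferentiability} to write, on the level of the localized correctors,
\begin{equation*}
\phi_{\xi_1}^T-\phi_{\xi_2}^T \;=\; \int_0^1 \phi_{\xi(t),\,\xi_1-\xi_2}^T\,dt, \qquad \xi(t):=\xi_2+t(\xi_1-\xi_2),
\end{equation*}
and analogously for $\sigma_{\xi_1}^T-\sigma_{\xi_2}^T$. Averaging over $B_r(x_0)$ and applying Minkowski's integral inequality in $t$ reduces the problem to the centered linearized bound \eqref{LinearCorrectorEstimate}, whose prefactor $(1+|\xi(t)|)^C|\xi_1-\xi_2|\varepsilon$ is uniformly dominated by $(1+|\xi_1|^C+|\xi_2|^C)|\xi_1-\xi_2|\varepsilon$. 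Since a stretched exponential moment bound is equivalent to $\theta$-th moment growth of order $\theta^{\theta/\bar\nu}$, and such growth is preserved by Jensen's inequality applied to $\int_0^1\cdot\,dt$, the stretched exponential character survives the $t$-integration. Passing to $T\to\infty$ and correcting for the centering at $B_1$ by the same chaining as in the previous paragraph produces the $x_0$-dependence in low dimensions. The main obstacle I foresee is the low-dimensional chaining in $d=1$: a naive triangle inequality would yield a linear factor in $|x_0|/\varepsilon$, so one must exploit approximate independence of increments across distant $\varepsilon$-balls (enforced by the spectral gap) to recover the optimal $1/2$-scaling without losing the stretched exponential moments.
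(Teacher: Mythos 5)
The overall architecture of your proof matches the paper's: take $T\to\infty$ in the $T$-uniform bounds of Propositions~\ref{PropositionCorrectorEstimate} and~\ref{PropositionLinearizedCorrectorEstimate}, represent the finite difference as $\phi_{\xi_2}^T-\phi_{\xi_1}^T = \int_0^1 \phi_{(1-s)\xi_1+s\xi_2,\,\xi_2-\xi_1}^T \,ds$ via Lemma~\ref{LemmaCorrectorDifferentiability}, use Minkowski in $s$, and observe that stretched-exponential moments survive both operations.

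However, there is a genuine gap in how you propose to move the centering from $\fint_{B_\varepsilon(x_0)}$ (which is what \eqref{CorrectorEstimate} naturally controls) to $\fint_{B_\varepsilon(0)}$. Your proposed chain of $\sim|x_0|/\varepsilon$ overlapping $\varepsilon$-balls, each contributing an increment of order $\mathcal C|\xi|\varepsilon$, would under a triangle inequality give a total of order $|\xi||x_0|$, and you recognize this is short of the target by a factor $(|x_0|/\varepsilon)^{1/2}$. The remedy you suggest --- a maximal-inequality argument based on ``approximate independence of increments'' --- is not available in this form: the increments $\fint_{B_\varepsilon(x_{k+1})}\phi_\xi^T - \fint_{B_\varepsilon(x_k)}\phi_\xi^T$ all depend on the same globally coupled corrector, the spectral gap does not furnish pairwise independence or orthogonality for them, and no off-the-shelf martingale/maximal inequality converts a triangle-inequality chain into CLT scaling here. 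The paper instead avoids the $\varepsilon$-scale chain entirely: it applies Lemma~\ref{LemmaCorrectorAverages} (estimate \eqref{EstimateCorrectorAverageDifference}) to climb dyadically from radius $\varepsilon$ to $R\sim|x_0|+r$ at each of the two centers. Each dyadic rung is a single linear functional $\int g_n\cdot\nabla\phi_\xi^T\,dx$ with $g_n$ normalized on scale $2^n\varepsilon$, for which Lemma~\ref{LemmaEstimateLinearFunctionals} already carries the CLT factor $(\varepsilon/2^n\varepsilon)^{d/2}$. Once the rung contributions are square-summed (the structure of \eqref{EstimateCorrectorAverageDifference}), the ladder produces exactly $\varepsilon(|x_0|/\varepsilon)^{1/2}$ for $d=1$ and $\varepsilon|\log((|x_0|+r)/\varepsilon)|^{1/2}$ for $d=2$, with no decorrelation-of-increments argument required. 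The cross-center comparison is then done only once, at the coarsest scale $R$, where $B_R(0)$ and $B_R(x_0)$ overlap substantially and the residual difference is again a single linear functional of lower order.

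A smaller omission: you do not address the identification of the limit $\sigma_\xi$ as a solution of \eqref{EquationFluxCorrector}. Passing to the limit in \eqref{EquationLocalizedFluxCorrector} only gives the gauge form \eqref{EquationFluxCorrectorGauge}; the paper deduces \eqref{EquationFluxCorrector} by combining this with $\nabla\cdot(A(\omega_\varepsilon,\xi+\nabla\phi_\xi))=0$ and the sublinear growth of $\sigma_\xi$ to conclude $\nabla\cdot\sigma_\xi = A(\omega_\varepsilon,\xi+\nabla\phi_\xi)-A_\shom(\xi)$. This step should be included.
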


\begin{remark}
  For $d\geq 3$  in the situation of Corollary~\ref{CorollaryImprovedCorrectorDifferenceBounds} the correctors $\phi_\xi^T$ and $\sigma_\xi^T$ (without renormalization) converge to the unique \textit{stationary} solution to the correction equations \eqref{EquationCorrector} and \eqref{EquationFluxCorrector} with vanishing expectation $\mathbb{E}[\phi_\xi]=0=\mathbb{E}[\sigma_\xi]$, respectively. 
\end{remark}

\subsection{The strategy of proof for corrector estimates}

A major difficulty in quantitative stochastic homogenization is the derivation of appropriate estimates on the (localized) homogenization correctors $\smash{\phi_\xi^T}$ and the (localized) flux corrector $\smash{\sigma_\xi^T}$: As previously mentioned, corrector bounds form the basis for homogenization error estimates and the analysis of the representative volume approximation.
For our purposes, we for example need to show that the homogenization corrector $\smash{\phi_\xi^T}$ and the flux corrector $\smash{\sigma_\xi^T}$ are at most of order $|\xi|\varepsilon$ (at least in case $d\geq 3$).

In periodic homogenization, that is for $\varepsilon$-periodic operators $-\nabla \cdot (A_\varepsilon(x,\nabla \cdot))$, such corrector bounds are an easy consequence of the periodicity: By the defining equation \eqref{EquationLocalizedCorrector}, the corrector $\phi_\xi^T$ is $\varepsilon$-periodic, has vanishing average on each periodicity cell $[0,\varepsilon]^d$, and is subject to an energy estimate of the form $\dashint_{[0,\varepsilon]^d} |\nabla \phi_\xi^T|^2 \,dx\leq C |\xi|^2$. By the Poincar\'e inequality on the periodicity cell, this implies a bound of order $|\xi| \varepsilon$ on the homogenization corrector $\phi_\xi^T$. The derivation of the corresponding estimate for the flux corrector $\smash{\sigma_\xi^T}$ is analogous.

In contrast, in quantitative stochastic homogenization the derivation of such estimates on the correctors is much more involved and presents one of the main challenges.
Our strategy for the derivation of bounds on $\phi_\xi^T$, which is strongly inspired by \cite{GloriaOtto,GloriaOtto2,GloriaNeukammOtto,GloriaNeukammOtto2} but streamlined by minimizing the use of elliptic regularity, proceeds as follows:
\begin{itemize}
\item As outlined in Proposition~\ref{PropositionCorrectorEstimate}, it is our goal to prove a corrector estimate of essentially the form
\begin{align*}
\inf_{b\in \Rm} \bigg(\fint_{B_R(x_0)} |\phi_{\xi}^T-b|^p \,dx\bigg)^{1/p}
\leq \mathcal{C}(x_0)
\begin{cases}
 \varepsilon(R/\varepsilon)^{1/2}  & \text{for }d=1,\\
\varepsilon  |\log (R/\varepsilon)| & \text{for }d=2,\\
\varepsilon & \text{for }d\geq 3,
\end{cases}
\end{align*}
for any $R\geq \varepsilon$, where $\mathcal{C}$ is a random field with stretched exponential moments in the sense $\mathbb{E}[\exp(\mathcal{C}^{1/C}/C)]\leq 2$.

\smallskip
\smallskip
\item In principle, the technical Lemma~\ref{LemmaMultiscaleDecomposition} (see below) reduces the derivation of such estimates on the corrector $\phi_\xi^T$ to (mostly) the derivation of bounds on stochastic moments of integral functionals of the form
\begin{align}
\label{FunctionalInStrategy}
F(\omega_\varepsilon):=\int_\Rd g\cdot \nabla \phi_\xi^T \,dx,
\end{align}
where $g$ basically takes a weighted average of $\nabla \phi_\xi^T$ on a certain scale $r$ with $\varepsilon\leq r\leq R$.

\smallskip
\smallskip
\item The random variables $F(\omega_\varepsilon)$ as defined in \eqref{FunctionalInStrategy} have vanishing expectation $\mathbb{E}[F(\omega_\varepsilon)]=0$ due to the vanishing expectation of the corrector gradient $\smash{\mathbb{E}[\nabla \phi_\xi^T]=0}$. The latter property is a consequence of stationarity of the corrector $\smash{\phi_\xi^T}$: Since the probability distribution of the random monotone operator $A(\omega_\varepsilon(x),\cdot)$ is invariant under spatial translations, the expectation of the corrector $\mathbb{E}[\phi_\xi^T(x)]$ is the same for all points $\smash{x\in \Rd}$. This yields $\smash{\mathbb{E}[\nabla \phi_\xi^T(x)]=\nabla \mathbb{E}[\phi_\xi^T(x)]}=0$.
As a consequence of the vanishing expectation, it suffices to bound the centered moments of $\smash{F(\omega_\varepsilon)}$, that is, the stochastic moments of $F(\omega_\varepsilon)-\mathbb{E}[F(\omega_\varepsilon)]$.

\smallskip
\smallskip
\item Concentration inequalities -- like the spectral gap inequality -- are one of the most widespread probabilistic tools for establishing bounds on centered moments of random variables. In our context, that is for random fields $\omega_\varepsilon$ with correlations restricted to the length scale $\varepsilon$, they will read for example
\begin{align}
\label{StrategySpectralGap}
\mathbb{E}\Big[\Big|F(\omega_\varepsilon)-\mathbb{E}\big[F(\omega_\varepsilon)\big]\Big|^2\Big]
\leq \frac{\varepsilon^d}{\rho^2}
&\mathbb{E}
\Bigg[\int_\Rd \bigg|\fint_{B_\varepsilon(x)} \bigg|\frac{\partial F}{\partial \omega_\varepsilon}\bigg| \,d\tilde x \bigg|^2 \,dx\Bigg]
\end{align}
for all random variables $F=F(\omega_\varepsilon)$. Here, $\frac{\partial F}{\partial \omega_\varepsilon}$ denotes (essentially) the Frech\'et derivative of $F$ with respect to the field $\omega_\varepsilon$ and $\rho>0$ denotes a constant.

Note that in stochastic homogenization it is an \emph{assumption} that a spectral gap inequality like \eqref{StrategySpectralGap} holds for the random field $\omega_\varepsilon$. This assumption on the probability distribution of the random field $\omega_\varepsilon$ \emph{encodes the decorrelation properties} of $\omega_\varepsilon$. Recall that \eqref{StrategySpectralGap} implies an estimate for the average $\smash{F(\omega_\varepsilon):=\dashint_{B_r} \omega_\varepsilon \,dx}$ of the order of the CLT-scaling, see discussion  below Definition~\ref{DefinitionSpectralGap}.

A spectral gap inequality of the form \eqref{StrategySpectralGap} is valid for many classes of random fields, see Figure~\ref{FigureRandomField} and the accompanying text. It also implies estimates on higher centered moments, see Lemma~\ref{LemmaLqSpectralGap} below.

\smallskip
\smallskip
\item In order to employ the spectral gap inequality \eqref{StrategySpectralGap} to estimate the stochastic moments of random variables $F(\omega_\varepsilon)=\int g \cdot \nabla \phi_\xi^T \,dx$ as defined in \eqref{StrategySpectralGap}, we need to estimate the right-hand side of \eqref{StrategySpectralGap}, that is we need to estimate the sensitivity of the functionals $F(\omega_\varepsilon)$ with respect to changes in the coefficient field $\omega_\varepsilon$. By standard computations (see the proof of Lemma~\ref{LemmaEstimateLinearFunctionals}a for details), one may show that the Frech\'et derivative of $F$ with respect to $\omega_\varepsilon$ is given by
\begin{align}
\label{StrategySensitivityEq}
\frac{\partial F}{\partial \omega_\varepsilon} = \partial_\omega A(\omega_\varepsilon,\xi+\nabla \phi_\xi^T) \otimes \nabla h
\end{align}
where $h$ is the solution to the auxiliary linear elliptic PDE
\begin{align}
\label{StrategyDefinitionh}
-\nabla \cdot (a_{\xi}^{T,*}(x)\nabla h)+\frac{1}{T} h = \nabla \cdot g
\end{align}
(the uniformly elliptic and bounded coefficient field $\smash{a_{\xi}^{T,*}}$ being given by the transpose of $a_{\xi}^{T}(x):=\partial_\xi A(\omega(x),\xi+\nabla \phi_\xi^T)$).

By the (standard) growth assumptions for $A(\omega_\varepsilon,\cdot)$ in \hyperlink{A3}{(A3)}, the representation \eqref{StrategySensitivityEq} implies that the sensitivity $\frac{\partial F}{\partial \omega_\varepsilon}$ may be estimated by $C |\xi+\nabla \phi_\xi^T| |\nabla h|$.
In conclusion, by the spectral gap inequality \eqref{StrategySpectralGap} respectively its version for higher stochastic moments in Lemma~\ref{LemmaLqSpectralGap} we see that we have
\begin{align}
\label{StrategySensitivity}
~~~~
\mathbb{E}[|F|^q]^{1/q}
\lesssim q \varepsilon^{d/2}
\mathbb{E}\bigg[\bigg|\int_\Rd \bigg|\fint_{B_\varepsilon(x)} \big|\xi+\nabla \phi_\xi^T\big| |\nabla h| \,d\tilde x \bigg|^2 \,dx\bigg|^{q/2}\bigg]^{1/q}.
\end{align}

\item Let us next discuss how to estimate the right-hand side of \eqref{StrategySensitivity}.
Recall that $g$ basically takes a weighted average on a scale $r$ (see \eqref{FunctionalInStrategy}), i.\,e.\ $g$ is supported on a ball $B_r$ and satisfies an estimate like $|g|\lesssim r^{-d}$. As a consequence, we obtain the deterministic estimate $\int_\Rd |\nabla h|^2 \,dx\lesssim r^{-d}$ by a simple energy estimate for the defining equation \eqref{StrategyDefinitionh}. If we knew that the small-scale averages of the corrector gradient were bounded in the sense $\fint_{B_\varepsilon(x)} |\xi+\nabla \phi_\xi^T|^2 \,d\tilde x \lesssim |\xi|^2$, by H\"older's inequality we would directly obtain the deterministic bound
\begin{align*}
\int_\Rd \bigg|\fint_{B_\varepsilon(x)} \big|\xi+\nabla \phi_\xi^T\big| |\nabla h| \,d\tilde x\bigg|^2 \,dx \lesssim |\xi|^2 \int_{\Rd} |\nabla h|^2 \,dx \lesssim |\xi|^2 r^{-d}.
\end{align*}
By \eqref{StrategySensitivity} this would imply an estimate of fluctuation order on the functionals \eqref{FunctionalInStrategy} of the form $\mathbb{E}[|F|^q]^{1/q}\lesssim q |\xi| (\varepsilon/r)^{d/2}$. This would be precisely the bound we seek to obtain.

However, in the context of stochastic homogenization we cannot expect a uniform bound on the locally averaged corrector gradient $\smash{\fint_{B_\varepsilon(x)}} |\xi+\nabla \smash{\phi_\xi^T|^2} \,d\tilde x$, as in general the random field $\omega_\varepsilon$ may contain geometric configurations which could cause the corrector gradient to be arbitrarily large.
As we are dealing with the case of systems, we also cannot expect more than $L^p$ integrability of the gradient $\nabla h$ of the auxiliary function for a Meyers exponent $p$ slightly larger than $2$.
We therefore need (almost) an $L^\infty$ bound on the local averages $\dashint_{B_\varepsilon(x)} |\xi+\nabla \phi_\xi^T|^2 \,d\tilde x$. It is here useful to introduce an auxiliary quantity, namely the minimal radius $r_{*,T,\xi}(x)$ above which the corrector $\phi_\xi^T$ satisfies a bound of the form
\begin{align*}
  \frac{1}{R^2}\dashint_{B_R(x)} |\phi_\xi^T|^2 \,d\tilde x \leq 1 \quad\quad\text{for all }R\geq r_{*,T,\xi}(x)
\end{align*}
(plus one additional technical condition).
{Using now the trivial estimate $\fint_{B_\varepsilon} f\,dx\leq (\frac{r}{\varepsilon})^d\fint_{B_r}f\,dx$} and the Caccioppoli inequality, this yields the bound
\begin{align*}
  ~~~~~~
  \dashint_{B_\varepsilon(x)} |\xi+\nabla \phi_\xi^T|^2 \,d\tilde x
\leq
\left(\frac{r_{*,T,\xi}(x)}{\varepsilon}\right)^d
\underbrace{\dashint_{B_{r_{*,T,\xi}(x)}(x)} |\xi+\nabla \phi_\xi^T|^2 \,d\tilde x}_{\leq C |\xi|^2}.
\end{align*}
This estimate is however again not sufficient for our purposes.
It is here that \emph{elliptic regularity theory} in form of the hole-filling estimate enters and provides a slight but crucial improvement
\begin{align*}
\dashint_{B_\varepsilon(x)} |\xi+\nabla \phi_\xi^T|^2 \,d\tilde x
\leq
C \bigg(\frac{r_{*,T,\xi}}{\varepsilon}\bigg)^{d-\delta} |\xi|^2
\end{align*}
for some $\delta>0$
(see Lemma~\ref{EstimateGradByrstar} for details).

Together with a version of the spectral gap estimate -- see Lemma~\ref{LemmaLqSpectralGap} below -- and the vanishing expectation $\mathbb{E}[\nabla \phi_\xi^T]=0$, this estimation strategy yields a bound of the form
\begin{align}
\label{EstimateFInTermsOfMoments}
\mathbb{E}\big[|F|^{2q}\big]^{1/2q}
&\leq C |\xi| q \bigg(\frac{\varepsilon}{r} \bigg)^{d/2}
\mathbb{E}\bigg[\bigg(\frac{r_{*,T,\xi}}{\varepsilon}\bigg)^{(d-\delta)q/(1-\tau)}\bigg]^{(1-\tau)/2q}
\end{align}
for any $0<\tau<1$ and any $q$ large enough (the latter of which is not a problem).

\smallskip
\smallskip
\item One then observes that one may control the moments of $r_{*,T,\xi}$ on the right-hand side of \eqref{EstimateFInTermsOfMoments} in terms of moments of functionals $F=\int g\cdot \nabla \phi_\xi^T \,dx$ (see the proof of Proposition~\ref{PropositionCorrectorEstimate}). It is here that the slight gain $\delta$ in the exponent (from hole-filling) is crucial, as it causes the estimate to buckle, yielding a bound of the form
\begin{align*}
\mathbb{E}\bigg[\bigg(\frac{r_{*,T,\xi}}{\varepsilon}\bigg)^{q}\bigg]^{1/q}
\lesssim q^{C}.
\end{align*}
The resulting moment bounds on $r_{*,T,\xi}$ then allow to deduce the corrector estimate in Proposition~\ref{PropositionCorrectorEstimate}.

\smallskip
\smallskip
\item The derivation of the estimates for the flux corrector $\sigma_\xi^T$ and the linearized correctors $\phi_{\xi,\Xi}^T$, $\sigma_{\xi,\Xi}^T$ follows a similar strategy. However, in the case of the linearized correctors $\phi_{\xi,\Xi}^T$, $\sigma_{\xi,\Xi}^T$ the derivation of the sensitivity estimates involves an additional integrability issue on small scales, making it necessary to use a $C^{1,\alpha}$ regularity theory on the microscopic scale. It is here that our additional regularity assumption \hyperlink{R}{(R)} enters. For details, see the derivation of Lemma~\ref{LemmaEstimateLinearFunctionalsLinearized}.
\end{itemize}

Let us now state the lemmas used in the course of the proof of our main results.
An immediate consequence of the spectral gap inequality of Definition~\ref{DefinitionSpectralGap} is the following version for the $q$-th centered moment; for a proof, we refer to e.\,g.\ \cite[Proposition~3.1]{DuerinckxGloriaConcentration}. {Note that the proof in \cite[Proposition~3.1]{DuerinckxGloriaConcentration} works in microscopic spatial coordinates, i.\,e.\ considers the case $\varepsilon:=1$; by rescaling, the result holds also for general correlation lengths $\varepsilon>0$.}
\begin{lemma}
\label{LemmaLqSpectralGap}
Suppose $\mathbb P$ satisfies \hyperlink{P1}{(P1)} and \hyperlink{P2}{(P2)}. Then for any $q\geq 1$ we have
\begin{align*}
&\mathbb{E}
\Big[\Big|F(\omega_\varepsilon)-\mathbb{E}\big[F(\omega_\varepsilon)\big]\Big|^{2q}\Big]^{1/q}
\leq C q^2\varepsilon^d
\mathbb{E}
\Bigg[\Bigg|\int_\Rd \bigg(\fint_{B_\varepsilon(x)} \bigg|\frac{\partial F}{\partial \omega_\varepsilon}\bigg| \bigg)^2 \,dx\Bigg|^{q}\Bigg]^{1/q}.
\end{align*}
\end{lemma}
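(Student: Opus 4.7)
Set $\widetilde F := F - \mathbb E[F]$ and
$$V(\omega_\varepsilon) := \bigg(\int_{\R^d}\bigg(\fint_{B_\varepsilon(x)}\bigg|\frac{\partial F}{\partial\omega_\varepsilon}\bigg|\,d\tilde x\bigg)^2 dx\bigg)^{1/2},$$
so that the claim is equivalent to the assertion $\|\widetilde F\|_{L^{2q}(\Omega)} \leq C\, q\, \varepsilon^{d/2}\,\|V\|_{L^{2q}(\Omega)}$ for every $q\geq 1$. The plan is to apply the spectral gap inequality \eqref{SpectralGapInequality} not to $F$ but to the nonlinear functional $G := |\widetilde F|^q$, and to iterate the resulting estimate dyadically in $q$ starting from $q=1$ (which is precisely \eqref{SpectralGapInequality} itself).

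Since $\widetilde F$ is constant in $x$ for each fixed realization of $\omega_\varepsilon$, the chain rule for the Fr\'echet derivative gives $|\partial_{\omega_\varepsilon} G|\leq q|\widetilde F|^{q-1}|\partial_{\omega_\varepsilon} F|$, whence
$$\int_{\R^d}\bigg(\fint_{B_\varepsilon(x)}\bigg|\frac{\partial G}{\partial \omega_\varepsilon}\bigg|\,d\tilde x\bigg)^2 dx \leq q^2\,|\widetilde F|^{2(q-1)}\, V^2.$$
Plugging this into \eqref{SpectralGapInequality} applied to $G$ and decoupling $|\widetilde F|^{2(q-1)}$ from $V^2$ via H\"older's inequality with exponents $\tfrac{q}{q-1}$ and $q$ yields
$$\mathbb E[|\widetilde F|^{2q}] - \mathbb E[|\widetilde F|^q]^2 \leq C\, q^2\, \varepsilon^d\;\mathbb E[|\widetilde F|^{2q}]^{(q-1)/q}\;\mathbb E[V^{2q}]^{1/q}.$$
From this I would extract a self-improving dichotomy: either $\mathbb E[|\widetilde F|^q]^2$ dominates the right-hand side, giving $\|\widetilde F\|_{L^{2q}}\leq 2\|\widetilde F\|_{L^q}$, or the $V$-term dominates, in which case after absorbing $\mathbb E[|\widetilde F|^{2q}]^{(q-1)/q}$ and taking $(2q)$-th roots one obtains $\|\widetilde F\|_{L^{2q}}\leq \sqrt{2C}\, q\,\varepsilon^{d/2}\|V\|_{L^{2q}}$. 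Combining,
$$\|\widetilde F\|_{L^{2q}} \leq \max\Big(2\|\widetilde F\|_{L^q},\; C_0\, q\, \varepsilon^{d/2}\|V\|_{L^{2q}}\Big).$$

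Finally, I would iterate this recursion. The base case $q=1$ is \eqref{SpectralGapInequality}, giving $\|\widetilde F\|_{L^2}\leq C_0\,\varepsilon^{d/2}\|V\|_{L^2}$; for $q\in(1,2]$ the bound $\|\widetilde F\|_{L^q}\leq\|\widetilde F\|_{L^2}$ (Jensen) together with $\|V\|_{L^2}\leq\|V\|_{L^{2q}}$ feeds into the max-recursion to give $\|\widetilde F\|_{L^{2q}}\leq 2C_0\, q\,\varepsilon^{d/2}\|V\|_{L^{2q}}$. Induction over the doubling $q\mapsto 2q$, using the monotonicity $\|V\|_{L^q}\leq\|V\|_{L^{2q}}$ to feed the inductive hypothesis into the $\|\widetilde F\|_{L^q}$-term of the max, then extends this bound to every $q\geq 1$, which after squaring is exactly the claim with $C := 4C_0^2$. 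The main obstacle is designing the iteration so as to preserve the linear dependence on $q$: the natural additive splitting of the right-hand side via Young's inequality closes too, but accumulates an extra $\log q$ across the iteration; it is the \emph{max} form of the dichotomy above that keeps the correct scaling. This strategy recovers the statement cited from \cite{DuerinckxGloriaConcentration}.
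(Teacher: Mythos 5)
Your proof is correct, and it supplies a self-contained argument for a statement the paper itself only proves by citation (the paper refers to \cite{DuerinckxGloriaConcentration}, Proposition~3.1, together with a rescaling from $\varepsilon=1$). The mechanism you use --- apply the $L^2$ spectral gap to $G=|\widetilde F|^q$, use the chain rule (valid in the $\limsup$-sense of Definition~\ref{DefinitionSpectralGap} since $\widetilde F$ is a scalar, so $|\widetilde F|^{q-1}$ pulls out of the carr\'e-du-champ) to get $|\partial_{\omega_\varepsilon}G|\leq q|\widetilde F|^{q-1}|\partial_{\omega_\varepsilon}F|$, then H\"older with exponents $\frac{q}{q-1}$ and $q$, absorb the $\mathbb E[|\widetilde F|^{2q}]^{(q-1)/q}$ factor via the max-dichotomy, and dyadically iterate in $q$ --- is exactly the standard route from an $L^2$ spectral gap to $L^p$ moment bounds, and the computations (the recursion $a\leq b+ca^{(q-1)/q}$, the case split, the clean closure of the induction with constant $A=\max(2\|\widetilde F\|_{L^2}/(\varepsilon^{d/2}\|V\|_{L^2}),\sqrt{2C})$ preserved across all dyadic levels thanks to $\|V\|_{L^q}\leq\|V\|_{L^{2q}}$) all check out. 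Your remark that a Young-inequality splitting would leak an extra $\log q$ over the iteration, while the max-form avoids it, is a genuine and correct observation about why the dichotomy is the right tool. One minor gap you should flag for a fully rigorous write-up: the absorption step in Case~2 tacitly requires $\mathbb E[|\widetilde F|^{2q}]<\infty$ a priori, which is not free; the standard fix is to first run the entire argument for the truncations $F_N:=\max(-N,\min(F,N))$ (which have all moments finite and satisfy $|\partial_{\omega_\varepsilon}F_N|\leq|\partial_{\omega_\varepsilon}F|$) and then send $N\to\infty$ by monotone convergence. With that one remark added, this is a complete proof.
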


A central step towards the proof of the corrector estimates of Proposition~\ref{PropositionCorrectorEstimate} are the following estimates on stochastic moments of linear functionals of the localized corrector and the localized flux corrector.

\begin{lemma}[Estimates for linear functionals of the corrector and the flux corrector for the monotone system]
\label{LemmaEstimateLinearFunctionals}
Let the assumptions \hyperlink{A1}{(A1)}--\hyperlink{A3}{(A3)} and \hyperlink{P1}{(P1)}--\hyperlink{P2}{(P2)} be satisfied.
Let $\smash{\xi\in \Rmd}$, $T\geq 2\varepsilon^2$, $K_{mass} \geq C(d,m,\lambda,\Lambda)$, and let $\smash{\phi_\xi^T}$, $\smash{\sigma_\xi^T}$ {be defined as in Lemma~\ref{L:localized}}. In case $d\geq 3$, let $\smash{\theta_\xi^T}$ be the (up to constants unique) sublinearly growing solution to \eqref{EquationPotentialFieldGauge}.
Define for any $x_0\in \Rd$
\begin{align}
\label{Definitionrstar}
  r_{*,T,\xi}(x_0) := \inf \bigg\{r=2^k\varepsilon: k\in\mathbb N_0
  \text{ and for all }R=2^\ell\varepsilon\geq r,\,\ell\in\mathbb N_0,\text{ we have both }
  \\
\notag 
  \frac{1}{R^2} \fint_{B_R(x_0)} \bigg|\phi_\xi^T-\fint_{B_R(x_0)}\phi_\xi^T\,d\tilde x\bigg|^2 \,dx \leq |\xi|^2~
\\
\notag
\text{ and }~
\frac{1}{\sqrt{T}} \bigg|\fint_{B_R(x_0)} \phi_\xi^T \,dx\bigg| \leq K_{mass}|\xi|~
\bigg\}.
\end{align}
Then the random field $r_{*,T,\xi}$ is well-defined and stationary. Moreover, the following statement holds: Let $F$ denote one of the following random variables
\begin{equation*}
  \begin{aligned}
    &F:=\int_{\Rd}g(x)\cdot \nabla\phi^T_\xi(x)\,dx,\\
    \text{or }\qquad&F:=\int_{\Rd}g(x)\cdot \nabla\sigma^T_{\xi,jk}(x)\,dx,
    \,~1\leq i,j\leq d,
    \\
        \text{or }\qquad&F:=\int_{\Rd}g(x)\cdot \frac{1}{r}\nabla\theta^T_{\xi,i}(x)\,dx,\,~ 1\leq i\leq d,\text{ (in the case $d\geq 3$ only)},
\end{aligned}
\end{equation*}
for a deterministic vector field $g\in L^{p}(\Rd;\Rmd)$ satisfying for some $x_0\in\R^d$, $r\geq\varepsilon$, and some exponent $2<p<2+c$  (with a constant $c=c(d,m,\lambda,\Lambda)>0$ defined in the proof below),
\begin{align*}
  \supp g\subset B_r(x_0)\qquad\text{and}\qquad\bigg(\fint_{B_r(x_0)} |g|^{p} \,dx\bigg)^{1/p}\leq r^{-d}.
\end{align*}
Then there exists an exponent $\delta=\delta(d,m,\lambda,\Lambda)>0$ (coming from hole-filling) such that the stochastic moments of the functional $F$ satsify
  \begin{align*}
    \mathbb{E}\big[|F|^{2q}\big]^{1/2q}
    &\leq C |\xi| q \bigg(\frac{\varepsilon}{r} \bigg)^{d/2}
      \mathbb{E}\bigg[\bigg(\frac{r_{*,T,\xi}}{\varepsilon}\bigg)^{(d-\delta)q/(1-\tau)}\bigg]^{(1-\tau)/2q}
  \end{align*}
  for any $0<\tau<1$ and any $q\geq C$. Here, the constant $C$ may depend on $d$, $m$, $\lambda$, $\Lambda$, $\rho$, $K_{mass}$, $p$, and $\tau$ (but not on $\varepsilon$ and $r$). In particular, all of our estimates are independent of $T\geq 2\varepsilon^2$.
\end{lemma}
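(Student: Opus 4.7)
My plan is to follow the strategy outlined after the lemma. First, I note that $r_{*,T,\xi}(x_0)$ is well-defined because the two conditions in \eqref{Definitionrstar} are satisfied for all sufficiently large dyadic radii: the Poincar\'e inequality on $B_R(x_0)$ combined with the energy estimate for \eqref{EquationLocalizedCorrector} controls the first quantity, and the mass term $T^{-1}\phi_\xi^T$ combined with the $L^2$-bound it provides gives the second. Stationarity of $r_{*,T,\xi}$ follows from uniqueness of the localized corrector (Lemma~\ref{L:localized}) together with stationarity of $\omega_\e$; the same argument shows that $\phi_\xi^T$, $\sigma_\xi^T$ are stationary and hence $\mathbb{E}[\nabla\phi_\xi^T]=0$, $\mathbb{E}[\nabla\sigma_\xi^T]=0$, so that $\mathbb{E}[F]=0$ in each of the three cases. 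Next, I compute the Fr\'echet derivative of $F$ by differentiating the defining PDEs with respect to $\omega_\e$. For $F=\int g\cdot \nabla \phi_\xi^T$, an infinitesimal perturbation $\delta\omega$ produces $\delta\phi$ solving $-\nabla\cdot(a_\xi^T\nabla\delta\phi)+T^{-1}\delta\phi=\nabla\cdot(\partial_\omega A(\omega_\e,\xi+\nabla\phi_\xi^T)\delta\omega)$; introducing the dual function $h$ solving $-\nabla\cdot(a_\xi^{T,*}\nabla h)+T^{-1}h=-\nabla\cdot g$, I obtain after testing the representation $\delta F=\int \nabla h\cdot \partial_\omega A(\omega_\e,\xi+\nabla\phi_\xi^T)\delta\omega\,dx$, and thanks to \hyperlink{A3}{(A3)}, the pointwise sensitivity bound $|\partial F/\partial\omega_\e|\leq C|\nabla h||\xi+\nabla\phi_\xi^T|$. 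For the flux-corrector functional one introduces one additional dual solving $-\Delta v+T^{-1}v=-\nabla\cdot g$ (whose curl enters the source of a second dual), and for the potential-field functional one uses $-\Delta v=-\nabla\cdot g/r$; in all three cases the same schematic sensitivity estimate holds with Meyers estimates available for the dual fields.

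\textbf{Spectral gap and hole-filling.} Since $\mathbb{E}[F]=0$, Lemma~\ref{LemmaLqSpectralGap} yields
\begin{equation*}
  \mathbb{E}[|F|^{2q}]^{1/q}\leq Cq^2\e^d\,\mathbb{E}\bigg[\Big(\int_\Rd\Big(\fint_{B_\e(x)}|\nabla h||\xi+\nabla\phi_\xi^T|\,d\tilde x\Big)^2 dx\Big)^q\bigg]^{1/q}.
\end{equation*}
Inside the $x$-integral I apply Cauchy--Schwarz to separate $\nabla h$ from $\xi+\nabla\phi_\xi^T$. Then Lemma~\ref{EstimateGradByrstar} (which combines the Caccioppoli inequality, Meyers, and the hole-filling lemma from Lemma~\ref{LemmaCaccioppoliHoleFilling}) together with the definition of $r_{*,T,\xi}$ supplies the pointwise estimate
\begin{equation*}
  \fint_{B_\e(x)}|\xi+\nabla\phi_\xi^T|^2\,d\tilde x\leq C|\xi|^2\Big(\tfrac{r_{*,T,\xi}(x)}{\e}\Big)^{d-\delta}
\end{equation*}
for some $\delta=\delta(d,m,\lambda,\Lambda)>0$. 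This reduces the task to bounding stochastic moments of $I:=\int_\Rd (r_{*,T,\xi}(x)/\e)^{d-\delta} G(x)\,dx$, where $G(x):=\fint_{B_\e(x)}|\nabla h|^2$.

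\textbf{Meyers, Minkowski, and H\"older in the expectation.} Pointwise in $\omega$, a weighted H\"older inequality with exponents $p/(p-2)$ and $p/2$ (treating $G\,dx$ as a measure) gives
\begin{equation*}
  I\leq \Big(\int(r_{*,T,\xi}/\e)^{(d-\delta)p/(p-2)}G\,dx\Big)^{(p-2)/p}\Big(\int G\,dx\Big)^{2/p}.
\end{equation*}
Here the deterministic energy estimate for the dual PDE yields $\int G\,dx=\int|\nabla h|^2\,dx\leq Cr^{-d}$, while the \emph{Meyers} estimate Lemma~\ref{LemmaWeightedMeyers}, valid uniformly in $\omega$ for any $p\in(2,2+c)$, gives $\int|\nabla h|^p\,dx\leq Cr^{-d(p-1)}$. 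For $q$ sufficiently large (so that $s:=q(p-2)/p\geq 1$), the integral Minkowski inequality lets me pull the expectation inside, after which I apply H\"older in expectation with exponents $1/\tau$ and $1/(1-\tau)$ to separate the $G$-factor from the $r_{*,T,\xi}$-factor. The exponent $(d-\delta)p/(p-2)\cdot s/(1-\tau)=(d-\delta)q/(1-\tau)$ matches the one in the statement, and the residual $G$-moments are controlled using $G(x)^{s/\tau}\leq \fint_{B_\e(x)}|\nabla h|^{2s/\tau}$ (Jensen) and the Meyers bound after Fubini, once the parameter $p$ is chosen close enough to $2$ so that $2s/\tau\leq p_0$ for the Meyers exponent $p_0$. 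Collecting the powers of $r$, $\e$, $q$, and $\tau$ yields the claimed estimate.

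\textbf{Main obstacle.} The delicate point is the mismatch in spatial localization: $\nabla h$ is essentially supported near $B_r(x_0)$ (and decays exponentially at scale $\sqrt{T}$), whereas $r_{*,T,\xi}$ is only stationary and so $\int(r_{*,T,\xi}/\e)^\alpha\,dx$ diverges. The resolution is to always keep the $G$-weight attached to $r_{*,T,\xi}$ inside integrals, treating $G\,dx$ as a finite measure; this is what forces the use of a genuinely weighted H\"older in the second step and requires the \emph{slight} Meyers improvement to produce a usable exponent. For the flux corrector $\sigma_\xi^T$ and the potential $\theta_\xi^T$ (case $d\geq 3$), the extra dual PDEs are pure (massive) Laplace-type equations, for which the analogous Meyers and Calder\'on--Zygmund estimates hold trivially, and the additional $1/r$ factor in the $\theta$-functional absorbs the scaling mismatch coming from the fact that $\theta$ is one degree more regular than $\phi$.
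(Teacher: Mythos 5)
Your setup — the sensitivity computation, the identification of the dual PDE for $h$, the use of the spectral gap inequality with $\mathbb{E}[F]=0$, and the reduction via \eqref{EstimateGradByrstar} to a quantity involving $(r_{*,T,\xi}/\varepsilon)^{d-\delta}$ — matches the paper's strategy. The gap is in the ``Meyers, Minkowski, and H\"older in the expectation'' step, and it is a genuine one.

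Concretely: after Minkowski (which requires $s:=q(p-2)/p\geq 1$) and H\"older in expectation, you are left with $\int_{\Rd}\mathbb{E}\big[G(x)^{s/\tau}\big]^{\tau/s}\,dx$ to control, and you propose to apply Jensen followed by a Meyers bound for the exponent $2s/\tau$. But $2s/\tau=2q(p-2)/(p\tau)$ grows linearly in $q$ for fixed $p,\tau$, while the Meyers exponent $p_0$ is fixed by $d,m,\lambda,\Lambda$. Adjusting $p$ to scale like $2+O(\tau/q)$ keeps $s$ bounded, but one still needs $2s/\tau\leq p_0$ simultaneously with $s\geq 1$, and these two constraints force $\tau\gtrsim 1/p_0$. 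That is fatal for the actual application: in the proof of Lemma~\ref{MomentsMinimalRadius} one must take $\tau$ of order $\delta/d$, which is \emph{small}, so your argument cannot supply the estimate in the regime where it is needed. Moreover, even in the regime where the Meyers exponent is available, the quantity $\int\mathbb{E}[G(x)^{s/\tau}]^{\tau/s}\,dx$ carries an outer power $\tau/s<1$ on a merely integrable function, and is not controlled by the global $L^{2s/\tau}$-bound coming from Meyers.

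The paper avoids both problems by never interchanging the spatial integral with the expectation. In Lemma~\ref{LemmaEstimateSensitivity} the spatial H\"older inequality is performed against a \emph{polynomial weight} $(1+|x|/r)^{\alpha_0}$ rather than against the random measure $G\,dx$: this produces, on the $r_{*,T,\xi}$ side, a weighted spatial average on which Jensen and stationarity apply directly, and, on the $\nabla h$ side, the quantity $r^{-d}\int|\nabla h|^p(1+|x|/r)^{\alpha_0}\,dx$, which the \emph{weighted} Meyers estimate of Lemma~\ref{LemmaWeightedMeyers} bounds \emph{deterministically} by $Cr^{-dp}$. Because that bound is uniform in $\omega$, one may raise it to the arbitrary power $2q/p\tau$ and take the expectation trivially, so the large $q$-dependent exponent never has to pass through elliptic regularity. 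This is the key device your argument is missing; without it, the moment-separation step does not close.
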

We also obtain the following estimates on averages of the correctors.
\begin{lemma}
\label{LemmaCorrectorAverages}
Given the assumptions of Lemma~\ref{LemmaEstimateLinearFunctionals}, for any $0<\tau<1$, any $q\geq C$, any $x_0\in \Rd$, any $r\geq \varepsilon$, and any $R\geq r$ we have the estimate
\begin{align}
\nonumber
&\mathbb{E}\bigg[\bigg|\fint_{B_r(x_0)} \phi_\xi^T \,dx - \fint_{B_R(x_0)} \phi_\xi^T \,dx \bigg|^{2q} \bigg]^{1/2q}
\\&
\label{EstimateCorrectorAverageDifference}
+\mathbb{E}\bigg[\bigg|\fint_{B_r(x_0)} \sigma_\xi^T \,dx - \fint_{B_R(x_0)} \sigma_\xi^T \,dx \bigg|^{2q} \bigg]^{1/2q}
\\&
\nonumber
~~~~
\leq C q |\xi| \mathbb{E}\bigg[\bigg(\frac{r_{*,T,\xi}}{\varepsilon}\bigg)^{(d-\delta)q/(1-\tau)}\bigg]^{(1-\tau)/2q} \Bigg(\sum_{l=0}^{\log_2 \frac{R}{r}} (2^l r)^2 \bigg(\frac{\varepsilon}{2^l r}\bigg)^d \Bigg)^{1/2}.
\end{align}
Furthermore, for any $R\geq \sqrt{T}$ we have the bound
\begin{align}
\label{EstimateCorrectorAverage}
&\mathbb{E}\bigg[\bigg|\fint_{B_R(x_0)} \phi_\xi^T \,dx\bigg|^q \bigg]^{1/q}
+\mathbb{E}\bigg[\bigg|\fint_{B_R(x_0)} \sigma_\xi^T \,dx\bigg|^q \bigg]^{1/q}
\\&
\nonumber
~~~~
\leq C q |\xi| \sqrt{T} \bigg(\frac{\varepsilon}{R}\bigg)^{d/2} \mathbb{E}\bigg[\bigg(\frac{r_{*,T,\xi}}{\varepsilon}\bigg)^{(d-\delta)q/(1-\tau)}\bigg]^{(1-\tau)/2q}.
\end{align}
\end{lemma}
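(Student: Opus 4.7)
The proof adapts the template of Lemma~\ref{LemmaEstimateLinearFunctionals}: apply the $L^{2q}$-spectral gap of Lemma~\ref{LemmaLqSpectralGap} to the functional $F$ in question, compute its Fr\'echet derivative $\partial F/\partial \omega_\varepsilon$ via an auxiliary function $h$ solving an adjoint massive PDE, bound the resulting sensitivity by $|\xi+\nabla \phi_\xi^T|\,|\nabla h|$ using \hyperlink{A3}{(A3)}, and close by combining (i) a H\"older splitting in probability using the hole-filling bound $\fint_{B_\varepsilon(x)}|\xi+\nabla \phi_\xi^T|^2\,d\tilde x \lesssim (r_{*,T,\xi}(x)/\varepsilon)^{d-\delta}|\xi|^2$ with (ii) a deterministic energy estimate on $\|\nabla h\|_{L^2}$. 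Throughout, stationarity of $\omega_\varepsilon$ combined with taking expectations in \eqref{EquationLocalizedCorrector} and \eqref{EquationLocalizedFluxCorrector} yields $\mathbb{E}[\phi_\xi^T]=0=\mathbb{E}[\sigma_\xi^T]$, so raw moments of $F$ coincide with centered moments and the spectral gap is directly applicable.

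For the difference estimate \eqref{EstimateCorrectorAverageDifference}, I would set $f:=\chi_{B_r(x_0)}/|B_r|-\chi_{B_R(x_0)}/|B_R|$ and introduce the Poisson potential $G$ with $-\Delta G=f$; integration by parts then gives
\[
\fint_{B_r(x_0)}\phi_\xi^T\,dx-\fint_{B_R(x_0)}\phi_\xi^T\,dx=\int f\,\phi_\xi^T\,dx=\int \nabla G\cdot\nabla\phi_\xi^T\,dx.
\]
The dual function $h$ solves $-\nabla\cdot(a_\xi^{T,*}\nabla h)+h/T=f$; testing with $h$ itself and rewriting $\int f h=\int \nabla G\cdot\nabla h$ (via $f=-\Delta G$) yields $\|\nabla h\|_{L^2}\lesssim \|\nabla G\|_{L^2}$. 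A direct radial computation (equivalently, a dyadic decomposition $G=\sum_l G_l$) then produces
\[
\varepsilon^d\|\nabla G\|_{L^2}^2\lesssim \sum_{l=0}^{\log_2(R/r)}(2^l r)^2\Big(\frac{\varepsilon}{2^l r}\Big)^d,
\]
precisely matching the square of the right-hand side. The corresponding bound for $\sigma_\xi^T$ is obtained along the same lines: inverting the massive Laplacian in \eqref{EquationLocalizedFluxCorrector} rewrites the difference of averages of $\sigma_\xi^T$ as $\int(\partial_k H\,q^T_{\xi,j}-\partial_j H\,q^T_{\xi,k})\,dx$ (with $H$ solving $-\Delta H+H/T=f$), and the sensitivity of this functional against $\omega_\varepsilon$ again has the form $|\xi+\nabla\phi_\xi^T|\,|\nabla\tilde h|$ for an auxiliary $\tilde h$ of comparable $L^2$-size, reducing the analysis to the previous case.

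For the average estimate \eqref{EstimateCorrectorAverage}, I would apply the $L^{2q}$-spectral gap directly to $F:=\fint_{B_R(x_0)}\phi_\xi^T\,dx$. The dual PDE now carries a non-mean-zero source: $-\nabla\cdot(a_\xi^{T,*}\nabla h)+h/T=\chi_{B_R(x_0)}/|B_R|$. Testing with $h$ and applying Cauchy--Schwarz give $\|\nabla h\|_{L^2}^2+\|h\|_{L^2}^2/T\leq(\fint_{B_R}h^2)^{1/2}$; combined with the trivial lower bound $\|h\|_{L^2}^2/T\geq(|B_R|/T)\fint_{B_R}h^2$ this bootstraps to $\fint_{B_R}h^2\lesssim(T/|B_R|)^2$ and hence $\|\nabla h\|_{L^2}^2\lesssim T R^{-d}$. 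Under the hypothesis $R\geq\sqrt T$ this is the dominant scale, and substitution into the spectral-gap machinery yields the claimed bound; the $\sigma_\xi^T$-part is handled analogously via the Green's-function identity. The main obstacle will be the sensitivity calculation for $\sigma_\xi^T$, whose dependence on $\omega_\varepsilon$ is doubly nonlinear (both through $A$ and through the corrector $\nabla\phi_\xi^T$); once $\partial F/\partial\omega_\varepsilon$ is represented as a linear functional of $\partial_\omega A$ against a gradient $\nabla h$ admitting a deterministic $L^2$-bound, the H\"older splitting and the $r_{*,T,\xi}$-moment closure proceed exactly as in Lemma~\ref{LemmaEstimateLinearFunctionals}.
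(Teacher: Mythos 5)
Your overall template is the same as the paper's: vanishing expectation of $F$ by stationarity, spectral gap in $L^{2q}$ form, sensitivity expressed via an auxiliary (adjoint) function $h$ solving a massive linear PDE, and closure via the hole-filling bound on $\fint_{B_\varepsilon}|\xi+\nabla\phi_\xi^T|^2$ together with stochastic moments of $r_{*,T,\xi}$. The deterministic scaling computations you do (the $\|\nabla G\|_{L^2}^2$ estimate and the bootstrap $\|\nabla h\|_{L^2}^2\lesssim T R^{-d}$ for the non-divergence source) are correct and consistent with the stated right-hand sides.

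There is, however, a genuine gap in your closure step. You propose to combine ``a H\"older splitting in probability'' with ``a deterministic energy estimate on $\|\nabla h\|_{L^2}$.'' This cannot work as stated. The splitting that is actually needed is the one in Lemma~\ref{LemmaEstimateSensitivity}: one must separate the stationary field $B(x)\sim|\xi|\,(r_{*,T,\xi}(x)/\varepsilon)^{(d-\delta)/2}$ from $\nabla h$, and since $B$ is a stationary, \emph{unbounded} random field (it has stretched-exponential moments, not an $L^\infty$ bound, and $\int_\Rd B^{2q}\,dx$ is divergent without a weight), the only way to control $\int_\Rd B(x)^2\,\fint_{B_\varepsilon(x)}|\nabla h|^2\,d\tilde x\,dx$ is to perform a H\"older inequality \emph{in physical space} with conjugate exponents strictly greater than $1$ and a decaying spatial weight $(1+|x|/r)^{-\alpha_0/(p-2)}$, which then requires a \emph{weighted $L^p$} bound on $\nabla h$ with $p>2$. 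That is exactly why Lemma~\ref{LemmaWeightedMeyers} is invoked in the paper. A plain deterministic $L^2$ energy estimate on $\nabla h$ does not furnish this higher integrability, and there is no purely probabilistic H\"older inequality that can compensate, because $\nabla h$ itself is random (through $a_\xi^{T,*}$) and correlated with $B$. So your step (ii) as written cannot be ``substituted into the spectral-gap machinery.''

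For \eqref{EstimateCorrectorAverageDifference} there is a second, related point. By introducing a single Poisson potential $G$ and using $g:=\nabla G$, you produce a test vector field that is \emph{not} compactly supported, so Lemma~\ref{LemmaEstimateLinearFunctionals} (which requires $\supp g\subset B_r(x_0)$ and a local $L^p$ bound) does not apply directly, and you would have to re-run the whole sensitivity argument for this particular non-compact $g$, which again forces you into the weighted Meyers estimate. The paper avoids this by writing the difference of averages as a telescoping sum over the sources $f_n=\frac{1}{|B_{2^n r}|}\chi_{B_{2^n r}}-\frac{1}{|B_{2^{n-1}r}|}\chi_{B_{2^{n-1}r}}$, solving local Neumann problems to obtain compactly supported $g_n$ with $\nabla\cdot g_n=f_n$ and $(\fint_{B_{2^n r}}|g_n|^p)^{1/p}\lesssim(2^n r)^{1-d}$ on each dyadic scale, and then directly applying the already established functional estimate of Lemma~\ref{LemmaEstimateLinearFunctionals} term by term. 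Your parenthetical remark ``(equivalently, a dyadic decomposition $G=\sum_l G_l$)'' gestures at exactly this fix; committing to it, and replacing the $L^2$ energy estimate by the weighted Meyers bound for each dyadic piece, is what is needed to close the argument.

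Your remarks on the $\sigma_\xi^T$ case are on the right track: as in Part~b of the proof of Lemma~\ref{LemmaEstimateLinearFunctionals}, the sensitivity requires a two-step duality (first inverting the massive Laplacian to get $\bar h$, then inverting the linearized operator $-\nabla\cdot(a_\xi^{T,*}\nabla\cdot)+T^{-1}$ to get $\hat h$), and the final bound for both auxiliary functions again comes from the weighted Meyers estimate rather than an $L^2$ energy bound.
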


We then establish the following moment bounds on the minimal radius $r_{*,T,\xi}$, which will enable us to deduce the corrector bounds in Proposition~\ref{PropositionCorrectorEstimate}.

\begin{lemma}[Moment bound on the minimimal radius]
\label{MomentsMinimalRadius}
Given the assumptions of Lemma~\ref{LemmaEstimateLinearFunctionals}, there exists $K_{mass}=K_{mass}(d,m,\lambda,\Lambda)$ such that the minimal radius $r_{*,T,\xi}$ has stretched exponential moments in the sense
\begin{align*}
\mathbb{E}\bigg[\exp\bigg(\frac{1}{C}\Big(\frac{r_{*,T,\xi}}{\varepsilon}\Big)^{1/C}\bigg)\bigg] \leq 2
\end{align*}
for some constant $C=C(d,m,\lambda,\Lambda,\rho)$ independent of $\xi$.
\end{lemma}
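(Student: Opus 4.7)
The plan is to combine the moment estimates of Lemma~\ref{LemmaEstimateLinearFunctionals} and Lemma~\ref{LemmaCorrectorAverages} into a self-improving (``buckling'') bound on the moments of $r_{*,T,\xi}/\varepsilon$, and then convert this polynomial-in-$q$ moment bound into the claimed stretched exponential moment bound by a standard Taylor expansion of the exponential. By stationarity of $\phi_\xi^T$ inherited from $\omega_\varepsilon$, the random field $r_{*,T,\xi}$ is stationary as well, so it suffices to work at $x_0=0$.

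By the definition of $r_{*,T,\xi}$ and a union bound over dyadic scales, for every dyadic $R=2^k\varepsilon$ with $k\geq 0$, the event $\{r_{*,T,\xi}(0)\geq R\}$ is contained in the union, over dyadic radii $R'=2^\ell\varepsilon\geq R/2$, of the failure events
\begin{equation*}
\Big\{\tfrac{1}{R'^{2}}\textstyle\fint_{B_{R'}}\big|\phi_\xi^T-\fint_{B_{R'}}\phi_\xi^T\big|^2\,dx>|\xi|^2\Big\}\quad\text{or}\quad\Big\{T^{-1/2}\big|\textstyle\fint_{B_{R'}}\phi_\xi^T\,dx\big|>K_{mass}|\xi|\Big\}.
\end{equation*}
The mass quantity is controlled directly by \eqref{EstimateCorrectorAverage} of Lemma~\ref{LemmaCorrectorAverages}, provided $K_{mass}$ is chosen appropriately large (depending only on $d,m,\lambda,\Lambda$). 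The centered oscillation is handled by a telescoping multiscale decomposition (the Lemma~\ref{LemmaMultiscaleDecomposition} indicated schematically in the proof diagram): one writes $\phi_\xi^T-\fint_{B_{R'}}\phi_\xi^T$ as a superposition of differences of averages at dyadic scales $\varepsilon\leq r\leq R'$, represents each such difference as a linear functional $\int g\cdot\nabla\phi_\xi^T\,dx$ with $g$ an admissible test field satisfying the hypothesis of Lemma~\ref{LemmaEstimateLinearFunctionals} (obtained e.g.~from a solution of an auxiliary Poisson problem), and applies the moment bound of Lemma~\ref{LemmaEstimateLinearFunctionals} to each term before summing the resulting geometric series.

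Combining these two ingredients and using Chebyshev's inequality yields a probability estimate of the form
\begin{equation*}
\mathbb P\big[r_{*,T,\xi}\geq R\big]\leq C(q,\tau)\,\Big(\tfrac{\varepsilon}{R}\Big)^{dq}\,\mathbb E\bigg[\Big(\tfrac{r_{*,T,\xi}}{\varepsilon}\Big)^{(d-\delta)q/(1-\tau)}\bigg]
\end{equation*}
for all $q\geq C$ and $\tau\in(0,1)$. Integrating in $R$ and using that $r_{*,T,\xi}\geq\varepsilon$ by definition, we obtain a buckling inequality
\begin{equation*}
\mathbb E\Big[\big(r_{*,T,\xi}/\varepsilon\big)^{\bar q}\Big]\leq C(\bar q,q,\tau)\,\mathbb E\Big[\big(r_{*,T,\xi}/\varepsilon\big)^{(d-\delta)q/(1-\tau)}\Big]^{\bar q/(dq)}
\end{equation*}
whenever $\bar q<dq$. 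The key point is that the hole-filling gain $\delta>0$ lets us pick $\tau$ so small that $(d-\delta)/(1-\tau)<d$; thus the exponent $(d-\delta)q/(1-\tau)\cdot \bar q/(dq)$ on the right-hand side is strictly less than $\bar q$, enabling absorption by Young's inequality. Starting from a trivial a~priori bound on small moments of $r_{*,T,\xi}/\varepsilon$ (finiteness being a consequence of the uniform $H^1_{\mathrm{uloc}}$-bound on $\phi_\xi^T$ coming from the energy estimate for \eqref{EquationLocalizedCorrector}), the bootstrap produces a polynomial-in-$q$ moment bound $\mathbb E[(r_{*,T,\xi}/\varepsilon)^q]^{1/q}\leq Cq^C$ uniformly in $T\geq 2\varepsilon^2$ and $x_0\in\R^d$. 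Expanding $\exp(C_0^{-1}X^{1/C_0})=\sum_{n\geq 0}(n!\,C_0^n)^{-1}X^{n/C_0}$ with $X=r_{*,T,\xi}/\varepsilon$ and using Stirling, the polynomial bound implies $\mathbb E[\exp(C_0^{-1}X^{1/C_0})]\leq 2$ for $C_0$ sufficiently large.

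The main obstacle is the multiscale decomposition step: one has to represent the centered $L^2$-oscillation of $\phi_\xi^T$ as a controlled sum of admissible linear functionals of $\nabla\phi_\xi^T$ in a way that is compatible with the moment bound in Lemma~\ref{LemmaEstimateLinearFunctionals} (i.e.~with test fields satisfying the support and $L^p$-normalization hypothesis), and one must carefully bookkeep how the resulting moment estimates buckle --- in particular, choosing $\tau$ small enough to exploit the hole-filling gain $\delta$ while keeping the constant $C(q,\tau)$ tractable. Once the buckling is set up correctly, the passage to stretched exponential moments is routine.
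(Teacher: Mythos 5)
Your plan relies on the multiscale decomposition (Lemma~\ref{LemmaMultiscaleDecomposition}) plus Chebyshev to control the oscillation-failure event, but this route loses too much decay in $R$ for the buckling to close when $d\geq 3$. Lemma~\ref{LemmaMultiscaleDecomposition} bounds $\big(\fint_{B_{R}}\big|\phi_\xi^T-\fint\phi_\xi^T\big|^2\big)^{1/2}$ by $CK\varepsilon$, where the constant $K$ combines moment bounds on the small-scale gradient averages $\fint_{B_\varepsilon}|\nabla\phi_\xi^T|^2$ (which by \eqref{EstimateGradByrstar} cost a factor $(r_{*,T,\xi}/\varepsilon)^{d-\delta}$) and on the linear functionals. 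The dominant contribution to this decomposition comes from the smallest scales $r\sim\varepsilon$, where the moment bound of Lemma~\ref{LemmaEstimateLinearFunctionals} gives no gain $(\varepsilon/r)^{d/2}\sim 1$; after normalizing by $R^2$, the resulting Chebyshev bound on the failure probability decays only like $(\varepsilon/R)^{2q}$, while the moment of $r_{*,T,\xi}/\varepsilon$ appearing on the right-hand side is of order $(d-\delta)q$. For the dyadic sum over $R$ to converge and then absorb the right-hand side, the working moment exponent must lie strictly below the probability-decay rate yet not below the right-hand-side exponent, which forces $d-\delta<2$; this fails for $d\geq 3$ since the hole-filling gain $\delta>0$ is small. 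Your claim of a $(\varepsilon/R)^{dq}$ decay rate cannot be produced by the decomposition you describe.

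The paper's proof sidesteps this by conditioning and by Lemma~\ref{LemmaWavelets}. On the event $\{r_{*,T,\xi}(x_0)=R\}$, the definition \eqref{Definitionrstar} (which controls both the oscillation and the $T^{-1/2}$-average at all dyadic scales $\geq R$) combined with the Caccioppoli inequality \eqref{CaccippoliMonotone} yields the deterministic scale-$R$ bound $\fint_{B_{dR}(x_0)}|\nabla\phi_\xi^T|^2\leq C|\xi|^2$. Lemma~\ref{LemmaWavelets} then splits the $L^2$-oscillation at scale $R$ into a small multiple of $\fint_{B_{dR}}|\nabla\phi_\xi^T|^2\leq C|\xi|^2$ -- which is absorbed against the failure threshold $c|\xi|^2$ by making the wavelet parameter small -- plus a \emph{bounded number} $N(d,K)$ of linear functionals of $\nabla\phi_\xi^T$ whose test fields $g_{n,R}$ are supported in $B_{dR}(x_0)$ and normalized at scale $R$. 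Applying Lemma~\ref{LemmaEstimateLinearFunctionals} to these finitely many scale-$R$ functionals produces the sharper decay $(\varepsilon/R)^{qd/2}$, which is just fast enough: with $\bar q=q(d-\delta/2)/2<qd/2$ and $1-\tau=(d-\delta)/(d-\delta/2)$, the dyadic sum converges with the small surplus $\delta/4$ in the exponent and the moment estimate buckles. The conditional Caccioppoli bound at scale $R$ and the wavelet reduction to a fixed finite number of scale-$R$ functionals are exactly the ingredients your proposal lacks.
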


It will be central for our optimal-order homogenization error estimates to obtain optimal-order estimates on \emph{differences of correctors} for different macroscopic field gradients $\xi$, like $\phi_{\xi_1}-\phi_{\xi_2}$ for $\xi_1,\xi_2\in \Rmd$. To estimate such differences, we will rely on estimates for the derivative $\phi_{\xi,\Xi}^T$ of the corrector $\phi_\xi^T$ with respect to $\xi$ in direction $\Xi$. Formally differentiating the corrector equation  \eqref{EquationLocalizedCorrector} with respect to $\xi$, we obtain for $\phi_{\xi,\Xi}^T:=\partial_\xi \phi_\xi^T \Xi$ the PDE
\begin{align*}
-\nabla \cdot \big(\partial_\xi A(\omega_\varepsilon,\xi+\nabla \phi_\xi^T)\big(\Xi+\nabla \phi_{\xi,\Xi}^T\big)\big)+\frac{1}{T}\phi_{\xi,\Xi}^T=0
\end{align*}
(see also \eqref{EquationLocalizedCorrectorLinearized} above).
It is interesting that this PDE again takes the form of a corrector equation, namely the corrector equation for the linear elliptic equation with random coefficient field $\partial_\xi A(\omega_\varepsilon,\xi+\nabla \phi_\xi^T)$.

We next show that this differentiation can be justified rigorously.
\begin{lemma}[Differentiability of the corrector with respect to $\xi$]
\label{LemmaCorrectorDifferentiability}
Let $T>0$ and assume \hyperlink{A1}{(A1)}--\hyperlink{A2}{(A2)}. For any $\xi\in \Rmd$ and any parameter field $\tilde \omega$, let $\phi_\xi^T$ denote the unique solution in $\Huloc(\Rd;\Rm)$ to the localized corrector equation \eqref{EquationLocalizedCorrector}.
Denote by $\phi_{\xi,\Xi}^T$ the unique solution in $\Huloc(\Rd;\Rm)$ to the localized linearized corrector equation \eqref{EquationLocalizedCorrectorLinearized}.
Then the map $\xi\mapsto \smash{\phi_\xi^T}$ -- as a map $\Rmd\rightarrow \Huloc(\Rd;\Rm)$ -- is differentiable with respect to $\xi$ in the Frech\'et sense with
\begin{align}
\label{DifferentiabilityCorrectorInXi}
\partial_\xi \phi_\xi^T \Xi=\phi_{\xi,\Xi}^T
\end{align}
for any $\Xi\in \Rmd$.

Similarly, denoting by $\sigma_\xi^T$ and $\sigma_{\xi,\Xi}^T$ the unique solutions in $\Huloc$ to the PDEs \eqref{EquationLocalizedFluxCorrector} and \eqref{EquationLocalizedFluxCorrectorLinearized}, the map $\xi\mapsto \smash{\sigma_\xi^T}$ -- as a map $\smash{\Rmd}\rightarrow \smash{H^1_{\rm uloc}(\Rd;\Rm\otimes \Rddskew)}$ -- is differentiable with respect to $\xi$ in the Frech\'et sense with
\begin{align*}
\partial_\xi \sigma_\xi^T \Xi=\sigma_{\xi,\Xi}^T
\end{align*}
for any $\Xi\in \Rmd$.
\end{lemma}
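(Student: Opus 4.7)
\medskip
\noindent\textit{Proof proposal.}
The plan is to compare the finite difference $D_\Xi := \phi_{\xi+\Xi}^T - \phi_\xi^T$ with the candidate derivative $\phi_{\xi,\Xi}^T$ and show that the remainder is $o(|\Xi|)$ in $\Huloc$. Subtracting the corrector equation~\eqref{EquationLocalizedCorrector} for $\phi_{\xi+\Xi}^T$ and $\phi_\xi^T$, and writing the difference of the nonlinearity by the fundamental theorem of calculus, we obtain
\begin{equation*}
-\nabla\cdot\big(\bar a^\Xi(\Xi+\nabla D_\Xi)\big)+\tfrac{1}{T}D_\Xi=0
\quad\text{with}\quad
\bar a^\Xi(x):=\int_0^1 \partial_\xi A\big(\tilde\omega(x),\xi+\nabla\phi_\xi^T(x)+t(\Xi+\nabla D_\Xi(x))\big)\,dt.
\end{equation*}
By \hyperlink{A1}{(A1)}--\hyperlink{A2}{(A2)} the coefficient field $\bar a^\Xi$ is uniformly $\lambda$-elliptic and $\Lambda$-bounded, so a standard $\Huloc$ energy estimate (using the massive term $\frac{1}{T}$ together with the exponential localization underlying Lemma~\ref{L:localized}) yields $\|D_\Xi\|_{\Huloc}\leq C(T)|\Xi|$. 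In particular $D_\Xi\to 0$ in $\Huloc$ as $|\Xi|\to 0$.

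Next, set $r_\Xi:=D_\Xi-\phi_{\xi,\Xi}^T$. Subtracting \eqref{EquationLocalizedCorrectorLinearized} from the PDE for $D_\Xi$ and regrouping the coefficient, $r_\Xi$ solves
\begin{equation*}
-\nabla\cdot\big(\bar a^\Xi\nabla r_\Xi\big)+\tfrac{1}{T}r_\Xi=\nabla\cdot\big((\bar a^\Xi-a_\xi^T)(\Xi+\nabla\phi_{\xi,\Xi}^T)\big),
\end{equation*}
and the same $\Huloc$ energy estimate gives $\|r_\Xi\|_{\Huloc}\leq C(T)\|(\bar a^\Xi-a_\xi^T)(\Xi+\nabla\phi_{\xi,\Xi}^T)\|_{L^2_{\mathrm{uloc}}}$. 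To show that the right-hand side is $o(|\Xi|)$, we first note the scaling identity $\phi_{\xi,\Xi}^T=|\Xi|\,\phi_{\xi,\Xi/|\Xi|}^T$ together with the linear energy estimate, which gives $\|\Xi+\nabla\phi_{\xi,\Xi}^T\|_{L^2_{\mathrm{uloc}}}\leq C|\Xi|$, and moreover a Meyers improvement provides the higher integrability $\|\Xi+\nabla\phi_{\xi,\Xi}^T\|_{L^{2+\delta}_{\mathrm{uloc}}}\leq C|\Xi|$ for some $\delta>0$. On the other hand, since $\partial_\xi A(\omega,\cdot)$ is continuous by \hyperlink{A2}{(A2)} and $\nabla D_\Xi\to 0$ in $L^2_{\mathrm{loc}}$, a subsequence converges a.e., so $\bar a^\Xi\to a_\xi^T$ a.e.; combined with the uniform bound $|\bar a^\Xi-a_\xi^T|\leq 2\Lambda$ and dominated convergence, this yields $\bar a^\Xi\to a_\xi^T$ in $L^p_{\mathrm{uloc}}$ for every $p<\infty$. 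H\"older's inequality with exponents $(2+\delta,\frac{2(2+\delta)}{\delta})$ then gives $\|(\bar a^\Xi-a_\xi^T)(\Xi+\nabla\phi_{\xi,\Xi}^T)\|_{L^2_{\mathrm{uloc}}}=o(|\Xi|)$, hence $\|r_\Xi\|_{\Huloc}=o(|\Xi|)$, proving \eqref{DifferentiabilityCorrectorInXi}. As the limit is unique, full sequences converge (not just subsequences).

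For the flux corrector we follow the same template, but the situation is simpler because \eqref{EquationLocalizedFluxCorrector} is \emph{linear} in $\sigma_\xi^T$ with right-hand side depending only on $q_\xi^T:=A(\tilde\omega,\xi+\nabla\phi_\xi^T)$. The identity $q_{\xi+\Xi}^T-q_\xi^T-q_{\xi,\Xi}^T=(\bar a^\Xi-a_\xi^T)(\Xi+\nabla\phi_{\xi,\Xi}^T)+\bar a^\Xi\nabla r_\Xi$ and the estimates above show that $q_\xi^T$ is Fr\'echet differentiable as a map $\Rmd\to L^2_{\mathrm{uloc}}$ with derivative $q_{\xi,\Xi}^T$. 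Applying the $\Huloc$ energy estimate for the screened Poisson equation to the difference $\sigma_{\xi+\Xi}^T-\sigma_\xi^T-\sigma_{\xi,\Xi}^T$ (whose right-hand side involves only $q_{\xi+\Xi}^T-q_\xi^T-q_{\xi,\Xi}^T$) then completes the proof.

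The main obstacle is controlling $(\bar a^\Xi-a_\xi^T)(\Xi+\nabla\phi_{\xi,\Xi}^T)$ with \emph{strict} $o(|\Xi|)$ decay in $L^2_{\mathrm{uloc}}$: the coefficient $\bar a^\Xi$ depends nonlinearly on $D_\Xi$ itself, so convergence $\bar a^\Xi\to a_\xi^T$ is only available in $L^p_{\mathrm{loc}}$ (via dominated convergence), and one must upgrade integrability of $\Xi+\nabla\phi_{\xi,\Xi}^T$ by a Meyers estimate in order to absorb this gap through H\"older's inequality. Under the stronger regularity assumption \hyperlink{R}{(R)} one could instead directly exploit Lipschitz continuity of $\partial_\xi A$, but we avoid this since the lemma is stated under \hyperlink{A1}{(A1)}--\hyperlink{A2}{(A2)} alone.
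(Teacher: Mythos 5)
Your overall architecture is sound and close in spirit to the paper's: both proofs compare the finite difference $\phi_{\xi+\Xi}^T-\phi_\xi^T$ with the candidate derivative $\phi_{\xi,\Xi}^T$, write the error equation, and apply the exponentially localized energy estimate. The genuine difference is in how the residual on the right-hand side of the error equation is controlled, and this is exactly where your argument has a gap.

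The paper's proof (in Appendix~B) Taylor-expands $A$ to second order and invokes the uniform bound $|\partial_\xi^2 A|\leq\Lambda$ from assumption \hyperlink{R}{(R)} to obtain a \emph{pointwise, uniform-in-$x$} estimate on the residual, namely $|R|\lesssim h^{1+\delta}|\Xi|^{1+\delta}+|\nabla\phi_{\xi+h\Xi}^T-\nabla\phi_\xi^T|^{1+\delta}$. Fed into Lemma~\ref{L:exploc} centered at an arbitrary $x_0$, this yields the required $\Huloc$ bound with a uniform rate. (Note that the paper's lemma only lists \hyperlink{A1}{(A1)}--\hyperlink{A2}{(A2)} in its hypotheses, while the proof explicitly uses \hyperlink{R}{(R)} --- a minor discrepancy in the paper that you could legitimately point out.)

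You, by contrast, try to prove the statement under \hyperlink{A1}{(A1)}--\hyperlink{A2}{(A2)} alone, using only continuity of $\partial_\xi A(\omega,\cdot)$ plus dominated convergence. This is where the argument breaks. Dominated convergence, applied after extracting an a.e.-convergent subsequence of $\nabla D_\Xi$, gives $\bar a^\Xi\to a_\xi^T$ only in $L^p_{\rm loc}(\Rd)$ --- not in $L^p_{\rm uloc}(\Rd)$ as you assert in the body of the proof. (Indeed, your own closing paragraph contradicts the body, correctly downgrading the claim to $L^p_{\rm loc}$.) But Fr\'echet differentiability as a map into $\Huloc$ requires $\sup_{x_0\in\Rd}\|r_\Xi\|_{H^1(B_1(x_0))}=o(|\Xi|)$, i.e.\ \emph{uniformity} in the ball center $x_0$. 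Your H\"older/Meyers step raises the integrability exponent of $\Xi+\nabla\phi_{\xi,\Xi}^T$, which is fine, but it cannot repair a lack of uniformity over $x_0$: the product of something small in $L^q_{\rm loc}$ with something bounded in $L^{2+\delta}_{\rm uloc}$ is only small in $L^2_{\rm loc}$, and the energy estimate at a far-away $x_0$ then sees no smallness at all.

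The obstruction is concrete: under \hyperlink{A1}{(A1)}--\hyperlink{A2}{(A2)} alone, the modulus of continuity of $\partial_\xi A(\tilde\omega(x),\,\cdot\,)$ near the base point $\xi+\nabla\phi_\xi^T(x)$ is not controlled uniformly over $x$ (the base point $\nabla\phi_\xi^T(x)$ is only $L^2_{\rm uloc}$, not $L^\infty$, and $\partial_\xi A$ need not have a uniform modulus of continuity on unbounded sets). This is precisely what the $|\partial_\xi^2 A|\leq\Lambda$ bound from \hyperlink{R}{(R)} buys, and why the paper uses it. Without it, your argument establishes Fr\'echet differentiability into $H^1_{\rm loc}$, but not into $\Huloc$. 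To repair your proof, either invoke \hyperlink{R}{(R)} (as the paper does) so that $|\bar a^\Xi-a_\xi^T|\leq\Lambda|\Xi+\nabla D_\Xi|$ pointwise, or else acknowledge that the conclusion holds only in the local topology under \hyperlink{A1}{(A1)}--\hyperlink{A2}{(A2)}.
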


In order to establish appropriate estimates on the linearized corrector $\phi_{\xi,\Xi}^T$, we again start by deriving estimates for linear functionals of the corrector gradient.

\begin{lemma}[Estimates for linear functionals of the corrector and the flux corrector for the linearized equation]
\label{LemmaEstimateLinearFunctionalsLinearized}
Let the assumptions \hyperlink{A1}{(A1)}--\hyperlink{A3}{(A3)} and \hyperlink{P1}{(P1)}--\hyperlink{P2}{(P2)} be satisfied. Suppose in addition that the regularity condition \hyperlink{R}{(R)} holds.
Let $\xi,\Xi \in \Rmd$, $K_{mass}\geq C(d,m,\lambda,\Lambda)$, $T\geq 2\varepsilon^2$, and let $\phi_{\xi,\Xi}^T$, $\sigma_{\xi,\Xi}^T$ be the unique solutions in $\Huloc$ to the corrector equations for the linearized problem \eqref{EquationLocalizedCorrectorLinearized} and \eqref{EquationLocalizedFluxCorrectorLinearized}. In case $d\geq 3$, denote by $\theta_\xi^T$ the (up to constants unique) sublinearly growing solution to \eqref{EquationPotentialFieldLinearizedGauge}. Define for any $x_0\in \Rd$
\begin{align}
\label{Definitionrstar2}
r_{*,T,\xi,\Xi}(x_0) :=
\inf \bigg\{r=2^k\varepsilon: k\in\mathbb N_0
  \text{ and for all }R=2^\ell\varepsilon\geq r,\,\ell\in\mathbb N_0,\text{ we have both }
  \\
\notag 
  \frac{1}{R^2} \fint_{B_R(x_0)} \bigg|\phi_{\xi,\Xi}^T-\fint_{B_R(x_0)}\phi_{\xi,\Xi}^T\,d\tilde x\bigg|^2 \,dx \leq |\Xi|^2~
\\
\notag
\text{ and }~
\frac{1}{\sqrt{T}} \bigg|\fint_{B_R(x_0)} \phi_{\xi,\Xi}^T \,dx\bigg| \leq K_{mass} |\Xi|~
\bigg\}.
\end{align}
Then the random field $r_{*,T,\xi,\Xi}$ is well-defined and stationary. Let $F$ denote one of the following random variables
\begin{equation*}
  \begin{aligned}
    &F:=\int_{\Rd}g(x)\cdot \nabla\phi^T_{\xi,\Xi}(x)\,dx,\\
    \text{or }\qquad&F:=\int_{\Rd}g(x)\cdot \nabla\sigma^T_{\xi,\Xi,jk}(x)\,dx,~1\leq i,j\leq d,\\
        \text{or }\qquad&F:=\int_{\Rd}g(x)\cdot \frac{1}{r}\nabla\theta^T_{\xi,\Xi,i}(x)\,dx,\,~1\leq i\leq d,\text{ (in the case $d\geq 3$ only)},
\end{aligned}
\end{equation*}
for a deterministic vector field $g\in L^{p}(\Rd;\Rmd)$ satisfying for some $x_0\in\R^d$, $r\geq\varepsilon$, and some exponent $2<p<2+c$  (with a constant $c=c(d,m,\lambda,\Lambda)>0$ defined in the proof below),
\begin{align*}
  \supp g\subset B_r(x_0)\qquad\text{and}\qquad\bigg(\fint_{B_r(x_0)} |g|^{p} \,dx\bigg)^{1/p}\leq r^{-d}.
\end{align*}
Then there exists an exponent $\delta=\delta(d,m,\lambda,\Lambda)>0$ (coming from hole-filling) such that the stochastic moments of the functional $F$ satsify
\begin{align*}
~~~~~~~~~~
\mathbb{E}\big[|F|^{2q}\big]^{1/2q}
&\leq C (1+|\xi|^C) |\Xi| q^C \bigg(\frac{\varepsilon}{r} \bigg)^{d/2}
\mathbb{E}\Big[\Big(\frac{r_{*,T,\xi,\Xi}}{\varepsilon}\Big)^{(d-\delta)q/(1-\tau)}\bigg]^{(1-\tau)/2q}
\end{align*}
for any $0<\tau<1$ and any $q\geq C$. Here, the constant $C$ may depend on $d$, $m$, $\lambda$, $\Lambda$, $\rho$, $K_{mass}$, $p$, $\nu$, and $\tau$ (but not on $\varepsilon$, $\xi$, $\Xi$, $x_0$, and $r$).\\
In particular, all of our estimates are independent of $T\geq 2\varepsilon^2$.
\end{lemma}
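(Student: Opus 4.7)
The plan is to adapt the strategy of Lemma~\ref{LemmaEstimateLinearFunctionals} to the linearized correctors, with the essential new feature that $\phi_{\xi,\Xi}^T$ depends on the parameter field $\omega_\varepsilon$ both \emph{directly} through $\partial_\xi A$ and \emph{indirectly} through the nonlinear corrector $\phi_\xi^T$ entering the linearized coefficient $a_\xi^T(x) = \partial_\xi A(\omega_\varepsilon(x),\xi+\nabla\phi_\xi^T(x))$. After invoking the $L^{2q}$ version of the spectral gap inequality (Lemma~\ref{LemmaLqSpectralGap}), the task reduces to a pointwise bound on the Fr\'echet derivative $\partial F/\partial\omega_\varepsilon$; this composite dependence forces the use of \emph{two} different auxiliary dual problems.

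Concretely, I would perturb $\omega_\varepsilon$ by $\delta\omega_\varepsilon$ and differentiate \eqref{EquationLocalizedCorrector} and \eqref{EquationLocalizedCorrectorLinearized} (justified via Lemma~\ref{LemmaCorrectorDifferentiability}) to derive linear PDEs for $\delta\phi_\xi^T$ and $\delta\phi_{\xi,\Xi}^T$; the latter carries a source involving both $\partial_\omega\partial_\xi A\,\delta\omega_\varepsilon$ and $\partial_\xi^2 A\,\nabla(\delta\phi_\xi^T)$. Introducing dual functions $h,\tilde h \in \Huloc$ that solve, in terms of the adjoint coefficient $a_\xi^{T,*}$,
\begin{align*}
-\nabla\cdot(a_\xi^{T,*}\nabla h) + \tfrac{1}{T}h &= \nabla\cdot g,\\
-\nabla\cdot(a_\xi^{T,*}\nabla\tilde h) + \tfrac{1}{T}\tilde h &= -\nabla\cdot\bigl(\partial_\xi^2 A(\omega_\varepsilon,\xi+\nabla\phi_\xi^T)(\Xi+\nabla\phi_{\xi,\Xi}^T)\,\nabla h\bigr),
\end{align*}
and using the sensitivity equation for $\delta\phi_\xi^T$ to eliminate the indirect source, a pairing argument produces the representation
\begin{equation*}
\frac{\partial F}{\partial\omega_\varepsilon} = \partial_\omega\partial_\xi A(\omega_\varepsilon,\xi+\nabla\phi_\xi^T)(\Xi+\nabla\phi_{\xi,\Xi}^T)\,\nabla h + \partial_\omega A(\omega_\varepsilon,\xi+\nabla\phi_\xi^T)\,\nabla\tilde h,
\end{equation*}
bounded pointwise, by \hyperlink{A3}{(A3)}, by $C|\Xi+\nabla\phi_{\xi,\Xi}^T||\nabla h| + C|\xi+\nabla\phi_\xi^T||\nabla\tilde h|$. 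The flux-corrector and potential-field variants of $F$ reduce to this case via the gauge equations \eqref{EquationLocalizedFluxCorrectorLinearized} and \eqref{EquationPotentialFieldLinearizedGauge}, which modify the dual problems in a harmless way.

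The main obstacle --- and the reason why \hyperlink{R}{(R)} is required here, in contrast to Lemma~\ref{LemmaEstimateLinearFunctionals} --- is a small-scale integrability issue for the $\tilde h$ problem: its right-hand side contains the product $|\Xi+\nabla\phi_{\xi,\Xi}^T||\nabla h|$, which a priori is only $L^1$ and would preclude any $L^2$ energy estimate on $\tilde h$. Under \hyperlink{R}{(R)}, the Lipschitz regularity of $\omega_\varepsilon$ on scale $\varepsilon$ together with $|\partial_\xi^2 A|\leq \Lambda$ yields small-scale $C^{1,\alpha}$ bounds (Lemma~\ref{PointwiseRegularityLinearized}) which upgrade $\fint_{B_\varepsilon}|\Xi+\nabla\phi_{\xi,\Xi}^T|^2$ to a genuinely pointwise control (up to a random prefactor with stretched-exponential moments), and similarly for $\xi+\nabla\phi_\xi^T$. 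Combined with a Meyers improvement (Lemma~\ref{LemmaWeightedMeyers}) for $\nabla h$ and $\nabla\tilde h$ opening a small window $p>2$ matching the integrability assumed on $g$, this converts the problematic source into an honest $L^2$ quantity and produces energy-type bounds on the duals of the order $r^{-d/2}$.

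With the sensitivity estimates in hand the argument closes as in Lemma~\ref{LemmaEstimateLinearFunctionals}: H\"older's inequality turns the pointwise bound into a control on $\mathbb{E}[|\int_\Rd(\fint_{B_\varepsilon}|\partial F/\partial\omega_\varepsilon|)^2\,dx|^q]^{1/q}$; the Caccioppoli and hole-filling estimates (Lemma~\ref{LemmaCaccioppoliHoleFilling}) applied between scale $\varepsilon$ and scale $r_{*,T,\xi,\Xi}$ deliver the crucial exponent gain $\delta>0$ via $\fint_{B_\varepsilon}|\Xi+\nabla\phi_{\xi,\Xi}^T|^2\lesssim (r_{*,T,\xi,\Xi}/\varepsilon)^{d-\delta}|\Xi|^2$, while the parallel term in $|\xi+\nabla\phi_\xi^T|^2$ is controlled by the already established moment bounds on $r_{*,T,\xi}$ from Lemma~\ref{MomentsMinimalRadius} and absorbed into a $(1+|\xi|^C)$ prefactor. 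The additional $|\xi|$-dependence reflects that the source driving the $\tilde h$-problem scales with $|\Xi+\nabla\phi_{\xi,\Xi}^T|$ without the natural $(1+|\xi|)^{-1}$ attenuation absent from \hyperlink{A3}{(A3)}, as discussed in the remark following Corollary~\ref{CorollaryTotalRVE}. This produces the claimed estimate with exponent $(d-\delta)q/(1-\tau)$ on $r_{*,T,\xi,\Xi}$, uniformly in $T\geq 2\varepsilon^2$.
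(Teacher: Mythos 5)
Your proposal is correct and follows essentially the same route as the paper: differentiate the linearized corrector equation, observe that the sensitivity source splits into a direct part (through $\partial_\omega\partial_\xi A$) and an indirect part (through $\nabla\delta\phi_\xi^T$), introduce a pair of dual problems for $h$ and $\hat h$ driven respectively by $g$ and by $\partial_\xi^2 A(\Xi+\nabla\phi_{\xi,\Xi}^T)\nabla h$, obtain the representation $\partial F/\partial\omega_\varepsilon = \partial_\omega\partial_\xi A\,(\Xi+\nabla\phi_{\xi,\Xi}^T)\cdot\nabla h + \partial_\omega A\cdot\nabla\hat h$, and close with the $L^{2q}$ spectral gap, Lemma~\ref{LemmaEstimateSensitivity}, the weighted Meyers estimate, and the moment bounds on $r_{*,T,\xi}$. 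You also correctly identify the central role of \hyperlink{R}{(R)}: without pointwise control of $\Xi+\nabla\phi_{\xi,\Xi}^T$ the source in the $\hat h$ equation cannot be closed in $L^p$, which is precisely where the $C^{1,\alpha}$ theory and Lemma~\ref{PointwiseRegularityLinearized} enter. Two small presentational gaps worth noting: the paper's post-processing of the $\hat h$ term requires a somewhat delicate H\"older decoupling between the stationary random prefactor $\mathcal{C}_{reg,\xi}^p(r_{*,T,\xi,\Xi}/\varepsilon)^{(d-\delta)p/2}$ and the deterministic $|\nabla h|^{p^2/2}$ factor (done by smuggling in a suitable weight $\varphi$ and applying H\"older in probability with exponents $\bar\tau$, $\tau$), which your sketch compresses into one sentence; and the flux-corrector case in Part~b actually requires a three-layer chain of dual problems ($\bar h$, $h_2$, $h_3$) rather than the two-layer one your ``harmless modification'' suggests. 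Neither affects the correctness of the strategy.
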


We also derive estimates on the averages of the linearized correctors that are analogous to the ones of Lemma~\ref{LemmaCorrectorAverages}.
\begin{lemma}
\label{LemmaCorrectorAveragesLinearized}
Given the assumptions of Lemma~\ref{LemmaEstimateLinearFunctionalsLinearized}, for any $0<\tau<1$, any $x_0\in \Rd$, any $r\geq \varepsilon$, and any $R\geq r$ we have the estimate
\begin{align}
\label{EstimateCorrectorAverageDifferenceLinearized}
&\mathbb{E}\bigg[\bigg|\fint_{B_r(x_0)} \phi_{\xi,\Xi}^T \,dx - \fint_{B_R(x_0)} \phi_{\xi,\Xi}^T \,dx \bigg|^{2q} \bigg]^{1/2q}
\\&
\nonumber
+\mathbb{E}\bigg[\bigg|\fint_{B_r(x_0)} \sigma_{\xi,\Xi}^T \,dx - \fint_{B_R(x_0)} \sigma_{\xi,\Xi}^T \,dx \bigg|^{2q} \bigg]^{1/2q}
\\&
\nonumber
~
\leq C (1+|\xi|)^C |\Xi| q^C \mathbb{E}\bigg[\bigg(\frac{r_{*,T,\xi,\Xi}}{\varepsilon}\bigg)^{(d-\delta)q/(1-\tau)}\bigg]^{(1-\tau)/2q} \Bigg(\sum_{l=0}^{\log_2 \frac{R}{r}} (2^l r)^2 \bigg(\frac{\varepsilon}{2^l r}\bigg)^d \Bigg)^{1/2}.
\end{align}
Furthermore, for any $R\geq \sqrt{T}$ we have the bound
\begin{align}
\label{EstimateCorrectorAverageLinearized}
&\mathbb{E}\bigg[\bigg|\fint_{B_R(x_0)} \phi_{\xi,\Xi}^T \,dx\bigg|^q \bigg]^{1/q}
+\mathbb{E}\bigg[\bigg|\fint_{B_R(x_0)}  \sigma_{\xi,\Xi}^T \,dx\bigg|^q \bigg]^{1/q}
\\&
\nonumber
~~~~
\leq C (1+|\xi|)^C |\Xi| q^C \sqrt{T} \bigg(\frac{\varepsilon}{R}\bigg)^{d/2} \mathbb{E}\bigg[\bigg(\frac{r_{*,T,\xi,\Xi}}{\varepsilon}\bigg)^{(d-\delta)q/(1-\tau)}\bigg]^{(1-\tau)/2q}.
\end{align}
\end{lemma}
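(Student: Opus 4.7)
The plan is to adapt the proof of Lemma~\ref{LemmaCorrectorAverages} essentially verbatim, replacing every invocation of Lemma~\ref{LemmaEstimateLinearFunctionals} by Lemma~\ref{LemmaEstimateLinearFunctionalsLinearized}. All deterministic manipulations---dyadic telescoping, Poisson-type potentials for differences of mollified indicators, and integration by parts against the massive corrector equation---are independent of the underlying PDE and carry over without change. Only the stochastic moment bounds on the resulting linear functionals of $\nabla\phi_{\xi,\Xi}^T$ and $\nabla\sigma_{\xi,\Xi}^T$ are updated, so that the prefactor $|\xi|q$ from Lemma~\ref{LemmaEstimateLinearFunctionals} becomes $(1+|\xi|)^C|\Xi|q^C$, and $r_{*,T,\xi}$ is replaced by $r_{*,T,\xi,\Xi}$.

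For~\eqref{EstimateCorrectorAverageDifferenceLinearized}, let $\eta_\rho$ denote a smooth mollification of $|B_\rho(x_0)|^{-1}\chi_{B_\rho(x_0)}$ and set $L=\lceil\log_2(R/r)\rceil$. I would write the dyadic telescoping
\[
\fint_{B_r(x_0)}\phi_{\xi,\Xi}^T\,dx-\fint_{B_R(x_0)}\phi_{\xi,\Xi}^T\,dx=\sum_{l=0}^{L-1}\int_{\Rd}(\eta_{2^lr}-\eta_{2^{l+1}r})\,\phi_{\xi,\Xi}^T\,dx.
\]
Since $\eta_{2^lr}-\eta_{2^{l+1}r}$ has vanishing mean and support in $B_{2^{l+1}r}(x_0)$, it is the divergence of a vector field $h_l$ with $\supp h_l\subset B_{2^{l+1}r}(x_0)$ and $\|h_l\|_{L^\infty}\lesssim(2^lr)^{1-d}$. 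Integration by parts recasts the $l$-th summand as $F_l:=-\int h_l\cdot\nabla\phi_{\xi,\Xi}^T\,dx$, which after rescaling by the factor $(2^lr)^{-1}$ falls under Lemma~\ref{LemmaEstimateLinearFunctionalsLinearized} at scale $2^{l+1}r$; this yields the per-scale bound of order $(1+|\xi|)^C|\Xi|\,(2^lr)(\varepsilon/(2^lr))^{d/2}$ times the claimed moment factor of $r_{*,T,\xi,\Xi}/\varepsilon$. Rather than summing via the triangle inequality---which would lose a factor of $\sqrt{\log(R/r)}$ in $d=2$ compared to the claim---I would apply the spectral-gap moment bound of Lemma~\ref{LemmaLqSpectralGap} directly to $F=\sum_l F_l$, exploiting that the Fr\'echet sensitivities $\partial F_l/\partial\omega_\varepsilon$ are concentrated on the essentially disjoint dyadic annuli $B_{2^{l+1}r}(x_0)\setminus B_{2^lr}(x_0)$; their pointwise $L^2$-norms in $x$ aggregate scale by scale, producing precisely the sum-of-squares factor $\bigl(\sum_l(2^lr)^2(\varepsilon/(2^lr))^d\bigr)^{1/2}$. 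The argument for $\sigma_{\xi,\Xi}^T$ is identical, with $\nabla\sigma_{\xi,\Xi,jk}^T$ in place of $\nabla\phi_{\xi,\Xi}^T$.

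For~\eqref{EstimateCorrectorAverageLinearized} I would exploit the mass term in~\eqref{EquationLocalizedCorrectorLinearized}: testing the equation against a smooth cutoff $\eta$ equal to $|B_R(x_0)|^{-1}$ on $B_R(x_0)$ with $\supp\eta\subset B_{2R}(x_0)$ gives, up to a boundary-layer error absorbable into~\eqref{EstimateCorrectorAverageDifferenceLinearized},
\[
\fint_{B_R(x_0)}\phi_{\xi,\Xi}^T\,dx=-T\int_{\Rd}\nabla\eta\cdot a_\xi^T(\Xi+\nabla\phi_{\xi,\Xi}^T)\,dx.
\]
The right-hand side is a nonlinear functional of $\omega_\varepsilon$ whose Fr\'echet sensitivity splits into a contribution via $a_\xi^T=\partial_\xi A(\omega_\varepsilon,\xi+\nabla\phi_\xi^T)$ (handled by the sensitivity analysis of Lemma~\ref{LemmaEstimateLinearFunctionals} as in the proof of Lemma~\ref{LemmaCorrectorAverages}) and a contribution via $\nabla\phi_{\xi,\Xi}^T$ (handled by Lemma~\ref{LemmaEstimateLinearFunctionalsLinearized}). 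Since $\|T\nabla\eta\|_{L^\infty}\lesssim T/R$ on $\supp\nabla\eta\subset B_{2R}(x_0)$, rescaling the weight to meet the $L^p$-normalization at scale $2R$ yields a bound of order $(T/R)(\varepsilon/R)^{d/2}$, which for $R\geq\sqrt T$ is dominated by $\sqrt T(\varepsilon/R)^{d/2}$ as claimed. The hardest step will be justifying the boundary-layer claim, namely that choosing the transition layer of width comparable to $R$ indeed keeps the difference between $\eta$ and $|B_R(x_0)|^{-1}\chi_{B_R(x_0)}$ at a level absorbable by~\eqref{EstimateCorrectorAverageDifferenceLinearized}; this is also the reason the estimates must be proved in the order~\eqref{EstimateCorrectorAverageDifferenceLinearized} first, \eqref{EstimateCorrectorAverageLinearized} afterwards.
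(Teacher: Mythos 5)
For the first estimate \eqref{EstimateCorrectorAverageDifferenceLinearized}, your overall plan---dyadic telescoping of the averages, representing the dyadic differences as linear functionals of $\nabla\phi_{\xi,\Xi}^T$ via vector potentials of the difference of indicators, and invoking the moment bounds of Lemma~\ref{LemmaEstimateLinearFunctionalsLinearized}---is the same as the paper's (Part~d of the proof of Lemma~\ref{LemmaEstimateLinearFunctionals} and Lemma~\ref{LemmaCorrectorAverages}, which Lemma~\ref{LemmaCorrectorAveragesLinearized} inherits by the "analogous" reference). Your variant of feeding $F=\sum_l F_l$ \emph{directly} into the spectral-gap moment bound, rather than estimating scale by scale, is a legitimate way to arrive at the square-root-of-sum. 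But the justification you give---that the Fr\'echet sensitivities $\partial F_l/\partial\omega_\varepsilon$ are concentrated on essentially disjoint dyadic annuli---is wrong: each sensitivity has the form $\partial_\omega\partial_\xi A(\omega_\varepsilon,\xi+\nabla\phi_\xi^T)(\Xi+\nabla\phi_{\xi,\Xi}^T)\cdot\nabla h_l+\partial_\omega A(\omega_\varepsilon,\xi+\nabla\phi_\xi^T)\cdot\nabla\hat h_l$, where $h_l,\hat h_l$ solve the global dual equations \eqref{EquationDualLinearized}, \eqref{EquationDual2Linearized} with compactly supported data, so $\nabla h_l,\nabla\hat h_l$ spread over all of $\Rd$ with only polynomial/exponential decay. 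What actually produces the square-root sum is the deterministic pointwise bound on the aggregated source, $|\sum_l g_l(x)|\lesssim\max(r,|x-x_0|)^{1-d}$: because each $g_l$ is dominant on a separate dyadic scale, the aggregated $L^p$-weighted norm entering Lemma~\ref{LemmaWeightedMeyers} is controlled by $\bigl(\sum_l\|g_l\|^2\bigr)^{1/2}$, and the cross terms are absorbed by the hole-filling decay of the weight. This is the mechanism that beats the plain triangle inequality you correctly flag as lossy in $d=2$; it should be stated correctly.

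For the second estimate \eqref{EstimateCorrectorAverageLinearized}, your route diverges from the paper's and has a genuine gap. The paper treats $F:=\fint_{B_R(x_0)}\phi_{\xi,\Xi}^T$ directly as a functional of $\omega_\varepsilon$: since $\mathbb{E}[\phi_{\xi,\Xi}^T]=0$ (take the expectation in \eqref{EquationLocalizedCorrectorLinearized} and use stationarity), one has $\mathbb{E}[F]=0$ and one simply repeats the sensitivity analysis with the dual equation \eqref{EquationDualLinearized} replaced by the non-divergence-form variant $-\nabla\cdot(a_\xi^{T,*}\nabla h)+T^{-1}h=|B_R|^{-1}\chi_{B_R(x_0)}$; the extra factor $\sqrt T$ then comes out of Lemma~\ref{LemmaWeightedMeyers} for this non-divergence right-hand side. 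Your route of testing \eqref{EquationLocalizedCorrectorLinearized} against a smooth cutoff $\eta$ gives the identity $\int\eta\,\phi_{\xi,\Xi}^T=-T\int\nabla\eta\cdot a_\xi^T(\Xi+\nabla\phi_{\xi,\Xi}^T)$, and the main term does have the claimed magnitude $\sim\sqrt T(\varepsilon/R)^{d/2}$ for $R\ge\sqrt T$. However, the boundary-layer error $\int(\eta-|B_R|^{-1}\chi_{B_R(x_0)})\,\phi_{\xi,\Xi}^T$ is \emph{not} absorbable into \eqref{EstimateCorrectorAverageDifferenceLinearized}: up to an $O(1)$ normalization it equals a linear combination of $\fint_{B_{2R}}\phi_{\xi,\Xi}^T$ and $\fint_{B_R}\phi_{\xi,\Xi}^T$, i.e.\ precisely the quantities you are trying to estimate, while \eqref{EstimateCorrectorAverageDifferenceLinearized} only controls \emph{differences} of sharp averages and gives no absolute bound. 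Worse, a naive bound of the boundary-layer term via a vector potential supported in $B_{2R}$ and Lemma~\ref{LemmaEstimateLinearFunctionalsLinearized} yields a contribution of size $R(\varepsilon/R)^{d/2}$, which dominates the claimed $\sqrt T(\varepsilon/R)^{d/2}$ whenever $R>\sqrt T$. Finally, the functional $-T\int\nabla\eta\cdot a_\xi^T(\Xi+\nabla\phi_{\xi,\Xi}^T)$ itself is not directly of the form treated in Lemma~\ref{LemmaEstimateLinearFunctionalsLinearized}, as $\omega_\varepsilon$ also enters through the coefficient $a_\xi^T=\partial_\xi A(\omega_\varepsilon,\xi+\nabla\phi_\xi^T)$, so a fresh sensitivity computation would be needed. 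The paper's route, where only the source of the dual equation changes, is both shorter and avoids the circularity.
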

We then establish moment bounds on the minimal radius $r_{*,T,\xi,\Xi}$.
\begin{lemma}[Moment bound on the minimimal radius of the linearized equation]
\label{MomentsMinimalRadiusLinearized}
Given the assumptions of Lemma~\ref{LemmaEstimateLinearFunctionalsLinearized}, the minimal radius $r_{*,T,\xi,\Xi}$ has stretched exponential moments in the sense
\begin{align*}
\mathbb{E}\bigg[\exp\bigg(\frac{1}{C}\Big(\frac{r_{*,T,\xi,\Xi}}{(1+|\xi|^C)\varepsilon}\Big)^{1/C}\bigg)\bigg] \leq 2
\end{align*}
for some constant $C=C(d,m,\lambda,\Lambda,\rho,\nu)$.
\end{lemma}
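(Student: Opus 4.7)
The plan is to follow the template of Lemma~\ref{MomentsMinimalRadius} (the nonlinear analogue), carefully tracking how the additional prefactor $(1+|\xi|)^C$ from Lemma~\ref{LemmaEstimateLinearFunctionalsLinearized} and Lemma~\ref{LemmaCorrectorAveragesLinearized} propagates through the argument and is ultimately absorbed into the normalization $(1+|\xi|^C)\varepsilon$. By the standard equivalence between stretched exponential moments and polynomial-in-$q$ growth of all algebraic moments, it suffices to establish
\begin{equation*}
\mathbb{E}\bigg[\bigg(\frac{r_{*,T,\xi,\Xi}}{(1+|\xi|^C)\varepsilon}\bigg)^q\bigg]^{1/q} \leq C\,q^{C}\quad \text{for all }q\geq 1.
\end{equation*}

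By stationarity we may fix $x_0=0$. The event $\{r_{*,T,\xi,\Xi}>R\}$ for a dyadic $R=2^k\varepsilon$ forces, by the definition \eqref{Definitionrstar2}, a violation of either the oscillation condition or the mass condition at some dyadic scale $R'\geq R$. A union bound combined with Markov's inequality applied at a large exponent $q$ then reduces the argument to high-moment estimates for $\fint_{B_{R'}}\big|\phi_{\xi,\Xi}^T-\fint_{B_{R'}}\phi_{\xi,\Xi}^T\big|^2$ and for $\fint_{B_{R'}}\phi_{\xi,\Xi}^T$. The mean is controlled directly by Lemma~\ref{LemmaCorrectorAveragesLinearized}: for $R'\geq\sqrt{T}$ by \eqref{EstimateCorrectorAverageLinearized}, and for $R'<\sqrt{T}$ by first telescoping up to scale $\sqrt{T}$ via \eqref{EstimateCorrectorAverageDifferenceLinearized} and then invoking \eqref{EstimateCorrectorAverageLinearized} at scale $\sqrt T$. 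The oscillation is in turn bounded by telescoping from the $\varepsilon$-scale average (again via \eqref{EstimateCorrectorAverageDifferenceLinearized}) together with the small-scale oscillation on $B_\varepsilon$, which is estimated through Poincar\'e and Lemma~\ref{LemmaEstimateLinearFunctionalsLinearized} applied to $\nabla\phi_{\xi,\Xi}^T$ tested against suitable averaging kernels supported in $B_\varepsilon$. Each of these bounds carries a universal prefactor $(1+|\xi|)^C|\Xi|\,q^C$ and involves the moment $\mathbb{E}[(r_{*,T,\xi,\Xi}/\varepsilon)^{(d-\delta)q/(1-\tau)}]^{(1-\tau)/2q}$ on the right-hand side.

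Combining these ingredients yields a tail inequality of the schematic form
\begin{equation*}
\mathbb{P}\big(r_{*,T,\xi,\Xi}(0)>R\big) \leq \bigg(\frac{C(1+|\xi|)^{C}q^{C}\varepsilon}{R}\bigg)^{cq}\,\mathbb{E}\bigg[\Big(\tfrac{r_{*,T,\xi,\Xi}}{\varepsilon}\Big)^{(d-\delta)q/(1-\tau)}\bigg]^{(1-\tau)/2}
\end{equation*}
for all dyadic $R\geq\varepsilon$ and all $q\geq C$, where $c>0$ and the hole-filling gain $\delta>0$ are independent of $\xi,\Xi,T,R$. Integrating in $R$ and choosing $\tau$ and $q$ so that the decay rate on $R$ strictly dominates the exponent $(d-\delta)/(1-\tau)$ inside the moment on the right, we arrive at a buckling inequality $\mathbb{E}[X^{\alpha q}]\leq A^{q}+B\,\mathbb{E}[X^{\alpha q}]^{\theta}$ with $\theta<1$ for $X:=r_{*,T,\xi,\Xi}/((1+|\xi|^C)\varepsilon)$; this is precisely the place where the hole-filling gain $\delta$ is used. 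Standard absorption then delivers the polynomial-in-$q$ moment bound, and therefore the stretched exponential bound.

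The main obstacle will be the bookkeeping around the $(1+|\xi|)^C$ factor: the exponent $C$ in the normalization $(1+|\xi|^C)\varepsilon$ must be chosen large enough, in terms of the exponents appearing in Lemma~\ref{LemmaEstimateLinearFunctionalsLinearized} and Lemma~\ref{LemmaCorrectorAveragesLinearized}, to absorb these prefactors completely, so that the resulting buckling inequality for $X$ closes with constants independent of $\xi$. Apart from this additional accounting, the proof is a direct transcription of the argument for Lemma~\ref{MomentsMinimalRadius}.
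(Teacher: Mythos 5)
The overall architecture you describe (tail estimate via Chebyshev on dyadic scales, identification of Lemma~\ref{LemmaEstimateLinearFunctionalsLinearized}, Lemma~\ref{LemmaCorrectorAveragesLinearized}, and \eqref{EstimateGradByrstar2} as the linearized substitutes, and the final buckling through the hole-filling gain $\delta$) matches the paper's one-line proof, which simply declares the argument analogous to Lemma~\ref{MomentsMinimalRadius} after these substitutions. The mass-condition branch, handled through Lemma~\ref{LemmaCorrectorAveragesLinearized}, is fine, and your observation that the $(1+|\xi|)^C$ prefactors from the linearized lemmas can be absorbed into the normalization $(1+|\xi|^C)\varepsilon$ is the only genuinely new bookkeeping required.

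However, your treatment of the oscillation branch contains a genuine gap, and it is not cosmetic. First, you propose to estimate the small-scale oscillation via ``Poincar\'e and Lemma~\ref{LemmaEstimateLinearFunctionalsLinearized} applied to $\nabla\phi_{\xi,\Xi}^T$ tested against averaging kernels in $B_\varepsilon$.'' Lemma~\ref{LemmaEstimateLinearFunctionalsLinearized} controls \emph{linear} functionals $\int g\cdot\nabla\phi_{\xi,\Xi}^T$; it does not, and cannot, control the quadratic energy $\fint_{B_\varepsilon}|\nabla\phi_{\xi,\Xi}^T|^2$ that Poincar\'e produces. For that one must use \eqref{EstimateGradByrstar2} (as in the paper), or better yet, the observation from the proof of Lemma~\ref{MomentsMinimalRadius}: within the event $\{r_{*,T,\xi,\Xi}(x_0)=R\}$, the Caccioppoli inequality combined with the very definition \eqref{Definitionrstar2} gives the \emph{deterministic} bound $\fint_{B_{dR}(x_0)}|\nabla\phi_{\xi,\Xi}^T|^2\leq C|\Xi|^2$. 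Second, and more seriously, the telescoping-of-averages route (which is effectively Lemma~\ref{LemmaMultiscaleDecomposition} applied to $\phi_{\xi,\Xi}^T$ followed by Markov on the resulting $L^2$-oscillation) yields a tail decay of only $(\varepsilon/R)^q$, whereas the buckling argument requires the decay exponent to dominate $(d-\delta)/(2(1-\tau))\approx d/2$. For $d\geq 3$ the series $\sum_k(2^k)^{q(d-\delta/2)/2}(2^{-k})^q$ diverges, so the inequality does not close. The paper avoids this by going through Lemma~\ref{LemmaWavelets}: after subtracting a small multiple of the deterministic gradient bound, the remaining oscillation at scale $R$ is controlled by finitely many \emph{linear} functionals $\int g_{n,R}\cdot\nabla\phi_{\xi,\Xi}^T$ of size at least $c|\Xi|$, and applying Markov \emph{directly to these linear functionals} via Lemma~\ref{LemmaEstimateLinearFunctionalsLinearized} gives the tail bound with the full $(\varepsilon/R)^{qd/2}$ decay. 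This is the step you are missing: you must convert the large-oscillation event into a large-linear-functional event \emph{before} applying Chebyshev, not after taking $L^2$ moments of the oscillation itself.
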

Note that under a different decorrelation assumption -- namely, finite range of dependence as opposed to a spectral gap inequality -- stochastic moment bounds for (basically) the quantity $r_{*,\infty,\xi,\Xi}$ have already been established in \cite{ArmstrongFergusonKuusi}. The estimates in \cite{ArmstrongFergusonKuusi} even achieve optimal stochastic integrability.

A necessary ingredient for the sensitivity estimates for functionals of the linearized corrector $\phi_{\xi,\Xi}^T$ -- as derived in Lemma~\ref{LemmaEstimateLinearFunctionalsLinearized} -- is the following regularity estimate {with rather strong stochastic integrability. It is }a consequence of our corrector estimates for the nonlinear problem and the small-scale regularity condition \hyperlink{R}{(R)}.
\begin{lemma}
\label{PointwiseRegularityLinearized}
Let the assumptions \hyperlink{A1}{(A1)}--\hyperlink{A3}{(A3)} and \hyperlink{P1}{(P1)}--\hyperlink{P2}{(P2)} be satisfied. Suppose furthermore that the small-scale regularity condition \hyperlink{R}{(R)} holds. Then there exist $\delta=\delta(d,m,\lambda,\Lambda,\rho,\nu)>0$ and a stationary nonnegative random field $\mathcal{C}_{reg,\xi}=\mathcal C_{reg,\xi}(\omega_\e,x_0)$ with uniformly bounded stretched exponential moments
\begin{align*}
\mathbb{E}\Big[\exp\Big(\frac{1}{C}\mathcal{C}_{reg,\xi}^{1/C}\Big)\Big]\leq 2
\end{align*}
such that the estimate
\begin{align}
\label{EstimateLinearCorrectorLinfty}
\sup_{\tilde x\in B_{\varepsilon/2}(x_0)} |\Xi+\nabla \phi_{\xi,\Xi}^T|(\tilde x)
\leq
\mathcal{C}_{reg,\xi}(x_0) (1+|\xi|)^C |\Xi| \bigg(\frac{r_{*,T,\xi,\Xi}(x_0)}{\varepsilon}\bigg)^{(d-\delta)/2}
\end{align}
holds for any $x_0\in \Rd$. Here, $C=C(d,m,\lambda,\Lambda,\rho,\nu)$.
\end{lemma}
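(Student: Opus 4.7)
The plan is to bootstrap from an $L^2$-type corrector bound at the scale $r_{*,T,\xi,\Xi}(x_0)$ down to a pointwise bound on $B_{\varepsilon/2}(x_0)$, using a small-scale Schauder regularity theory that is available thanks to assumption \hyperlink{R}{(R)}. I would organize the argument in three stages.

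\textbf{Stage 1: Hölder regularity of the linearized coefficient field on the $\varepsilon$-scale.} Under \hyperlink{R}{(R)}, the nonlinearity $A$ has a bounded second derivative $\partial_\xi^2 A$, and $\omega_\varepsilon$ is Lipschitz on balls of radius $\varepsilon$ with a Lipschitz constant $\mathcal{C}(x)\varepsilon^{-1}$ having stretched exponential moments. In each of the three sub-cases of \hyperlink{R}{(R)} (scalar equation, $d\leq 2$, Uhlenbeck structure), classical nonlinear $C^{1,\alpha}$ regularity applies to the nonlinear corrector equation \eqref{EquationLocalizedCorrector}, and a Campanato-type iteration starting from the $L^2$-gradient bound supplied by Proposition~\ref{PropositionCorrectorEstimate} gives
\begin{equation*}
[\nabla \phi_\xi^T]_{C^\alpha(B_\varepsilon(x_0))}\leq \mathcal{C}_{\rm nl,\xi}(x_0)(1+|\xi|)^C\varepsilon^{-\alpha},
\end{equation*}
with $\mathcal{C}_{\rm nl,\xi}$ stationary and of stretched exponential moments. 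Combining this with the Lipschitz bound on $\omega_\varepsilon$ and the $C^{1,1}$-dependence of $A$ on its two arguments (via \hyperlink{A3}{(A3)} and \hyperlink{R}{(R)}), we conclude that $a_\xi^T(x)=\partial_\xi A(\omega_\varepsilon(x),\xi+\nabla \phi_\xi^T(x))$ has Hölder seminorm on $B_\varepsilon(x_0)$ controlled by a random field of the same type.

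\textbf{Stage 2: Linear Schauder estimate for the linearized corrector equation.} With $a_\xi^T$ now Hölder continuous on $B_\varepsilon(x_0)$, the equation \eqref{EquationLocalizedCorrectorLinearized} is a linear uniformly elliptic system with Hölder coefficients plus a harmless lower-order mass term $\frac{1}{T}\phi_{\xi,\Xi}^T$. Standard Schauder theory then yields
\begin{equation*}
\sup_{B_{\varepsilon/2}(x_0)}|\Xi+\nabla \phi_{\xi,\Xi}^T|\leq C\bigl(1+\varepsilon^\alpha[a_\xi^T]_{C^\alpha(B_\varepsilon(x_0))}\bigr)\biggl(\fint_{B_\varepsilon(x_0)}|\Xi+\nabla \phi_{\xi,\Xi}^T|^2\,dx\biggr)^{1/2}+ \mathrm{l.o.t.},
\end{equation*}
where the lower-order terms, coming from the massive term, are absorbed using the second condition in the definition \eqref{Definitionrstar2} of $r_{*,T,\xi,\Xi}$ and the bound $T\geq 2\varepsilon^2$.

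\textbf{Stage 3: Upscaling from $\varepsilon$ to $r_{*,T,\xi,\Xi}(x_0)$ via Caccioppoli and hole-filling.} A naive $L^2$-embedding between the two scales would cost a factor $(r_{*,T,\xi,\Xi}/\varepsilon)^{d/2}$. Iterating the hole-filling inequality of Lemma~\ref{LemmaCaccioppoliHoleFilling} (applied to the linear elliptic system for $\phi_{\xi,\Xi}^T$) across the dyadic annuli between $\varepsilon$ and $r_{*,T,\xi,\Xi}(x_0)$ strictly improves this to $(r_{*,T,\xi,\Xi}/\varepsilon)^{(d-\delta)/2}$ for a fixed $\delta=\delta(d,m,\lambda,\Lambda)>0$. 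At the top scale, a standard Caccioppoli estimate for the linearized equation combined with the defining property \eqref{Definitionrstar2} of $r_{*,T,\xi,\Xi}$ gives $\bigl(\fint_{B_{r_{*,T,\xi,\Xi}(x_0)}(x_0)}|\Xi+\nabla\phi_{\xi,\Xi}^T|^2\bigr)^{1/2}\leq C|\Xi|$. Concatenating the three stages and setting $\mathcal{C}_{\rm reg,\xi}(x_0):=C\bigl(1+\varepsilon^\alpha[a_\xi^T]_{C^\alpha(B_\varepsilon(x_0))}\bigr)$ yields \eqref{EstimateLinearCorrectorLinfty}; the stretched exponential moment bound on $\mathcal{C}_{\rm reg,\xi}$ follows from the corresponding bounds on $\mathcal{C}_{\rm nl,\xi}$ and $\mathcal{C}$ from \hyperlink{R}{(R)}, since products of random variables with stretched exponential moments have stretched exponential moments of a (possibly worse) exponent.

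\textbf{Main obstacle.} The delicate step is Stage 1: obtaining a microscopic $C^{1,\alpha}$ estimate for the nonlinear corrector $\phi_\xi^T$ whose Hölder seminorm depends only polynomially on $|\xi|$ and whose implicit random prefactor has stretched exponential moments. This is where the three structural restrictions in \hyperlink{R}{(R)} are essential, since for general systems in $d\geq 3$ small-scale $C^{1,\alpha}$ regularity can fail; the $|\xi|$-dependence has to be tracked carefully through the nonlinear Schauder iteration because the modulus of continuity of $\partial_\xi A(\omega,\xi+\nabla\phi_\xi^T)$ depends on the pointwise size of $\xi+\nabla \phi_\xi^T$.
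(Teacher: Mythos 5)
Your three-stage argument matches the paper's proof, which applies Proposition~\ref{PropositionRegularityLinearizedCorrector} (your Stages~1--2: a small-scale Schauder estimate for the linearized equation, enabled by the Hölder continuity of $a_\xi^T$ obtained in Proposition~\ref{C1alphaRegularity} from the $C^{1,\alpha}$ regularity of $\nabla\phi_\xi^T$ under \hyperlink{R}{(R)}) to control $\sup_{B_{\varepsilon/2}(x_0)}|\Xi+\nabla\phi_{\xi,\Xi}^T|$ by the $L^2$ average on $B_\varepsilon(x_0)$, and then invokes the Caccioppoli/hole-filling bound \eqref{EstimateGradByrstar2} (your Stage~3) to upscale that average to the scale $r_{*,T,\xi,\Xi}(x_0)$. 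The paper's proof consists of exactly these two references, so your proposal is a correct, somewhat more detailed account of the same route.
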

\begin{proof}
We use the assumption of small-scale regularity \hyperlink{R}{(R)} to yield by Proposition~\ref{PropositionRegularityLinearizedCorrector}
\begin{align*}
\sup_{\tilde x\in B_{\varepsilon/2}(x_0)} |\Xi+\nabla \phi_{\xi,\Xi}^T|(\tilde x)
&\leq \mathcal{C}_{reg,\xi}(x_0) (1+|\xi|)^C \bigg(|\Xi|^2+\fint_{B_\varepsilon(x_0)} |\Xi+\nabla \phi_{\xi,\Xi}^T|^2 \,d\tilde x\bigg)^{\tfrac{1}{2}}.
\end{align*}
Using \eqref{EstimateGradByrstar2} to estimate the right-hand side in this equation, we obtain the desired bound.
\end{proof}

The following result converts estimates on linear functionals of the gradient of a random field and estimates on the gradient of the random field into $L^p$-estimates for the random field.
\begin{lemma}[Estimate on the $L^p$ norm by a multiscale decomposition]
\label{LemmaMultiscaleDecomposition}
Let $\gamma>0$, $\varepsilon>0$, $m\geq 2$, and $K\geq 0$. Let $u=u(\omega_\varepsilon,x)$ be a random field subject to the estimates
\begin{align}
\label{IntLemma.AssumptionGradient}
\mathbb{E}\left[\left(\fint_{B_\varepsilon(x_0)} |\nabla u|^2 \,dx\right)^{m/2}\right]^{1/m}
\leq K
\quad\quad\text{for all }x_0\in \mathbb{R}^d
\end{align}
and
\begin{align}
\label{IntLemma.AssumptionGradientAverages}
\mathbb{E}\left[\left(\int_\Rd \nabla u \cdot g \,dx\right)^m \right]^{1/m}
\leq K \bigg(\frac{\varepsilon}{r}\bigg)^\gamma
\end{align}
for all $r\geq 2\varepsilon$, all $x_0\in \mathbb{R}^d$, and all vector fields $g:\mathbb{R}^d \rightarrow \mathbb{R}^d$ supported in $B_r(x_0)$ satisfying
\begin{align}\nonumber
\Big(\fint_{B_r(x_0)} |g|^{2+1/d} \,dx\Big)^{1/(2+1/d)}\,\leq \, r^{-d}.
\end{align}
Then for all $r\geq \varepsilon$ and $2\leq p<\infty$ with $p\leq \frac{2d}{(d-2)_+}$ we have
\begin{align*}
&\mathbb{E}\left[\left(\fint_{B_r(x_0)} \left|u-\fint_{B_r(x_0)}u\right|^p \,dx\right)^{m/p} \right]^{1/m}
  \\
  &\qquad \leq
C_3 K \varepsilon
\begin{cases}
(r/\varepsilon)^{1-\gamma} &\text{for }\gamma<1,
\\
\sqrt{\log(r/\varepsilon)} &\text{for }\gamma=1\text{ and }p=2,
\\
\log(r/\varepsilon) &\text{for }\gamma=1\text{ and }p>2,
\\
1 &\text{for }\gamma>1,
\end{cases}
\end{align*}
with $C_3$ depending on $\gamma$ and $p$, but being independent of $m$.
\end{lemma}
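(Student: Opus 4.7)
The strategy is a dyadic multiscale decomposition: split $u-\fint_{B_r(x_0)}u$ into contributions at scales $r_k:=2^k\varepsilon$ for $k=0,\ldots,K$ with $K:=\lceil\log_2(r/\varepsilon)\rceil$, handle the smallest scale with the pointwise gradient bound \eqref{IntLemma.AssumptionGradient}, and represent each coarser-scale difference as a linear functional of $\nabla u$ to which the sensitivity bound \eqref{IntLemma.AssumptionGradientAverages} applies. Writing $u_\rho(x):=\fint_{B_\rho(x)}u\,dy$ for the scale-$\rho$ mollification of $u$ and using that $u_{r_K}(x_0)=\fint_{B_{r_K}(x_0)}u$, one has the telescoping identity
\begin{equation*}
  u(x)-\fint_{B_{r_K}(x_0)}u=\bigl(u(x)-u_\varepsilon(x)\bigr)+\sum_{k=0}^{K-1}\bigl(u_{r_k}(x)-u_{r_{k+1}}(x)\bigr)+\bigl(u_{r_K}(x)-u_{r_K}(x_0)\bigr),
\end{equation*}
valid for every $x\in B_r(x_0)$.

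\textbf{Scale-by-scale estimates.} The microscopic term $u-u_\varepsilon$ is controlled on each ball $B_\varepsilon(x)$ by Poincar\'e (together with Sobolev embedding when $p>2$) and \eqref{IntLemma.AssumptionGradient}, giving a contribution of order $\varepsilon$ in the required $L^m(\Omega;L^p(B_r))$-norm. For each dyadic difference, I construct a compactly supported vector field $G_k^x$ satisfying
\begin{equation*}
  \nabla\cdot G_k^x=\tfrac{1}{|B_{r_k}|}\mathbf 1_{B_{r_k}(x)}-\tfrac{1}{|B_{r_{k+1}}|}\mathbf 1_{B_{r_{k+1}}(x)},\qquad \supp G_k^x\subset B_{r_{k+1}}(x),
\end{equation*}
with $\|G_k^x\|_{L^\infty}\lesssim r_k^{1-d}$; since the source is radial about $x$ with zero mean, an elementary radial ansatz yields such a $G_k^x$ explicitly. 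Integration by parts gives $u_{r_k}(x)-u_{r_{k+1}}(x)=-\int_{\R^d}\nabla u\cdot G_k^x\,dy$, and after rescaling $G_k^x$ to match the normalization of \eqref{IntLemma.AssumptionGradientAverages} at scale $r_{k+1}$, the assumption yields the pointwise moment bound
\begin{equation*}
  \mathbb E\bigl[|u_{r_k}(x)-u_{r_{k+1}}(x)|^m\bigr]^{1/m}\leq CK\,\varepsilon^\gamma r_{k+1}^{1-\gamma}
\end{equation*}
uniformly in $x\in B_r(x_0)$. The recentering term $u_{r_K}(x)-u_{r_K}(x_0)$ is handled analogously at scale $r_K\sim r$, by a vector field with divergence equal to the difference of the two normalized indicators $|B_r|^{-1}\mathbf 1_{B_r(x)}$ and $|B_r|^{-1}\mathbf 1_{B_r(x_0)}$, which is again mean-zero and supported in a ball of radius $\sim r$ around $x_0$.

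\textbf{Summation and main obstacle.} Passing from the pointwise moment bound to the $L^p(B_r(x_0))$-norm via Minkowski's integral inequality (valid for $m\geq p$, to which the general case reduces by power-mean monotonicity of $m$-th moments) and summing the $\varepsilon^\gamma r_{k+1}^{1-\gamma}$ contributions in $k$ yields the geometric series
\begin{equation*}
  \sum_{k=0}^{K}\varepsilon^{\gamma}r_k^{1-\gamma}\;\sim\; \varepsilon\cdot
  \begin{cases}
    (r/\varepsilon)^{1-\gamma} &\text{for }\gamma<1,\\
    \log(r/\varepsilon) &\text{for }\gamma=1,\\
    1 &\text{for }\gamma>1,
  \end{cases}
\end{equation*}
which is exactly the claim apart from the $\sqrt{\log}$ improvement in the critical case $p=2$, $\gamma=1$. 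Recovering this improvement is the main technical difficulty: for it I expand $\mathbb E\bigl\|\sum_k(u_{r_k}-u_{r_{k+1}})\bigr\|_{L^2(B_r)}^2$ and exploit a near-$L^2$-orthogonality of the dyadic blocks $f_k:=u_{r_k}-u_{r_{k+1}}$. For $k<l$ the cross-term $\mathbb E\int_{B_r}f_k f_l\,dx$ is rewritten using the integral representation $f_l=-\int\nabla u\cdot G_l^\cdot\,dy$ and is controlled by a second application of \eqref{IntLemma.AssumptionGradientAverages} to a suitably renormalized test field; this produces decay in $|k-l|$ that is summable, so that only the diagonal $\sum_k\mathbb E\|f_k\|_{L^2(B_r)}^2\sim K|B_r|\varepsilon^2$ survives, and square rooting yields the sharp $\sqrt{K}\,\varepsilon=\sqrt{\log(r/\varepsilon)}\,\varepsilon$. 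The explicit radial construction of $G_k^x$ and the conversion between the moment indices $m$ and $p$ are routine compared to this orthogonality step.
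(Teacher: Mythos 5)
Your telescoping strategy and the scale-by-scale moment bounds are sound: the construction of the divergence-form vector fields $G_k^x$, the correct rescaling to match the assumed normalization (which costs a factor $\sim r_k$), and the resulting pointwise bound of order $\varepsilon^\gamma r_k^{1-\gamma}$ per scale all check out, and summing the geometric series reproduces the stated rates for $\gamma\neq 1$ and for $p>2$, $\gamma=1$. The paper itself does not prove this lemma directly but cites an external reference for the $L^2$ version; your decomposition is in the same spirit as that reference for the non-critical regimes.

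The gap is in your treatment of the critical case $p=2$, $\gamma=1$. You propose to control the cross-terms $\mathbb{E}\int_{B_r}f_kf_l\,dx$ ``by a second application of \eqref{IntLemma.AssumptionGradientAverages} to a suitably renormalized test field,'' claiming this yields decay in $|k-l|$. This cannot work as stated. Writing $f_l(x)=-\int\nabla u\cdot G_l^x\,dy$ and freezing the other factor, the would-be test field $g(y)=\int_{B_r}f_k(x)\,G_l^x(y)\,dx$ is \emph{random} (it depends on $\nabla u$ through $f_k$), while \eqref{IntLemma.AssumptionGradientAverages} is only available for deterministic $g$; and the quantity $\int_{B_r}f_kf_l$ is quadratic in $\nabla u$, so no single application of the linear-functional bound can control it. Moreover, nothing in the hypotheses forces any kind of cross-correlation decay in $|k-l|$ between the random variables $f_k(x)$ and $f_l(x)$: they are both functionals of the same field $\nabla u$, and the assumption gives no information on their joint law. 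So the ``near-orthogonality in probability'' you are aiming for is not available.

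The correct mechanism for the $\sqrt{\log}$ improvement is pathwise (spatial) orthogonality, not stochastic orthogonality. One replaces $B_r(x_0)$ by a comparable box and expands $u-\fint u$ in a complete orthonormal system $\{e_n\}$ (e.g.\ a cosine/Fourier series, exactly as in Lemma~\ref{LemmaWavelets} of the paper), so that for each realization $\|u-\fint u\|_{L^2}^2=\sum_n|c_n|^2$ \emph{exactly}, with each coefficient $c_n$ a linear functional of $\nabla u$ after integration by parts. Then, since $m\geq 2$, the triangle inequality in $L^{m/2}(\Omega)$ gives
\begin{equation*}
  \mathbb E\Big[\Big(\sum_n|c_n|^2\Big)^{m/2}\Big]^{2/m}\leq\sum_n\mathbb E\big[|c_n|^m\big]^{2/m},
\end{equation*}
and each summand is bounded by \eqref{IntLemma.AssumptionGradientAverages} at the frequency scale of $e_n$. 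Summing over the $\log(r/\varepsilon)$ dyadic frequency annuli, each of which contributes $O(\varepsilon^2)$ when $\gamma=1$, gives $\sqrt{\log(r/\varepsilon)}\cdot\varepsilon$. For $p>2$ Plancherel is unavailable, one must fall back on the triangle inequality in $L^p$ across dyadic blocks, and one only recovers $\log(r/\varepsilon)$ --- consistent with the statement. You should replace your cross-term argument by this orthogonal-expansion argument.
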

\begin{proof}
The $L^2$-version of the statement is shown e.\,g.\ in \cite[Lemma 12]{BellaFehrmanFischerOtto}. The $L^p$ version is proven analogously.
\end{proof}

Finally, note that our main results rely on estimates for the correctors $\phi_\xi$ and $\sigma_\xi$ (and not the localized approximations $\phi_\xi^T$ and $\sigma_\xi^T$). While all of our bounds on the localized correctors $\phi_\xi^T$ and $\sigma_\xi^T$ are uniform with respect to the parameter $T\geq 2\varepsilon^2$, it remains to justify the passage to the limit $T\rightarrow \infty$.
\begin{lemma}[Convergence of the localized correctors in the limit $T\rightarrow \infty$]
\label{LemmaCorrectorMassiveLimit}
Let \hyperlink{A1}{(A1)}--\hyperlink{A3}{(A3)} and \hyperlink{P1}{(P1)}--\hyperlink{P2}{(P2)} be satisfied.
Let $\phi_\xi^T$ and $\sigma_\xi^T$ be the unique solutions to the localized corrector equations \eqref{EquationLocalizedCorrector} and \eqref{EquationLocalizedFluxCorrector} in $\Huloc$.
For any $r>0$, as $T\rightarrow \infty$ the stationary random fields $\nabla \phi_\xi^T$ and $\nabla \sigma_{\xi,jk}^T$ converge strongly in $L^2(\Omega\times B_r)$ to the corrector gradients $\nabla \phi_\xi$ and $\nabla \sigma_{\xi,jk}$, with the correctors $\phi_\xi$ and $\sigma_\xi$ as defined in Definition~\ref{D:corr}.
\end{lemma}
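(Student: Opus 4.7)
My plan is to combine the uniform-in-$T$ moment bounds of Proposition~\ref{PropositionCorrectorEstimate} with a Minty monotonicity argument in the spirit of the standard existence proof for correctors of monotone operators. Weak compactness produces a subsequential limit of $\nabla\phi_\xi^T$; monotonicity together with the limiting PDE and the uniqueness statement in Definition~\ref{D:corr} identify the limit as $\nabla\phi_\xi$; monotonicity once more upgrades weak to strong convergence. The corresponding statement for $\sigma_\xi^T$ then follows from the \emph{linear} structure of \eqref{EquationLocalizedFluxCorrector}.

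\textbf{Weak compactness and passage to the limit in the PDE.} By Proposition~\ref{PropositionCorrectorEstimate} combined with a Caccioppoli-type inequality (Lemma~\ref{LemmaCaccioppoliHoleFilling}) and the stationarity of $\nabla\phi_\xi^T$, one has $\mathbb{E}[|\nabla\phi_\xi^T|^2]\leq C|\xi|^2$ uniformly in $T\geq 2\varepsilon^2$. Lipschitz continuity \hyperlink{A2}{(A2)} then gives a uniform $L^2(\Omega\times B_r)$ bound for $A(\omega_\varepsilon,\xi+\nabla\phi_\xi^T)$ as well. Extracting a subsequence $T_n\to\infty$ along which
\begin{equation*}
\nabla\phi_\xi^{T_n}\rightharpoonup G,\qquad A(\omega_\varepsilon,\xi+\nabla\phi_\xi^{T_n})\rightharpoonup H
\end{equation*}
weakly in $L^2(\Omega\times B_r)$ for every $r>0$, I would pass to the limit in the distributional formulation of \eqref{EquationLocalizedCorrector} tested against $\psi\in C_c^\infty(\Rd;\Rm)$. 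The massive contribution $\frac{1}{T_n}\int\phi_\xi^{T_n}\psi\,dx$ vanishes in the limit: for $d\geq 3$ this is immediate from \eqref{CorrectorEstimateWithoutAverage}, while for $d\leq 2$ the combination of \eqref{CorrectorEstimate} and \eqref{CorrectorEstimateAveragesLowd} yields an $L^2_{\rm loc}$ bound on $\phi_\xi^{T_n}$ growing at most like a fractional power of $T_n$, which the factor $\frac{1}{T_n}$ dominates. This shows $-\nabla\cdot H=0$ distributionally and also, by weak closure of stationarity and of vanishing expectation, that $G$ is stationary with $\mathbb{E}[G]=0$.

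\textbf{Identification via Minty and strong convergence.} To identify $H=A(\omega_\varepsilon,\xi+G)$ I would apply the Minty trick: for a cutoff $\eta\in C_c^\infty(\Rd)$ and arbitrary $V\in C_c^\infty(\Rd;\Rmd)$, monotonicity gives
\begin{equation*}
\int\bigl(A(\omega_\varepsilon,\xi+\nabla\phi_\xi^{T_n})-A(\omega_\varepsilon,\xi+V)\bigr)\cdot\bigl(\nabla\phi_\xi^{T_n}-V\bigr)\eta^2\,dx\geq 0.
\end{equation*}
Testing \eqref{EquationLocalizedCorrector} against $(\phi_\xi^{T_n}-c_n)\eta^2$ with $c_n:=\fint_{B_1}\phi_\xi^{T_n}\,dx$ absorbing the additive constant ambiguity, I can express $\int A(\omega_\varepsilon,\xi+\nabla\phi_\xi^{T_n})\cdot\nabla\phi_\xi^{T_n}\eta^2\,dx$ in terms of cutoff boundary terms plus the nonpositive massive contribution $-\frac{1}{T_n}\int|\phi_\xi^{T_n}-c_n|^2\eta^2\,dx-\frac{c_n}{T_n}\int(\phi_\xi^{T_n}-c_n)\eta^2$; the centered bound \eqref{CorrectorEstimate} guarantees that all these error terms vanish as $n\to\infty$. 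In the limit we obtain
\begin{equation*}
\int\bigl(H-A(\omega_\varepsilon,\xi+V)\bigr)\cdot(G-V)\eta^2\,dx\geq 0,
\end{equation*}
and varying $V$ along a sequence approximating $G$ in $L^2_{\rm loc}$ identifies $H=A(\omega_\varepsilon,\xi+G)$ almost everywhere. Thus $G$ solves the corrector equation $-\nabla\cdot A(\omega_\varepsilon,\xi+G)=0$, and the uniqueness statement in Definition~\ref{D:corr} yields $G=\nabla\phi_\xi$. Choosing $V$ to be a smooth truncation approaching $\nabla\phi_\xi$ in the Minty inequality above then yields $\int|\nabla\phi_\xi^{T_n}-\nabla\phi_\xi|^2\eta^2\,dx\to 0$, i.e.\ the desired strong convergence in $L^2(\Omega\times B_r)$. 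As the limit is unique, the full family converges.

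\textbf{Main obstacle and flux corrector.} The main technical obstacle lies in the low-dimensional cases $d\leq 2$, where $\phi_\xi^{T_n}$ itself may grow (in $T_n$) and the massive term $\frac{1}{T_n}\int\phi_\xi^{T_n}(\phi_\xi^{T_n}-c_n)\eta^2\,dx$ in the Minty calculation is not obviously vanishing; the remedy is to re-center by $c_n$ and rely on the \emph{centered} estimate \eqref{CorrectorEstimate} (uniform in $T$) rather than on pointwise or uncentered bounds on $\phi_\xi^T$. Once $\nabla\phi_\xi^T\to\nabla\phi_\xi$ strongly in $L^2(\Omega\times B_r)$, the flux $A(\omega_\varepsilon,\xi+\nabla\phi_\xi^T)$ converges in $L^2(\Omega\times B_r)$ by Lipschitz continuity \hyperlink{A2}{(A2)}. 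Since the flux corrector equation \eqref{EquationLocalizedFluxCorrector} is linear in $\sigma_\xi^T$ with right-hand side built from these converging fluxes, subtracting the equations for $\sigma_\xi^T$ and $\sigma_\xi^{T'}$ and performing a stationary energy estimate --- controlled by the uniform bound from Proposition~\ref{PropositionCorrectorEstimate} --- gives the Cauchy property of $\nabla\sigma_\xi^T$ in $L^2(\Omega\times B_r)$, hence convergence to the unique limit $\nabla\sigma_\xi$ solving \eqref{EquationFluxCorrector}.
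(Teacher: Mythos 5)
Your proposal is essentially correct in strategy but takes a genuinely different route from the paper. The paper proves this lemma quantitatively: it subtracts the equations for $\phi_\xi^{2T}$ and $\phi_\xi^T$, performs an exponentially weighted energy estimate (Lemma~\ref{L:exploc}) on the resulting linear equation for the difference, and then uses the corrector bounds (Proposition~\ref{PropositionCorrectorEstimate} and, in $d\geq 3$, the potential field $\theta_\xi^T$) to show that $\mathbb{E}[|\nabla\phi_\xi^{2T}-\nabla\phi_\xi^T|^2]^{1/2}\lesssim |\xi|(\varepsilon/\sqrt T)^{(d\wedge 4)/2}$ up to logs (this is the content of Lemma~\ref{L:syscorr} and Corollary~\ref{C:syscorr}). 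Summability along dyadic scales gives the Cauchy property directly, with the same elementary differencing argument covering $\sigma_\xi^T$ (and $\phi_{\xi,\Xi}^T$). Your Minty/Browder compactness route is a valid alternative that mirrors the standard existence proof for monotone-operator correctors and does not need any rate; what it does not deliver (and what the paper's argument gives for free and needs elsewhere) is the quantitative localization error. The paper even remarks that a purely qualitative argument requiring only stationarity and ergodicity suffices, so your approach is in the spirit it acknowledges.

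One technical point in your sketch deserves care. In the identification step you need the boundary term $\int A(\omega_\varepsilon,\xi+\nabla\phi_\xi^{T_n})\cdot(\phi_\xi^{T_n}-c_n)\,\nabla\eta^2\,dx$ to converge, for which you implicitly invoke strong $L^2$ convergence of $\phi_\xi^{T_n}-c_n$ while only having \emph{weak} $L^2(\Omega\times B_r)$ convergence of the gradient. Rellich compactness gives strong convergence pathwise in the spatial variable but provides no compactness in the probability variable, so strong convergence in $L^2(\Omega\times B_r)$ is not automatic from the weak convergence of $\nabla\phi_\xi^{T_n}$; as written, that product cannot be passed to the limit. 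This is repairable by either (i) working in the Hilbert space of stationary $L^2$ potential fields with vanishing expectation, where the Minty argument can be carried out without a spatial cutoff because $\mathbb{E}\big[A(\omega_\varepsilon,\xi+\nabla\phi_\xi^{T_n})\cdot \Psi\big]$ for stationary potential test fields $\Psi$ reduces to a finite-dimensional identity via stationarity and the vanishing of the massive term, or (ii) first extracting an a.e.-$\omega$ pathwise weakly convergent subsequence, running Minty pathwise, and then upgrading a.e.\ convergence to $L^2(\Omega\times B_r)$ convergence using the uniform higher-moment bounds (which give uniform integrability). The paper's dyadic differencing argument sidesteps all of this, since the difference of two monotone solutions satisfies a \emph{linear} equation (see Remark~\ref{R:exponentialloc}), so no weak compactness or Minty trick is needed.
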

{For the proof we refer to the beginning of the  proof of Lemma~\ref{L:syscorr}. There, a quantitative argument for $\phi_\xi$ is carried out. It extends to $\sigma_\xi$. We remark that the statement (as in the linear case) can also be shown by a purely qualitative argument that only requires stationarity and ergodicity.}

\subsection{Quantitative two-scale approximation}
The estimate on the homogenization error invokes a quantitative two-scale expansion of the homogenized equation. As indicated earlier, for technical reasons, we do not work with the usual expansion $u_{\shom}+\nabla\phi_{\nabla u_{\shom}}$, but consider the  (easier) case of a piecewise affine approximation\footnote{The authors are indebted to Gilles Francfort for this suggestion, which lead to a simplification of the proof.} of the form
\begin{equation*}
{  u_{\shom} + \sum_{k\in K} \eta_k \phi_{\xi_k}},
\end{equation*}
where $\{\eta_k\}_{k\in K}$ denotes a partition of unity subordinate to a cover of $\Rd$ on a scale $\delta\gtrsim\varepsilon$, and $\{\xi_k\}$ denotes the associated piecewise-constant approximation of $\nabla u_{\shom}$. Depending on our application, we will choose either $\delta:=\varepsilon$ or $\delta:=\varepsilon^{1/2}$ (at least for $d\geq 3$).

We first introduce the construction of the piecewise-constant approximation:
\newcommand{\const}{{\bar C}}
\begin{lemma}[Piecewise constant approximation]\label{L:discretization}
  Let $\domain$ either denote $\Rd$, a bounded $C^1$-domain or a bounded, convex Lipschitz domain in $\Rd$. Then there exists a constant $\const\geq1$ that only depends on $\domain$, such that the following holds:
  For  all $0<\delta\leq \frac1{\const}$ there exists a partition of unity
  \begin{equation*}
    \{\eta_k\}_{k\in K}\subset C^\infty_c(\Rd)\qquad\text{with}\qquad
    K\subset\domain\text{ at most countable},
  \end{equation*}
  such that for all $k\in K$,
  \begin{subequations}
  \begin{align}
    \label{pu1}&\sum_{k\in K}\eta_k=1\qquad\text{in }\domain,\qquad 0\leq\eta_k\leq 1,\qquad \delta|\nabla\eta_k|_\infty\leq\const,\\
    \label{pu2}&\supp\eta_k\subset B_{2\delta}(k),\qquad \frac{1}{\const}\delta^d\leq \int_{\domain}\eta_k \,dx\leq \const\delta^d,
  \end{align}
  and such that the partition is locally finite in the sense that for all $k\in K$
  \begin{align}
  \label{pu3}
  \#(K\cap B_{4\delta}(k))\leq\const.
  \end{align}
\end{subequations}
Furthermore, for all $g\in H^1(\domain)$ the local averages
\begin{equation}\label{def:xi}
  \xi_k:=\frac{1}{\int_{\domain}\eta_k\,dx}\int_{\domain}g(x)\eta_k(x)\,dx,\qquad k\in K,
\end{equation}
satisfy for all $1\leq p<\infty$ with a constant $C$ (only depending on $p$ and $\domain$)
\begin{subequations}
  \begin{align}\label{est:xi1}
    |\xi_k|\,\leq& C\left(\delta^{-d}\int_{B_{2\delta}(k)\cap\domain}|g|^p \,dx\right)^\frac1p,\\
    \label{est:xi2}
    \left(\int_{B_{2\delta}(k)\cap\domain}|\xi_k-g|^p\,dx\right)^\frac1p\,\leq& C\delta\left(\int_{B_{2\delta}(k)\cap\domain}|\nabla g|^p \,dx\right)^\frac1p,
  \end{align}
  Moreover, for all $k,{\tilde k}\in K$ that are closed by in the sense that $|k-{\tilde k}|\leq 4\delta$, we have
  \begin{align}
        \label{est:xi3}
    \delta^{-1}|\xi_k-\xi_{{\tilde k}}|\,\leq& C\left(\delta^{-d}\int_{B_{6\delta}({\tilde k})\cap \domain}|\nabla g|^2 \,dx\right)^\frac12.
  \end{align}
\end{subequations}
\end{lemma}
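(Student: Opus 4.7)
The plan is to build $K$ and $\{\eta_k\}$ from a maximally $\delta$-separated net and a fixed smooth bump, then derive all three estimates from scaled (weighted) Poincaré inequalities on $B_{2\delta}(k)\cap\domain$.

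\textbf{Construction.} Let $K\subset\domain$ be a maximal $\delta$-separated subset (which is automatically countable and, by maximality, satisfies $\domain\subset\bigcup_{k\in K} B_\delta(k)$). Fix once and for all a function $\chi\in C_c^\infty(B_2;[0,1])$ with $\chi\equiv 1$ on $B_1$, and set $\tilde\eta_k(x):=\chi((x-k)/\delta)$. The $\delta$-separation makes the balls $B_{\delta/2}(k)$ pairwise disjoint, hence for any $k\in K$ one has $\#(K\cap B_{4\delta}(k))\leq |B_{9\delta/2}|/|B_{\delta/2}|=9^d$, giving \eqref{pu3} and $1\leq \sum_{j\in K}\tilde\eta_j\leq 9^d$ pointwise on $\domain$. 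Then $\eta_k:=\tilde\eta_k/\sum_j\tilde\eta_j$ defines a smooth partition of unity on $\domain$ with $\supp\eta_k\subset B_{2\delta}(k)$, $0\leq\eta_k\leq 1$, and $|\nabla\eta_k|\leq \const/\delta$, which is \eqref{pu1}--\eqref{pu2} except for the lower bound on $\int\eta_k$.

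\textbf{Lower density bound.} Since $\domain$ is $\Rd$, a bounded $C^1$-domain, or a bounded convex Lipschitz domain, there exists $c_0>0$ depending only on $\domain$ such that $|B_\delta(k)\cap\domain|\geq c_0\delta^d$ for every $k\in\domain$ and every $0<\delta\leq 1/\const$ (for $C^1$ domains this is the interior cone/ball condition; for convex bounded domains one uses that $\domain$ contains a fixed interior ball and $k\in\domain$ together with convexity, restricting to small $\delta$). Since $\eta_k\geq 9^{-d}$ on $B_\delta(k)$, this yields $\int_\domain\eta_k\,dx\geq 9^{-d}c_0\delta^d$, completing \eqref{pu2}.

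\textbf{Estimate \eqref{est:xi1}.} By H\"older and the lower bound on $\int\eta_k$,
\begin{equation*}
  |\xi_k|\leq \frac{1}{\int\eta_k}\int_{B_{2\delta}(k)\cap\domain}|g|\,dx\leq C\delta^{-d}|B_{2\delta}(k)\cap\domain|^{1/p'}\Big(\int_{B_{2\delta}(k)\cap\domain}|g|^p\,dx\Big)^{1/p},
\end{equation*}
which gives the claim.

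\textbf{Estimates \eqref{est:xi2} and \eqref{est:xi3}.} For both estimates I use a standard Poincar\'e inequality on $B_{r\delta}(k_0)\cap\domain$ (with $r=2$ or $r=6$): for every $k_0\in\domain$ and $0<\delta\leq 1/\const$,
\begin{equation*}
  \Big(\int_{B_{r\delta}(k_0)\cap\domain}|g-\bar g|^p\,dx\Big)^{1/p}\leq C\delta\Big(\int_{B_{r\delta}(k_0)\cap\domain}|\nabla g|^p\,dx\Big)^{1/p},
\end{equation*}
with $\bar g$ the usual mean over $B_{r\delta}(k_0)\cap\domain$. By scaling this reduces to a uniform Poincar\'e inequality on the family of sets $\delta^{-1}(B_{r\delta}(k_0)\cap\domain)$, which holds because all of these sets are John domains with John constant and diameter bounded by constants depending only on $\domain$ and $r$. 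Applying \eqref{est:xi1} to $g-\bar g$ gives $|\xi_k-\bar g|\leq C\delta^{1-d/p}(\int|\nabla g|^p)^{1/p}$ with $\bar g$ the mean over $B_{2\delta}(k)\cap\domain$, and then the triangle inequality and integration of the constant $\xi_k-\bar g$ over $B_{2\delta}(k)\cap\domain$ yield \eqref{est:xi2}. For \eqref{est:xi3}, both $\supp\eta_k$ and $\supp\eta_{\tilde k}$ lie in $B_{6\delta}(\tilde k)$ when $|k-\tilde k|\leq 4\delta$; choosing $\bar g$ as the mean over $B_{6\delta}(\tilde k)\cap\domain$ and applying the same argument with $p=2$ to both $\xi_k-\bar g$ and $\xi_{\tilde k}-\bar g$, the triangle inequality yields the claim.

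\textbf{Main obstacle.} The only nontrivial point is the uniform Poincar\'e/density geometry of $B_{r\delta}(k_0)\cap\domain$ for $k_0\in\domain$ ranging up to the boundary: one needs that this family of sets has a uniform (in $k_0$ and $\delta$) John constant after rescaling. For $C^1$-domains this follows by flattening the boundary; for bounded convex Lipschitz domains one uses that intersections of convex sets with balls are convex and hence John with a universal constant, combined with the boundedness of $\domain$ to bound diameters. Once this geometric input is in place, the remaining steps are purely mechanical.
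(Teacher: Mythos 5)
Your construction and your route to the estimates are genuinely different from the paper's: you build $K$ from a maximal $\delta$-separated net and normalize a fixed bump, whereas the paper picks centers $K''=\{x\in\domain: B_{\delta/\const}(x)\subseteq\domain\}$ strictly inside $\domain$, thins them by Vitali, and mollifies indicator functions; and you derive \eqref{est:xi3} from a Poincar\'e inequality on $B_{6\delta}(\tilde k)\cap\domain$, whereas the paper solves an auxiliary Neumann problem there and integrates by parts. The estimates in your write-up are sound in spirit and arguably more transparent; the Poincar\'e route makes explicit that the only geometric input is a uniform (after $\delta^{-1}$-rescaling) Poincar\'e constant for the family $B_{r\delta}(k)\cap\domain$, which is also what underlies the paper's Neumann construction.

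There is, however, a genuine gap in your construction of the partition of unity when $\domain$ is bounded. You establish $1\leq\sum_j\tilde\eta_j\leq\const$ only on $\domain$, but $\supp\tilde\eta_k\subset B_{2\delta}(k)$ extends into $\Rd\setminus\domain$. Since $K\subset\domain$, a point $x\notin\domain$ can lie in $\supp\tilde\eta_k$ while being at distance $\geq 2\delta$ from every other center, in which case $\sum_j\tilde\eta_j(x)=\tilde\eta_k(x)\to 0$ as $x$ approaches $\partial\supp\tilde\eta_k$; the quotient $\eta_k=\tilde\eta_k/\sum_j\tilde\eta_j$ then equals $1$ up to the edge of the support and jumps to $0$, so $\eta_k\notin C^\infty_c(\Rd)$ and $\delta|\nabla\eta_k|_\infty$ is unbounded, contradicting \eqref{pu1}. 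Your own argument in fact only yields $\sum_j\tilde\eta_j\geq c>0$ on $\domain+B_{\delta/2}(0)$ (after fixing $\chi\geq 1/2$ on $B_{3/2}$), not on the full union of supports $\domain+B_{2\delta}(0)$. A clean fix is to set $\eta_k:=\tilde\eta_k/\psi(\sum_j\tilde\eta_j)$ with a smooth nondecreasing $\psi\geq 1$ satisfying $\psi(t)=t$ for $t\geq 1$ and $\psi\equiv 1$ for $t\leq 1/2$; this keeps $\sum_k\eta_k=1$ on $\domain$, preserves $\supp\eta_k\subset B_{2\delta}(k)$, forces $0\leq\eta_k\leq 1$, and bounds $\delta|\nabla\eta_k|_\infty$. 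Alternatively one can multiply by a smooth cutoff $\zeta\in C_c^\infty(\domain+B_{\delta/4})$ with $\zeta\equiv 1$ on $\domain$ and $|\nabla\zeta|\lesssim\delta^{-1}$. As stated, though, \eqref{pu1} is not established.

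A smaller point: your justification of the uniform John constant for convex domains (``intersections of convex sets with balls are convex and hence John with a universal constant, combined with boundedness of $\domain$'') is not quite sufficient as written. Convexity and a diameter bound alone do not give a uniform John constant (consider a very thin slab intersected with a ball); one also needs a lower bound on the inradius of the rescaled set, which here follows from the volume estimate $|B_\delta(k)\cap\domain|\geq c_0\delta^d$ that you proved. You have all the ingredients, but you should invoke the volume bound explicitly in the John argument rather than convexity and boundedness alone.
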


\begin{remark}\label{R:pu_Rd}
  In the case $\domain=\R^d$ the partition can be simply  chosen as
  \begin{equation*}
    K=\delta\mathbb Z^d,\qquad \eta_k:=\eta(\tfrac{\cdot-k}{\delta}),
  \end{equation*}
  for a suitable cut-off function $\eta\in C^\infty_c(\Rd)$ with $\supp \eta \subset [-1,1]^d$ that only depends on the dimension $d$.
\end{remark}

\begin{proposition}[Error representation by the two-scale expansion]
  \label{PropositionTwoScaleExpansion}
  Let $A_\varepsilon:\Rd\times \Rmd\rightarrow \Rmd$ be a monotone operator satisfying uniform ellipticity and boundedness conditions \hyperlink{A1}{(A1)} and \hyperlink{A2}{(A2)}. For any $\xi \in \Rmd$, denote by $\phi_\xi$ a solution to the corrector equation \eqref{EquationCorrector} on $\Rd$ in the sense of Definition~\ref{D:corr} and denote by $\sigma_\xi$ a solution to the equation for the flux corrector \eqref{EquationFluxCorrector} on $\Rd$.

  Let $\domain$ either denote $\Rd$, a bounded $C^1$-domain, or a bounded, convex Lipschitz domain in $\Rd$. Let $\const$, $K$, and $\{\eta_k\}_{k\in K}$ be defined as in Lemma~\ref{L:discretization} for a discretization scale $0<\delta\leq \frac1\const$. Given $\ubar\in H^2(\domain)$ we consider the two-scale expansion,
  \begin{equation}\label{def:twoscaleexpansion}
    \hat u_\e :=\bar u + \sum_{k\in K} \eta_k\phi_k,
  \end{equation}
  with recentered correctors
  \begin{equation*}
    \phi_k:= \big(\phi_{\xi_k}-\fint_{B_\varepsilon(k)}\phi_{\xi_k}\,dx\big),\quad\sigma_k:= \big(\sigma_{\xi_k}-\fint_{B_\varepsilon(k)}\sigma_{\xi_k}\,dx\big)
  \end{equation*}
  associated with the local averages $\xi_k$ (as defined in \eqref{def:xi}) of the function $g:=\nabla\bar u$.  
 
  Then in a distributional sense we have
  \begin{align*}
    -\nabla \cdot (A_\varepsilon(x,\nabla \hat u_\e)) &= -\nabla \cdot (A_\shom(\nabla \bar u)) +\nabla \cdot R\qquad\text{in }\domain,
  \end{align*}
  with a residuum $R\in L^2(\domain;\Rmd)$ satisfying for all $\ell\in K$,
  \begin{align}
    \label{EstimateResidual-2}
    \begin{aligned}
      &\int_{\domain}\eta_\ell |R|^2 \,dx\\
      \, \leq\,&C\left(\int_{B_{2\delta}(\ell)\cap \domain} \delta^2 |\nabla^2 \ubar|^2 \,dx+\frac{1}{\delta^2}\sum_{k\in K\atop|\ell-k|\leq 4\delta}\int_{B_{6\delta}(\ell)}|\phi_{\ell}-\phi_{k}|^2 + |\sigma_{\ell}-\sigma_{k}|^2 \,dx\right).
    \end{aligned}
  \end{align}
  Above the constant $C$ only depends on $d$, $m$, $\lambda$, $\Lambda$, and the domain $\domain$.
\end{proposition}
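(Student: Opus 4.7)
The plan is to introduce the residual via the three-term decomposition $R := R_1 + R_2 + R_3$ where
\begin{align*}
R_1 &:= A_{\shom}(\nabla\ubar) - \sum_{k\in K}\eta_k A_{\shom}(\xi_k),\\
R_2 &:= \sum_{k\in K} \nabla \eta_k \cdot \sigma_k,\\
R_3 &:= \sum_{k\in K} \eta_k A_\varepsilon(\cdot, \xi_k + \nabla \phi_k) - A_\varepsilon(\cdot, \nabla \hat u_\e),
\end{align*}
and to first verify the distributional identity $\nabla \cdot R = \nabla \cdot A_{\shom}(\nabla \ubar) - \nabla \cdot A_\varepsilon(\cdot, \nabla \hat u_\e)$. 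Three ingredients drive the verification: the corrector equation $\nabla \cdot A_\varepsilon(\cdot, \xi_k + \nabla \phi_k) = 0$; the flux corrector equation $\nabla \cdot \sigma_k = A_\varepsilon(\cdot, \xi_k + \nabla \phi_k) - A_{\shom}(\xi_k)$; and the identity $\nabla \cdot (\nabla \eta_k \cdot \sigma_k) = -\nabla \eta_k \cdot (\nabla \cdot \sigma_k)$, which follows from the skew-symmetry of $\sigma_k$ in its last two indices together with $\partial_j\partial_l\eta_k = \partial_l\partial_j\eta_k$. Upon expansion and using $\sum_k \nabla \eta_k = 0$ (from $\sum_k\eta_k = 1$), the auxiliary contributions $\nabla\eta_k\cdot A_{\shom}(\xi_k)$ and $\nabla\eta_k\cdot A_\varepsilon(\cdot,\xi_k+\nabla\phi_k)$ cancel in pairs, leaving the claimed identity.

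To estimate $\int_{\domain}\eta_\ell |R_1|^2$ I would use Lipschitz continuity of $A_{\shom}$ from Theorem~\ref{TheoremStructureProperties} together with \eqref{pu1}--\eqref{pu3} and \eqref{est:xi2}--\eqref{est:xi3} to control $\sum_k\eta_k|\nabla\ubar - \xi_k|$ by $\delta$ times local averages of $|\nabla^2\ubar|$. For $R_2$ the key trick is to invoke $\sum_k\nabla\eta_k = 0$ on the support of $\eta_\ell$, writing $R_2 = \sum_{k:|k-\ell|\leq 4\delta}\nabla\eta_k\cdot(\sigma_k - \sigma_\ell)$; combined with $|\nabla\eta_k|\leq C/\delta$ and the locally finite bound \eqref{pu3}, this produces precisely the $\sigma$-difference contribution in \eqref{EstimateResidual-2}.

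The main obstacle is the estimate of $R_3$. Using $\sum_k\eta_k = 1$, Lipschitz continuity of $A_\varepsilon(x,\cdot)$, and the identity
\begin{equation*}
\nabla\hat u_\e - (\xi_\ell + \nabla\phi_{\xi_\ell}) = (\nabla\ubar - \xi_\ell) + \sum_k\nabla\eta_k\otimes(\phi_k-\phi_\ell) + \sum_k\eta_k(\nabla\phi_{\xi_k}-\nabla\phi_{\xi_\ell}),
\end{equation*}
a direct bound yields
\begin{equation*}
|R_3| \,\leq\, C|\nabla\ubar - \xi_\ell| + \frac{C}{\delta}\sum_{k:|k-\ell|\leq 4\delta}|\phi_k-\phi_\ell| + C\sum_k\eta_k|\xi_k - \xi_\ell| + C\sum_k\eta_k|\nabla\phi_{\xi_k}-\nabla\phi_{\xi_\ell}|.
\end{equation*}
The first three groups yield the desired $\nabla^2\ubar$- and $\phi$-difference contributions via \eqref{est:xi2}--\eqref{est:xi3}, but the last one is problematic, since $|\nabla\phi_{\xi_k}-\nabla\phi_{\xi_\ell}|$ cannot be controlled pointwise. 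The remedy is to observe that $v := \phi_{\xi_k} - \phi_{\xi_\ell}$ solves the linear elliptic equation $-\nabla\cdot\bigl(a_{k\ell}(\xi_k - \xi_\ell + \nabla v)\bigr) = 0$ with coefficient $a_{k\ell}(x) := \int_0^1\partial_\xi A(\omega_\varepsilon, \xi_\ell + \nabla\phi_{\xi_\ell} + s[(\xi_k-\xi_\ell) + \nabla v])\,ds$, which is uniformly elliptic and bounded by \hyperlink{A1}{(A1)}--\hyperlink{A2}{(A2)}. Applying the Caccioppoli inequality (Lemma~\ref{LemmaCaccioppoliHoleFilling}) to $v$ recentered by $c := \fint_{B_\varepsilon(\ell)}\phi_{\xi_\ell}\,dx - \fint_{B_\varepsilon(k)}\phi_{\xi_k}\,dx$ (so that $v - c = \phi_k - \phi_\ell$) yields
\begin{equation*}
\int_{B_{2\delta}(\ell)} |\nabla\phi_{\xi_k} - \nabla\phi_{\xi_\ell}|^2\,dx \,\leq\, \frac{C}{\delta^2}\int_{B_{6\delta}(\ell)} |\phi_k - \phi_\ell|^2\,dx + C\delta^{d}|\xi_k - \xi_\ell|^2,
\end{equation*}
and inserting the Poincar\'e-type bound $|\xi_k-\xi_\ell|^2 \leq C\delta^{2-d}\int_{B_{6\delta}(\ell)}|\nabla^2\ubar|^2\,dx$ from \eqref{est:xi3} reproduces exactly the right-hand side of \eqref{EstimateResidual-2}. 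This Caccioppoli-based trade-off between $\nabla\phi$-differences and $\phi$-differences --- at the cost of the already-present $|\nabla^2\ubar|^2$ contribution --- is the conceptual core of the argument, and in particular explains why the right-hand side of \eqref{EstimateResidual-2} involves only $\phi$- and $\sigma$-differences and no gradient differences.
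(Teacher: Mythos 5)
Your proof is correct and follows essentially the same route as the paper's: both rely on the corrector and flux corrector equations, the skew-symmetry of $\sigma_k$ (to convert $\sum_k\eta_k\nabla\cdot\sigma_k$ into $-\sum_k\sigma_k\nabla\eta_k$), Lipschitz continuity of $A_\varepsilon$ and $A_\shom$, and the Caccioppoli-type energy estimate for $\phi_{\xi_k}-\phi_{\xi_\ell}$ to trade the gradient-difference term $|\nabla\phi_{\xi_k}-\nabla\phi_{\xi_\ell}|$ for the $\phi$-difference plus $|\xi_k-\xi_\ell|^2$, the latter being absorbed into the $|\nabla^2\ubar|^2$ contribution via \eqref{est:xi3}. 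The only differences are cosmetic: your three-term split of $R$ regroups the paper's $I+II+III$ (you bundle all heterogeneous-material contributions into $R_3$, whereas the paper keeps an extra intermediate quantity and spreads them over $I$, $II$, $III$), and there are two harmless slips that don't affect the argument ($\sum_k\nabla\eta_k=0$ is needed only in the $R_2$ estimate, not for the distributional identity where the cancellation is pairwise and exact; and the recentering constant should satisfy $v+c=\phi_k-\phi_\ell$ rather than $v-c$).
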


\subsection{The approximation of the homogenized operator by periodic representative volumes}
In this section we outline the general strategy for the proof of the a~priori estimates for the RVE approximation of the effective material law $A_\shom$ stated in Theorem~\ref{T:RVE}. It is inspired by \cite{GloriaNeukammOttoInventiones}, where the first optimal result for periodic RVEs in a linear discrete setting has been established. In the following we focus on the situation where the small-scale regularity assumption \hyperlink{R}{(R)} is satisfied, since in that case we can obtain better rates. The argument features additional subtleties (even compared to the linear case of \cite{GloriaNeukammOttoInventiones}).  We start with the observation that the total approximation error decomposes into a random and a systematic part
\begin{align*}
  &A^{\RVE,L}(\xi)-\Ahom(\xi)
  \\
  &= \Big(A^{\RVE,L}(\xi)-\mathbb E_L\left[A^{\RVE,L}(\xi)\right]\Big)~+~\Big(\mathbb E_L\left[A^{\RVE,L}(\xi)\right]-\Ahom(\xi)\Big).
\end{align*}
The random error (the first term on the RHS) is a random variable with vanishing expectation and corresponds to the fluctuations of the periodic RVE approximation around its expected value. Theorem~\ref{T:RVE}a asserts that this error decays with the same rate as the fluctuations of a linear average of the random parameter field on scale $L$, i.\,e.\ like $(\frac{L}{\e})^{-\frac d2}$. The second term on the RHS is the systematic error. It captures the error coming from approximating the whole-space law $\mathbb P$ by the $L$-periodic law $\mathbb P_L$. As in \cite{GloriaNeukammOttoInventiones} we decompose the systematic error into different contributions. In particular, we introduce the following notion of localized RVE approximation.
\begin{definition}[Localized RVE approximation]\label{D:RVET}
Let $A$ satisfy \hyperlink{A1}{(A1)}--\hyperlink{A3}{(A3)}.
Let $T\geq 2\varepsilon^2$, let $\eta$ be a non-negative weight $\eta:\R^d\to\R$ with $\int_\Rd \eta\,dx=1$, and let $\widetilde\omega:\Rd\to\Hilbert\cap B_1$ be a parameter field. We then define the localized RVE approximation of size $L$ with localization parameter $T\in [2\varepsilon^2,L^2]$ for the effective operator $A_\shom$ by the expression
\begin{align}
\label{LocalizedRVEApprox}
  A^{\RVE,\eta,T}(\widetilde\omega,\xi)\cdot \Xi:=\int_\Rd \eta\,\bigg(A(\widetilde\omega,\xi+\nabla\phi^T_\xi)\cdot \Xi-\frac1T\phi_\xi^T\phi_{\xi,\Xi}^{*,T}\bigg)\,dx
\end{align}
for any $\xi,\Xi\in\Rmd$,
where $\phi^T_\xi=\phi^T_\xi(\widetilde\omega)$ and $\phi^{*,T}_{\xi,\Xi}=\phi^{*,T}_{\xi,\Xi}(\widetilde\omega)$ denote the localized corrector and the localized linearized adjoint corrector, i.\,e.\ $\smash{\phi_\xi^T}$ is the unique solution in $\Huloc(\Rd;\Rm)$ to the equation \eqref{EquationLocalizedCorrector} and $\smash{\phi_{\xi,\Xi}^{T,*}}$ is the unique solution in $\Huloc(\Rd;\Rm)$ to the equation \eqref{EquationLocalizedCorrectorLinearized} but with $\smash{a_\xi^T}$ replaced by its transpose $\smash{a_\xi^{T,*}}$.
\end{definition}
If we choose in the previous definition a random field $\omega_{\varepsilon}$ with some probability distribution $\mathbb P$ subject to \hyperlink{P1}{(P1)}--\hyperlink{P2}{(P2)}, the localized RVE approximation $A^{\RVE,\eta,T}(\xi)$ converges almost surely for $T\rightarrow \infty$ to {the (still random) material law $A^{\RVE,\eta}(\xi)\cdot \Xi=\int_\Rd \eta\,\big(A(\xi+\nabla\phi_\xi)\cdot \Xi\big)\,dx$, whose expectation is given by the effective material law $A_\shom(\xi)$.
To see this (and in particular that the contribution of the term $\frac1T\phi_\xi^T\phi_{\xi,\Xi}^{*,T}$ vanishes in this limit)}, one may e.\,g.\ use ergodicity and stationarity as well as the sublinear growth of correctors. However, in contrast to the periodic RVE approximation $A^{\RVE,L}$ to the effective material law or the homogenized material law $\Ahom$ itself, the localized RVE can be defined for all parameter fields $\widetilde\omega$, since it only invokes the localized correctors. 
This allows to couple parameter fields sampled with $\mathbb P$ and $\mathbb P_L$, respectively.
More precisely, with the restriction map
\begin{align}
\label{DefPi}
  (\pi_L\widetilde\omega)(x):=
  \begin{cases}
    \widetilde\omega(x)&\text{if }x\in B_{\frac L4},\\
    0&\text{else,}
  \end{cases}
\end{align}
we note that if $\mathbb P_L$ is an $L$-periodic approximation of $\mathbb P$ in the sense of Definition~\ref{D:Lperproxy} and if $\omega_{\e,L}$ and $\omega_\e$ denote random fields distributed according to $\mathbb P_L$ respectively $\mathbb P$, then the equality of laws
\begin{equation*}
  \pi_L\omega_{\e,L}\sim \pi_L\omega_\e
\end{equation*}
holds.
Hence, if the weight $\eta$ of the localized RVE is  supported in $B_{\frac L4}$ and $\mathbb P_L$ is a $L$-periodic approximation of $\mathbb P$, then 
\begin{equation}\label{syserror-midterm}
  \mathbb E_L\left[A^{\RVE,\eta,T}(\pi_L\omega_{\e,L},\xi)\right]=\mathbb E\left[A^{\RVE,\eta,T}(\pi_L\omega_\e,\xi)\right].
\end{equation}
This identity couples localized approximations for the homogenized material law associated with $\mathbb P$ and $\mathbb P_L$.
Motivated by this we decompose the systematic error as
\begin{equation}\label{decompsys}
  \begin{aligned}
    \mathbb E_L\left[A^{\RVE,L}(\xi)\right]-\Ahom(\xi)=
    &\mathbb E_L\left[A^{\RVE,L}(\xi)\right]-\mathbb E_L\left[A^{\RVE,\eta,T}(\xi)\right]\\
    &+\mathbb E_L\left[A^{\RVE,\eta, T}(\xi)\right]  -\mathbb E_L\left[A^{\RVE,\eta, T}(\pi_L\omega_{\e,L},\xi)\right]\\
    &+\mathbb E\left[A^{\RVE,\eta, T}(\pi_L\omega_\e,\xi)\right]  -\mathbb E\left[A^{\RVE,\eta, T}(\xi)\right]\\
    &+\mathbb E\left[A^{\RVE,\eta,T}(\xi)\right]-\Ahom(\xi).
  \end{aligned}
\end{equation}
The differences in the second and third row capture the error coming from replacing $\omega_{\e,L}$ with $\pi_L\omega_{\e}$. As our next lemma shows, this error becomes small upon choosing the localization parameter $T$ suitably. At the heart of the proof of the lemma is the exponential locality of the localized corrector equations \eqref{EquationLocalizedCorrector} and \eqref{EquationLocalizedFluxCorrectorLinearized}, see Lemma~\ref{L:exploc}.
\begin{lemma}[Estimate for the coupling error]\label{L:couperror}
Let $A:\Rd\times \Rmd\rightarrow \Rmd$ be a monotone operator subject to conditions \hyperlink{A1}{(A1)}-\hyperlink{A2}{(A2)} and subject to the Lipschitz estimate for $\partial_\xi A$ as in \hyperlink{R}{(R)}.
  Let $L\geq\sqrt T\geq 2\varepsilon$ and let $\eta_L$ denote a non-negative weight supported in $B_\frac{L}{8}$ with $|\eta_L|\leq C(d) L^{-d}$ and $|\nabla \eta|\leq C(d)L^{-d-1}$. 
  Then there exist $q=q(d,m,\lambda,\Lambda)$, $\gamma=\gamma(d,m,\lambda,\Lambda)$, and $C=C(d,m,\lambda,\Lambda)$ such that for all parameter fields $\widetilde\omega$ and all $\xi,\Xi\in \Rmd$ we have
  \begin{equation*}
    \begin{aligned}
      &\big|A^{\RVE,\eta_L,T}(\widetilde\omega,\xi)\cdot \Xi-A^{\RVE,\eta_L,T}(\pi_L\widetilde\omega,\xi)\cdot \Xi\big|\\
      &\leq C \exp\Big(-\frac{\gamma}{64}\cdot \frac{L}{\sqrt
        T}\Big)
        \Big(|\xi||\Xi|+(1+|\xi|)|\xi|\|\Xi+\nabla\phi_{\xi,\Xi}^{T}\|_{q,L,T}\Big),
    \end{aligned}
  \end{equation*}
  where 
  \begin{equation*}
    \begin{aligned}
      &\|\Xi+\nabla\phi_{\xi,\Xi}^T\|_{q,L,T}\\
      &:=\frac1{\# X_{L,T}}\sum_{x_0\in X_{L,T}}\left(\sqrt
        T^{-d}\int_\Rd|\Xi+\nabla\phi_{\xi,\Xi}^T(\widetilde\omega,x)|^{q}\exp\Big(-\frac\gamma
        2\frac{|x-x_0|}{\sqrt T}\Big)\,dx\right)^\frac{1}{q},
    \end{aligned}
  \end{equation*}
  and where $X_{L,T}\subset B_{\frac L 8}$ denotes an arbitrary finite set with cardinality $\# X_{L,T}\leq C(d) (\frac{L}{\sqrt T})^d$ and $\cup_{x_0\in X_{L,T}}B_{\sqrt T}(x_0)\supset B_{\frac L 8}$.
\end{lemma}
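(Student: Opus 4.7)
The plan is to exploit the exponential locality of the localized corrector equations (Lemma~\ref{L:exploc}) in conjunction with the fact that $\widetilde\omega\equiv\pi_L\widetilde\omega$ on $B_{L/4}$, while $\supp\eta_L\subset B_{L/8}$ sits at distance at least $L/8$ from the region of perturbation. Abbreviating $\phi:=\phi_\xi^T(\widetilde\omega)$, $\tilde\phi:=\phi_\xi^T(\pi_L\widetilde\omega)$, and $\phi^*,\tilde\phi^*$ analogously for the linearized adjoint correctors, the definition \eqref{LocalizedRVEApprox} together with $A(\widetilde\omega(x),\cdot)=A(\pi_L\widetilde\omega(x),\cdot)$ on $\supp\eta_L$ reduces the quantity to bound to
\begin{equation*}
\int\eta_L\,\big[A(\widetilde\omega,\xi+\nabla\phi)-A(\widetilde\omega,\xi+\nabla\tilde\phi)\big]\cdot\Xi\,dx-\frac{1}{T}\int\eta_L\,\big[\phi(\phi^*-\tilde\phi^*)+(\phi-\tilde\phi)\tilde\phi^*\big]\,dx,
\end{equation*}
which by \hyperlink{A2}{(A2)} is controlled by weighted integrals of $|\nabla(\phi-\tilde\phi)|$, $|\phi^*-\tilde\phi^*|$, and $|\phi-\tilde\phi|$ against $\eta_L$.

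The first step is to observe that $\phi-\tilde\phi$ solves a linear massive equation whose source is supported in $\Rd\setminus B_{L/4}$ (since $\widetilde\omega=\pi_L\widetilde\omega$ on $B_{L/4}$) and has size $\lesssim|\xi+\nabla\tilde\phi|$ by \hyperlink{A3}{(A3)}. The exponential locality of Lemma~\ref{L:exploc} then yields, for any $x_0\in\supp\eta_L$, an estimate of the form
\begin{equation*}
\left(T^{-d/2}\int|\nabla(\phi-\tilde\phi)|^{q}\,e^{-\gamma|x-x_0|/\sqrt{T}}\,dx\right)^{1/q}\lesssim e^{-\gamma L/(32\sqrt{T})}\,(1+|\xi|)|\xi|,
\end{equation*}
for a Meyers-type exponent $q=q(d,m,\lambda,\Lambda)$ slightly above $2$. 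An analogous argument applies to $\phi^*-\tilde\phi^*$, whose governing linearized equation has two sources of perturbation: the direct change $\widetilde\omega-\pi_L\widetilde\omega$ supported outside $B_{L/4}$, and the indirect change of the coefficient $\partial_\xi A(\widetilde\omega,\xi+\nabla\phi)$ driven by $\nabla(\phi-\tilde\phi)$. Propagating the exponential smallness of the latter through a second application of Lemma~\ref{L:exploc} yields a corresponding weighted bound for $|\phi^*-\tilde\phi^*|+\sqrt{T}|\nabla(\phi^*-\tilde\phi^*)|$.

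To assemble the estimate, apply H\"older in the weighted $L^q$--$L^{q'}$ pair centered at each $x_0\in X_{L,T}$ and average over $X_{L,T}$, using the covering $\bigcup_{x_0}B_{\sqrt{T}}(x_0)\supset B_{L/8}$ together with $|\eta_L|\lesssim L^{-d}$. The first integral together with the $\phi(\phi^*-\tilde\phi^*)$ contribution is handled via the pointwise estimate $\|\phi\|_{L^\infty_\loc}\lesssim\sqrt{T}|\xi|$, which follows from Moser iteration for the massive corrector; this produces the $|\xi||\Xi|$ term in the stated bound. The \emph{main obstacle}, and the origin of the norm $\|\Xi+\nabla\phi_{\xi,\Xi}^T\|_{q,L,T}$ in the statement, is the $(\phi-\tilde\phi)\tilde\phi^*$ contribution: a pointwise bound on $\tilde\phi^*$ would waste a factor of $\sqrt{T}$, so one instead tests the equation for $\phi-\tilde\phi$ against $\eta_L\tilde\phi^*$ (exploiting that $\tilde\phi^*$ solves the adjoint linearized equation) to re-express this contribution, after integration by parts, as an integral of $\nabla(\phi-\tilde\phi)$ against $\Xi+\nabla\phi_{\xi,\Xi}^T$, plus lower-order terms involving $\nabla\eta_L=O(L^{-d-1})$. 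H\"older with the exponentially weighted $L^{q'}$-bound on $\nabla(\phi-\tilde\phi)$ then produces the claimed $(1+|\xi|)|\xi|\,\|\Xi+\nabla\phi_{\xi,\Xi}^T\|_{q,L,T}$ contribution. The factor $1/64$ in the exponent arises by combining the $L/8$-separation with two H\"older splittings of the exponential weight.
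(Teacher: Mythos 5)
Your proposal gets the overall skeleton right (subtract the equations for the two correctors, exploit that the source vanishes on $B_{L/4}$, use the exponentially localized energy estimate of Lemma~\ref{L:exploc}, upgrade to weighted $L^p$ by Meyers, cover $B_{L/8}$ by $\sqrt T$-balls, and assemble). This matches the paper's Steps 1--3. However, you misidentify which term forces the norm $\|\Xi+\nabla\phi_{\xi,\Xi}^T\|_{q,L,T}$ into the estimate, and as a consequence your assembling step is inconsistent.

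You claim the "main obstacle'' is the $(\phi-\tilde\phi)\tilde\phi^*$ contribution and propose testing the $\hat\delta\phi$-equation against $\eta_L\tilde\phi^*$ to avoid a pointwise bound on $\tilde\phi^*$. In fact that term is the \emph{easy} one: by Cauchy-Schwarz against the exponential weight $\eta(\cdot-x_0)$, the factor $\frac{1}{\sqrt T}\|\hat\delta\phi\|_{L^2_\eta}$ is exponentially small by Step~1, while $\frac{1}{\sqrt T}\|\tilde\phi^*\|_{L^2_\eta}\lesssim\sqrt T^{d/2}|\Xi|$ follows directly from the localized energy estimate (Lemma~\ref{L:exploc}) applied to the equation \eqref{EquationLocalizedCorrectorLinearized} for $\tilde\phi^*$ -- no pointwise bound and no factor of $\sqrt T$ is wasted. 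This contribution therefore lands in the $|\xi||\Xi|$ part. The integration-by-parts maneuver you propose is redundant, and as stated it would produce the norm for the \emph{periodized} corrector $\Xi+\nabla\tilde\phi^*$, requiring yet another comparison step to reach the stated norm on $\Xi+\nabla\phi_{\xi,\Xi}^T(\widetilde\omega,\cdot)$.

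The norm $\|\Xi+\nabla\phi_{\xi,\Xi}^T\|_{q,L,T}$ actually enters through the other term, $\phi(\phi^*-\tilde\phi^*)$, because it is already unavoidable in the estimate for $\hat\delta\phi_{\xi,\Xi}^{*,T}=\phi^*-\tilde\phi^*$ itself. Subtracting the two linearized corrector equations produces the source $\nabla\cdot\big((a_\xi^T-\widehat a_\xi^T)(\Xi+\nabla\phi_{\xi,\Xi}^T)\big)$, and $|a_\xi^T-\widehat a_\xi^T|\lesssim|\nabla\hat\delta\phi_\xi^T|+\mathbf{1}_{\{|x|>L/4\}}$ by the Lipschitz assumption on $\partial_\xi A$. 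When one bounds the weighted energy of $\hat\delta\phi_{\xi,\Xi}^{*,T}$ one therefore meets the integral $\int|\nabla\hat\delta\phi_\xi^T|^2|\Xi+\nabla\phi_{\xi,\Xi}^T|^2\eta\,dx$, and since the lemma's hypotheses do \emph{not} include the full small-scale regularity condition \hyperlink{R}{(R)} there is no uniform $L^\infty$ bound on $\Xi+\nabla\phi_{\xi,\Xi}^T$ available; one must H\"older-split with the Meyers exponent $p$ for $\nabla\hat\delta\phi_\xi^T$, producing the exponentially weighted $L^q$-norm of $\Xi+\nabla\phi_{\xi,\Xi}^T$ with $q=2p/(p-2)$. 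Your claim that the $\phi(\phi^*-\tilde\phi^*)$ contribution yields only $|\xi||\Xi|$ via a Moser pointwise bound on $\phi$ cannot be right: the factor $\phi^*-\tilde\phi^*$ itself already carries the norm, and the Moser iteration you invoke would anyway require assumption \hyperlink{R}{(R)}, which is not in force in this deterministic lemma.
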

The differences in the first and last row of the right-hand side in \eqref{decompsys} are the \textit{systematic localization errors}, which originate from the localization with parameter $T$. The systematic localization error can be estimated as follows.
\begin{proposition}[Systematic error of localized RVE] \label{P:RVEsystem}
Let $A:\Hilbert\times \Rmd\rightarrow \Rmd$ satisfy \hyperlink{A1}{(A1)}-\hyperlink{A3}{(A3)}.
  Let $\mathbb P$ be stationary in the sense of assumption \hyperlink{P1}{(P1)}. Then the following holds for all $\xi\in\Rmd$:
  \begin{enumerate}[(a)]
  \item $\mathbb E\left[A^{\RVE,\eta,T}(\xi)\right]$ as defined in \eqref{LocalizedRVEApprox} is independent of the weight $\eta$.
  \item Suppose that $\mathbb P$ satisfies a spectral gap estimate in the sense of assumption \hyperlink{P2}{(P2)}. Assume furthermore that the small-scale regularity condition \hyperlink{R}{(R)} holds. Then for all $T\geq 2\varepsilon^2$ and all $\xi,\Xi\in \Rmd$ we have
    \begin{equation*}
      \begin{aligned}
        &~~~~\big|A_{\shom}(\xi)\cdot \Xi-\mathbb E\left[A^{\RVE,\eta,T}(\xi)\cdot \Xi\right]\big|\\
          &~~~~\leq C
          (1+|\xi|)^{C}|\xi||\Xi|\bigg(\frac{\varepsilon}{\sqrt T}\bigg)^{d\wedge 4}\times
          \begin{cases}
            \big|\log (\sqrt T/\varepsilon)\big|^2&\text{for }d=2\text{ and }d=4,\\
            1&\text{for }d=3,\ d=1,\text{ and }d\geq 5
          \end{cases}
        \end{aligned}
      \end{equation*}
      with a constant $C=C(d,m,\lambda,\Lambda,\rho,\nu)$.
  \item If $\mathbb P$ concentrates on $L$-periodic parameter fields and satisfies a periodic spectral gap estimate in the sense of Definition~\ref{SpectralGapInequality}b as well as the regularity condition \hyperlink{R}{(R)}, then we have for all $T\in [2\varepsilon^2,L^2]$ and all $\xi,\Xi\in \Rmd$
    \begin{equation*}
      \begin{aligned}
        &~~~~|\mathbb E\left[A^{\RVE,\eta}(\xi)\cdot \Xi\right]-\mathbb E\left[A^{\RVE,\eta,T}(\xi)\cdot \Xi\right]|\\
          &~~~~\leq C
          (1+|\xi|)^{C}|\xi||\Xi|\bigg(\frac{\varepsilon}{\sqrt T}\bigg)^{d\wedge 4}\times
          \begin{cases}
            \big|\log (\sqrt T/\varepsilon)\big|^2&\text{for }d=2\text{ and }d=4,\\
            1&\text{for }d=3,\ d=1,\text{ and }d\geq 5.
          \end{cases}
        \end{aligned}
      \end{equation*}
  \end{enumerate}
\end{proposition}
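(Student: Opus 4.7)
The plan is to exploit a single representation identity derived from integration by parts against the adjoint linearized corrector $\phi^{*,T}_{\xi,\Xi}$, and then to extract the systematic rate by iterating the potential-field trick of Proposition~\ref{PropositionCorrectorEstimate} / Proposition~\ref{PropositionLinearizedCorrectorEstimate}.

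\emph{Step 1 (part (a)).} Since $\widetilde\omega\mapsto (\phi_\xi^T,\phi_{\xi,\Xi}^{*,T})$ is determined uniquely in $\Huloc$ by the massive PDEs \eqref{EquationLocalizedCorrector}/\eqref{EquationLocalizedCorrectorLinearized} (with $a_\xi^T$ replaced by its transpose), stationarity of $\mathbb P$ in the sense of \hyperlink{P1}{(P1)} transfers to the integrand
\[
A(\omega_\varepsilon,\xi+\nabla\phi_\xi^T)\cdot\Xi-\tfrac1T\phi_\xi^T\phi_{\xi,\Xi}^{*,T}.
\]
Its expectation is therefore $x$-independent, and integrating against any weight $\eta$ with $\int\eta=1$ reproduces it. This gives (a).

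\emph{Step 2 (representation formula for the error).} The key identity, obtained by using that $\nabla\cdot A(\omega,\xi+\nabla\phi_\xi^T)=\frac1T\phi_\xi^T$ and integrating by parts for stationary gradient fields, is
\[
\mathbb E\bigl[A^{\RVE,\eta,T}(\xi)\cdot\Xi\bigr]
=\mathbb E\bigl[A(\omega_\varepsilon,\xi+\nabla\phi_\xi^T)\cdot(\Xi+\nabla\phi_{\xi,\Xi}^{*,T})\bigr];
\]
the correction $-\tfrac1T\phi_\xi^T\phi_{\xi,\Xi}^{*,T}$ is precisely what is needed to absorb the boundary term from integration by parts and convert the test field $\Xi$ into the adjoint-corrected test field $\Xi+\nabla\phi_{\xi,\Xi}^{*,T}$. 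On the other hand, since $\nabla\cdot A(\omega,\xi+\nabla\phi_\xi)=0$, the same identity holds for $A_\shom(\xi)\cdot\Xi=\mathbb E[A(\omega_\varepsilon,\xi+\nabla\phi_\xi)\cdot(\Xi+\nabla\phi_{\xi,\Xi}^{*,T})]$. Subtracting,
\[
A_\shom(\xi)\cdot\Xi-\mathbb E[A^{\RVE,\eta,T}(\xi)\cdot\Xi]
=\mathbb E\bigl[(A(\omega_\varepsilon,\xi+\nabla\phi_\xi)-A(\omega_\varepsilon,\xi+\nabla\phi_\xi^T))\cdot(\Xi+\nabla\phi_{\xi,\Xi}^{*,T})\bigr].
\]
I split the integrand as $a_\xi^T\nabla(\phi_\xi-\phi_\xi^T)+F_{\mathrm{nl}}$ with $|F_{\mathrm{nl}}|\lesssim|\nabla(\phi_\xi-\phi_\xi^T)|^2$ (using the bound on $\partial_\xi^2A$ from \hyperlink{R}{(R)}). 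A second integration by parts that exploits $\nabla\cdot(a_\xi^{T,*}(\Xi+\nabla\phi_{\xi,\Xi}^{*,T}))=\tfrac1T\phi_{\xi,\Xi}^{*,T}$ turns the linear part into the single term $-\tfrac1T\mathbb E[(\phi_\xi-\phi_\xi^T)\phi_{\xi,\Xi}^{*,T}]$ (stationarity of $\phi_\xi-\phi_\xi^T$ is either automatic in $d\geq 3$ or handled by renormalizing averages in $d\leq 2$ via Lemma~\ref{LemmaCorrectorMassiveLimit} and a localization argument).

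\emph{Step 3 (extracting the rate $(\varepsilon/\sqrt T)^{d\wedge 4}$).} A naive Cauchy-Schwarz on $\tfrac1T\mathbb E[(\phi_\xi-\phi_\xi^T)\phi_{\xi,\Xi}^{*,T}]$ using Propositions~\ref{PropositionCorrectorEstimate}-\ref{PropositionLinearizedCorrectorEstimate} gives only $(\varepsilon/\sqrt T)^{2}$, which is sharp for $d=2$ but insufficient otherwise. For $d\geq 3$ I replace $\phi_{\xi,\Xi}^{*,T}$ (respectively $\phi_\xi-\phi_\xi^T$) by $\nabla\cdot\theta_{\xi,\Xi}^{*,T}$ (respectively $\nabla\cdot(\theta_\xi-\theta_\xi^T)$) and integrate by parts. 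Inserting the potential-field bounds \eqref{PotentialFieldEstimate}/\eqref{LinearPotentialFieldEstimate} provides two additional factors of $\varepsilon$, up to the logarithmic losses on the critical dimensions $d\in\{2,4\}$; the saturation at $d\wedge 4$ reflects the failure of the next-order potential to be stationary in $d\geq 5$, so iteration stops after two steps. The quadratic remainder $\mathbb E[F_{\mathrm{nl}}\cdot(\Xi+\nabla\phi_{\xi,\Xi}^{*,T})]$ is controlled by Cauchy-Schwarz in combination with Lemma~\ref{PointwiseRegularityLinearized} applied to the interpolating correctors: this gives the pointwise bound $|\nabla(\phi_\xi-\phi_\xi^T)|\leq \mathcal C(1+|\xi|)^C$, which reduces the quadratic term to a linear one and allows a further potential-field integration by parts against the elementary $L^2$-control $\mathbb E[|\nabla(\phi_\xi-\phi_\xi^T)|^2]^{1/2}\lesssim |\xi|(\varepsilon/\sqrt T)$ obtained from energy testing the defect equation $-\nabla\cdot(\tilde a\nabla(\phi_\xi-\phi_\xi^T))=\tfrac1T\phi_\xi^T$.

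\emph{Step 4 (part (c)).} The argument for the periodic case is the same once the stationary integration by parts is replaced by its counterpart on the torus $[0,L]^d$; the periodic spectral gap of Definition~\ref{DefinitionSpectralGap}(b) and the periodic analogues of Propositions~\ref{PropositionCorrectorEstimate}-\ref{PropositionLinearizedCorrectorEstimate} (which transfer verbatim since all corrector arguments were local) supply the required estimates.

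\emph{Expected main obstacle.} The delicate point is Step~3: the bookkeeping of the iterated potential-field integration by parts so that each IBP really produces a factor of $\varepsilon$ (rather than being absorbed by an unavoidable logarithm), and the handling of the nonlinear remainder with just the ingredients already at our disposal---this is where the regularity condition \hyperlink{R}{(R)} and the pointwise regularity lemma are essential. A secondary technical point is the justification of integration by parts in $d=1,2$ where $\phi_\xi-\phi_\xi^T$ is only defined up to an additive constant; this is handled by working with renormalized averages as in Corollary~\ref{CorollaryImprovedCorrectorDifferenceBounds} together with the logarithmic/square-root correction that naturally appears in \eqref{CorrectorEstimateAveragesLowd}.
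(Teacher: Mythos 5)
Your Step 1 (part (a)) is correct and matches the paper. Your Step 2 representation formula is essentially the right identity: the paper indeed exploits a testing of the localized corrector equation against $\phi^{*,T}_{\xi,\Xi}$, a Taylor expansion of $A$ around $\xi+\nabla\phi_\xi^T$ (using $|\partial_\xi^2A|\le\Lambda$ from \hyperlink{R}{(R)}), and potential-field integration by parts to improve the rate. But there are two genuine gaps in how you close Steps 2 and 3, and they interact.

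\emph{Gap 1: direct comparison with $\phi_\xi$ vs.\ telescoping in $T$.} You compare $A_\shom(\xi)$ directly with $\mathbb E[A^{\RVE,\eta,T}(\xi)]$, which forces you to integrate by parts against $\phi_\xi-\phi_\xi^T$. In $d\le 2$ the limit corrector $\phi_\xi$ is not a stationary random field (only $\nabla\phi_\xi$ is), so $\phi_\xi-\phi_\xi^T$ is not stationary and the identity $\mathbb E\bigl[a_\xi^T\nabla(\phi_\xi-\phi_\xi^T)\cdot(\Xi+\nabla\phi^{*,T}_{\xi,\Xi})\bigr]=-\tfrac1T\mathbb E[(\phi_\xi-\phi_\xi^T)\phi^{*,T}_{\xi,\Xi}]$ is not a bona fide stationary integration by parts. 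You flag this and invoke ``renormalized averages,'' but that needs more than a gesture: a priori the right-hand side is not even well-defined. The paper sidesteps the issue entirely by proving a $2T$ vs.\ $T$ increment bound \eqref{P:RVEsystem:mainestimate} -- where both $\phi_\xi^{2T}$ and $\phi_\xi^T$ are genuinely stationary -- and then summing a convergent telescope $\sum_i \bigl(\mathbb E[A^{\RVE,\eta,2^{i+1}T}]-\mathbb E[A^{\RVE,\eta,2^iT}]\bigr)$. Without that telescope, the $d\le 2$ case is not covered by your argument.

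\emph{Gap 2: the quadratic remainder $\mathbb E[F_{\mathrm{nl}}\cdot(\Xi+\nabla\phi^{*,T}_{\xi,\Xi})]$.} You invoke the ``elementary $L^2$-control $\mathbb E[|\nabla(\phi_\xi-\phi_\xi^T)|^2]^{1/2}\lesssim|\xi|(\varepsilon/\sqrt T)$.'' That first-order rate is precisely what a naive Young/absorption energy test gives, but it is not enough: the quadratic term needs the improved rate $(\varepsilon/\sqrt T)^{(d\wedge 4)/2}$ so that squaring produces $(\varepsilon/\sqrt T)^{d\wedge 4}$. This improved rate is exactly the content of Lemma~\ref{L:syscorr} (and its $L^p$ upgrade Corollary~\ref{C:syscorr}), where the potential-field trick is applied \emph{inside} the energy estimate for the gradient difference (writing $\phi_\xi^T=\nabla\cdot(\theta_\xi^T-b)$ in the source term). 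You seem to apply the potential trick only to the $\tfrac1T\phi\phi$ term ($I_2$ in the paper's notation), not to the gradient-difference bound that feeds into $I_1$. Taking your $L^2$ estimate at face value gives $(\varepsilon/\sqrt T)^2$ for the quadratic remainder, which is short of $(\varepsilon/\sqrt T)^4$ in $d\geq 5$.

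Moreover, the route you sketch for this term -- use a pointwise bound $|\nabla(\phi_\xi-\phi_\xi^T)|\le\mathcal C(1+|\xi|)^C$ to ``reduce the quadratic term to a linear one'' and then do ``a further potential-field integration by parts'' -- does not close: after reduction you are looking at $\mathbb E\bigl[\mathcal C\,|\nabla(\phi_\xi-\phi_\xi^T)|\,|\Xi+\nabla\phi^{*,T}_{\xi,\Xi}|\bigr]$, where the absolute values on both factors destroy any gradient or divergence structure needed for an integration by parts. The paper instead keeps the quadratic factor, separates by H\"older with exponents $p$ and $p/(p-1)$, and feeds in $\mathbb E[|\nabla\phi_\xi^{2T}-\nabla\phi_\xi^T|^{2p}]^{1/p}\lesssim|\xi|^2(\varepsilon/\sqrt T)^{d\wedge 4}|\log\cdot|^2$ from Corollary~\ref{C:syscorr} and $\mathbb E[|\Xi+\nabla\phi_{\xi,\Xi}^T|^{p/(p-1)}]^{(p-1)/p}\lesssim(1+|\xi|)^C|\Xi|$ from \eqref{EstimateLinearCorrectorLinfty} and Lemma~\ref{MomentsMinimalRadiusLinearized}.

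In short, the skeleton of your proposal -- stationary IBP against the adjoint linearized corrector, Taylor remainder governed by \hyperlink{R}{(R)}, potential fields for the rate -- is the right one, but (i) the comparison must be telescoped in $T$ to handle $d\le 2$ and to work only with stationary correctors, and (ii) the quadratic remainder must be controlled by the improved gradient-difference rate of Lemma~\ref{L:syscorr} (which itself uses the potential field), not by the naive $L^2$ energy rate and a hypothetical further IBP.
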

In order to prove this result, we need to quantify the systematic error on the level of the correctors. Note that this estimate is slightly pessimistic (by the logarithmic factor) for $d=2$ and $d=4$. Moreover, note that for $d\geq 5$ the estimate saturates, an effect that is also observed in the stochastic homogenization of linear elliptic PDEs, see \cite[Corollary~1]{GloriaOtto2}.
\begin{lemma}[Localization error in the corrector]\label{L:syscorr}
  Let $A$ satisfy the assumptions \hyperlink{A1}{(A1)}--\hyperlink{A3}{(A3)} and let $\mathbb P$ satisfy the assumptions \hyperlink{P1}{(P1)}--\hyperlink{P2}{(P2)}. Then for all $T\geq 2\varepsilon^2$ and all $x_0\in \Rd$ we have
  \begin{align*}
    &\left(\fint_{B_{\sqrt T}(x_0)}\Big(\big|\nabla \phi_\xi^{2T}-\nabla \phi_\xi^T\big|^2 + \frac{1}{T}\big|\phi_\xi^{2T}-\phi_\xi^T\big|^2\Big) \,dx\right)^\frac12\\
    &\qquad\leq\mathcal C|\xi| \bigg(\frac{\varepsilon}{\sqrt{T}}\bigg)^{\frac{d\wedge 4}{2}}
      \begin{cases}
        \big|\log (\sqrt T/\varepsilon)\big|^{1/2}  &\text{for }d\in\{2,4\},\\
        1&\text{for }d=3,\ d=1,\text{ and }d\geq 5.\\
      \end{cases}
  \end{align*}
  Here $\mathcal C$ denotes a random constant as in Proposition~\ref{PropositionLinearizedCorrectorEstimate}.
\end{lemma}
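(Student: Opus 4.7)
My approach is to derive and analyze a linear massive elliptic PDE for the difference $w^T := \phi_\xi^{2T} - \phi_\xi^T$. By the standard monotonicity-linearization trick, $A(\omega_\varepsilon, \xi + \nabla \phi_\xi^{2T}) - A(\omega_\varepsilon, \xi + \nabla \phi_\xi^T) = \bar a(x) \nabla w^T$ with $\bar a(x) := \int_0^1 \partial_\xi A(\omega_\varepsilon, \xi + \nabla \phi_\xi^T + s \nabla w^T)\,ds$ a uniformly elliptic and bounded random coefficient field (by \hyperlink{A1}{(A1)}--\hyperlink{A2}{(A2)}). Subtracting the localized corrector equations \eqref{EquationLocalizedCorrector} at scales $T$ and $2T$ and regrouping the massive terms yields
\begin{equation*}
  -\nabla \cdot (\bar a \nabla w^T) + \frac{1}{2T} w^T = \frac{1}{2T} \phi_\xi^T.
\end{equation*}
The right-hand side is crucial: it is a rescaling of the mean-zero random field $\phi_\xi^T$, the vanishing expectation $\mathbb{E}[\phi_\xi^T] = 0$ following from stationarity applied to the localized corrector equation (taking expectation, using $\mathbb{E}[\nabla \cdot A] = 0$).

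From this linear massive PDE, the plan is to estimate $w^T$ by combining energy estimates with exponentially decaying weights on the natural scale $\sqrt T$ with the sensitivity/corrector estimates from Lemma~\ref{LemmaEstimateLinearFunctionals} and Lemma~\ref{LemmaCorrectorAverages}. Heuristically, $w^T(x_0)$ is the convolution $\frac{1}{2T}\int G_{\bar a}^{2T}(x_0,y)\phi_\xi^T(y)\,dy$ of $\phi_\xi^T$ with the massive Green's function of $-\nabla\cdot(\bar a \nabla) + \frac{1}{2T}$, which is exponentially localized on scale $\sqrt T$. A dyadic decomposition of this convolution (in the spirit of Lemma~\ref{LemmaMultiscaleDecomposition}) and the CLT-type bounds \eqref{EstimateCorrectorAverageDifference}--\eqref{EstimateCorrectorAverage} for local averages of $\phi_\xi^T$ on balls of radius $\sqrt T$ would then yield the claimed rate $(\varepsilon/\sqrt T)^{(d\wedge 4)/2}$. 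The saturation of this rate for $d \geq 5$ reflects the fact that the $L^2$-mass of the massive Green's function ceases to grow in $T$ once $d \geq 5$ (it concentrates near the diagonal on the microscopic scale $\varepsilon$), exactly as in linear stochastic homogenization (compare \cite[Corollary~1]{GloriaOtto2}); the logarithmic factor in $d\in\{2,4\}$ is the usual borderline summation from Lemma~\ref{LemmaMultiscaleDecomposition}.

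The main obstacle is that $\bar a$ itself depends on $w^T$, so no linear stochastic homogenization theory with a fixed coefficient field applies directly. I would circumvent this by a deterministic duality argument: for a compactly supported test field $g$ on $B_{\sqrt T}(x_0)$, test the equation for $w^T$ against the solution $v$ of the dual massive equation $-\nabla\cdot(\bar a^*\nabla v) + \frac{1}{2T} v = -\nabla\cdot g$ to write $\int g\cdot \nabla w^T\,dx = \frac{1}{2T}\int v\,\phi_\xi^T\,dx$, then split $v = \bar v + (v - \bar v)$ with $\bar v$ the solution of the homogenized dual massive equation. This decomposes the functional into a deterministic linear functional of $\phi_\xi^T$ (to which Lemma~\ref{LemmaEstimateLinearFunctionals} applies directly, exploiting the mean-zero property) plus a fluctuation remainder controlled by a massive homogenization-error estimate for the linear dual problem, which in turn is of the order of the corrector fluctuations. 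Combined with the moment bounds on the minimal radius from Lemma~\ref{MomentsMinimalRadius}, this produces the claimed $L^2$-average bound with stretched exponential stochastic moments.
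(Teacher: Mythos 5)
Your derivation of the linear massive PDE for $w^T:=\phi_\xi^{2T}-\phi_\xi^T$ and your identification of the right-hand side $\frac{1}{2T}\phi_\xi^T$ as the source of smallness are correct, and your Green's function heuristic explains the rates. However, your proposed rigorous implementation diverges from the paper's proof and has two genuine gaps.

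First, you do not use the vector potential $\theta_\xi^T$ with $\phi_\xi^T=\nabla\cdot\theta_\xi^T$ from \eqref{EquationPotentialField}, and this is precisely what makes the paper's argument close in $d\geq 3$. The paper tests the equation for $w^T$ against $w^T\eta$ with the exponential weight $\eta(x)=\exp(-\gamma|x|/\sqrt T)$, obtaining the pathwise estimate
\begin{equation*}
\int_{\Rd}\Big(|\nabla w^T|^2+\tfrac{1}{T}|w^T|^2\Big)\eta\,dx\leq \frac{C}{T}\int_{\Rd}\phi_\xi^T\,w^T\,\eta\,dx.
\end{equation*}
For $d\geq 3$ one cannot control the right-hand side simply by absorbing and invoking the $L^2$ bound $|\phi_\xi^T|\lesssim\varepsilon$, which only yields the suboptimal rate $\varepsilon/\sqrt T$ uniformly in $d$. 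The crucial step is to substitute $\phi_\xi^T=\nabla\cdot(\theta_\xi^T-b)$, integrate by parts so that a derivative lands on $w^T\eta$, absorb by Young's inequality, and be left with $\frac{C}{T^2}\int|\theta_\xi^T-b|^2\eta\,dx$. Since the estimate \eqref{PotentialFieldEstimate} on $\theta_\xi^T$ carries an extra factor $\varepsilon$ relative to the corrector bound, this gains the exponent $(d\wedge 4)/2$. Your Green's function reasoning implicitly encodes the same gain, but the duality/two-scale construction you describe does not make this gain rigorous.

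Second, the ``massive homogenization-error estimate for the linear dual problem'' that you invoke for the remainder $\frac{1}{2T}\int(v-\bar v)\phi_\xi^T\,dx$ would require the quantitative corrector estimates for the linearized coefficient field $\bar a^*$ (which depends on both $\nabla\phi_\xi^T$ and $\nabla\phi_\xi^{2T}$). In this paper those linearized estimates rest on the small-scale regularity condition \hyperlink{R}{(R)}, which is \emph{not} assumed in Lemma~\ref{L:syscorr}. The lemma is needed precisely to justify passage to the limit $T\to\infty$ and the convergence \eqref{limtlocalized} under assumptions \hyperlink{A1}{(A1)}--\hyperlink{A3}{(A3)} and \hyperlink{P1}{(P1)}--\hyperlink{P2}{(P2)} alone, so a proof that invokes linearized-corrector homogenization theory is logically circular here. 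Moreover, the deterministic functional $\frac{1}{2T}\int \bar v\,\phi_\xi^T\,dx$ is not directly of the form $\int g\cdot\nabla\phi_\xi^T\,dx$ covered by Lemma~\ref{LemmaEstimateLinearFunctionals}; you would still need to reduce it to such functionals (or to averages via Lemma~\ref{LemmaCorrectorAverages}), adding a further layer of work. The paper's pathwise energy estimate with the vector potential avoids both difficulties entirely.
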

With help of Meyers estimate we may upgrade the previous estimate to an $L^p$ bound.
\begin{corollary}\label{C:syscorr}
Consider the setting of Lemma~\ref{L:syscorr}. Then there exists a Meyers exponent $\bar p=\bar p(d,m,\lambda,\Lambda)>2$ such that for any $p\in [2,\bar p]$ the estimate
\begin{align*}
  &\mathbb E\left[|\nabla \phi_\xi^{2T}-\nabla \phi_\xi^T\big|^{p}\right]^\frac1{p}
  \\
  &\leq C|\xi| \bigg(\frac{\varepsilon}{\sqrt{T}}\bigg)^{\frac{d\wedge 4}{2}}
    \times \begin{cases}
      \big|\log (\sqrt T/\varepsilon)\big| &\text{for }d\in\{2,4\},\\
      1&\text{for }d=3,\ d=1,\text{ and }d\geq 5
    \end{cases}
\end{align*}
holds, where $C=C(d,m,\lambda,\Lambda,p,\rho)$.
\end{corollary}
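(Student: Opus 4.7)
Set $w := \phi_\xi^{2T}-\phi_\xi^T$. The plan is to derive a linear elliptic PDE for $w$, apply a deterministic Meyers / Gehring-type reverse Hölder inequality locally on balls of radius $\sqrt T$, take the $L^p(\mathbb P)$-norm, and then insert the $L^2$-bound from Lemma~\ref{L:syscorr} (whose random constant has stretched exponential moments and is therefore harmless in any fixed $L^p(\mathbb P)$). By stationarity, $\mathbb E[|\nabla w(x)|^p]$ is independent of $x$, so a local estimate in space transfers to an annealed estimate with no loss.

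First, I would subtract the two corrector equations \eqref{EquationLocalizedCorrector} for parameters $T$ and $2T$ and write the nonlinearity as the integral of its derivative, obtaining
\begin{equation*}
-\nabla\cdot(a(x)\nabla w)+\tfrac{1}{2T}w=\tfrac{1}{2T}\phi_\xi^T,\qquad a(x):=\int_0^1\partial_\xi A(\omega_\varepsilon(x),\xi+\nabla\phi_\xi^T+t\nabla w)\,dt,
\end{equation*}
which is uniformly elliptic with constants $\lambda,\Lambda$ by \hyperlink{A1}{(A1)}--\hyperlink{A2}{(A2)}. Next I would invoke the (weighted) Meyers estimate of Lemma~\ref{LemmaWeightedMeyers} applied to this linear equation on the ball $B_{\sqrt T}(x_0)$: there exists $\bar p=\bar p(d,m,\lambda,\Lambda)>2$ such that for all $p\in[2,\bar p]$
\begin{equation*}
\Bigl(\fint_{B_{\sqrt T}(x_0)}|\nabla w|^p\,dx\Bigr)^{1/p}\leq C\Bigl(\fint_{B_{2\sqrt T}(x_0)}|\nabla w|^2\,dx\Bigr)^{1/2}+\frac{C}{\sqrt T}\Bigl(\fint_{B_{2\sqrt T}(x_0)}|\phi_\xi^T|^{p_*}+|w|^{p_*}\,dx\Bigr)^{1/p_*},
\end{equation*}
with $p_*:=dp/(d+p)$ the Sobolev conjugate. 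The contribution of $|w|^{p_*}$ can be absorbed by choosing $\bar p$ small enough (using $p_*<p$ and Young's inequality) or, alternatively, by exploiting the exponentially decaying weights in Lemma~\ref{LemmaWeightedMeyers}. The exponential localization coming from the massive term is precisely what makes this estimate independent of the whole-space-type loss.

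Finally, I would take the $L^p(\mathbb P)$-norm of the previous display at $x_0=0$, use stationarity to rewrite $\mathbb E[|\nabla w(0)|^p]^{1/p}=\mathbb E[\fint_{B_{\sqrt T}}|\nabla w|^p\,dx]^{1/p}$, and insert the two available estimates. For the first term on the right, Lemma~\ref{L:syscorr} yields the stated rate once we note that the stretched exponential moments of $\mathcal C$ imply $\mathbb E[\mathcal C^{p}]^{1/p}\leq C(p)$. For the second term I would use Proposition~\ref{PropositionCorrectorEstimate} applied at scale $R=2\sqrt T$: in case $d\geq 3$ this gives $\frac{1}{\sqrt T}(\fint_{B_{2\sqrt T}}|\phi_\xi^T|^{p_*})^{1/p_*}\lesssim |\xi|\varepsilon/\sqrt T$, which is of lower or equal order than $|\xi|(\varepsilon/\sqrt T)^{(d\wedge 4)/2}$ for $d\in\{3,4\}$ (with a logarithm in $d=4$), and I would argue analogously in $d=1,2$ using \eqref{CorrectorEstimateAveragesLowd}. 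Combining these contributions gives the announced rate.

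\textbf{Main obstacle.} The delicate point is the interplay between the massive zero-order term $\frac{1}{2T}w$ and the right-hand side $\frac{1}{2T}\phi_\xi^T$ under the Meyers / reverse Hölder iteration: a naive whole-space application of Meyers would lose the exponential localization on scale $\sqrt T$ and produce a rate weaker than $(\varepsilon/\sqrt T)^{(d\wedge 4)/2}$. This is precisely why the weighted Meyers estimate of Lemma~\ref{LemmaWeightedMeyers} is needed, and why one must feed in the sharp local $L^2$ bound from Lemma~\ref{L:syscorr} rather than a whole-space energy estimate. Once that is in place, the upgrade to $L^p(\mathbb P)$ is routine because the only randomness enters through $\mathcal C$, which has stretched exponential moments bounded uniformly in $T$ and $\xi$.
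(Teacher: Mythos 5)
Your overall strategy -- subtract the two corrector equations to get a linear PDE for $w=\phi_\xi^{2T}-\phi_\xi^T$, apply a local Meyers estimate on balls of radius $\sqrt T$, insert the $L^2$ bound of Lemma~\ref{L:syscorr} for the Dirichlet part, control the right-hand side via Proposition~\ref{PropositionCorrectorEstimate}, and finally take the $p$-th stochastic moment and use stationarity -- is exactly the paper's. However, there is a genuine gap in how you treat the right-hand side $\frac{1}{2T}\phi_\xi^{2T}$ (equivalently $\frac{1}{2T}\phi_\xi^{T}$) for $d\geq 3$, and the error propagates into an inequality that runs in the wrong direction.

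You feed $\frac{1}{2T}\phi_\xi^T$ into the Meyers estimate as a massive right-hand side of the form $\frac{1}{\sqrt T}f$, which produces a contribution of order $\frac{1}{\sqrt T}\bigl(\fint_{B_{2\sqrt T}}|\phi_\xi^T|^p\bigr)^{1/p}\lesssim |\xi|\,\varepsilon/\sqrt T$ for $d\geq 3$. You then claim this is \emph{of lower or equal order} than the target $|\xi|(\varepsilon/\sqrt T)^{(d\wedge 4)/2}$. That is false: since $\varepsilon/\sqrt T\leq 1$, the exponent $1$ is \emph{smaller} than $(d\wedge 4)/2\geq 3/2$, so $(\varepsilon/\sqrt T)^1\geq(\varepsilon/\sqrt T)^{(d\wedge 4)/2}$, i.e.\ your bound is strictly \emph{worse} than what the corollary asserts, for every $d\geq 3$. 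The paper avoids this by rewriting, in the case $d\geq 3$, $\phi_\xi^{2T}=\nabla\cdot(\theta_\xi^{2T}-b)$ via \eqref{EquationPotentialField}, so that the right-hand side becomes a genuine \emph{divergence-form} term $\nabla\cdot\bigl(\frac{1}{2T}(\theta_\xi^{2T}-b)\bigr)$. Plugging the sharper potential-field bound \eqref{PotentialFieldEstimate} (which gains an extra factor of $\varepsilon$) into the $g$-slot of Lemma~\ref{L:linearMeyersLocalized} then yields, e.g.\ for $d=3$, $\frac{1}{2T}\bigl(\fint_{B_{2\sqrt T}}|\theta_\xi^{2T}-b|^p\bigr)^{1/p}\lesssim |\xi|(\varepsilon/\sqrt T)^{3/2}$, which is the correct rate. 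Without this step the corollary's rate cannot be reached for $d\geq 3$.

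A secondary issue: you invoke the whole-space weighted Meyers estimate Lemma~\ref{LemmaWeightedMeyers}, but what is actually needed (and used by the paper) is the \emph{local} Meyers estimate Lemma~\ref{L:linearMeyersLocalized}, which directly compares $\fint_{B_{\sqrt T}}|\nabla w|^p$ to $\fint_{B_{2\sqrt T}}(|\nabla w|^2+\frac{1}{T}|w|^2)$ plus $\fint_{B_{2\sqrt T}}(|g|^p+|f|^p)$. Your version of the Meyers inequality introduces a spurious $|w|^{p_*}$ term and a Sobolev-conjugate exponent $p_*$ that Lemma~\ref{L:linearMeyersLocalized} simply does not produce; you do not need to absorb anything.
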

We also require control of the localization error for the linearized corrector.
\begin{lemma}[Localization error for linearized corrector]\label{L:syslincorr}
  Consider the setting of Lemma~\ref{L:syscorr}. Furthermore, assume that the small-scale regularity condition \hyperlink{R}{(R)} holds. Then 
  \begin{equation*}
    \begin{aligned}
      &\mathbb E\left[|\nabla \phi_{\xi,\Xi}^T-\nabla \phi_{\xi,\Xi}^{2T}|^2+\frac1T|\phi_{\xi,\Xi}^T-\phi_{\xi,\Xi}^{2T}|^2\right]^{1/2}\\
      &\qquad\leq C(1+|\xi|)^{C}|\Xi|  \bigg(\frac{\varepsilon}{\sqrt{T}}\bigg)^{\frac{d\wedge 4}{2}}
    \times \begin{cases}
      \big|\log (\sqrt T/\varepsilon)\big| &\text{for }d\in\{2,4\},\\
      1&\text{for }d=3,\ d=1, \text{ and }d\geq 5,
    \end{cases}
    \end{aligned}
  \end{equation*}
  where $C=C(d,m,\lambda,\Lambda,\rho,\nu)$.
\end{lemma}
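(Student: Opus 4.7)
The plan is to follow the template of Lemma~\ref{L:syscorr} for the nonlinear corrector difference, augmented by new ingredients needed to treat the divergence-form source arising from the variation of the linearized coefficient. Setting $w_\Xi:=\phi^T_{\xi,\Xi}-\phi^{2T}_{\xi,\Xi}$ and subtracting the defining PDEs \eqref{EquationLocalizedCorrectorLinearized} at levels $T$ and $2T$, a direct computation yields
\begin{equation*}
-\nabla\cdot\bigl(a_\xi^T\nabla w_\Xi\bigr)+\tfrac1T w_\Xi
=\nabla\cdot\bigl((a_\xi^{2T}-a_\xi^T)(\Xi+\nabla\phi^{2T}_{\xi,\Xi})\bigr)-\tfrac1{2T}\phi^{2T}_{\xi,\Xi}.
\end{equation*}
By stationarity of $w_\Xi$ and $\nabla w_\Xi$ (inherited from the mass regularization), $\mathbb E[|\nabla w_\Xi|^2]$ and $\mathbb E[|w_\Xi|^2]$ are independent of the base point, so it suffices to estimate them pointwise. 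Accordingly, I split $w_\Xi=w_\Xi^{(\mathrm{mass})}+w_\Xi^{(\mathrm{div})}$ by linearity: the component driven by the mass source is handled by a carbon copy of the argument of Lemma~\ref{L:syscorr}, now applied to the linear elliptic operator with random coefficient $a_\xi^T$ and invoking the linearized corrector estimates from Proposition~\ref{PropositionLinearizedCorrectorEstimate} (together with the moment bound from Lemma~\ref{MomentsMinimalRadiusLinearized}) in place of those from Proposition~\ref{PropositionCorrectorEstimate}. This yields the claimed rate $(\varepsilon/\sqrt T)^{(d\wedge 4)/2}$ with prefactor $(1+|\xi|)^C|\Xi|$ and the sharp logarithmic factor in $d\in\{2,4\}$.

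The new contribution $w_\Xi^{(\mathrm{div})}$ solves $-\nabla\cdot(a_\xi^T\nabla w_\Xi^{(\mathrm{div})})+\tfrac1T w_\Xi^{(\mathrm{div})}=\nabla\cdot\bigl((a_\xi^{2T}-a_\xi^T)(\Xi+\nabla\phi^{2T}_{\xi,\Xi})\bigr)$. Assumption \hyperlink{R}{(R)} supplies the uniform bound $|\partial_\xi^2 A|\leq \Lambda$, hence the pointwise estimate $|a_\xi^{2T}-a_\xi^T|\leq \Lambda|\nabla\phi_\xi^{2T}-\nabla\phi_\xi^T|$. A weighted energy estimate with an exponentially decaying cutoff on scale $\sqrt T$ (absorbing gradient-of-cutoff terms into the massive term $\tfrac1T w_\Xi^{(\mathrm{div})}$ thanks to the uniform ellipticity of $a_\xi^T$) reduces the task to controlling
\begin{equation*}
\mathbb E\Bigl[\fint_{B_{C\sqrt T}}|\nabla\phi_\xi^{2T}-\nabla\phi_\xi^T|^2\,|\Xi+\nabla\phi^{2T}_{\xi,\Xi}|^2\,dx\Bigr].
\end{equation*}
H\"older with the Meyers exponent $\bar p>2$ of Corollary~\ref{C:syscorr} separates the two factors,
\begin{equation*}
\fint\!|\nabla\phi_\xi^{2T}-\nabla\phi_\xi^T|^2|\Xi+\nabla\phi^{2T}_{\xi,\Xi}|^2
\leq\Bigl(\fint\!|\nabla\phi_\xi^{2T}-\nabla\phi_\xi^T|^{\bar p}\Bigr)^{2/\bar p}\Bigl(\fint\!|\Xi+\nabla\phi^{2T}_{\xi,\Xi}|^{\tfrac{2\bar p}{\bar p-2}}\Bigr)^{(\bar p-2)/\bar p}.
\end{equation*}
Taking $\mathbb E$ and Cauchy--Schwarz, the first factor is controlled in $L^2(\Omega)$ by Corollary~\ref{C:syscorr} and produces (the square of) the sharp rate $(\varepsilon/\sqrt T)^{(d\wedge 4)/2}|\log|$, while the second factor is estimated via the near-$L^\infty$ bound of Lemma~\ref{PointwiseRegularityLinearized}, $|\Xi+\nabla\phi^{2T}_{\xi,\Xi}|\leq \mathcal C_{\mathrm{reg},\xi}(1+|\xi|)^C|\Xi|(r_{*,2T,\xi,\Xi}/\varepsilon)^{(d-\delta)/2}$, combined with the stretched exponential moments of $r_{*,2T,\xi,\Xi}$ from Lemma~\ref{MomentsMinimalRadiusLinearized}, which furnish finite $L^q(\Omega)$ norms for any $q<\infty$ at the price of a polynomial loss in $1+|\xi|$. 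Adding the contributions from $w_\Xi^{(\mathrm{mass})}$ and $w_\Xi^{(\mathrm{div})}$ and taking square roots reproduces the claim.

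The principal obstacle is this integrability matching for $w_\Xi^{(\mathrm{div})}$: Corollary~\ref{C:syscorr} only supplies $\nabla\phi_\xi^T-\nabla\phi_\xi^{2T}$ in $L^{\bar p}$ with $\bar p$ marginally above $2$, so the companion factor $|\Xi+\nabla\phi^{2T}_{\xi,\Xi}|$ must be estimated in the potentially very large conjugate exponent $2\bar p/(\bar p-2)$. The small-scale regularity hypothesis \hyperlink{R}{(R)} is what makes the argument go through: it promotes the linearized corrector estimate to the near-$L^\infty$ bound of Lemma~\ref{PointwiseRegularityLinearized} with polynomial loss in the auxiliary radius $r_{*,2T,\xi,\Xi}$, whose stretched exponential moments then deliver finite $L^q$-norms for all $q<\infty$, at the unavoidable price of the prefactor $(1+|\xi|)^C$ that ends up in the final estimate.
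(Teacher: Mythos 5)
Your argument follows the paper's proof essentially step by step: subtract the equations at levels $T$ and $2T$; exploit $|a_\xi^{2T}-a_\xi^T|\leq C|\nabla\phi_\xi^{2T}-\nabla\phi_\xi^T|$ from the uniform bound on $\partial_\xi^2 A$ in \hyperlink{R}{(R)}; localize exponentially and use stationarity; treat the mass source via Proposition~\ref{PropositionLinearizedCorrectorEstimate} (with the potential representation $\phi_{\xi,\Xi}^{2T}=\nabla\cdot\theta_{\xi,\Xi}^{2T}$ for $d\geq 3$, as in Lemma~\ref{L:syscorr}); and treat the divergence source by splitting the product $|\nabla\phi_\xi^{2T}-\nabla\phi_\xi^T|^2\,|\Xi+\nabla\phi_{\xi,\Xi}^{2T}|^2$ using Corollary~\ref{C:syscorr} on the first factor and Lemma~\ref{PointwiseRegularityLinearized} together with Lemma~\ref{MomentsMinimalRadiusLinearized} on the second. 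The explicit decomposition $w_\Xi=w_\Xi^{(\mathrm{mass})}+w_\Xi^{(\mathrm{div})}$ is a harmless reorganization of the paper's single energy estimate.

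One technical point to repair: after passing to the pointwise expectation via stationarity, you should apply H\"older in $\Omega$ with the \emph{matched} exponents $\bar p/2$ and $\bar p/(\bar p-2)$ -- i.e.\
\begin{equation*}
\mathbb E\bigl[|\nabla\phi_\xi^{2T}-\nabla\phi_\xi^T|^2|\Xi+\nabla\phi_{\xi,\Xi}^{2T}|^2\bigr]
\leq
\mathbb E\bigl[|\nabla\phi_\xi^{2T}-\nabla\phi_\xi^T|^{\bar p}\bigr]^{2/\bar p}
\mathbb E\bigl[|\Xi+\nabla\phi_{\xi,\Xi}^{2T}|^{\frac{2\bar p}{\bar p-2}}\bigr]^{\frac{\bar p-2}{\bar p}},
\end{equation*}
which is exactly what the paper does. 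Your ``Cauchy--Schwarz'' step, read literally, would require controlling $\mathbb E\bigl[|\nabla\phi_\xi^{2T}-\nabla\phi_\xi^T|^{2\bar p}\bigr]$ (after Jensen on the spatial average raised to the power $4/\bar p>1$), which lies strictly outside the Meyers range $[2,\bar p]$ of Corollary~\ref{C:syscorr} and is not available. With the H\"older exponents matched as above, the first factor gives the square of the rate from Corollary~\ref{C:syscorr} while the second is finite (with polynomial loss in $1+|\xi|$) thanks to Lemma~\ref{PointwiseRegularityLinearized} and Lemma~\ref{MomentsMinimalRadiusLinearized}, and everything closes.
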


\section{Proof of the main results}
\subsection{Ingredients from regularity theory}

Our estimates crucially rely on three basic regularity estimates for elliptic PDEs, the first two being the Caccioppoli inequality and the hole-filling estimate for \emph{nonlinear elliptic equations} (and systems) with monotone nonlinearity and the last one being a weighted Meyers estimate for linear elliptic equations (and systems). We first state the Caccioppoli inequality and the hole-filling estimate in the nonlinear setting. The (standard) proofs are provided in Appendix~\ref{SectionRegularity}.
\begin{lemma}[Caccioppoli inequality and hole-filling estimate for monotone systems]
\label{LemmaCaccioppoliHoleFilling}
Let $A(x,\xi)$ be a monotone operator subject to the assumptions \hyperlink{A1}{(A1)}--\hyperlink{A2}{(A2)}. Let $0<T\leq \infty$ and let $u$ be a solution to the system of PDEs
\begin{align*}
-\nabla \cdot (A(x,\nabla u)) + \frac{1}{T} u=\nabla \cdot g + \frac{1}{T}f
\end{align*}
for some $f\in L^2(\Rd;\Rm)$ and some $g\in L^2(\Rd;\Rmd)$.
Then there exist constants $C>0$ and $\delta>0$ depending only on $d$, $m$, $\lambda$, and $\Lambda$ with the following property: For any $R,r>0$ with $R\geq r$ we have the Caccioppoli inequality
\begin{align}
\label{CaccippoliMonotone}
&\fint_{B_{R/2}(x_0)} |\nabla u|^2 + \frac{1}{T} |u|^2 \,dx
\\&
\nonumber
\leq
\frac{C}{R^2} \fint_{B_R(x_0)} |u-b|^2 \,dx + \frac{C}{T} |b|^2 + C \fint_{B_R(x_0)}  |g|^2 + \frac{1}{T}|f|^2 \,dx
\end{align}
for any $b\in \Rm$
and the hole-filling estimate
\begin{align}
\nonumber
&\int_{B_r(x_0)} |\nabla u|^2 + \frac{1}{T} |u|^2 \,dx
\\&
\label{HoleFillingMonotone}
\leq C \bigg(\frac{r}{R}\bigg)^{\delta} \bigg(\int_{B_R(x_0)} |\nabla u|^2
+\frac{1}{T} |u|^2 \,dx\bigg)
\\&~~~
\nonumber
+ C \int_{B_R(x_0)} \bigg(\frac{r}{r+|x-x_0|}\bigg)^\delta \Big(|g|^2+\frac{1}{T}|f|^2\Big) \,dx.
\end{align}
\end{lemma}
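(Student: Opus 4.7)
For the Caccioppoli inequality \eqref{CaccippoliMonotone}, I would test the equation against $\eta^2(u-b)$, where $\eta\in C_c^\infty(B_R(x_0))$ is a cut-off with $\eta\equiv 1$ on $B_{R/2}(x_0)$ and $|\nabla\eta|\leq C/R$. Expanding $\nabla(\eta^2(u-b))=\eta^2\nabla u+2\eta\nabla\eta\otimes(u-b)$ and invoking (A2) (which includes $A(x,0)=0$), the monotonicity (A1) delivers the coercivity $A(x,\nabla u)\cdot\nabla u\geq\lambda|\nabla u|^2$, while the Lipschitz bound $|A(x,\nabla u)|\leq\Lambda|\nabla u|$ controls the cross term $2\int\eta\,A(x,\nabla u)\cdot(\nabla\eta\otimes(u-b))$ via Young's inequality. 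The mass contribution is rewritten as
\begin{equation*}
\frac{1}{T}\int \eta^2 u\cdot(u-b)=\frac{1}{T}\int \eta^2|u-b|^2+\frac{1}{T}b\cdot\int\eta^2(u-b),
\end{equation*}
whose error term is again absorbed by Young's inequality. The right-hand side pairings $\int g\cdot\nabla(\eta^2(u-b))$ and $\frac{1}{T}\int f\cdot\eta^2(u-b)$ are estimated analogously; the identity $|u|^2\leq 2|u-b|^2+2|b|^2$ then produces the stated form of the Caccioppoli estimate.

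For the hole-filling estimate \eqref{HoleFillingMonotone}, the plan is to first establish a \emph{one-step decay} for the energy $E(r):=\int_{B_r(x_0)}(|\nabla u|^2+\tfrac{1}{T}|u|^2)\,dx$, of the form
\begin{equation*}
E(R/2)\leq \theta\, E(R)+C\int_{B_R(x_0)}\Big(|g|^2+\tfrac{1}{T}|f|^2\Big)\,dx,\qquad \theta\in(0,1),
\end{equation*}
and then iterate it dyadically. The one-step estimate is obtained by applying the Caccioppoli inequality with $b=\bar u_A$, the mean of $u$ over the annulus $A=B_R(x_0)\setminus B_{R/2}(x_0)$. The Poincaré inequality on $A$ (which requires exactly the annular mean) converts the contribution $\frac{1}{R^2}\int_A|u-\bar u_A|^2$ into $\int_A|\nabla u|^2$, and the term $\frac{1}{T}|\bar u_A|^2|B_{R/2}|$ is folded back into $\frac{C}{T}\int_A|u|^2$ via $|\bar u_A|^2\leq\fint_A|u|^2$ and $|B_{R/2}|\sim|A|$. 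This yields $E(R/2)\leq \tilde C(E(R)-E(R/2))+\text{RHS}$, and the buckling step gives $\theta=\tilde C/(1+\tilde C)<1$.

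Iterating the one-step bound across dyadic scales $R_k=2^{-k}R$ from $R_0\sim R$ down to $R_k\sim r$ produces $E(r)\leq C(r/R)^\delta E(R)+\sum_k\theta^{k}\int_{B_{2^{-k}R}(x_0)}(|g|^2+\tfrac{1}{T}|f|^2)\,dx$ with $\delta=\log_2(1/\theta)>0$ depending only on $d,m,\lambda,\Lambda$. Rewriting the geometric sum as a single integral by noting that each $x\in B_R(x_0)$ lies in the shell indexed by $2^{-k}R\sim|x-x_0|$ (so that $\theta^{k}\sim (r/(r+|x-x_0|))^\delta$ at the relevant scale) yields the weighted right-hand side $\int_{B_R}(r/(r+|x-x_0|))^\delta(|g|^2+\tfrac{1}{T}|f|^2)\,dx$ claimed in \eqref{HoleFillingMonotone}. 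The main technical obstacle I expect is the careful bookkeeping of the massive term $\frac{1}{T}|u|^2$ throughout the Caccioppoli--Poincaré--buckling chain, ensuring that it appears symmetrically in the annular decomposition so that the same exponent $\delta$ governs the decay of both $|\nabla u|^2$ and $\frac{1}{T}|u|^2$; the use of the annular mean $\bar u_A$ (rather than the mean over all of $B_R$) is precisely what makes this symmetry possible.
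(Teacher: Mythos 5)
Your proposal is correct and follows essentially the same route as the paper: test with $\eta^2(u-b)$, use (A1)/(A2) plus Young for Caccioppoli, then apply the Poincaré inequality on the annulus $B_R\setminus B_{R/2}$ with the annular mean $\bar u_A$, hole-fill to get one-step geometric decay, and iterate dyadically. The only cosmetic difference is your rewriting of the mass term via $u\cdot(u-b)=|u-b|^2+b\cdot(u-b)$, which is an equivalent reshuffle of the paper's treatment.
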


We next state a weighted Meyers-type estimate for linear uniformly elliptic equations and systems. Its proof (which is provided in Appendix~\ref{SectionMeyers}) relies on the usual Meyers estimate, along with a duality argument and a hole-filling estimate for the adjoint operator. The details are provided in \cite{BellaFehrmanFischerOtto} for the case $T=\infty$; however, the proof applies verbatim to the case $T>0$, as the only ingredients are the Meyers estimate for the PDE and the hole-filling estimate for the adjoint PDE.
\begin{lemma}[Weighted Meyers estimate for linear elliptic systems]
\label{LemmaWeightedMeyers}
Let $a:\Rd\rightarrow \Rmd\otimes \Rmd$ be a uniformly elliptic and bounded coefficient field with ellipticity and boundedness constants $\lambda$ and $\Lambda$. Let $r>0$ be arbitrary.
Let $v \in H^1(\Rd;\Rm)$ and $g \in L^2(\Rd;\Rmd)$, $f\in L^2(\Rd;\Rm)$ be functions related through
\begin{align*}
-\nabla \cdot (a \nabla v) + \frac{1}{T} v = \nabla \cdot g + \frac{1}{\sqrt{T}} f.
\end{align*}
There exists a Meyers exponent $\bar p>2$ and a constant $c>0$, which both only depend on $d$, $m$, $\lambda$, and $\Lambda$, such that for all $2\leq p<\bar p$ and all $0<\alpha_0<c$ we have
\begin{align}
\label{WeightedMeyers}
&\left(\int_\Rd \Big(|\nabla v|^{p}+\Big|\frac{1}{\sqrt{T}}v\Big|^p \Big) \bigg(1+\frac{|x|}{r}\bigg)^{\alpha_0} \,dx \right)^{\frac{1}{p}}
\\&\nonumber~~~~~~~~~~
\leq
C \left(\int_\Rd (|g|^{p} + |f|^p) \bigg(1+\frac{|x|}{r}\bigg)^{\alpha_0} \,dx\right)^{\frac{1}{p}},
\end{align}
where the constant $C$ depends only on $d$, $m$, $\lambda$, $\Lambda$, $p$, and $\alpha_0$.
\end{lemma}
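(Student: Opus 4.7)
The plan is to follow closely the argument of \cite{BellaFehrmanFischerOtto} for the case $T=\infty$, using as the only ingredients the unweighted Meyers estimate, the Caccioppoli inequality~\eqref{CaccippoliMonotone}, and the hole-filling estimate~\eqref{HoleFillingMonotone}, all of which are already formulated with the massive term included. The first step is the \emph{unweighted} Meyers estimate: there exists $\bar p = \bar p(d,m,\lambda,\Lambda) > 2$ such that for all $2 \leq p < \bar p$,
\begin{equation*}
  \|\nabla v\|_{L^p(\R^d)} + \|T^{-1/2} v\|_{L^p(\R^d)} \,\leq\, C_p \bigl( \|g\|_{L^p(\R^d)} + \|f\|_{L^p(\R^d)} \bigr).
\end{equation*}
This follows from Lemma~\ref{LemmaCaccioppoliHoleFilling} applied with $b = \fint v$ on balls of radius $R \leq \sqrt T$ and combined with Sobolev--Poincar\'e (which converts the zero-order terms into a lower-exponent gradient integral and produces a reverse H\"older inequality for $|\nabla v|^2 + T^{-1}|v|^2$), followed by Gehring's self-improvement lemma.

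The second step is the key quantitative decay estimate: if $(g_0, f_0)$ is supported in a ball $B_\rho(x_0)$ and $v_0$ is the corresponding solution, then there exists $\beta = \beta(d,m,\lambda,\Lambda) > 0$ such that for all $R \geq 2\rho$,
\begin{equation*}
  \int_{\R^d \setminus B_R(x_0)} \bigl(|\nabla v_0|^p + T^{-p/2}|v_0|^p\bigr) \,dx \,\leq\, C\Bigl(\frac{\rho}{R}\Bigr)^{\beta} \int_{B_\rho(x_0)} \bigl(|g_0|^p + |f_0|^p\bigr) \,dx.
\end{equation*}
To derive this, one applies~\eqref{HoleFillingMonotone} iteratively on a chain of balls of geometrically increasing radii centered at $x_0$ to obtain an $L^2$-decay estimate of the form $\int_{B_{2^l\rho}(x_0)}|\nabla v_0|^2 + T^{-1}|v_0|^2 \leq C\, 2^{-l\delta}\int_{B_\rho}(|g_0|^2+T^{-1}|f_0|^2)\,dx$ on data-free dyadic shells. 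On each such shell, the $L^2$-bound is upgraded to an $L^p$-bound via the local Meyers estimate of Step~1, possibly with a slight loss in the decay exponent.

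Finally, to obtain the weighted estimate on the whole space, I would decompose $g = \sum_k g_k$, $f = \sum_k f_k$, where $(g_k, f_k)$ is supported in the dyadic annulus $A_k := B_{2^{k+1}r}(0)\setminus B_{2^k r}(0)$ for $k \geq 1$ and $A_0 := B_r$, and let $v_k$ be the corresponding solution, so that $v = \sum_k v_k$. Since on $A_l$ the weight $w(x) := (1+|x|/r)^{\alpha_0}$ is comparable to $2^{l\alpha_0}$, Minkowski's inequality in $L^p(A_l)$, Step~2 for the off-diagonal terms $k \neq l$, and Step~1 for the diagonal term $k = l$ together yield
\begin{equation*}
  \Bigl(\int_{A_l} (|\nabla v|^p + T^{-p/2}|v|^p) w \,dx\Bigr)^{1/p} \,\leq\, C \sum_k 2^{-|k-l|\beta/p}\, 2^{l\alpha_0/p} \Bigl(\int_{A_k}(|g|^p + |f|^p)\,dx\Bigr)^{1/p}.
\end{equation*}
Writing $2^{l\alpha_0/p} = 2^{(l-k)\alpha_0/p}\, 2^{k\alpha_0/p}$ converts the right-hand side into a discrete convolution whose kernel $2^{-|k-l|\beta/p + (l-k)\alpha_0/p}$ is $\ell^1$-summable precisely when $\alpha_0 < \beta$; Young's convolution inequality for $\ell^p$-sequences then sums up to the claimed weighted bound, fixing $c := \beta$.

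The main obstacle is Step~2, namely the upgrade from the $L^2$-decay furnished by hole-filling to an $L^p$-decay strong enough to pair with the weight. The upgrade requires carefully interleaving hole-filling on nested balls with local Meyers on data-free annuli, and the resulting $L^p$-decay exponent $\beta$ is in general strictly smaller than the hole-filling exponent $\delta$. This loss is precisely what restricts the range of admissible weight exponents to $\alpha_0 < c = c(d,m,\lambda,\Lambda)$ and fixes the Meyers range $p < \bar p$.
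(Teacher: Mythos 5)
Your overall structure matches the paper's (and that of \cite{BellaFehrmanFischerOtto}): establish an unweighted Meyers estimate, then a quantitative decay estimate for solutions with localized data, then sum over dyadic annuli against the weight. The summation in the last step (the discrete convolution kernel $2^{-|k-l|\beta/p + (l-k)\alpha_0/p}$ being $\ell^1$ iff $\alpha_0<\beta$) is also fine.

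However, there is a genuine gap in your Step~2, and it is the crux of the lemma. You claim the outward decay estimate
\begin{equation*}
  \int_{\R^d \setminus B_R(x_0)} \bigl(|\nabla v_0|^p + T^{-p/2}|v_0|^p\bigr)\,dx \,\lesssim\, \Bigl(\tfrac{\rho}{R}\Bigr)^{\beta} \int_{B_\rho(x_0)} \bigl(|g_0|^p + |f_0|^p\bigr)\,dx
\end{equation*}
follows from ``iterating \eqref{HoleFillingMonotone} on a chain of balls of geometrically increasing radii.'' But \eqref{HoleFillingMonotone} goes in the opposite direction: it bounds the energy on a \emph{small} ball $B_r$ by $(r/R)^\delta$ times the energy on a \emph{large} ball $B_R$ plus a source contribution. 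With the data localized in $B_\rho$ and $R\to\infty$ it yields $\int_{B_r}|\nabla v_0|^2 \lesssim \int_{B_\rho}\big(\tfrac{r}{r+|x|}\big)^\delta(|g_0|^2+|f_0|^2)$, which for $r\geq\rho$ is merely the global energy estimate with no decay. It says nothing about how the energy of $v_0$ tails off as one moves \emph{away} from the data. An exterior Caccioppoli argument also stalls: testing with $v_0\eta^2$ for an outer cutoff produces $\int_{B_R^c}|\nabla v_0|^2 \lesssim R^{-2}\int_{B_R\setminus B_{R/2}}|v_0|^2$, and the absence of a Poincar\'e inequality (no mean-zero condition for $v_0$, and the massive term does not scale with $R$) blocks the iteration.

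What the paper actually uses here is a \emph{duality argument}, and it is the missing idea. To estimate $\int_{B_{4R}\setminus B_{R/2}}(|\nabla v_{in,k}|^2 + T^{-1}|v_{in,k}|^2)$ when the data for $v_{in,k}$ sits in an inner annulus $B_{2^{-k}R}\setminus B_{2^{-k-1}R}$, one represents this quantity as a supremum over test data $(\tilde g,\tilde f)$ supported in the \emph{outer} annulus $B_{4R}\setminus B_{R/2}$ with $\|\tilde g\|_{L^2}^2+\|\tilde f\|_{L^2}^2\leq 1$, solves the \emph{adjoint} problem $-\nabla\cdot(a^*\nabla w)+T^{-1}w = -\nabla\cdot\tilde g + T^{-1/2}\tilde f$, and integrates by parts to reduce to $\int_{B_{2^{-k}R}\setminus B_{2^{-k-1}R}} g\cdot\nabla w - T^{-1/2}f\,w$. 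Now the data for $w$ is far from the region $B_{2^{-k}R}$ where $\nabla w$ is evaluated, and \eqref{HoleFillingMonotone} applied to $w$ (in its natural small-to-big direction) gives exactly the factor $(2^{-k})^\delta$. Without this duality step, Step~2 is unsupported, and the rest of the argument has nothing to sum. If you wish to avoid duality, you would need a separate exterior-decay lemma for variable-coefficient systems (no Green's function available for $m>1$ without extra regularity), which is not in the paper's toolkit.
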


The localization ansatz for the correctors relies crucially on the following elementary deterministic energy estimate with exponential localization. As the proof is short and elementary, we directly provide it here.
\begin{lemma}[Exponential localization]
\label{L:exploc}
Suppose that $A:\Rd\times \Rmd\rightarrow \Rmd$ is a monotone operator satisfying \hyperlink{A1}{(A1)} and \hyperlink{A2}{(A2)}. Let $T>0$ and $L\geq\sqrt T$. Consider $u\in H^1_{\loc}(\Rd;\Rm)$ and $f\in L^2_{\loc}(\Rd;\Rm)$, $F\in L^2_{\loc}(\Rd;\Rmd)$ related by
\begin{equation*}
  -\nabla\cdot (A(x,\nabla u))+\frac{1}{T} u=\nabla\cdot F+\frac{1}{T} f
\end{equation*}
in a distributional sense in $\Rd$. Suppose that $u$, $f$, and $F$ have at most polynomial growth in the sense that
\begin{equation*}
  \exists k\in\mathbb N\,:\qquad\limsup\limits_{R\to\infty}R^{-k}\left(\fint_{B_R}(|u|+|\nabla u|+|f|+|F|)^2\right)^\frac{1}{2}=0.
\end{equation*}
Then for $0<\gamma\leq c(d,m,\lambda,\Lambda)$ we have
\begin{align*}
  &\int_\Rd\Big(|\nabla u|^2+\frac1T|u|^2\Big)\exp(-\gamma|x|/L)\,dx
  \\&~~~~~~~~~~~~~~~~~~~~~
  \leq C(d,m,\lambda,\Lambda)
  \int_\Rd\Big(|F|^2+\frac1T|f|^2\Big)\exp(-\gamma|x|/L)\,dx.
\end{align*}
\end{lemma}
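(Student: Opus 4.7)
The plan is to test the PDE with $\varphi := u\,\eta$, where $\eta(x) := \exp(-\gamma|x|/L)$ (or, to obtain a smooth weight, the equivalent $\eta(x) := \exp(-\gamma\sqrt{|x|^2+L^2}/L)$, which also satisfies $|\nabla\eta|\leq (\gamma/L)\,\eta$). To justify the integration by parts in the absence of decay, one multiplies this test function by a standard cutoff $\chi_R\in C^\infty_c(B_{2R})$ with $\chi_R\equiv 1$ on $B_R$ and $|\nabla\chi_R|\leq 2/R$, and sends $R\to\infty$. The polynomial growth of $u$, $\nabla u$, $f$, $F$ against the exponential decay of $\eta$ guarantees that all cutoff errors vanish in the limit.

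After testing, the weak form of the equation becomes
\begin{equation*}
\int A(x,\nabla u)\!\cdot\!\nabla u\,\eta + \frac{1}{T}\int |u|^2\eta = -\int A(x,\nabla u)\!\cdot\!(u\otimes\nabla\eta) -\int F\!\cdot\!\nabla(u\eta) + \frac{1}{T}\int f\!\cdot\!u\,\eta.
\end{equation*}
By assumption \hyperlink{A1}{(A1)} together with $A(x,0)=0$ from \hyperlink{A2}{(A2)}, we have $A(x,\nabla u)\!\cdot\!\nabla u\geq \lambda|\nabla u|^2$, so the left-hand side controls $\lambda\int|\nabla u|^2\eta + \frac{1}{T}\int|u|^2\eta$. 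The key point is that $|\nabla\eta|\leq (\gamma/L)\,\eta$ and $L\geq\sqrt T$: combining these, the cross term satisfies
\begin{equation*}
\Bigl|\int A(x,\nabla u)\!\cdot\!(u\otimes\nabla\eta)\Bigr|\,\leq\, \Lambda\gamma \int |\nabla u|\,\frac{|u|}{\sqrt T}\,\eta,
\end{equation*}
which by Young's inequality splits into contributions absorbable into $\frac{\lambda}{4}\int|\nabla u|^2\eta$ and into $\frac{1}{4T}\int|u|^2\eta$ once $\gamma$ is chosen sufficiently small in terms of $\lambda$ and $\Lambda$.

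The right-hand side terms are handled by standard Young's inequalities: $\int F\!\cdot\!\nabla u\,\eta$ is absorbed into $\frac{\lambda}{4}\int|\nabla u|^2\eta$ plus $C\int|F|^2\eta$; the corresponding cross term $\int F\!\cdot\!(u\otimes \nabla\eta)$ is bounded using $|\nabla\eta|\leq (\gamma/\sqrt T)\,\eta$ by $C\int|F|^2\eta + \frac{\gamma^2}{T}\int|u|^2\eta$; and $\frac{1}{T}\int f\!\cdot\!u\,\eta$ by $\frac{1}{4T}\int|u|^2\eta + \frac{C}{T}\int|f|^2\eta$. Choosing $\gamma=\gamma(d,m,\lambda,\Lambda)$ small enough so that all terms in $\int|\nabla u|^2\eta$ and $\frac{1}{T}\int|u|^2\eta$ can be absorbed into the left-hand side completes the proof.

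The main obstacle is the simultaneous absorption: the cross term $\int A(x,\nabla u)\!\cdot\!(u\otimes\nabla\eta)$ couples the $|\nabla u|^2$ and $\frac{1}{T}|u|^2$ norms and must be split between them, so the threshold $c(d,m,\lambda,\Lambda)$ on $\gamma$ is dictated by requiring that all $\gamma$-dependent contributions fit into the two available coercive terms with strict slack. Beyond this, the only mild technicality is the removal of the cutoff $\chi_R$ under polynomial growth, which is purely routine given the exponential weight.
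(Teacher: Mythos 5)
Your proof is correct and follows essentially the same route as the paper: test with $u\,\eta$ for the exponential weight $\eta(x)=\exp(-\gamma|x|/L)$, invoke $|\nabla\eta|\leq(\gamma/L)\eta\leq(\gamma/\sqrt T)\eta$, use coercivity and Lipschitz bounds on $A$, and absorb the cross terms by Young's inequality for $\gamma$ small. You are slightly more explicit about the cutoff $\chi_R$ that makes testing rigorous and about which terms get split where, but there is no substantive difference from the paper's argument.
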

\begin{proof}
  Set $\eta(x):=\exp(-\gamma|x|/L)$. We test the equation with $u\eta$ (which can be justified by approximation thanks to the polynomial growth assumption). By an integration by parts, the ellipticity and Lipschitz continuity of $A$, and using $|\nabla\eta|\leq \tfrac{\gamma}{L} \eta \leq \tfrac{\gamma}{\sqrt T}\eta$, we get
  \begin{align*}
    &\lambda\int_\Rd|\nabla u|^2\eta+\frac1T |u|^2\eta\,dx\\
    &\leq \gamma\int_\Rd\Lambda|\nabla u|\frac1{\sqrt T}|u|\eta\,dx+\int_\Rd|F|\Big(|\nabla u|+\gamma\frac1{\sqrt T}|u|\Big)\eta\,dx+\frac1T\int_\Rd |f||u|\eta\,dx
  \end{align*}
The claim now follows for $\gamma\leq c$ by absorbing the terms with $u$ and $\nabla u$ on the right-hand side into the left-hand side with help of Young's inequality.
\end{proof}

\begin{remark}\label{R:exponentialloc}
  We frequently apply the exponential localization in the following form: Suppose that $A:\Rd\times \Rmd\rightarrow \Rmd$ is a monotone operator satisfying \hyperlink{A1}{(A1)} and \hyperlink{A2}{(A2)}. Let $T>0$ and $L\geq\sqrt T$. Consider $u_1,u_2\in H^1_{\loc}(\Rd;\Rm)$ and $f\in L^2_{\loc}(\Rd;\Rm)$, $F\in L^2_{\loc}(\Rd;\Rmd)$, all with at most polynomial growth and related by
  \begin{equation*}
    -\nabla\cdot (A(x,\nabla u_1)-A(x,\nabla u_2))+\frac1T(u_1-u_2)=\nabla\cdot F+\frac1Tf
  \end{equation*}
  in a distributional sense in $\R^d$.
  Then for $0<\gamma\leq c(d,m,\lambda,\Lambda)$ we have
  \begin{align*}
    &\int_\Rd\Big(|\nabla u_1-\nabla u_2|^2+\frac1T|u_1-u_2|^2\Big)\exp(-\gamma|x|/L)\,dx\\
    &\leq C(d,m,\lambda,\Lambda)
      \int_\Rd\Big(|F|^2+\frac1T|f|^2\Big)\exp(-\gamma|x|/L)\,dx.
  \end{align*}
  Indeed, with $a(x):=\int_0^1 \partial_\xi A(x,\xi+(1-s)\nabla u_1(x)+s\nabla u_2(x))\,ds$ and $\delta u:=u_1-u_2$, we have $-\nabla\cdot (a(x)\nabla\delta u) + \frac1T\delta u=\nabla\cdot F+\frac1T f$. Since $A$ is a monotone operator satisfying \hyperlink{A1}{(A1)} and \hyperlink{A2}{(A2)}, the derivative $\partial_\xi A(x,\xi)$ is a uniformly elliptic matrix field; hence, $a(x)$ is a uniformly elliptic coefficient field and the claimed estimate follows from the linear version of Lemma~\ref{L:exploc}.
\end{remark}

\subsection{The convergence rate of the solutions}

We first provide the proof of the error estimate for $||u_\varepsilon-u_\shom||_{L^2}$. It is based on a two-scale expansion with a piecewise constant approximation for the slope of the limiting solution $u_\shom$, whose approximation properties are stated in Lemma~\ref{L:discretization} and Proposition~\ref{PropositionTwoScaleExpansion}.

\begin{proof}[Proof of Lemma~\ref{L:discretization}]
  \textbf{Step 1. Construction of the partition of unity.}
  The construction of the partition of unity in the case $\domain=\R^d$ is elementary, see Remark~\ref{R:pu_Rd}. We thus only discuss the case of a bounded domain. In the following $\const>1$ denotes a constant that may vary from line to line, but that can be chosen only depending on $\domain$ and the dimension $d$. We fix the length scale $\delta$ with $0<\delta\leq\frac1\const$.
  Thanks to the assumptions on $\domain$ ($C^1$-boundary or Lipschitz boundary \& convexity), for $\const>0$ large enough, we have that
  \begin{equation}\label{L:pu:eq3}
    B_{6\delta}(x)\cap\domain\text{ is connected for all }x\in\domain,
  \end{equation}
  and we may cover $\domain$ (enlarged by a layer of thickness $\delta/2$) by balls of radius $B_{\delta}$, whose volume interesected with $\domain$ is comparable with $\delta^d$; more precisely,
  \begin{equation*}
    \domain+B_{\delta/2}(0)\subset \bigcup_{x\in K''}B_{\delta}(k),\qquad K'':=\{\,x\in \domain\,:\,B_{\delta/\const}(x)\subseteq\domain\,\}.
  \end{equation*}
  Since the set on the left is compact, we can find a finite subset $K'\subset K''$ such that the above inclusion holds with $K''$ replaced by $K'$. Moreover, by the Vitali covering lemma we can find a finite subset $K\subseteq K'$ such that the balls $\{B_{\delta/3}(k)\}_{k\in K}$ are disjoint and $\domain+B_{\delta/2}\subseteq \bigcup_{k\in K}B_\delta(k)$. With this covering at hand, we may iteratively define (measurable) functions $\chi_k:\R^d\to\{0,1\}$ such that
  \begin{equation}\label{L:pu:eq1}
    1_{B_{\delta/3}(k)}\leq \chi_k\leq 1_{B_{\delta}(k)}\qquad\text{and}\qquad \sum_{k\in K}\chi_k=1\text{ on }\domain+B_{\delta/2}(0).
  \end{equation}
  Let $\eta_{\delta/2}$ denote the standard mollifier with support in $B_{\delta/2}(0)$. For $k\in K$ set $\eta_k:=\chi_k*\eta_{\delta/2}$. By construction $\{\eta_k\}_{k\in K}$ is a smooth partition of unity for $\domain$ satisfying \eqref{pu1} and \eqref{pu2} (the latter is a consequence of \eqref{L:pu:eq1} and the fact that the balls $B_{\delta/\const}(k)$, $k\in K$, are disjoint and contained in $\domain$) as well as \eqref{pu3}.
  \medskip

  \textbf{Step 2. Estimates.} The arguments for \eqref{est:xi1} and \eqref{est:xi2} are standard. We prove  \eqref{est:xi3}. We first note that for all $k,{\tilde k}\in K$ with $|k-{\tilde k}|\leq 4\delta$ we have $\supp\eta_k\cup\supp\eta_{{\tilde k}}\subset B_{6\delta}({\tilde k})$. Moreover, by \eqref{L:pu:eq3} the set $B_{6\delta}({\tilde k})\cap\domain$ is connected. Denote by $v\in H^1(\domain\cap B_{6\delta}({\tilde k}))$ the unique mean-free weak solution to the Neumann problem
  \begin{eqnarray*}
    -\triangle v&=&\frac{\eta_k}{\int_\domain\eta_k\,dx}-\frac{\eta_{{\tilde k}}}{\int_\domain\eta_{{\tilde k}}\,dx}\qquad\text{in }B_{6\delta}({\tilde k})\cap\domain,\\
    \partial_\nu v&=&0\qquad\text{on }\partial(B_{6\delta}({\tilde k})\cap\domain).
  \end{eqnarray*}
  The standard a priori estimate, Poincar\'e's inequality and \eqref{pu2} yield
  \begin{equation*}
    \int_{B_{6\delta}({\tilde k})\cap\domain}|\nabla v|^2 \,dx\leq C\delta^{2-d},
  \end{equation*}
  for a constant $C$ only depending on $\const$. This implies \eqref{est:xi3}, since
  \begin{equation*}
    \xi_k-\xi_{{\tilde k}}=\int_{\domain}\bigg(\frac{\eta_k}{\int_\domain\eta_k}-\frac{\eta_{{\tilde k}}}{\int_\domain\eta_{{\tilde k}}}\bigg)g\,dx=\int_{\domain\cap B_{6\delta}({\tilde k})}\nabla g\cdot \nabla v\,dx,
  \end{equation*}
  as can be seen by an integration by parts.
\end{proof}

\begin{proof}[Proof of Proposition~\ref{PropositionTwoScaleExpansion}]
  To shorten the notation, we implicitly assume that $k,\tilde k\in K$ and we shall use the shorthand notation
  \begin{equation*}
    k\sim \tilde k\qquad:\Leftrightarrow\qquad \text{$k,\tilde k$ are nearby sites, i.e. $|k-\tilde k|\leq 4\delta$}.
  \end{equation*}
  Note that $\supp\eta_k\cap\supp\eta_{\tilde k}\neq\emptyset$ implies $k\sim\tilde k$.
  We shall also use the notation $A\lesssim B$ if $A\leq CB$ for a constant $C$ that only depends on $d,m,\lambda,\Lambda$, $\const$ and $\domain$.   
  First, we note that in the sense of distribution in $\domain$, we have
  \begin{align*}
    &\nabla \cdot (A_\varepsilon(x,\nabla \hat u_\e))
    \\&
    =\nabla \cdot \Big(A_\varepsilon\Big(x,\nabla \ubar+\sum_{k} \eta_k \nabla \phi_{k}\Big)\Big)
    \\&~~~~
    +\nabla \cdot \Big(A_\varepsilon\Big(x,\nabla \ubar+ \sum_{k} \eta_k \nabla \phi_{k} + \sum_{k\in K} \phi_{k} \nabla \eta_k \Big)
    \\&~~~~~~~~~~~~~~~~
    -A_\varepsilon\Big(x,\nabla \ubar+\sum_{k} \eta_k \nabla \phi_{k}\Big)\Big).
  \end{align*}
  Adding and subtracting intermediate terms and using the fact that the $\eta_k$ form a partition of unity (i.\,e.\ $\sum_{k}\eta_k=1$), we get
\begin{align*}
&\nabla \cdot (A_\varepsilon(x,\nabla \hat u_\e))
\\&
=
\nabla \cdot \big(A_\shom\big(\nabla \ubar \big)\big)
\\&~~~~
+\nabla \cdot \bigg(\sum_{k} \eta_k \big(A_\shom(\xi_k)-A_\shom(\nabla \ubar)\big)\bigg)
\\&~~~~
+\nabla \cdot \bigg(\sum_{k} \eta_k \big(A_\varepsilon\big(x,\xi_k+\nabla \phi_{k}\big)-A_\shom\big(\xi_k\big)\big)\bigg)
\\&~~~~
+\nabla \cdot \bigg(\sum_{k} \eta_k \big(A_\varepsilon\big(x,\nabla \ubar+\nabla \phi_{k}\big)-A_\varepsilon\big(x,\xi_k+\nabla \phi_{k}\big)\big)\bigg)
\\&~~~~
+\nabla \cdot \bigg(A_\varepsilon\Big(x,\nabla \ubar+\sum_{k} \eta_k \nabla \phi_{k}\Big)-\sum_{k} \eta_k A_\varepsilon\big(x,\nabla \ubar+\nabla \phi_{k}\big)\bigg)
\\&~~~~
+\nabla \cdot \bigg(A_\varepsilon\Big(x,\nabla \ubar+ \sum_{k} \eta_k \nabla \phi_{k} + \sum_{k} \phi_{k} \nabla \eta_k \bigg)
\\&~~~~~~~~~~~~~~~~
-A_\varepsilon\Big(x,\nabla \ubar+\sum_{k} \eta_k \nabla \phi_{k}\Big)\Big).
\end{align*}
Using the equation for the flux corrector $\nabla \cdot \sigma_{k} = A_\varepsilon\big(x,\xi_k+\nabla \phi_{k}\big)-A_\shom\big(\xi_k\big)$ (see \eqref{EquationFluxCorrector}) and the skew-symmetry of $\sigma_{k}$ (which implies $\nabla \cdot (\eta\nabla \cdot \sigma_{k})=-\nabla \cdot (\sigma_{k}\nabla \eta)$), we obtain
\begin{align*}
\nabla \cdot (A_\varepsilon(x,\nabla \hat u_\e))\,=\,
\nabla \cdot \big(A_\shom\big(\nabla \ubar \big)\big)
+\nabla \cdot R,
\end{align*}
with a residuum $R:=I+II+III$ where
\begin{align*}
I:=&\sum_{k} \eta_k \big(A_\shom(\xi_k)-A_\shom(\nabla \ubar)\big)
\\&~
+\sum_{k} \eta_k \big(A_\varepsilon\big(x,\nabla \ubar+\nabla \phi_{k}\big)-A_\varepsilon\big(x,\xi_k+\nabla \phi_{k}\big)\big)
\end{align*}
and
\begin{align*}
II:=&
-\sum_{k} \sigma_{k} \nabla \eta_k
\\&~
+A_\varepsilon\Big(x,\nabla \ubar+ \sum_{k} \eta_k \nabla \phi_{k} + \sum_{k} \phi_{k} \nabla \eta_k \Big)
-A_\varepsilon\Big(x,\nabla \ubar+\sum_{k} \eta_k \nabla \phi_{k}\Big)
\end{align*}
as well as
\begin{align*}
III:=A_\varepsilon\Big(x,\nabla \ubar + \sum_{k} \eta_k \nabla \phi_{k}\Big)-\sum_{k} \eta_k A_\varepsilon\big(x,\nabla \ubar+\nabla \phi_{k}\big).
\end{align*}
It is our goal to show that $R$ can be estimated for all $\ell$ as
\begin{align}
\label{ResidualBound1}
  &\int_{\domain}\eta_\ell |R|^2 \,dx\lesssim \int_{\domain}\eta_\ell(|I|^2+|II|^2+|III|^2) \,dx
  \\&
  \nonumber
  \lesssim  \delta^2 \int_{B_{2\delta}(\ell)\cap\domain} |\nabla^2 \ubar|^2\,dx +\frac{1}{\delta^2}\sum_{k\,:\, k\sim \ell}\int_{B_{6\delta}(\ell)}|\phi_{\ell}-\phi_{k}|^2 + |\sigma_{\ell}-\sigma_{k}|^2 \,dx.
\end{align}
For the argument we first argue that $\int\eta_\ell|I|^2$ may be bounded by the first term on the right-hand side of \eqref{ResidualBound1}. Indeed, by  Lipschitz continuity of $A_\varepsilon$ and $A_{\shom}$ in $\xi$ (see \hyperlink{A2}{(A2)} and Theorem~\ref{TheoremStructureProperties}a, respectively), we have the pointwise bound $|I|\lesssim \sum_{k}\eta_k|\nabla\ubar-\xi_k|$, and thus (with help of \eqref{est:xi2})
\begin{eqnarray*}
  \int_{\domain}\eta_\ell|I|^2\,dx
  &\lesssim& \sum_{k\,:\,k\sim \ell}\int_{B_{2\delta}(k)\cap\domain}|\nabla\ubar -\xi_k|^2\,dx\\
  &\lesssim& \delta^2\sum_{k\,:\,k\sim \ell}\int_{B_{2\delta}(k)\cap\domain}|\nabla^2\ubar|^2\,dx\lesssim \delta^2\int_{B_{6\delta}(\ell)\cap\domain}|\nabla^2\ubar|^2\,dx.
\end{eqnarray*}
Next, we show that $\int\eta_\ell |II|^2\,dx$ may be bounded by the second term on the right-hand side of \eqref{ResidualBound1}.
By the Lipschitz-continuity of $A_\varepsilon$ in $\xi$ and the fact that $\sum_k\nabla \eta_k=0$ (as the $\eta_k$ form a partition of unity), we have $|II|\lesssim \big|\,\sum_{k}(\sigma_{\ell}-\sigma_{k})\nabla\eta_k\,\big|+\big|\sum_{k}(\phi_{\ell}-\phi_{k})\nabla\eta_k\big|$. Since $|\nabla \eta_k|\leq \const \delta^{-1}$, we deduce that
\begin{eqnarray*}
  \int_{\domain}\eta_\ell|II|^2\,dx
  &\lesssim& \delta^{-2}\sum_{k:k\sim\ell}  \int_{B_{2\delta}(\ell)}|\phi_{\ell}-\phi_{k}|^2+|\sigma_{\ell}-\sigma_{k}|^2\,dx.
\end{eqnarray*}
Finally, we estimate the third term $III$. Since $\sum_{\tilde k}\eta_{\tilde k}=1$ we have
\begin{align*}
III=\sum_{\tilde k}\eta_{\tilde k}\Big(A_\varepsilon\big(x,\nabla \ubar + \sum_{k} \eta_k \nabla \phi_{k}\big)-A_\varepsilon\big(x,\nabla \ubar+\nabla \phi_{{\tilde k}}\big)\Big).
\end{align*}
Hence, the Lipschitz-continuity of $A_\varepsilon$ in $\xi$ yields $  |III|\lesssim \sum_{\tilde k,k}\eta_{\tilde k}\eta_k |\nabla \phi_{k}-\nabla \phi_{{\tilde k}}|$. By the support property of $\eta_k$ and $\eta_{\tilde k}$ we get
\begin{equation}\label{eq:1230020}  
  \int_{\domain}\eta_\ell|III|^2\,dx\leq C\sum_{k\,:\,k\sim \ell}\int_{B_{2\delta}(\ell)}|\nabla \phi_{\ell}-\nabla \phi_{k}|^2\,dx.
\end{equation}
Thus, we need to bound the difference of the two corrector gradients $\nabla \phi_{\ell}$ and $\nabla \phi_{k}$ for nearby grid points $k$ and $\ell$. To this aim, note that the corrector equation \eqref{EquationCorrector} implies
\begin{align*}
  -\nabla \cdot (A_\varepsilon(x,\xi_\ell+\nabla \phi_{\ell})-A_\varepsilon(x,\xi_{k}+\nabla \phi_{{k}}))&=0.
\end{align*}
An energy estimate based on assumptions \hyperlink{A1}{(A1)} and \hyperlink{A2}{(A2)} thus yield
\begin{align*}
  \fint_{B_{2\delta}(\ell)} |\nabla \phi_{\ell}-\nabla \phi_{k}|^2 \,dx
  \lesssim  |\xi_\ell-\xi_{k}|^2 + \frac{1}{\delta^2} \fint_{B_{6\delta}(\ell)} |\phi_{\ell}-\phi_{k}|^2 \,dx.
\end{align*}
In conjunction with \eqref{est:xi2} and \eqref{eq:1230020}, we see that $\int\eta_\ell|III|^2 \,dx$ may be bounded by the right-hand side of \eqref{ResidualBound1}.
\end{proof}

\begin{proof}[Proof of Theorem~\ref{TheoremErrorEstimate}, Theorem~\ref{TheoremErrorEstimate2d}, and Theorem~\ref{TheoremErrorEstimateDomains}]
  We use the notation $A\lesssim B$ if $A\leq CB$ for a constant $C$ that only depends on $d,m,\lambda,\Lambda$, $\const$ and $\domain$. 

{\bf Step 1: Proof of Theorem~\ref{TheoremErrorEstimateDomains} -- the case of a bounded domain $\domain$.} For the proof we appeal to the two-scale expansion introduced in Proposition~\ref{PropositionTwoScaleExpansion}. Let $\delta$ denote a discretization scale that satisfies $\e \leq \delta \leq \frac{1}{C}$ and that we fix later. According to  Proposition~\ref{PropositionTwoScaleExpansion} we denote by $\{\eta_k\}_{k\in K}$ the  partition of unity of Lemma~\ref{L:discretization} and by $\hat u_\e$ the two-scale expansion defined in~\eqref{def:twoscaleexpansion} associated with $\ubar:=u_{\shom}$. Moreover, we define the correctors $\phi_k,\sigma_k$ as in Proposition~\ref{PropositionTwoScaleExpansion}. Note that by the proposition and the identity $\nabla \cdot (A(\omega_\varepsilon,\nabla u_\varepsilon))=\nabla \cdot (A_{\shom}(\nabla \ubar))$ in $\domain$, we have
\begin{align}
  \label{MainEquationDifferenceDomain}
  \nabla \cdot (A(\omega_\varepsilon,\nabla u_\varepsilon)-A(\omega_\varepsilon,\nabla \hat u_\e)) = \nabla \cdot R\qquad\text{in a distributional sense in $\domain$,}
\end{align}
with a residuum $R\in L^2(\domain;\Rmd)$.

{\it Step 1.1:}
We claim that
\begin{equation}\label{st:001001011}
    \int_\domain |R|^2 \,dx \,\leq\,\mathcal C^2\|\nabla u_{\shom}\|_{H^1(\domain)}^2\,\Big(\delta^2+\Big(\frac{\e}{\delta}\Big)^2\mu_d\Big(\frac{\delta}{\e}\Big)\Big).
  \end{equation}
  Here and below we denote by $\mathcal C$ a random constant that might change from line to line, but that satisfies the stretched exponential moment bound
  \begin{equation}\label{st:stretchedmoment}
    \mathbb E\Big[\exp\Big(\frac{\mathcal C^{\bar\nu}}{C}\Big)\Big]\leq 2,
  \end{equation}
  with $\bar\nu$ only depending on $d,m,\lambda,\Lambda,\rho$ and with $C>0$ depending only on $d,m,\lambda,\Lambda,$ $\rho,$ and $\domain$. Below, we shall tacitly use the calculus rules for random constants with stretched exponential moments, see Lemma~\ref{L:calculusstretched}.

The starting point for the argument is the local residuum estimate \eqref{EstimateResidual-2}, which we post-process by appealing to the triangle inequality:
\begin{equation}
  \begin{aligned}
    \int_\domain |R|^2 \,=\,&\sum_{\ell\in K}\int_{\domain}\eta_\ell|R|^2\\
    \lesssim\, &\delta^2 \int_\domain|\nabla^2u_{\shom}|^2\,dx\,+\delta^{-2}\sum_{\ell\in K}\sum_{k\in K\atop |k-\ell|\leq 4\delta} \int_{B_{6\delta}(\ell)}(|\phi_{k}|^2 + |\sigma_{k}|^2) \,dx
  \end{aligned}
\end{equation}
As we shall show in Step~3, from Corollary~\ref{CorollaryImprovedCorrectorDifferenceBounds} we may obtain the corrector estimate
\begin{eqnarray}
\label{st:correstimat1}
  \sum_{\ell\in K}\sum_{k\in K\atop |k-\ell|\leq 4\delta}\int_{B_{6\delta}(\ell)}|\phi_{k}|^2 + |\sigma_{k}|^2 \,dx
  &\leq&\mathcal C^2\,\int_{\domain}|\nabla u_{\shom}|^2\,\e^2\mu_d\Big(\frac{\delta}{\e}\Big),
\end{eqnarray}
where
\begin{equation*}
  \mu_d(s):=
  \begin{cases}
    s&\text{for }d=1,\\
    |\log(s)|&\text{for }d=2,\\
    1&\text{for }d\geq 3.
  \end{cases}
\end{equation*}
Now, \eqref{st:001001011} follows by combining the previous two estimates.
\smallskip

{\it Step 1.2:} 
To derive a bound on the difference $\nabla u_\e-\nabla\hat u_\e$ from the above estimate on the residuum, we would like to test the equation with $u_\e-\hat u_\e$. Thus, we need to modify $\hat u_\e$ close to the boundary to obtain an admissible test function. Let $\tau \geq \varepsilon$. Set $(\partial\domain)_\tau:=\{x\in\domain\,:\,\dist(x,\partial\domain)<\tau\}$. We denote by $\psi$ a cut-off function with $\psi\equiv 1$ in $\{x\in \domain:\dist(x,\partial\domain)>\tau\}$, $\psi=0$ on $\partial \domain$, and $|\nabla \psi|\lesssim \tau^{-1}$. We claim that 
\begin{equation}\label{st:001001012}
  \begin{aligned}
    &\int_\domain |\nabla u_\varepsilon-\nabla \hat u_\e - \nabla ((1-\psi)(\ubar-\hat u_\e))|^2 \,dx
    \\&
    \leq\mathcal C^2\,\|\nabla u_{\shom}\|_{H^1(\domain)}^2\,\Big(\tau+\delta^2+\Big(\frac{\e}{\delta}\Big)^2\mu_d\Big(\frac{\delta}{\e}\Big) + \frac{\e^2}{\tau}\mu_d\Big(\frac{\delta}{\e}\Big)\Big).
  \end{aligned}
\end{equation}
Note that the terms with the factors $\tau$ and $\frac{\e^2}{\tau}\mu_d(\frac{\delta}{\e})$ are due to the cut-off close to the boundary. For the argument, we test the equation \eqref{MainEquationDifferenceDomain} with the difference $(u_\varepsilon-\hat u_\e+(1-\psi)(\hat u_\e-\ubar))\in H^1_0(\domain)$. By the monotonicity property \hyperlink{A1}{(A1)} and the Lipschitz continuity \hyperlink{A2}{(A2)} we get
\begin{align*}
  \int_\domain \lambda |\nabla u_\varepsilon-\nabla \hat u_\e|^2 \,dx
  \leq& -\int_\domain R \cdot \nabla (u_\varepsilon-\hat u_\e) \,dx
  \\&
  +\Lambda \int_{\domain} |\nabla u_\varepsilon-\nabla \hat u_\e| |\nabla ((1-\psi)(\hat u_\e-\ubar))| \,dx
  \\&
  -\int_{\domain} R \cdot \nabla ((1-\psi)(\hat u_\e-\bar u)) \,dx.
\end{align*}
This entails the estimate
\begin{align*}
  \int_\domain |\nabla u_\varepsilon-\nabla \hat u_\e - \nabla ((1-\psi)(\ubar-\hat u_\e))|^2 \,dx
  \lesssim&
            \int_\domain |R|^2 \,dx
        + \int_{(\partial\domain)_\tau} |\nabla\hat u_\e-\nabla \ubar|^2 \,dx
  \\&
  + \tau^{-2} \int_{(\partial\domain)_\tau} |\hat u_\e-\ubar|^2 \,dx.
\end{align*}
By the definition of $\hat u_\e$ and the properties of the partition of unity (cf.~\eqref{pu1},\eqref{pu2}), this implies
\begin{align*}
  &\int_\domain |\nabla u_\varepsilon-\nabla \hat u_\e - \nabla ((1-\psi)(\ubar-\hat u_\e))|^2 \,dx
  \\&
  \lesssim \int_\domain |R|^2 \,dx + \sum_{k\in K} \int_{(\partial\domain)_\tau}\eta_k(|\nabla \phi_{k}|^2 + \tau^{-2}|\phi_{k}|^2) \,dx.
\end{align*}
We combine this estimate with the following estimate, the proof of which is postponed to Step~3:
\begin{equation}
  \label{st:correstimat2}  \int_{(\partial\domain)_\tau}\sum_{k\in K}\eta_k\big(|\nabla\phi_k|^2+\tau^{-2}|\phi_k|^2) \,dx
  \leq \Big(C \tau +\mathcal C^2\,\frac{\e^2}{\tau}\mu_d\Big(\frac{\delta}{\e}\Big) \Big) \,\|\nabla u_{\shom}\|_{H^1(\domain)}^2.
\end{equation}
Thus, \eqref{st:001001012} follows from the previous two estimates and \eqref{st:001001011}.

{\it Step 1.3:} 
From the definition of the two-scale expansion $\hat u_\e$ and the Poincar\'e inequality we obtain
\begin{align*}
  \|u_\e-&u_{\shom}\|_{L^2(\domain)}=\bigg|\bigg|u_\e-\hat u_\e+(1-\psi)\sum_{k\in K}\eta_k\phi_k\bigg|\bigg|_{L^2(\domain)}+\bigg|\bigg|\sum_{k\in K}\eta_k\phi_k\bigg|\bigg|_{L^2(\domain)}\\
  &\lesssim \|\nabla u_\varepsilon-\nabla \hat u_\e - \nabla ((1-\psi)(\ubar-\hat u_\e))\|_{L^2(\domain)}+\left(\sum_{k\in K}\int_{B_{2\delta}(k)}|\phi_k|^2\,dx\right)^\frac12 .
\end{align*}
Thus, with \eqref{st:001001012} and \eqref{st:correstimat1} we obtain
\begin{equation*}
  \|u_\e-u_{\shom}\|_{L^2(\domain)}^2  \leq\mathcal C^2\,\|\nabla u_{\shom}\|_{H^1(\domain)}^2\,\Big(\tau+\delta^2+\Big(\frac{\e}{\delta}\Big)^2\mu_d\Big(\frac{\delta}{\e}\Big) + \frac{\e^2}{\tau}\mu_d\Big(\frac{\delta}{\e}\Big)\Big).
\end{equation*}
By setting $\tau:=\delta^2$ and
\begin{equation*}
  \delta:=
  \begin{cases}
    \varepsilon^{1/3}&\text{for }d=1,\\
    \varepsilon^{1/2} |\log \varepsilon|^{1/4}&\text{for }d=2,\\
    \varepsilon^{1/2}&\text{for }d\geq 3,
  \end{cases}
\end{equation*}
this completes the proof of Theorem~\ref{TheoremErrorEstimateDomains}.

{\bf Step 2: Proof of Theorem~\ref{TheoremErrorEstimate} and Theorem~\ref{TheoremErrorEstimate2d} -- the cases with $\domain=\Rd$.} As before we appeal to the two-scale expansion $\hat u_\e$ of Proposition~\ref{PropositionTwoScaleExpansion} and thus recall the definitions of $\{\eta_k\}_{k\in K}$, $\phi_k,\sigma_k$ and $\hat u_\e$ from Step~1. To improve the scaling of the error, we will crucially use the improved estimate on the difference of correctors $\phi_{\xi_k}-\phi_{\xi_{\ell}}$ and $\sigma_{\xi_k}-\sigma_{\xi_{\ell}}$ provided by Corollary~\ref{CorollaryImprovedCorrectorDifferenceBounds}; however, as these estimates grow with a factor of $(1+|\xi_k|^C+|\xi_\ell|^C)$, the $L^\infty$-norm for the gradient $\nabla u_\shom$ appears on the right-hand side of the estimate.
As in Step~1 we deduce from Proposition~\ref{PropositionTwoScaleExpansion} that
\begin{align}
\label{MainEquationDifference}
\nabla \cdot (A(\omega_\varepsilon,\nabla u_\varepsilon)-A(\omega_\varepsilon,\nabla \hat u_\e)) = \nabla \cdot R,\qquad\text{for }d\geq 3,
\end{align}
and
\begin{align}
\label{MainEquationDifference2d}
  ~\nabla \cdot (A(\omega_\varepsilon,\nabla u_\varepsilon)-A(\omega_\varepsilon,\nabla \hat u_\e)) - (u_\varepsilon-\hat u_\e) = \nabla \cdot R +\sum_{k\in K} \eta_k \phi_{k}~\text{for }d=1,2.
\end{align}
{\it Step 2.1:}
We claim that
\begin{equation}\label{EstimateR}
  \int_\Rd |R|^2 \,\leq\,\mathcal C^2\widehat C(\nabla u_{\shom})^2\,\Big(\delta^2+\e^2\mu_d\Big(\frac{\delta}{\e}\Big)\Big)
\end{equation}
where
\begin{equation*}
  \widehat C(\nabla u_{\shom})^2:=\|\nabla u_{\shom}\|_{H^1(\Rd)}^2+(1+\sup_{\Rd}|\nabla u_\shom|)^{2C}\int_{\Rd}|\nabla^2u_{\shom}|^2\,dx.
\end{equation*}
Here and below we denote by $\mathcal C$ a random constant that might change from line to line, but that satisfies the stretched exponential moment bound
\begin{equation*}
  \mathbb E\Big[\exp\Big(\frac{\mathcal C^{\bar\nu}}{C}\Big)\Big]\leq 2,
\end{equation*}
with $\bar\nu$ only depending on $d,m,\lambda,\Lambda,\rho,\nu$ and with $C>0$ only depending on $d,m,\lambda,\Lambda,\rho,\nu$ as well as $\domain$.

Indeed, the local residuum estimate \eqref{EstimateResidual-2} yields
\begin{equation}
  \begin{aligned}
    \int_{\R^d} |R|^2 \,dx \,\lesssim\, &\delta^2 \int_{\Rd}|\nabla^2u_{\shom}|^2\,dx\\
    &+\delta^{-2}\sum_{\ell\in K}\sum_{k\in K\atop |k-\ell|\leq 4\delta} \int_{B_{6\delta}(\ell)}(|\phi_{k}-\phi_\ell|^2 + |\sigma_{k}-\sigma_\ell|^2) \,dx.
  \end{aligned}
\end{equation}
We combine it with the improved corrector estimate
\begin{equation}\label{st:12399491}
  \begin{aligned}
  &\delta^{-2}\sum_{\ell\in K}\sum_{k\in K\atop |k-\ell|\leq 4\delta}\int_{B_{6\delta}(\ell)}(|\phi_{k}-\phi_\ell|^2 + |\sigma_{k}-\sigma_\ell|^2) \,dx\\
  \leq\,&\mathcal{C}^2\,(1+\sup_{\Rd}|\nabla u_\shom|)^{2C}\Big(\int_{\Rd}|\nabla^2u_{\shom}|^2\,dx\Big)\,\varepsilon^2\mu_d\Big(\frac{\delta}{\e}\Big).
\end{aligned}
\end{equation}
The proof of the above estimate is postponed to Step~3. It exploits the regularity assumption \hyperlink{R}{(R)}. The combination of the previous two estimates yields \eqref{EstimateR}.
\smallskip

{\it Step 2.2:}
We fix $\delta$ as follows:
\begin{equation*}
  \delta:=
  \begin{cases}
    \e|\log\e|^{\frac12}&d=2,\\
    \e &d\neq 2.    
  \end{cases}
\end{equation*}
We first note that with this choice, we have for $d\geq 3$
\begin{align}
  \|u_\varepsilon-u_\shom\|_{L^{2d/(d-2)}(\Rd)}
  &\leq \bigg|\bigg|\sum_{k\in K}\eta_k\phi_k\bigg|\bigg|_{L^{2d/(d-2)}(\Rd)}+\,\mathcal C\,\widehat C(\nabla u_{\shom})\,\e,
    \intertext{while for $d=1,2$ we have}
    \|u_\varepsilon-u_\shom\|_{L^{2}(\Rd)}
  \leq\,&\,\bigg|\bigg|\sum_{k\in K}\eta_k\phi_k\bigg|\bigg|_{L^{2}(\Rd)}\\\notag
  &\qquad +\,\mathcal C\,\widehat C(\nabla u_{\shom})\,\left\{
    \begin{aligned}
      &\e&&\text{for }d=1,\\
      &\e|\log\e|^\frac12&&\text{for }d=2.
    \end{aligned}\right.
\end{align}
Indeed, for $d=3$ the estimate follows from the energy energy estimate for the PDE \eqref{MainEquationDifference} combined with \eqref{EstimateR} and the Sobolev embedding. For $d=1,2$ the estimate can be directly obtained from the energy estimate for the PDE \eqref{MainEquationDifference2d} in combination with \eqref{EstimateR}.
As we shall prove in Step~3, we have
  \begin{equation}\label{S:eq10010010}
    \bigg\|\sum_{k\in K} \eta_k\phi_{\xi_k}\bigg\|_{L^p(\Rd)}\leq\mathcal C
\,\widehat C(\nabla u_{\shom})\,    \left\{\begin{aligned}
        &\e^\frac12&&\text{for }d=1\text{ and }p=2,\\
      &\e|\log\e|^\frac12&&\text{for }d=2\text{ and }p=2,\\
      &\e&&\text{for }d\geq 3\text{ and }p=\frac{2d}{d-2}.
    \end{aligned}\right.
  \end{equation}
Combining the previous three estimates, we obtain the statements of Theorem~\ref{TheoremErrorEstimate} and Theorem~\ref{TheoremErrorEstimate2d}.
\bigskip

{\bf Step 3: Proofs of the corrector estimates \eqref{st:correstimat1}, \eqref{st:correstimat2}, \eqref{st:12399491}, and \eqref{S:eq10010010}.} 

{\it Step 3.1 - Proof of \eqref{st:correstimat1}.} 
We first claim that there exists a positive constant $C\lesssim 1$ and $\bar\nu>0$ only depending on $d,m,\lambda,\Lambda$ and $\rho$ such that for $k,\ell\in K$ with $|k-\ell|\leq 4\delta$ we have
\begin{equation}\label{st:correstimat1-p1}
  \int_{B_{6\delta}(\ell)}|\phi_k|^2+|\sigma_k|^2 \,dx \leq \mathcal C_k^2\,\Big(\int_{B_{2\delta}(k)\cap\domain}|\nabla u_{\shom}|^2\,dx\Big)\,\e^2\mu_d\Big(\frac{\delta}{\e}\Big),
\end{equation}
with a random constant $\mathcal C_k$ satisfying
\begin{equation}\label{st:correstimat1-p2}
  \mathbb E\Big[\exp\Big(\frac{\mathcal C_k^{\bar\nu}}{C}\Big)\Big]\leq 2.
\end{equation}
For the argument, we first recall that $\phi_k(\omega_\e,x)=\phi_{\xi_k}(\omega_\e,x)-\fint_{B_\e(k)}\phi_{\xi_k}(\omega_\e,y)\,dy$. Thus, by translation of  $\omega_\e$, we may assume without loss of generality that $k=0$ and $|\ell|\leq 4\delta$. The claim then follows from Corollary~\ref{CorollaryImprovedCorrectorDifferenceBounds} (applied with $r=10\delta$ and $x_0=\ell\in B_{4\delta}(0)$) and the estimate of $|\xi_k|$ via \eqref{est:xi1}.

Summation of \eqref{st:correstimat1-p1} thus yields \eqref{st:correstimat1} with a random constant $\mathcal C$ given by the expression
\begin{equation*}
  \mathcal C=\left(\int_\domain|\nabla u_{\shom}|^2\,dx\right)^{-\frac12}\left(\sum_{\ell,k\in K\atop|\ell-k|\leq 4\delta}\mathcal C_k^2\Big(\int_{B_{2\delta}(k)\cap\domain}|\nabla u_{\shom}|^2\,dx\Big)\,\right)^\frac12.
\end{equation*}
Since $\sum_{\ell,k\in K\atop |\ell-k|\leq 4\delta}\int_{B_{2\delta}(k)\cap\domain}|\nabla u_{\shom}|^2\,dx\lesssim \int_{\domain}|\nabla u_{\shom}|^2\,dx$, we deduce with help of the calculus rules for random variables with stretched exponential moments (see Lemma~\ref{L:calculusstretched}) that $\mathcal C$ satisfies the claimed moments bounds.
\smallskip

{\it Step 3.2 - Proof of \eqref{st:correstimat2}.} 
We first claim that there exists a positive constant $C\lesssim 1$ and $\bar\nu>0$ only depending on $d,m,\lambda,\Lambda$ and $\rho$ such that for $k\in K$ we have
\begin{equation}\label{st:correstimat2-p1}
  \int_{(B_{2\delta}(k)\cap \partial\domain)_{\tau}}|\nabla\phi_k|^2+\tau^{-2}|\phi_k|^2\,dx\,\lesssim\, \frac{\tau}{\delta}\bigg(1+\mathcal C_{k}^2\,\Big(\frac{\e}{\tau}\Big)^2\mu_d\Big(\frac{\delta}{\e}\Big)
 \bigg)\int_{\domain\cap B_{3\delta}(k)}|\nabla u_{\shom}|^2 \,dx
\end{equation}
with a random constant $\mathcal C_k$ satisfying \eqref{st:correstimat1-p2}. For the argument, we first note that $\fint_{B_{2\tau}(y)}|\nabla\phi_k|^2\,dx\lesssim |\xi_k|^2+\tau^{-2}\fint_{B_{4\tau}(y)}|\phi_k|^2\,dx$ thanks to the Caccioppoli inequality. The remaining argument is similar to the one in Step 3.1, covering the set $(B_{2\delta}(k)\cap \partial \domain)_\tau$ with balls of the form $B_{4\tau}(y)$.

Thanks to the properties of the partition of unity \eqref{pu2}, we deduce from \eqref{st:correstimat2-p1},
\begin{eqnarray*}
  &&\sum_{k\in K}\int_{(\partial\domain)_\tau}\eta_k(|\nabla\phi_k|^2+\tau^{-2}|\phi_k|^2)\,dx\\
  &\lesssim& \frac{\tau}{\delta} \sum_{k\in K\atop \dist(k,\partial\Omega)\leq 3\delta}\bigg(1+\mathcal C_{k}^2\,\Big(\frac{\e}{\tau}\Big)^2\mu_d\Big(\frac{\delta}{\e}\Big)\bigg)\Big(\int_{\domain\cap B_{2\delta}(k)}|\nabla u_{\shom}|^2\,dx\Big)\\
  &\lesssim& \frac{\tau}{\delta} \bigg(1+\mathcal C^2 \,\Big(\frac{\e}{\tau}\Big)^2\mu_d\Big(\frac{\delta}{\e}\Big)\bigg)\int_{(\partial\domain)_{5\delta}}|\nabla u_{\shom}|^2\,dx,
\end{eqnarray*}
 with a random constant given by 
\begin{equation*}
  \mathcal C=\left(\int_{(\partial\domain)_{5\delta}}|\nabla u_{\shom}|^2\,dx\right)^{-\frac12}\left(\sum_{k\in K\atop \dist(k,\partial\Omega)\leq 3\delta}\mathcal C_k^2\int_{\domain\cap B_{3\delta}(k)}|\nabla u_{\shom}|^2\,dx\right)^\frac12.
\end{equation*}
As in Step 3.1 we deduce that $\mathcal C$ satisfies the claimed moments bounds. To conclude \eqref{st:correstimat2}, it remains to prove the following trace-type estimate:
\begin{equation}
 \label{EstimateDuhombdry}
  \int_{(\partial\domain)_r}|\nabla u_{\shom}|^2\,dx\lesssim r \|\nabla u_{\shom}\|_{H^1(\domain)}^2
\end{equation}
for $r=5\delta$. In fact, the estimate holds for any $v\in H^1(\domain)$ (instead of $\nabla u_{\shom}$). To see this it suffices to consider a smooth $v:\Rd_+\to\R$ with $\Rd_+:=\R^{d-1}\times(0,\infty)$. Then
\begin{align*}
  \int_{\R^{d-1}\times(0,r)}|v|^2\,dx=&\int_{\R^{d-1}}\int_0^r |v(x',s)|^2\,ds\,dx'\\
  \leq &2r \int_{\R^{d-1}}|v(x',0)|^2\,dx'+2\int_{\R^{d-1}}\int_0^r \bigg|\int_0^s\partial_d v(x',t)\,dt \bigg|^2\,ds\,dx'\\
  \leq &2r \int_{\R^{d-1}}|v(x',0)|^2\,dx'+2r^2\int_{\R^{d-1}}\int_0^r|\partial_d v(x',t)|^2\,dt\,dx'\\
  \lesssim &r \|v\|_{H^1(\domain)}^2,
\end{align*}
where the last line holds thanks to the trace estimate. The case of a general Lipschitz or $C^1$-domain can be reduced to $\mathbb{R}^d_+$ by appealing to a partition of unity and a local straightening of the boundary.

{\it Step 3.3 - Proof of \eqref{st:12399491}.} With the regularity assumption \hyperlink{R}{(R)} at hand, Corollary~\ref{CorollaryImprovedCorrectorDifferenceBounds} in combination with \eqref{est:xi3} yields
\begin{align*}
  &\delta^{-2}\sum_{k\in K\atop |k-\ell|\leq 4\delta}\int_{B_{6\delta}(\ell)}(|\phi_{k}-\phi_\ell|^2 + |\sigma_{k}-\sigma_\ell|^2) \,dx\\
  \leq\,&\, \mathcal{C}_{\ell}^2 (1+\sup_{\Rd}|\nabla u_\shom|)^{2C}\int_{B_{6\delta}(\ell)\cap\domain}|\nabla^2u_{\shom}|^2\,dx~ \varepsilon^2\mu_d\Big(\frac{\delta}{\e}\Big)
\end{align*}
where $\mathcal C_\ell$ denotes a random constant satisfying $\mathbb E[\exp(\tfrac{\mathcal C_\ell^{\bar\nu}}{C})]\leq 2$
with $C, \bar\nu$ only depend on $d,m,\lambda,\Lambda,\rho$ and $\nu$. A summation in $\ell\in K$ and the calculus rules for random variable with stretched exponential moments (see Lemma~\ref{L:calculusstretched}) yield \eqref{st:12399491}.

{\it Step 3.4 - Proof of \eqref{S:eq10010010}.}  The estimate follows from Corollary~\ref{CorollaryImprovedCorrectorDifferenceBounds} and \eqref{est:xi1} by an argument similar to the one in Step 3.1.
\end{proof}

\begin{lemma}\label{L:calculusstretched}
  Let $J$ denote a countable index set. For $j\in J$ let $\mathcal C_j$ be a non-negative random variable with stretched exponential moments of the form
  \begin{equation*}
    \mathbb{E}\bigg[\exp\bigg(\frac{\mathcal{C}_j^{\nu_j}}{C_j}\bigg)\bigg]\leq 2,
  \end{equation*}
  with positive constants $C_j$ and exponents $0< \nu_j\leq 2$. Suppose that $(a_j)_{j\in J}$ is a summable sequence of non-negative numbers. Then the random variable
  \begin{equation*}
    \mathcal C:=\frac{\sum_{j\in J}a_j\mathcal C_j}{\sum_{j\in J}a_j}\qquad\text{satisfies}\qquad    \mathbb{E}\bigg[\exp\bigg(\frac{\mathcal{C}^{\hat \nu}}{\hat C}\bigg)\bigg]\leq 2,
  \end{equation*}
  where $\hat \nu:=\inf_{j\in J}\nu_j$ and $\hat C:=C(\nu_1,\ldots,\nu_J)\sup_{j\in J}C_j^{\hat \nu/\nu_j}$.
\end{lemma}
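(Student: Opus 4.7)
The plan is to convert the stretched exponential moment bounds into equivalent polynomial moment bounds, combine them via Minkowski's inequality, and then convert back. From $\mathbb{E}[\exp(\mathcal{C}_j^{\nu_j}/C_j)] \leq 2$, Markov's inequality yields the tail bound $\mathbb{P}(\mathcal{C}_j > t) \leq 2\exp(-t^{\nu_j}/C_j)$. Integrating against $p\,t^{p-1}\,dt$ and substituting $u = t^{\nu_j}/C_j$ identifies $\mathbb{E}[\mathcal{C}_j^p]$ with a Gamma integral, so by Stirling
\begin{equation*}
\mathbb{E}[\mathcal{C}_j^p]^{1/p} \leq c(\nu_j)\,C_j^{1/\nu_j}\,p^{1/\nu_j} \qquad \text{for all } p \geq 1.
\end{equation*}

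Since $\mathcal{C}$ is a convex combination of the $\mathcal{C}_j$ with weights $a_j/\sum_i a_i$, Minkowski's inequality in $L^p$ gives $\mathbb{E}[\mathcal{C}^p]^{1/p} \leq \sup_j \mathbb{E}[\mathcal{C}_j^p]^{1/p}$. Combined with the previous step and the elementary observation $p^{1/\nu_j} \leq p^{1/\hat{\nu}}$ for $p \geq 1$ (since $\nu_j \geq \hat{\nu}$), this yields a uniform bound
\begin{equation*}
\mathbb{E}[\mathcal{C}^p]^{1/p} \leq C_*\,\bigl(\sup_j C_j^{1/\nu_j}\bigr)\,p^{1/\hat{\nu}} =: A\,p^{1/\hat{\nu}},
\end{equation*}
with $C_* = \max_j c(\nu_j)$ depending only on $\nu_1,\ldots,\nu_J$.

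To convert back I would expand $\exp(\mathcal{C}^{\hat{\nu}}/\hat{C})$ as a power series, insert the moment bound at $p = k\hat{\nu}$, and use the Stirling lower bound $k! \geq (k/e)^k$ to control each summand with $k\hat{\nu} \geq 1$ by $(A^{\hat{\nu}}\,e\,\hat{\nu}/\hat{C})^k$. Observing that $A^{\hat{\nu}} = C_*^{\hat{\nu}}\,\sup_j C_j^{\hat{\nu}/\nu_j}$ (by monotonicity of $x \mapsto x^{\hat{\nu}}$), the choice $\hat{C} = C(\nu_1,\ldots,\nu_J)\,\sup_j C_j^{\hat{\nu}/\nu_j}$ with $C(\nu_1,\ldots,\nu_J)$ large enough (some explicit multiple of $C_*^{\hat{\nu}} e\hat{\nu}$) reduces the tail of the series to a geometric series of ratio at most $1/2$; the finitely many early terms with $k\hat{\nu} < 1$ are handled trivially using $\mathcal{C}^{k\hat{\nu}} \leq 1 + \mathcal{C}$. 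Summing everything produces the desired bound by $2$. The main subtlety is purely bookkeeping: one must verify that the $C_j$ enter $\hat{C}$ only through the aggregated quantity $\sup_j C_j^{\hat{\nu}/\nu_j}$ and that the extra factor $C_*$ (which depends on all the $\nu_j$) can be absorbed into $C(\nu_1,\ldots,\nu_J)$.
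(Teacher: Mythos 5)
Your proposal is correct and follows essentially the same route as the paper's proof: convert the stretched-exponential moment bounds to polynomial moment bounds of the form $\mathbb{E}[\mathcal{C}_j^p]^{1/p}\lesssim C_j^{1/\nu_j}p^{1/\nu_j}$, combine via Minkowski's inequality using that $\mathcal{C}$ is a convex combination, pass from $p^{1/\nu_j}$ to $p^{1/\hat\nu}$, and convert back to a stretched-exponential bound. The only cosmetic difference is that the paper packages the two conversion steps into an abstract equivalence lemma ($\mathbb{E}[\exp(C'\mathcal{Z})]\leq2\Rightarrow\mathbb{E}[\mathcal{Z}^p]^{1/p}\leq p\Rightarrow\mathbb{E}[\exp(c'\mathcal{Z})]\leq2$) applied to $\mathcal{Z}=\mathcal{C}_j^{\nu_j}/C_j$ and to $\mathcal{C}^{\hat\nu}/\hat C$, whereas you spell out the underlying tail/Gamma-integral and power-series computations explicitly.
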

\begin{proof}
  We first note that there exist universal constants $0<c'\leq C'<\infty$ such that for any non-negative random variable $\mathcal Z$ we have the chain of implications
  \begin{equation*}
    \mathbb E\bigg[\exp(C'\mathcal Z)\bigg]\leq 2 \quad\quad \Rightarrow \quad\quad \forall p\geq 1\,:\,    \mathbb E\big[\mathcal Z^p\big]^\frac{1}{p}\leq p \quad\quad \Rightarrow \quad\quad \mathbb E\bigg[\exp(c'\mathcal Z)\bigg]\leq 2.
  \end{equation*}
  We now estimate
  \begin{eqnarray*}
        \mathbb E\bigg[\Big(\frac{\mathcal C^\nu}{\hat C}\Big)^{p}\bigg]^\frac{1}{\hat \nu p}&=&\frac{1}{\hat C^{1/\hat \nu}}\mathbb E\bigg[\Big(\Big(\frac{\sum_{j\in J}a_j\mathcal C_j}{\sum_{j\in J}a_j}\Big)^{\hat\nu}\Big)^{p}\bigg]^\frac{1}{\hat\nu p}
    \leq
        \frac{1}{\hat C^{1/\hat\nu}}\frac{\sum_{j\in J}a_j\mathbb E \Big[\big(\mathcal C_j^{\hat\nu}\big)^p \Big]^{\frac{1}{\hat\nu p}}}{\sum_{j\in J}a_j}\\
    &\leq&
        \frac{1}{\hat C^{1/\hat \nu}} \frac{\sum_{j\in J}a_j\mathbb E \Big[\big(\mathcal C_j^{\nu_j}\big)^p \Big]^{\frac{1}{\nu_j p}}}{\sum_{j\in J}a_j}
        \leq \frac{1}{\hat C^{1/\hat \nu}} \frac{\sum_{j\in J} a_j (p C_j)^{\frac{1}{\nu_j}} }{\sum_{j\in J}a_j}
        \\
        &\leq& \big(c' p\big)^{\frac{1}{\hat \nu}},
  \end{eqnarray*}
  and thus the claimed estimate follows.
\end{proof}

\subsection{Estimates on the random fluctuations of the RVE approximation for the effective material law}

We next establish the estimates on fluctuations of the representative volume approximation for the effective material law stated in Theorem~\ref{T:RVE}a.

\begin{proof}[Proof of Theorem~\ref{T:RVE}a]
  {To ease the notation we drop the indices $\e$ and $L$ and simply write $\omega$ instead of $\omega_{\e,L}$. }
Consider the random variable
\begin{equation*}
  F(\omega):=A^{\RVE,L}(\omega,\xi)\cdot \Xi=\fint_{[0,L]^d}A(\omega(x),\xi+\nabla\phi_\xi(\omega,x)) \cdot \Xi\,dx.
\end{equation*}
Let $\delta \omega$ denote a periodic infinitesimal perturbation in the sense of Definition~\ref{DefinitionSpectralGap}b. Then for all $L$-periodic parameter fields $\omega$ we have
\begin{equation}\label{T:RVEran:eq1}
  \begin{aligned}
    \delta F(\omega):=&\lim\limits_{t\to 0}\frac{F(\omega+t\delta\omega)-F(\omega)}{t}\\
    =&\fint_{[0,L]^d}\partial_\omega A(\omega(x),\xi+\nabla\phi_\xi(\omega,x))\delta\omega(x)\cdot\Xi
    \\&~~~~~~~~~~~~~~~~~~
    +a_\xi(x)\nabla\delta\phi_\xi(\omega,x) \cdot \Xi\,dx,
  \end{aligned}
\end{equation}
where
\begin{align*}
  a_\xi(x)=&\partial_\xi A(\omega(x),\xi+\nabla\phi_\xi(\omega,x)),
\end{align*}
and where $\delta\phi_\xi=\delta\phi_\xi(\omega,\cdot)$ is the unique ($L$-periodic) solution with mean zero to
\begin{align}
\label{EqDeltaphi}
  -\nabla\cdot (a_\xi\nabla\delta\phi_\xi)=\nabla\cdot(\partial_\omega A(\omega(x),\xi+\nabla\phi_\xi(\omega,x))\delta\omega(x)).
\end{align}
Introducing the unique $L$-periodic solution $h$ with vanishing mean to the PDE
\begin{align}
\label{Eqh}
-\nabla \cdot (a_\xi^{*}\nabla h)=\nabla \cdot (a_\xi^*(x)\Xi),
\end{align}
we deduce by testing \eqref{Eqh} with $\delta \phi_{\xi}$ and testing \eqref{EqDeltaphi} with $h$
\begin{align*}
\delta F(\omega)
=&\fint_{[0,L]^d}\partial_\omega A(\omega(x),\xi+\nabla\phi_\xi(\omega,x))\delta\omega(x)\cdot\Xi\,dx
\\&
+\fint_{[0,L]^d}\partial_\omega A(\omega(x),\xi+\nabla\phi_\xi(\omega,x))\delta\omega(x) \cdot \nabla h\,dx.
\end{align*}
This establishes by \hyperlink{A3}{(A3)}
\begin{align*}
\bigg|\frac{\partial F(\omega)}{\partial \omega}\bigg|
\leq C L^{-d} |\xi+\nabla \phi_\xi| (|\Xi|+|\nabla h|)
\end{align*}
which yields by the $q$-th moment version of the spectral gap inequality in Lemma~\ref{LemmaLqSpectralGap}
\begin{align*}
  &\mathbb{E}_L\Big[\big|F-\mathbb{E}_L[F]\big|^{2q}\Big]^{1/2q}\\
  &\leq C q \e^{\frac{d}{2}}L^{-d} \mathbb{E}_L\bigg[\bigg(\int_{[0,L]^d} \bigg(\fint_{B_\varepsilon(x)} |\xi+\nabla \phi_\xi| (|\Xi|+|\nabla h|)\,d\tilde x\bigg)^2 \,d x \bigg)^q\bigg]^{1/2q}.
\end{align*}
By H\"older's inequality, we infer for any $p>2$
\begin{align*}
&\mathbb{E}_L\Big[\big|F-\mathbb{E}_L[F]\big|^{2q}\Big]^{1/2q}
\\
&\leq C q \Big(\frac{\e}{L}\Big)^{\frac{d}{2}} \mathbb{E}_L\bigg[\bigg(\fint_{[0,L]^d} \bigg|\fint_{B_\varepsilon(x)} |\xi+\nabla \phi_\xi|^2 \,d\tilde x\bigg|^{p/(p-2)} \,dx\bigg)^{q(p-2)/p}
\\&~~~~~~~~~~~~~~~~~~~~~~~~~~~~~~~~~~~~~~~\times
\bigg(\fint_{[0,L]^d} (|\Xi|+|\nabla h|)^p \,dx\bigg)^{2q/p}\bigg]^{1/2q}.
\end{align*}
Bounding the last integral by $C|\Xi|$ by Meyers estimate for \eqref{Eqh} (see Lemma~\ref{LemmaWeightedMeyers}) and using the estimate 
\eqref{EstimateGradByrstar}, we obtain
\begin{align*}
&\mathbb{E}_L\Big[\big|F-\mathbb{E}_L[F]\big|^{2q}\Big]^{1/2q}
\\&
\leq C q \Big(\frac{\e}{L}\Big)^{\frac{d}{2}} |\xi| |\Xi| \, \mathbb{E}_L \bigg[\bigg(\fint_{[0,L]^d} \big|r_{*,L,\xi}(x)\big|^{(d-\delta)p/(p-2)} \,dx\bigg)^{q(p-2)/p}\bigg]^{1/2q}.
\end{align*}
Using stationarity of $r_{*,L,\xi}$ and the moment bound of Lemma~\ref{MomentsMinimalRadius} (which we prove explicitly for the probability distribution $\mathbb{P}$, but which may be established for $\mathbb{P}_L$ analogously; furthermore, while the estimates of Lemma~\ref{MomentsMinimalRadius} are stated for finite $T<\infty$, they are uniform in $T\geq \varepsilon^2$ and therefore also hold in the limit $T\rightarrow \infty$), we deduce for $q$ large enough
\begin{align*}
&\mathbb{E}_L\Big[\big|F-\mathbb{E}_L[F]\big|^{2q}\Big]^{1/2q}
\leq C q^C\Big(\frac{\e}{L}\Big)^{\frac{d}{2}} |\xi| |\Xi|.
\end{align*}
This is the assertion of Theorem~\ref{T:RVE}a.
\end{proof}

\subsection{Estimates for the error introduced by localization}
We next establish the estimates from Lemma~\ref{L:syscorr}, Corollary~\ref{C:syscorr}, and Lemma~\ref{L:syslincorr} for the error introduced in the correctors by the exponential localization on scale $\sqrt{T}$ via the massive term.
We then prove Proposition~\ref{P:RVEsystem}, which estimates the systematic error in the approximation for the effective coefficient $\mathbb{E}[A^{\RVE,\eta,T}]$ introduced by the finite localization parameter $T<\infty$.
\begin{proof}[Proof of Lemma~\ref{L:syscorr}]
We will use the exponential weight $\eta(x):=\exp(-\gamma|x|/\sqrt T)$ with $0<\gamma\ll 1$. Note that 
\begin{equation}\label{L:syscorr:eq1}
  |\nabla\eta|\leq\frac\gamma{\sqrt T}\eta.
\end{equation}
By the localized corrector equation \eqref{EquationLocalizedCorrector} we have
\begin{equation*}
  -\nabla \cdot (A(\omega_\varepsilon,\xi+\nabla \phi_\xi^{2T})-A(\omega_\varepsilon,\xi+\nabla \phi_\xi^T)) + \frac{1}{2T}(\phi_\xi^{2T}-\phi_\xi^T) =\frac{1}{2T}\phi_\xi^{T}.
\end{equation*}
Testing with $(\phi_\xi^{2T}-\phi_\xi^{T})\eta$ and using the monotonicity and Lipschitz continuity of $A$ (see \hyperlink{A1}{(A1)}-\hyperlink{A2}{(A2)}) as well as \eqref{L:syscorr:eq1}, we get
\begin{align*}
  &\int_\Rd\Big( \lambda \big|\nabla \phi_\xi^{2T}-\nabla \phi_\xi^T\big|^2 + \frac{1}{2T}\big|\phi_\xi^{2T}-\phi_\xi^T\big|^2\Big)\eta \,dx
  \\
  &\leq \frac{1}{2T} \int_\Rd\phi_\xi^{T} (\phi_\xi^{2T}-\phi_\xi^T)\eta\,dx+\gamma\int_\Rd \Lambda |\nabla\phi_\xi^{2T}-\nabla\phi_\xi^T| \frac1{\sqrt T} |\phi_\xi^{2T}-\phi_\xi^T|\eta\,dx.
\end{align*}
Choosing $\gamma\leq c(d,m,\lambda,\Lambda)$, we may absorb the second term on the RHS into the LHS to obtain
\begin{equation}\label{L:syscorr:eq2}
  \int_\Rd\Big(\big|\nabla \phi_\xi^{2T}-\nabla \phi_\xi^T\big|^2 + \frac{1}{T}\big|\phi_\xi^{2T}-\phi_\xi^T\big|^2\Big)\eta \,dx
  \leq  \frac{C}{T} \int_\Rd \phi_\xi^{T} (\phi_\xi^{2T}-\phi_\xi^T)\eta\,dx.
\end{equation}
In the following, we treat dimensions $d\leq 2$ and $d\geq 3$ separately. 
In the case $d\leq 2$, by Young's inequality and absorption the previous inequality yields
\begin{equation*}
  \int_{\R^2}\Big(\big|\nabla \phi_\xi^{2T}-\nabla \phi_\xi^T\big|^2 + \frac{1}{T}\big|\phi_\xi^{2T}-\phi_\xi^T\big|^2\Big)\eta \,dx
  \leq    \frac{C}{T}\int_{\R^2}|\phi_\xi^T|^2\eta\,dx.
\end{equation*}
Note that by Proposition~\ref{PropositionCorrectorEstimate} (in connection with a dyadic decomposition of $\R^d$ for $d=1,2$ into the ball $B_{\sqrt T}(0)$ and the annuli $\{2^i\sqrt T\leq |x|<2^{i+1}\sqrt T\}$, $i=0,1,2,\ldots$) we obtain
\begin{equation*}
  \frac{1}{T}\int_{\Rd}|\phi_\xi^T|^2\eta\,dx \leq
  \begin{cases}
  \mathcal C |\xi|^2 \varepsilon
  &\text{for }d=1,
  \\
  \mathcal C |\xi|^2 \varepsilon^2 \Big|\log\frac{\sqrt T}{\varepsilon}\Big|
  &\text{for }d=2.
  \end{cases}
\end{equation*}
Since $\eta \geq \exp(-1)$ on $B_{\sqrt T}$, the claimed estimate follows for $d\leq 2$.
\smallskip

In the case $d\geq 3$ we can use in \eqref{L:syscorr:eq2} the representation $\phi^{T}_{\xi}=\nabla\cdot(\theta^{T}_{\xi}-b)$ for any $b\in \Rmd$ (see \eqref{EquationPotentialField}). We obtain by an integration by parts and by the Cauchy-Schwarz inequality  the estimate
\begin{align*}
&\frac{1}{T} \int_\Rd \phi_\xi^{T} (\phi_\xi^{2T}-\phi_\xi^T)\eta\,dx\\
&\leq\frac1T\int_\Rd|\theta^{T}_{\xi}-b|\Big(|\nabla\phi_{\xi}^{2T}-\nabla\phi_{\xi}^{T}|+\frac{\gamma}{\sqrt T}|\phi_{\xi}^{2T}-\phi_{\xi}^{T}|\Big)\eta\,dx
\end{align*}
With Young's inequality we may absorb the second factor into the LHS of \eqref{L:syscorr:eq2}. We thus obtain
\begin{equation*}
  \int_\Rd\Big(\big|\nabla \phi_\xi^{2T}-\nabla \phi_\xi^T\big|^2 + \frac{1}{T}\big|\phi_\xi^{2T}-\phi_\xi^T\big|^2\Big)\eta \,dx\leq \frac{C}{T^2} \int_\Rd |\theta^{T}_{\xi}-b|^2\eta\,dx.
\end{equation*}
By appealing to Proposition~\ref{PropositionCorrectorEstimate} (in connection with a dyadic decomposition of $\Rd$), and the fact that $\eta\geq \exp(-1)$ on $B_{\sqrt T}$, the claimed estimate follows for $d\geq 3$.
\end{proof}
\begin{proof}[Proof of Corollary~\ref{C:syscorr}]
Set $u:=\phi^{2T}_\xi-\phi^T_\xi$ and note that with $\hat a(\xi):=\int_0^1 \partial_\xi A(\omega_\varepsilon,\xi+(1-s)\nabla\phi_\xi^T+s\nabla \phi_\xi^{2T})\,ds$ we have by \eqref{EquationLocalizedCorrector}
\begin{equation*}
-\nabla\cdot(\hat a\nabla u)+\frac1T u=\frac1{2T}\phi_\xi^{2T}.
\end{equation*}
Applying the Meyers estimate of Lemma~\ref{L:linearMeyersLocalized} to this PDE -- upon rewriting the right-hand side using \eqref{EquationPotentialField} in case $d\geq 3$ -- we obtain for $0\leq p-2\ll 1$
\begin{align*}
&\bigg(\fint_{B_{\sqrt{T}}(x_0)}|\nabla (\phi^{2T}_\xi-\phi^T_\xi)|^p \,dx\bigg)^{1/p}
\\&
\leq
C(d,m,\lambda,\Lambda,p) \bigg(\fint_{B_{2\sqrt{T}}(x_0)}|\nabla u|^2 + \Big|\frac{1}{\sqrt{T}}u\Big|^2 \,dx\bigg)^{1/2}
\\&~~~
+
\begin{cases}
C(d,m,\lambda,\Lambda,p) \bigg(\fint_{B_{2\sqrt{T}}(x_0)} \Big|\frac{1}{\sqrt{T}}\phi_\xi^{2T}\Big|^p \,dx\bigg)^{1/p} &\text{in case }d\leq 2,
\\
C(d,m,\lambda,\Lambda,p) \bigg(\fint_{B_{2\sqrt{T}}(x_0)} \Big|\frac{1}{T}(\theta_\xi^{2T}-b)\Big|^p \,dx\bigg)^{1/p} &\text{in case }d\geq 3.
\end{cases}
\end{align*}
This implies by Proposition~\ref{PropositionCorrectorEstimate} and Lemma~\ref{L:syscorr}
\begin{align*}
&\bigg(\fint_{B_{\sqrt{T}}(x_0)}|\nabla (\phi^{2T}_\xi-\phi^T_\xi)|^p \,dx\bigg)^{1/p}
\\&
\leq \mathcal{C}|\xi| \bigg(\frac{\varepsilon}{\sqrt T}\bigg)^{\frac{d\wedge 4}{2}}
    \begin{cases}
      \big|\log (\sqrt T/\varepsilon) \big| &\text{for }d\in\{2,4\},\\
      1&\text{for }d=3,\ d=1,\text{ and }d\geq 5.
    \end{cases}
\end{align*}
Taking the $p$-th stochastic moment and using stationarity, we conclude.
\end{proof}

\begin{proof}[Proof of Lemma~\ref{L:syslincorr}]
Again, we use the weight $\eta(x):=\exp(-\gamma |x|/\sqrt{T})$ for $0<\gamma\ll 1$.
  We subtract the equations for $\phi^{2T}_{\xi,\Xi}$ and $\phi^{T}_{\xi,\Xi}$ (see \eqref{EquationLocalizedCorrectorLinearized}) to obtain
\begin{eqnarray*}
    -\nabla\cdot \Big(a_\xi^{2T}(\Xi+\nabla\phi^{2T}_{\xi,\Xi})-a_\xi^T(\Xi+\nabla\phi^{T}_{\xi,\Xi})\Big)+\frac1 T(\phi^{2T}_{\xi,\Xi}-\phi^{T}_{\xi,\Xi})=\frac1{2T}\phi^{2T}_{\xi,\Xi}.
\end{eqnarray*}
By adding and subtracting $a_\xi^T(\Xi+\phi^{2T}_{\xi,\Xi})$ and by additionally appealing to the representation $\nabla\cdot(\theta^{2T}_{\xi,\Xi}-b)=\phi^{2T}_{\xi,\Xi}$ for any $b\in \Rmd$ (see \eqref{EquationPotentialFieldLinearized}) in the case $d\geq 3$, we get
  \begin{equation}\label{L:syslincorr:eq1}
    \begin{aligned}
      &-\nabla\cdot \big(a_\xi^{T}(\nabla\phi^{2T}_{\xi,\Xi}-\nabla\phi^{T}_{\xi,\Xi})\big)+\frac1 T(\phi^{2T}_{\xi,\Xi}-\phi^{T}_{\xi,\Xi})\\
      &=\,\nabla\cdot\Big((a^{2T}_\xi-a_\xi^T)(\Xi+\nabla\phi^{2T}_{\xi,\Xi})\Big)+
      \begin{cases}
        \frac1{2T}\phi^{2T}_{\xi,\Xi}&\text{for }d\leq 2,\\
        \nabla\cdot(\frac1{2T}(\theta^{2T}_{\xi,\Xi}-b))&\text{for }d\geq 3.
      \end{cases}
    \end{aligned}
  \end{equation}
Testing the equation with $(\phi^{2T}_{\xi,\Xi}-\phi^{T}_{\xi,\Xi})\eta$ (with $0<\gamma\ll 1$) yields the exponentially localized energy estimate
\begin{align*}
  &\int_{\Rd}\Big(\lambda|\nabla\phi^{2T}_{\xi,\Xi}-\nabla\phi^{T}_{\xi,\Xi}|^2+\frac{1}T|\phi^{2T}_{\xi,\Xi}-\phi^{T}_{\xi,\Xi}|^2\Big)\eta\,dx\\
  &\leq C\int_\Rd |a^{2T}_\xi-a_\xi^T|^2|\Xi+\nabla\phi^{2T}_{\xi,\Xi}|^2\eta\,dx
     +C\begin{cases}
      \frac1{T}\int_\Rd|\phi^{2T}_{\xi,\Xi}|^2\eta\,dx&\text{for }d\leq 2,\\
      \frac1{T^2}\int_\Rd|\theta^{2T}_{\xi,\Xi}-b|^2\eta\,dx&\text{for }d\geq 3.
    \end{cases}
\end{align*}
By taking the expectation and exploiting stationarity  of the LHS, we get
\begin{align*}
  &\mathbb E\Big[|\nabla\phi^{2T}_{\xi,\Xi}-\nabla\phi^{T}_{\xi,\Xi}|^2+\frac{1}T|\phi^{2T}_{\xi,\Xi}-\phi^{T}_{\xi,\Xi}|^2\Big]\\
  &\leq C\mathbb E\left[ |a^{2T}_\xi-a_\xi^T|^2|\Xi+\nabla\phi^{2T}_{\xi,\Xi}|^2\right]
+C\left\{\begin{aligned}\displaystyle
      &\frac1{T}\mathbb E\left[|\phi^{2T}_{\xi,\Xi}|^2\right]&\text{for }d\leq 2,\\\displaystyle
      &\frac1{T^2}\mathbb E\left[\int_\Rd|\theta^{2T}_{\xi,\Xi}-b|^2\frac{\eta}{\sqrt T^d}\,dx\right]&\text{for }d\geq 3.
    \end{aligned}\right.
\end{align*}
The second term on the RHS can be estimated with help of Proposition~\ref{PropositionLinearizedCorrectorEstimate}.

We estimate the first term on the RHS: Since $\partial_\xi A$ is Lipschitz by assumption \hyperlink{R}{(R)} and since $a_\xi^T(x)=A(\omega_\varepsilon(x),\xi+\nabla \phi_\xi^T)$, we have $|a_\xi^{2T}-a_\xi^T|\leq C |\nabla\phi_\xi^{2T}-\nabla\phi_\xi^T|$. Hence, with H\"older's inequality with exponents $0<p-1 \leq c(d,m,\lambda,\Lambda)$ and $\frac p{p-1}$, the bound on $\Xi+\nabla\phi_{\xi,\Xi}^T$ from \eqref{EstimateLinearCorrectorLinfty} and Lemma~\ref{MomentsMinimalRadiusLinearized}, and Corollary~\ref{C:syscorr}, we get
\begin{align*}
  &\mathbb E\left[ |a^{2T}_\xi-a_\xi^T|^2|\Xi+\nabla\phi^{2T}_{\xi,\Xi}|^2\right]\\
  &\leq C \mathbb E\left[ |\nabla\phi_\xi^{2T}-\nabla\phi_\xi^{T}|^{2p}\right]^\frac{1}{p}\mathbb E\left[|\Xi+\nabla\phi^{2T}_{\xi,\Xi}|^{2\frac{p}{p-1}}\right]^\frac{p-1}{p}\\
  &\leq C |\Xi|^2(1+|\xi|)^{C}
    |\xi|^2\bigg(\frac{\varepsilon}{\sqrt T}\bigg)^{d\wedge 4}
    \times \begin{cases}
      \big|\log (\sqrt T/\varepsilon)\big|^2 &\text{for }d\in\{2,4\},\\
      1&\text{for }d=3,\ d=1,\text{ and }d\geq 5.
    \end{cases}
\end{align*}
\end{proof}

\begin{proof}[Proof of Proposition~\ref{P:RVEsystem}]
{\bf Step 1: Proof of (a).} This is a direct consequence of stationarity of $\mathbb P$ (see assumption \hyperlink{P1}{(P1)}) and stationarity of the random field $A(\omega_\varepsilon,\xi+\nabla\phi^T_\xi) \cdot \Xi-\frac1T\phi_\xi^T\phi_{\xi,\Xi}^{*,T}$, the latter of which is a consequence of the former.
\smallskip

{\bf Step 2: Proof of (b).} First note that it suffices to prove for any $T\geq 2\varepsilon^2$ the estimate
\begin{equation}\label{P:RVEsystem:mainestimate}
  \begin{aligned}
    &\left|\mathbb E\left[A^{\RVE,\eta,2T}(\xi) \cdot \Xi\right]-\mathbb E\left[A^{\RVE,\eta,T}(\xi) \cdot \Xi\right]\right|\\
    &\leq C
    (1+|\xi|)^{2C}|\xi||\Xi| \bigg(\frac{\varepsilon}{\sqrt T}\bigg)^{d\wedge 4}
    \begin{cases}
      \big|\log (\sqrt T/\varepsilon)\big|^2&\text{for }d=2\text{ and }d=4,\\
      1&\text{for }d=3,\ d=1,\text{ and }d\geq 5.
    \end{cases}
  \end{aligned}
\end{equation}
Indeed, the claimed estimate then follows by rewriting the systematic localization error as a telescopic sum,
\begin{align*}
  &A_{\shom}(\xi) \cdot \Xi \, - \,\mathbb E\left[A^{\RVE,\eta,T}(\xi) \cdot \Xi\right]
 \\&=\sum_{i=0}^\infty\bigg(\mathbb E\left[A^{\RVE,\eta,2^{i+1}T}(\xi) \cdot \Xi\right]-\mathbb E\left[A^{\RVE,\eta,2^iT}(\xi) \cdot \Xi\right]\bigg),
\end{align*}
which holds since
\begin{equation*}
  \lim_{T\rightarrow \infty} \mathbb E\left[A^{\RVE,\eta,T}(\xi) \cdot \Xi\right]= A_{\shom}(\xi) \cdot \Xi\qquad\mathbb P\text{-almost surely}.
\end{equation*}
We present the argument for \eqref{P:RVEsystem:mainestimate}.
In view of (a), we may assume without loss of generality that the weight $\eta$ satisfies
  \begin{equation*}
    \supp\eta\subset B_{\sqrt T},\qquad \int_\Rd\eta\,dx=1,\qquad |\eta|+\sqrt T|\nabla\eta| \leq C(d) \sqrt T^{-d}.
  \end{equation*}
Let $\phi^{*,T}_{\xi,\Xi}$ denote the localized, linearized, adjoint corrector (i.\,e.\ the $T$-localized homogenization corrector associated with the linear elliptic PDE with coefficient field $(a_\xi^{T})^*$). The localized corrector equation \eqref{EquationLocalizedCorrector} yields 
\begin{equation}\label{eq:RVE:1}
  \begin{aligned}
    &-\int_\Rd \eta\Big(A(\omega_\varepsilon,\xi+\nabla\phi^{2T}_\xi)-A(\omega_\varepsilon,\xi+\nabla\phi^{T}_\xi)\Big) \cdot \nabla\phi^{*,T}_{\xi,\Xi}\,dx\\
    &=\int_\Rd \Big(A(\omega_\varepsilon,\xi+\nabla\phi^{2T}_\xi)-A(\omega_\varepsilon,\xi+\nabla\phi^{T}_\xi)\Big) \cdot (\phi^{*,T}_{\xi,\Xi}\nabla\eta)+\eta\Big(\frac{1}{2T}\phi^{2T}_\xi-\frac1T\phi^T_\xi\Big)\phi^{*,T}_{\xi,\Xi}\,dx.
  \end{aligned}
\end{equation}
Combined with the definition of the localized RVE approximation in Definition~\ref{D:RVET}, we get
\begin{align*}
  &\Big(A^{\RVE,\eta,2T}(\xi)-  A^{\RVE,\eta,T}(\xi)\Big) \cdot \Xi\\
  &=\int_\Rd \eta\Bigg(\Big(A(\omega_\varepsilon,\xi+\nabla\phi^{2T}_\xi)-A(\omega_\varepsilon,\xi+\nabla\phi^{T}_\xi)\Big) \cdot \Xi
     -\frac1{2T}\phi_\xi^{2T}\phi_{\xi,\Xi}^{*,2T}
     +\frac1T\phi_\xi^{T}\phi_{\xi,\Xi}^{*,T}\Bigg)\,dx\\
  &=\int_\Rd \eta\Bigg(\Big(A(\omega_\varepsilon,\xi+\nabla\phi^{2T}_\xi)-A(\omega_\varepsilon,\xi+\nabla\phi^{T}_\xi)\Big) \cdot (\Xi+\nabla\phi^{*,T}_{\xi,\Xi})
     -\frac1{2T}\phi_\xi^{2T}\phi_{\xi,\Xi}^{*,2T}
     +\frac1T\phi_\xi^{T}\phi_{\xi,\Xi}^{*,T}\Bigg)\,dx\\
  &~~~+\int_\Rd \Big(A(\omega_\varepsilon,\xi+\nabla\phi^{2T}_\xi)-A(\omega_\varepsilon,\xi+\nabla\phi^{T}_\xi)\Big) \cdot (\phi^{*,T}_{\xi,\Xi}\nabla\eta)+\eta\Big(\frac{1}{2T}\phi^{2T}_\xi-\frac1T\phi^T_\xi\Big)\phi^{*,T}_{\xi,\Xi}\,dx\\
  &=\int_\Rd \eta\Bigg(\Big(A(\omega_\varepsilon,\xi+\nabla\phi^{2T}_\xi)-A(\omega_\varepsilon,\xi+\nabla\phi^{T}_\xi)\Big) \cdot (\Xi+\nabla\phi^{*,T}_{\xi,\Xi})
     -\frac1{2T}\phi_\xi^{2T}(\phi_{\xi,\Xi}^{*,2T}-\phi_{\xi,\Xi}^{*,T})\Bigg)\,dx\\
  &~~~+\int_\Rd \Big(A(\omega_\varepsilon,\xi+\nabla\phi^{2T}_\xi)-A(\omega_\varepsilon,\xi+\nabla\phi^{T}_\xi)\Big) \cdot (\phi^{*,T}_{\xi,\Xi}\nabla\eta)\,dx.
\end{align*}
We subtract the linearized corrector equation \eqref{EquationLocalizedCorrectorLinearized} in its form for the adjoint coefficient field and the corresponding corrector
\begin{equation*}
  \int_\Rd  a_\xi^{T,*}\big(\Xi+\nabla \phi_{\xi,\Xi}^{*,T}\big) \cdot \nabla\big(\eta(\phi^{2T}_\xi-\phi^T_\xi)\big)+\frac{1}{T}\int_\Rd \eta\phi_{\xi,\Xi}^{*,T}(\phi_\xi^{2T}-\phi_\xi^T)\,dx=0,
\end{equation*}
where $a_\xi^{T,*}:=(\partial_\xi A(\xi+\nabla \phi_\xi^T))^*$. We get
\begin{equation}\label{prop:RVEsys:eqS1}
  \begin{aligned}
    &\Big(A^{\RVE,\eta,2T}(\xi)-  A^{\RVE,\eta,T}(\xi)\Big) \cdot \Xi\\
    &=\int_\Rd \eta\Big(A(\omega_\varepsilon,\xi+\nabla\phi^{2T}_\xi)-A(\omega_\varepsilon,\xi+\nabla\phi^{T}_\xi)-a_\xi^{T}(\nabla\phi_\xi^{2T}-\nabla\phi_\xi^T)\Big) \cdot (\Xi+\nabla\phi^{*,T}_{\xi,\Xi})\,dx
    \\
    &~~~~+\int_\Rd \Big(A(\omega_\varepsilon,\xi+\nabla\phi^{2T}_\xi)-A(\omega_\varepsilon,\xi+\nabla\phi^{T}_\xi)\Big)\cdot \phi^{*,T}_{\xi,\Xi}\nabla\eta
    \\&~~~~~~~~~~~~~~~~~~~~~~~~~~~~~~~~~~~~~
    -a_\xi^T(\Xi+\nabla \phi_{\xi,\Xi}^{*,T}) \cdot (\phi_\xi^{2T}-\phi_\xi^T)\nabla \eta \,dx
    \\
    &~~~~-\int_\Rd \eta\Big(\frac1{2T}\phi_\xi^{2T}(\phi_{\xi,\Xi}^{*,2T}-\phi_{\xi,\Xi}^{*,T})+\frac1T\phi_{\xi,\Xi}^{*,T}(\phi_\xi^{2T}-\phi_\xi^T)\Big)\,dx.
  \end{aligned}
\end{equation}
We take the expectation of this identity and note that the expectation of the second integral on the right-hand side vanishes: Indeed, since it is of the form $\mathbb E\big[\int_\Rd B\nabla\eta\big]$ where $B$ is a stationary random field and $\eta$ is compactly supported, we have $\mathbb E\big[\int_\Rd B\nabla\eta\big]=\mathbb E\big[B\big]\int_\Rd \nabla\eta=0$. Moreover, for the first term on the RHS of \eqref{prop:RVEsys:eqS1} we appeal to the uniform bound on $\partial_\xi^2 A$ from assumption \hyperlink{R}{(R)} in form of (recall that $a_\xi^T=\partial_\xi A(\omega_\varepsilon,\xi+\nabla \phi_\xi^T)$)
\begin{equation*}
 \big|A(\omega_\varepsilon,\xi+\nabla \phi_\xi^{2T})-A(\omega_\varepsilon,\xi+\nabla \phi_\xi^T)-a_\xi^T(\nabla \phi_\xi^{2T}-\nabla\phi_\xi^{T})\big|
\leq C \big|\nabla \phi_\xi^{2T}-\nabla\phi_\xi^T\big|^2.
\end{equation*}
We thus get
\begin{equation}\label{prop:RVEsys:eq3}
  \begin{aligned}
    &\Big|\mathbb E\Big[\Big(A^{\RVE,\eta,2T}(\xi)-  A^{\RVE,\eta,T}(\xi)\Big) \cdot \Xi\Big]\Big|\\
    &\qquad\leq C \mathbb E\Big[|\nabla\phi_\xi^{2T}-\nabla\phi_\xi^{T}|^2|\Xi+\nabla\phi^{*,T}_{\xi,\Xi}|\Big]\\
    &\qquad\quad+\ \Bigg|\mathbb E\Bigg[\int_\Rd \eta\Big(\frac1{2T}\phi_\xi^{2T}(\phi_{\xi,\Xi}^{*,2T}-\phi_{\xi,\Xi}^{*,T})+\frac1T\phi_{\xi,\Xi}^{*,T}(\phi_\xi^{2T}-\phi_\xi^T)\Big)\,dx\Bigg]\Bigg|\\
    &\qquad=:I_1+I_2.
  \end{aligned}
\end{equation}
To estimate $I_1$  we first apply H\"older's inequality with exponents $p$ and $\frac{p}{p-1}$ with $0<p-1 \leq c(d,m,\lambda,\Lambda)$ to obtain
\begin{align*}
I_1 \leq C \mathbb{E}\big[\big|\nabla \phi_\xi^{2T} - \nabla \phi_\xi^{T}\big|^{2p}\big]^\frac{1}{p} \mathbb{E}\big[\big|\Xi+\nabla \phi_{\xi,\Xi}^{T}\big|^{\frac{p}{p-1}}\big]^\frac{p-1}{p}.
\end{align*}
We then appeal to Corollary~\ref{C:syscorr} and the moment bound on the linearized corrector from \eqref{EstimateLinearCorrectorLinfty} as well as Lemma~\ref{MomentsMinimalRadiusLinearized} to deduce
\begin{equation*}
  I_1 \leq C
  |\xi|^2(1+|\xi|)^C|\Xi|
  \bigg(\frac{\varepsilon}{\sqrt T}\bigg)^{d\wedge 4}
  \begin{cases}
    \big|\log (\sqrt T/\varepsilon)\big|^2 &\text{for }d\in\{2,4\},\\
    1&\text{for }d=3,\ d=1,\text{ and }d\geq 5.\\
  \end{cases}
\end{equation*}
Regarding $I_2$ we distinguish the cases $d\leq 2$ and $d\geq 3$. In the case $d= 2$,  we apply Lemma~\ref{L:syslincorr}, Lemma~\ref{L:syscorr}, as well as Proposition~\ref{PropositionCorrectorEstimate} and Proposition~\ref{PropositionLinearizedCorrectorEstimate} to obtain
\begin{align*}
  I_2
  &\leq C\mathbb E\left[\frac1{T}|\phi_\xi^{2T}|^2\right]^\frac12\mathbb E\left[\frac1T|\phi_{\xi,\Xi}^{*,2T}-\phi_{\xi,\Xi}^{*,T}|^2\right]^\frac12+
             \mathbb E\left[\frac1T|\phi_{\xi,\Xi}^{*,T}|^2\right]^\frac12\mathbb E\left[\frac1T|\phi_\xi^{2T}-\phi_\xi^T|^2\right]^\frac12\\
  &\leq C(1+|\xi|)^{C}|\xi||\Xi| \varepsilon^2 \sqrt T^{-2}
               \big|\log (\sqrt T/\varepsilon)\big|^2,
\end{align*}
and in case $d=1$ we proceed similarly.

In the case $d\geq 3$ we  appeal to the representation of $\smash{\phi_{\xi}^{2T}}$ and $\smash{\phi_{\xi,\Xi}^{*,T}}$ as $\smash{\nabla\cdot \theta_{\xi}^{2T}}$ and $\smash{\nabla\cdot \theta_{\xi,\Xi}^{*,2T}}$ by \eqref{EquationPotentialField} and \eqref{EquationPotentialFieldLinearized}, respectively. To shorten the notation, in the following we assume without loss of generality that $\smash{\fint_{B_{\sqrt T}}\theta_\xi^{2T}=\fint_{B_{\sqrt T}}\theta_{\xi,\Xi}^{*,T}=0}$. An integration by parts thus yields
\begin{eqnarray*}
  I_2&=&\Bigg|\mathbb E\Bigg[\int_\Rd \eta\Big(\frac1{2T}\theta_\xi^{2T}\cdot(\nabla\phi_{\xi,\Xi}^{*,2T}-\nabla\phi_{\xi,\Xi}^{*,T})+\frac1T\theta_{\xi,\Xi}^{*,T}\cdot(\nabla\phi_\xi^{2T}-\nabla\phi_\xi^T)\Big)\,dx\\
     &&\qquad +\int_\Rd \nabla\eta\cdot \Big(\frac1{2T}\theta_\xi^{2T}(\phi_{\xi,\Xi}^{*,2T}-\phi_{\xi,\Xi}^{*,T})+\frac1T\theta_{\xi,\Xi}^{*,T}(\phi_\xi^{2T}-\phi_\xi^T)\Big)\,dx\Bigg]\Bigg|.
\end{eqnarray*}
With the properties of $\eta$ (in particular, $|\nabla\eta|\lesssim\sqrt T^{-d-1}$ and $\supp \eta \subset B_{\sqrt{T}}$), by the Cauchy-Schwarz inequality, and by stationarity of the localized correctors, we get
\begin{align*}
I_2&
\leq \frac{C}{T}
\mathbb E\Bigg[\fint_{B_{\sqrt T}}|\theta_\xi^{2T}|^2\,dx\Bigg]^\frac12 \mathbb E\bigg[|\nabla\phi_{\xi,\Xi}^{*,2T}-\nabla\phi_{\xi,\Xi}^{*,T}|^2 +\frac{1}{T}|\phi_{\xi,\Xi}^{*,2T}-\phi_{\xi,\Xi}^{*,T}|^2
\bigg]^\frac12
\\&~~~~
+\frac{C}{T} \mathbb E\Bigg[\fint_{B_{\sqrt T}}|\theta_{\xi,\Xi}^{*,2T}|^2\,dx\Bigg]^\frac12\mathbb E\Bigg[|\nabla\phi_{\xi}^{2T}-\nabla\phi_{\xi}^{T}|^2+\frac1T|\phi_{\xi}^{2T}-\phi_{\xi}^{T}|^2\Bigg]^\frac12.
\end{align*}
By appealing to Proposition~\ref{PropositionCorrectorEstimate} and Lemma~\ref{L:syslincorr} for the first term and to Proposition~\ref{PropositionLinearizedCorrectorEstimate} and Lemma~\ref{L:syscorr} for the second term, we obtain
\begin{equation*}
  I_2\leq C
  (1+|\xi|)^{C}|\xi||\Xi|\bigg(\frac{\varepsilon}{\sqrt T}\bigg)^{d\wedge 4}
  \begin{cases}
    1 &\text{for }d=3 \text{ and }d\geq 5,\\
    \big|\log (\sqrt T/\varepsilon)\big|^2 &\text{for }d=4.
  \end{cases}
\end{equation*}

Plugging in the estimates on $I_1$ and $I_2$ into \eqref{prop:RVEsys:eq3}, this establishes the estimate on the localization error for the representative volume element method for $\mathbb{P}$.

{\bf Step 3: Proof of (c).}
For the periodized probability distribution $\mathbb{P}_L$, one may proceed analogously to (b), deriving an error estimate on $\mathbb{E}_L[A^{\RVE,\eta,T}]-\mathbb{E}_L[A^{\RVE,L}]$.
\end{proof}

\subsection{Coupling error for RVEs}

\begin{proof}[Proof of Lemma~\ref{L:couperror}]
  To shorten the presentation we set $\widehat\omega:=\pi_L\widetilde\omega$ and similarly mark quantities that are associated with $\widehat\omega$, i.e.,
  \begin{align*}
    &\widehat\phi_\xi^T(x)=\phi_\xi^T(\widehat\omega,x),\qquad \widehat\phi_{\xi,\Xi}^T(x):=\phi_{\xi,\Xi}^T(\widehat\omega,x),
    \qquad \widehat a_\xi^T(x):=A(\widehat\omega,\xi+\nabla \widehat \phi_\xi^T).
  \end{align*}
  Moreover, we shall use the following notation for the differences
  \begin{equation*}
    \hat\delta\phi_\xi^T:=\phi_\xi^T-\widehat\phi_\xi^T,\qquad     \hat\delta\phi_{\xi,\Xi}^T:=\phi_{\xi,\Xi}^T-\widehat\phi_{\xi,\Xi}^T;
  \end{equation*}
  (we use the symbol $\hat\delta$ to distinguish the quantity from the sensitivities considered in Sections~\ref{SectionCorrectorEstimates} and \ref{SectionLinearizedCorrectorEstimates}).
  In the proof we make use of the exponential test-functions
  \begin{equation*}
    \eta(x):=\exp(-\gamma|x|/\sqrt T)
  \end{equation*}
  where $0<\gamma\ll 1$ is chosen such that the exponential localization estimate Lemma~\ref{L:exploc} applies.
  
  \noindent
  {\bf Step 1. Estimate for $\hat\delta\phi^T_\xi$.}   
  We claim that  
  \begin{equation}\label{L:coup:step1}
    \sup_{x_0\in B_{L/8}}\int_{\Rd}\Big(|\nabla\hat\delta\phi^T_\xi|^2+\frac1T|\hat\delta\phi^T_\xi|^2\Big)\eta(x-x_0)\,dx\leq C \sqrt T^d\exp(-\tfrac{\gamma}{16}L/\sqrt T)|\xi|^2.
  \end{equation}
  Indeed, by subtracting the equations for $\phi_\xi^T$ and $\widehat\phi_\xi^T$, we find that
  \begin{equation}
  \label{L:coup:eq0}
    -\nabla\cdot (a(x)\nabla\hat\delta\phi_\xi^T)+\frac1 T\hat\delta\phi_\xi^T=\nabla\cdot F,
  \end{equation}
  where 
  \begin{align*}
    a(x)&:=\int_0^1 \partial_\xi A(\tilde \omega,\xi+(1-s)\nabla \phi_\xi^T+s\nabla\widehat\phi_\xi^T)\,ds,
    \\
    F&:=A(\tilde \omega,\xi+\nabla\widehat\phi^\xi_T)-A(\widehat \omega,\xi+\nabla\widehat\phi^\xi_T).
  \end{align*}
  By the exponentially localized energy estimate of Lemma~\ref{L:exploc} we have
  \begin{equation*}
    \int_\Rd\Big(|\nabla\hat\delta\phi^T_\xi|^2+\frac1T|\hat\delta\phi^T_\xi|^2\Big)\eta(x-x_0)\,dx\leq C \int_\Rd|F|^2\eta(x-x_0)\,dx.
  \end{equation*}
  Since $|x_0|\leq \frac{L}{8}$, for all $x$ with $|x|\geq \frac{L}{4}$ the estimate $|x-x_0|\geq \frac L{16} +\frac 12|x-x_0|$ applies. This yields for such $x$
  \begin{equation}\label{L:coup:eq1}
    \begin{aligned}
      \eta(x-x_0)\leq
      \exp(-\tfrac{\gamma}{16}L/\sqrt T)\exp(-\tfrac\gamma 2 |x-x_0|/\sqrt
      T)
    \end{aligned}
  \end{equation}
  As $\hat \omega(x)=\tilde \omega(x)$ holds for $|x|\leq \frac{L}{4}$, we see that $F$ vanishes on $B_{L/4}$. We thus obtain
  \begin{align*}
    \int_\Rd|F|^2\eta(x-x_0)\,dx
    &\leq C \exp(-\tfrac{\gamma}{16}L/\sqrt T)\int_\Rd|\xi+\nabla\widehat\phi_T|^2\exp(-\tfrac{\gamma}{2}|x-x_0|/\sqrt T)\,dx\\
    &\leq C \exp(-\tfrac{\gamma}{16}L/\sqrt T)\sqrt T^d|\xi|^2,
  \end{align*}
  where for the last estimate we appealed to the localized energy estimate for $\widehat\phi_\xi^T$, see Lemma~\ref{L:exploc}. We conclude that \eqref{L:coup:step1} holds. For further reference, we note that we may similarly derive
  \begin{equation}\label{L:coup:step1p}
    \sup_{x_0\in B_{L/8}}\int_{\Rd}\Big(|\nabla\hat\delta\phi^T_\xi|^p+\Big|\frac{1}{\sqrt{T}}\hat\delta\phi^T_\xi\Big|^p\Big)\eta(x-x_0)\,dx\leq C \sqrt T^d\exp(-\tfrac{\gamma}{16}L/\sqrt T)|\xi|^p
  \end{equation}
  for some $p=p(d,m,\lambda,\Lambda)>2$ by applying the Meyers estimate of Lemma~\ref{L:linearMeyersLocalized} to \eqref{L:coup:eq0} with the dyadic decomposition $\Rd=B_{\sqrt{T}}\cup \bigcup_{k=1}^\infty (B_{2^k\sqrt{T}}\setminus B_{2^{k-1}\sqrt{T}})$, the estimate \eqref{L:coup:step1}, and the bound
  \begin{align*}
    \int_\Rd|F|^p\eta(x-x_0)\,dx
    &\leq C \exp(-\tfrac{\gamma}{16}L/\sqrt T)\int_\Rd|\xi+\nabla\widehat\phi_T|^p\exp(-\tfrac{\gamma}{2}|x-x_0|/\sqrt T)\,dx\\
    &\leq C \exp(-\tfrac{\gamma}{16}L/\sqrt T)\sqrt T^d|\xi|^p.
  \end{align*}
  Note that in the last step of of the last inequality we have again used the Meyers estimate of Lemma~\ref{L:linearMeyersLocalized} together with the localized energy estimate of Lemma~\ref{L:exploc} and a dyadic decomposition.
  \smallskip

  \noindent
  {\bf Step 2. Estimate for $\hat\delta\phi_{\xi,\Xi}^T$.} 
  We claim that there exists $q=q(d,m,\lambda,\Lambda)$ such that for all $x_0\in B_{\frac L 8}$ we have
  \begin{equation}\label{L:coup:step2}
    \begin{aligned}
      &\int_{\Rd}\Big(|\nabla\hat\delta\phi^T_{\xi,\Xi}|^2+\frac1T|\hat\delta\phi^T_{\xi,\Xi}|^2\Big)\eta(x-x_0)\,dx\\
      &\leq C \sqrt T^d\exp(-\tfrac{\gamma}{32}L/\sqrt
      T)(1+|\xi|^2)\|\Xi+\nabla\phi_{\xi,\Xi}^T\|_{q,T,x_0}^2,
    \end{aligned}
  \end{equation}
  where 
  \begin{equation*}
    \|\Xi+\nabla\phi_{\xi,\Xi}^T\|_{q,T,x_0}:=\left(\sqrt T^{-d}\int_\Rd|\Xi+\nabla\phi_{\xi,\Xi}^T|^{q}\exp(-\tfrac\gamma 2|x-x_0|/\sqrt T)\,dx\right)^\frac{1}{q}.
  \end{equation*}
  Indeed, by subtracting the equations \eqref{EquationLocalizedCorrectorLinearized} for $\phi_{\xi,\Xi}^T$ and $\widehat\phi_{\xi,\Xi}^T$, we get
  \begin{align*}
    -\nabla\cdot (\widehat a_\xi^T\nabla\hat\delta\phi_{\xi,\Xi}^T)+\frac1T \hat\delta\phi_{\xi,\Xi}^T =\nabla\cdot \big((a_\xi^T-\widehat a_\xi^T)(\Xi+\nabla\phi_{\xi,\Xi}^T)\big).
  \end{align*}
  Note that
  \begin{eqnarray*}
    a_\xi^T-\widehat a_\xi^T&=&\partial_\xi A(\tilde\omega,\xi+\nabla\phi_\xi^T)-\partial_\xi  A(\widehat\omega,\xi+\nabla\widehat\phi_\xi^T)\\
    &=&\big(\partial_\xi A(\tilde\omega,\xi+\nabla\phi_\xi^T)-\partial_\xi A(\widehat\omega,\xi+\nabla\phi_\xi^T)\big)
    \\&&+\big(\partial_\xi A(\widehat\omega,\xi+\nabla\phi_\xi^T)-\partial_\xi A(\widehat\omega,\xi+\nabla\widehat\phi_\xi^T)\big)\\
    &=:&F_1+F_2.
  \end{eqnarray*}
  By exponential localization in form of Lemma~\ref{L:exploc}, the Lipschitz continuity of $\partial_\xi A$ (see \hyperlink{R}{(R)}), the fact that $F_1$ vanishes on $B_{\frac L4}$ by $\tilde\omega=\widehat\omega$ on $B_{\frac L4}$, and the uniform bound on $\partial_\xi A$ from \hyperlink{A2}{(A2)}, we get
  \begin{equation}\label{L:coup:eq2}
    \begin{aligned}
      &\int_\Rd\Big(|\nabla\hat\delta\phi^T_{\xi,\Xi}|^2+\frac1T|\hat\delta\phi^T_{\xi,\Xi}|^2\Big)\eta(x-x_0)\,dx\\
      &\leq C \int_\Rd|F_1|^2|\Xi+\nabla\phi_{\xi,\Xi}^T|^2\eta(x-x_0)\,dx+C\int_\Rd|F_2|^2|\Xi+\nabla\phi_{\xi,\Xi}^T|^2\eta(x-x_0)\,dx\\
      &\leq C \int_{\{|x|>\frac
        L4\}} |\Xi+\nabla\phi_{\xi,\Xi}^T|^2\eta(x-x_0)\,dx\\&~~~~+C\int_\Rd|\nabla\hat\delta\phi_{\xi}^T|^2|\Xi+\nabla\phi_{\xi,\Xi}^T|^2\eta(x-x_0)\,dx.
    \end{aligned}
  \end{equation}
  We estimate the first term on the RHS by H\"older's inequality as
  \begin{align*}
    &\int_{\{|x|>\frac L4\}}|\Xi+\nabla\phi_{\xi,\Xi}^T|^2\eta(x-x_0)\,dx\\
    &\leq C(d,m,\lambda,\Lambda,q) \sqrt T^d\exp\big(-\tfrac{\gamma}{16}L/\sqrt T\big) \|\Xi+\nabla\phi_{\xi,\Xi}^T\|^2_{q,T,x_0}.
  \end{align*}
  Next, we estimate the second term on the RHS in \eqref{L:coup:eq2}.
Using H\"older's inequality with exponents $p/2$ and $\frac{p}{p-2}$ (with $0<p-2\ll 1$), setting $q:=\frac{2p}{p-2}$, and recalling \eqref{L:coup:step1p} from Step 1, we get
  \begin{align*}
    &\int_\Rd|\nabla\hat\delta\phi_{\xi}^T|^2|\Xi+\nabla\phi_{\xi,\Xi}^T|^2\eta(x-x_0)\,dx \\
    &\leq C \sqrt T^d\left(\sqrt T^{-d}\int_{\R^d}|\nabla\hat\delta\phi_\xi^T|^{p}\eta(x-x_0)\,dx\right)^\frac 2p    \|\Xi+\nabla\phi_{\xi,\Xi}^T\|^2_{q,T,x_0}\\
    &\leq C \sqrt T^d\exp(-\tfrac{\gamma}{32}L/\sqrt T)|\xi|^2\|\Xi+\nabla\phi_{\xi,\Xi}^T\|^2_{q,T,x_0}.
  \end{align*}
  This completes the argument for \eqref{L:coup:step2}.
  \smallskip
  
  \noindent
  {\bf Step 3. Conclusion.}
  Set
  \begin{align*}
  \zeta(\widetilde\omega,x):=\bigg(A(\widetilde\omega(x),\xi+\nabla\phi^T_\xi(\widetilde\omega,x)) \cdot \Xi-\frac1T\phi_\xi^T(\widetilde\omega,x)\phi_{\xi,\Xi}^{*,T}(\widetilde\omega,x)\bigg).
  \end{align*}
  Since $\eta_L$ is supported in $B_{\frac L 8}$ and $\eta_L \leq L^{-d}$, we have
  \begin{equation*}
    I_1:=\Big|\big(A^{\RVE,\eta_L,T}(\widetilde\omega,\xi)-    A^{\RVE,\eta_L,T}(\widehat\omega,\xi)\big) \cdot \Xi \Big| \leq C L^{-d}\int_{B_{\frac L8}}|\zeta(\widetilde\omega,x)-\zeta(\widehat\omega,x)|\,dx.
  \end{equation*}
  We cover $B_{\frac L 8}$ by balls of radius $\sqrt T\leq L$ and centers in $B_{\frac L 8}$; more precisely, there exists a set $X_{L,T}\subset B_{\frac L 8}$ with $\# X_{L,T}\leq C(d) \big(L/{\sqrt T}\big)^d$ and $\cup_{x_0\in X_{L,T}}B_{\sqrt T}(x_0)\supset B_{\frac L 8}$. Thus,
  \begin{align*}
    I_1&\leq C L^{-d}\sum_{x_0\in X_{L,T}}\int_{B_{\sqrt T}(x_0)\cap B_{L/8}}|\zeta(\widetilde\omega,x)-\zeta(\widehat\omega,x)|\,dx
    \\
    &\leq C \sqrt T^{-d}\Big(\frac1{\# X_{L,T}}\sum_{x_0\in X_{L,T}}\int_{B_{L/8}(x_0)\cap B_{L/8}}|\zeta(\widetilde\omega,x)-\zeta(\widehat\omega,x)|\eta(x-x_0)\,dx\Big).
  \end{align*}
  By the definition of $\zeta$, the estimates in \eqref{L:coup:step1} and \eqref{L:coup:step2} from Step~1 and Step~2, and the deterministic exponentially localized bounds on $\frac{1}{\sqrt T}\phi_\xi^T$ and $\frac{1}{\sqrt T}\phi_{\xi,\Xi}^{*,T}$ (which are a consequence of Lemma~\ref{L:exploc}), and the Lipschitz continuity of $A$ with respect to the second variable (see \hyperlink{A2}{(A2)}), we conclude for $x_0\in X_{L,T}\subset B_{\frac L 8}$ that
  \begin{align*}
    & \int_{B_{L/8}(x_0)\cap B_{L/8}} |\zeta(\widetilde\omega,x)-\zeta(\widehat\omega,x)|\eta(x-x_0)\,dx\\
    &\leq C \int_{\Rd}\big(\big|A(\tilde \omega,\xi+\nabla\phi_\xi^T)-A(\tilde \omega,\xi+\nabla\widehat\phi_\xi^T)\big||\Xi|
    \\&~~~~~~~~~~~~~~
    +\frac1T|\phi_\xi^T||\hat\delta\phi_{\xi,\Xi}^{*,T}|+\frac1T|\hat\delta\phi_\xi^T||\widehat \phi_{\xi,\Xi}^{*,T}|\big)\eta(x-x_0)\,dx
    \\
    &\leq C \left(\int_\Rd|\Xi|^2\eta(x-x_0)\,dx\right)^\frac12\left(\int_{\Rd}|\nabla\hat\delta\phi_\xi^T|^2\eta(x-x_0) \,dx\right)^\frac12
    \\
    &~~~~+C \left(\int_{\Rd}\frac1T|\phi_\xi^T|^2\eta(x-x_0) \,dx\right)^\frac12\left(\int_{\Rd}\frac1T|\hat\delta\phi_{\xi,\Xi}^{*,T}|^2\eta(x-x_0) \,dx\right)^\frac12\\
    &~~~~
      +C \left(\int_{\Rd}\frac1T|\hat\delta\phi_\xi^T|^2\eta(x-x_0) \,dx\right)^\frac12\left(\int_{\Rd}\frac1T|\widehat\phi_{\xi,\Xi}^{*,T}|^2\eta(x-x_0) \,dx\right)^\frac12
      \\
    &\leq C \sqrt T^d\Big(\exp(-\tfrac{\gamma}{32}L/\sqrt T)\Big)^\frac12\Big(|\xi|^2 |\Xi|^2+(1+|\xi|^2)|\xi|^2\|\Xi+\nabla\phi_{\xi,\Xi}^T\|_{q,T,x_0}^2 \Big)^\frac12.
\end{align*}
In total, we have shown the desired deterministic estimate
\begin{align*}
&|A^{\RVE,\eta_L,T}(\widetilde\omega,\xi)-A^{\RVE,\eta_L,T}(\pi_L\widetilde\omega,\xi)|
\\&
\leq C \exp\Big(-\frac{\gamma}{64}\cdot \frac{L}{\sqrt
        T}\Big)
        \Big(|\xi||\Xi|+(1+|\xi|)|\xi|\|\Xi+\nabla\phi_{\xi,\Xi}^{T}\|_{q,L,T}\Big).
\end{align*}
\end{proof}

\subsection{Estimate on the systematic error of the RVE method}
\label{S:P:TRVE1}
We now estimate the systematic error of the RVE approximation for the effective material law. We begin with the case in the presence of the regularity condition \hyperlink{R}{(R)}.
\begin{proof}[{Proof of Theorem~\ref{T:RVE}b} -- the case with \hyperlink{R}{(R)}]
By rescaling we may assume without loss of generality that $\e=1$. In the following $\eta:\R^d\to\R$ denotes a non-negative weight supported in $B_{\frac L 8}$ with $|\eta|\leq C(d) L^{-d}$ and $\int_{\Rd}\eta\,dx=1$. Moreover, we consider a localization parameter $T$ according to 
\begin{equation}\label{choiceofT}
  \sqrt T=\frac{\gamma}{64}\frac{L}{\log ((L/\varepsilon)^{d\wedge 4})}
\end{equation}
for the $\gamma=\gamma(d,m,\lambda,\Lambda)$ from Lemma~\ref{L:couperror}.
Our starting point is the error decomposition \eqref{decompsys}, which yields for any $\Xi\in \Rmd$
\begin{equation}\label{decompsys2}
  \begin{aligned}
    \mathbb E_L&\left[A^{\RVE,L}(\xi)\right]\cdot \Xi-\Ahom(\xi)\cdot \Xi\\
    =&\ \mathbb E_L\left[A^{\RVE,L}(\xi)\cdot \Xi\,\right]-\mathbb E_L\left[A^{\RVE,\eta,T}(\xi)\cdot \Xi\,\right]\\
    &\ +\mathbb E_L\left[A^{\RVE,\eta, T}(\xi)\cdot \Xi\,\right]  -\mathbb E_L\left[A^{\RVE,\eta, T}(\pi_L\omega_{\e,L},\xi)\cdot \Xi\,\right]\\
    &\ +\mathbb E\left[A^{\RVE,\eta, T}(\pi_L\omega_\e,\xi)\cdot \Xi\,\right]  -\mathbb E\left[A^{\RVE,\eta, T}(\xi)\cdot \Xi\,\right]\\
    &\ +\mathbb E\left[A^{\RVE,\eta,T}(\xi)\cdot \Xi\,\right]-\Ahom(\xi)\cdot \Xi\\
    =:&\ I_1+I_2+I_3+I_4.
  \end{aligned}
\end{equation}
Note that in the above decomposition we already used the equality
\begin{align*}
\mathbb E_L\left[A^{\RVE,\eta, T}(\pi_L\omega_{\e,L},\xi)\right]=\mathbb E\left[A^{\RVE,\eta, T}(\pi_L\omega_\e,\xi)\right],
\end{align*}
which is valid since $\mathbb P_L$ is assumed to be a $L$-periodic approximation of $\mathbb P$ in the sense of Definition~\ref{D:Lperproxy} (recall also \eqref{DefPi}). The terms $I_2$ and $I_3$ are coupling errors that can be estimated deterministically with help of Lemma~\ref{L:couperror}. Combined with the choice of $T$ in \eqref{choiceofT} and the bound on high moments of $\nabla\phi_{\xi,\Xi}^T$ obtained by combining \eqref{EstimateLinearCorrectorLinfty} and Lemma~\ref{MomentsMinimalRadiusLinearized}, we arrive at
\begin{align*}
 |I_2|+|I_3|&\leq C (1+|\xi|^C) |\xi||\Xi|
 \bigg(\frac{L}{\varepsilon}\bigg)^{-(d\wedge 4)}.
\end{align*}
The terms $I_1$ and $I_4$ are systematic localization errors that can be estimated with help of Proposition~\ref{P:RVEsystem}b,c. We obtain using again \eqref{choiceofT}
\begin{equation*}
  |I_1|+|I_4|\leq C (1+|\xi|)^{C}|\xi||\Xi| \bigg(\frac{L}{\varepsilon}\bigg)^{-(d\wedge 4)}|\log (L/\varepsilon)|^{\alpha_d}.
\end{equation*}
Having estimated all terms in \eqref{decompsys2}, this establishes the first estimate in Theorem~\ref{T:RVE}b upon taking the supremum with respect to $\Xi$, $|\Xi|\leq 1$.
\end{proof}
We next establish the suboptimal estimate for the systematic error of the RVE method in the case without the small-scale regularity condition \hyperlink{R}{(R)}.
\begin{proof}[{Proof of Theorem~\ref{T:RVE}b} -- the case without \hyperlink{R}{(R)}]
As in the case with small scale regularity \hyperlink{R}{(R)}, we denote by $\eta:\R^d\to\R$ a non-negative weight supported in $B_{\frac L 8}$ with $|\eta|\leq C(d) L^{-d}$ and $\int_{\Rd}\eta=1$. Moreover, we consider a localization parameter $\sqrt T\leq L$, whose relative scaling with respect to $L$ will be specified below in Step~3.
For any parameter field $\widetilde\omega$ we consider the localized RVE-approximation
  \begin{equation*}
  A^{\RVE,\eta,T}(\widetilde\omega,\xi):=\int_\Rd \eta A(\widetilde\omega,\xi+\nabla\phi^T_\xi)\,dx.
\end{equation*}
Note that it has a simpler form compared to the quantity introduced in Definition~\ref{D:RVET}. In particular, the above expression does not invoke the linearized corrector (for which we cannot derive suitable estimates without the small scale regularity condition \hyperlink{R}{(R)}). As in the case with small scale regularity, the starting point is estimate \eqref{decompsys2}, i.\,e.\ the decomposition of the systematic error
\begin{align*}
\mathbb{E}_L[A^{\RVE,L}(\xi)]-A_\shom(\xi) = I_1+I_2+I_3+I_4
\end{align*}
into the two coupling errors
\begin{eqnarray*}
  I_2&:=&\mathbb E_L\left[A^{\RVE,\eta, T}(\xi)\right]  -\mathbb E_L\left[A^{\RVE,\eta, T}(\pi_L\omega_{\e,L},\xi)\right],\\
  I_3&:=&\mathbb E\left[A^{\RVE,\eta, T}(\pi_L\omega_\e,\xi)\right]  -\mathbb E\left[A^{\RVE,\eta, T}(\xi)\right],
\end{eqnarray*}
and the two systematic localization errors
\begin{eqnarray*}
  I_1&:=& \mathbb E_L\left[A^{\RVE,L}(\xi)\right]-\mathbb E_L\left[A^{\RVE,\eta,T}(\xi)\right],\\
  I_4&:=&\mathbb E\left[A^{\RVE,\eta,T}(\xi)\right]-\Ahom(\xi).
\end{eqnarray*}
\smallskip

\noindent
{\bf Step 1. Estimate of the coupling errors.}
We claim that
\begin{equation*}
  |I_2|+|I_3|\leq C \exp\Big(-\frac{\gamma}{32}L/\sqrt T\Big)|\xi|.
\end{equation*}
Indeed, this can be seen by an argument similar to the proof of Lemma~\ref{L:couperror}. In fact the argument is significantly simpler thanks to the absence of the linearized corrector in the definition of the localized RVE-approximation. We only discuss the argument for $I_3$, since the one for $I_2$ is analogous. We first note that thanks to the assumptions on $\eta$ and the Lipschitz continuity of $A$, see \hyperlink{A2}{(A2)}, we have
\begin{equation}\label{P:TRVE2:eq1}
  \Big|A^{\RVE,\eta, T}(\omega_\e,\xi)-A^{\RVE,\eta, T}(\pi_L\omega_\e,\xi)\Big|\leq C \fint_{B_{\frac L 8}}|\nabla\hat\delta\phi_\xi^T| \,dx,
\end{equation}
where $\hat\delta\phi_\xi^T$ is defined by \eqref{L:coup:eq0}. As in Section~\ref{S:P:TRVE1} we consider a minimal cover of $B_{\frac L8}$ by balls of radius $\sqrt T$; more precisely, let $X_{L,T}\subset B_{\frac L8}$ denote a finite set of points such that $\# X_{L,T}\leq C(d) (L/\sqrt T)^d$ and $\cup_{x_0\in X_{L,T}}B_{\sqrt T}(x_0)\supset B_{\frac L 8}$. Then 
\begin{align*}
  [\text{RHS of \eqref{P:TRVE2:eq1}}]
  &\leq C \sqrt T^{-d}\frac{1}{\# X_{L,T}}\sum_{x_0\in X_{L,T}}\int_{B_{\frac{L}{8}}(x_0)}|\nabla\hat\delta\phi_\xi^T|\exp(-\gamma|x-x_0|/\sqrt T)\,dx~~~~~~~~\\
  &\leq C \sqrt T^{-\frac d2}\frac{1}{\# X_{L,T}}\sum_{x_0\in X_{L,T}}\left(\int_{\R^d}|\nabla\hat\delta\phi_\xi^T|^2\exp(-\gamma|x-x_0|/\sqrt T)\,dx\right)^\frac12
\end{align*}
where $0<\gamma\ll1$ is chosen such that the exponential localization estimate of Lemma~\ref{L:exploc} applies. 
Combining the estimate with \eqref{L:coup:step1} thus yields
\begin{align*}
  &\Big|A^{\RVE,\eta, T}(\omega_\e,\xi)-A^{\RVE,\eta, T}(\pi_L\omega_\e,\xi)\Big|\\
  &\leq C
  {\sqrt T}^{-d/2} \sup_{x_0\in B_{\frac L8}}\left(\int_{\R^d}|\nabla\hat\delta\phi_\xi^T|^2\exp(-\gamma|x-x_0|/\sqrt T) \,dx\right)^\frac12\\
  &\leq C
  \exp\big(-\tfrac{\gamma}{32}L/\sqrt T \big)|\xi|.
\end{align*}

\noindent
{\bf Step 2. Estimate of the systematic localization errors.}
We claim that
\begin{equation*}
  |I_1|+|I_4|\leq C |\xi| \bigg(\frac{\varepsilon}{\sqrt T}\bigg)^{\frac{d\wedge 4}{2}}
  \begin{cases}
    \big|\log \sqrt T\big|^\frac12 &\text{for }d\in\{2,4\},\\
    1&\text{for }d=3\text{ and }d\geq 5.\\
  \end{cases}
\end{equation*}
We only discuss $I_4$, since the argument for $I_1$ is similar. The Lipschitz continuity of $A$ (see \hyperlink{A2}{(A2)}) and the localization error estimate for the corrector in form of Lemma~\ref{L:syscorr} yield
\begin{align*}
  &\Big|\mathbb E\left[A^{\RVE,\eta,2T}(\xi)-A^{\RVE,\eta,T}(\xi)\right]\Big|
  \\
  &\leq C
  \mathbb E\left[|\nabla\phi_\xi^{2T}-\nabla\phi_\xi^{T}|^2\right]^\frac12
  \\
  &\leq C|\xi| \bigg(\frac{\varepsilon}{\sqrt T}\bigg)^{\frac{d\wedge 4}{2}}\times
    \begin{cases}
      \big|\log (\varepsilon/\sqrt T)\big|^\frac12 &\text{for }d\in\{2,4\},\\
      1&\text{for }d=3\text{ and }d\geq 5.\\
    \end{cases}
\end{align*}
The claimed estimate now follows by a telescopic sum argument similar to Step~2 of the proof of Proposition~\ref{P:RVEsystem}.
\smallskip

\noindent
{\bf Step 3. Conclusion.} The combination of the previous steps yields
\begin{align*}
  &\frac{|I_1|+|I_2|+|I_3|+|I_4|}{|\xi|}
  \\&
  \leq C
  \exp\Big(-\frac{\gamma}{32} \frac{L}{\sqrt T}\Big)+\bigg(\frac{\varepsilon}{\sqrt{T}}\bigg)^{\frac{d\wedge 4}{2}}\times 
  \begin{cases}
    \big|\log (\sqrt T/\varepsilon)\big|^\frac12 &\text{for }d\in\{2,4\},\\
    1&\text{otherwise}.
  \end{cases}
\end{align*}
With $\sqrt T=\frac{\gamma}{32}L\big(\log{((L/\varepsilon)^{\frac{d\wedge 4}2}})\big)^{-1}$ the RHS turns into
\begin{equation*}
  C\bigg(\frac{\varepsilon}{L}\bigg)^{\frac{d\wedge 4}{2}}+C\bigg(\frac{\varepsilon}{L}\bigg)^{\frac{d\wedge 4}{2}}(\log (L/\varepsilon))^{\frac{d\wedge 4}{2}}
  \times \begin{cases}
    \big|\log L\big|^\frac12 &\text{for }d\in\{2,4\},\\
    1&\text{otherwise}.\\
  \end{cases}
\end{equation*}
This establishes the result.
\end{proof}

\section{Corrector estimates for the nonlinear monotone PDE}

\label{SectionCorrectorEstimates}

\subsection{Estimates on linear functionals of the corrector and the flux corrector}

We now turn to the first major step, the estimate on the localized homogenization corrector $\phi_\xi^T$ and the localized flux corrector $\sigma_\xi^T$. For the proof of the estimate on linear functionals of the corrector and the flux corrector in Lemma~\ref{LemmaEstimateLinearFunctionals}, we need the following auxiliary lemma, which follows by a combination of the Caccioppoli inequality with hole-filling.
\begin{lemma}
Let the assumptions \hyperlink{A1}{(A1)}--\hyperlink{A2}{(A2)} be satisfied, let $\varepsilon>0$ be arbitrary, let $\tilde \omega$ be an arbitrary parameter field, let the correctors $\phi_\xi^T$ and $\phi_{\xi,\Xi}^T$ be defined as the unique solutions to the corrector equations \eqref{EquationLocalizedCorrector} and \eqref{EquationLocalizedCorrectorLinearized} in $\Huloc$, and let $r_{*,T,\xi}$ and $r_{*,T,\xi,\Xi}$ be as in \eqref{Definitionrstar} and \eqref{Definitionrstar2}. Let the parameter $K_{mass}$ in \eqref{Definitionrstar}, \eqref{Definitionrstar2} be chosen as $K_{mass}\geq C(d,m,\lambda,\Lambda)$ for some $C$ determined in the proof below {(independently of $\eps$)}. Then for any $x_0\in \Rd$ the estimates
\begin{align}
\label{EstimateGradByrstar}
\fint_{B_\varepsilon(x_0)} |\xi+\nabla \phi_\xi^T|^2 \,dx
\leq
C |\xi|^2 \bigg(\frac{r_{*,T,\xi}(x_0)}{\varepsilon}\bigg)^{d-\delta}
\end{align}
and
\begin{align}
\label{EstimateGradByrstar2}
\fint_{B_\varepsilon(x_0)} |\Xi+\nabla \phi_{\xi,\Xi}^T|^2 \,dx
\leq
C |\Xi|^2 \bigg(\frac{r_{*,T,\xi,\Xi}(x_0)}{\varepsilon}\bigg)^{d-\delta}
\end{align}
hold true for some $\delta=\delta(d,m,\lambda,\Lambda)$, $0<\delta<\frac{1}{2}$. Furthermore, we have
\begin{align}
\label{UpperBoundrstarxi}
r_{*,T,\xi}(x_0)&\leq C\sqrt{T},
\\
\label{UpperBoundrstarXi}
r_{*,T,\xi,\Xi}(x_0)&\leq C\sqrt{T}.
\end{align}
\end{lemma}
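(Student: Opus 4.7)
The plan is to derive the two $L^2$-on-$B_\varepsilon$ estimates by the standard Caccioppoli-plus-hole-filling argument based on Lemma~\ref{LemmaCaccioppoliHoleFilling}, and to bound $r_{*,T,\xi}$ and $r_{*,T,\xi,\Xi}$ by $C\sqrt T$ via the exponentially localized energy estimate of Lemma~\ref{L:exploc}.

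For the nonlinear estimate \eqref{EstimateGradByrstar} I first recast the corrector equation \eqref{EquationLocalizedCorrector} in a form suitable for applying Lemma~\ref{LemmaCaccioppoliHoleFilling}: with $\widetilde A(x,\zeta):=A(\widetilde\omega(x),\xi+\zeta)-A(\widetilde\omega(x),\xi)$, which inherits \hyperlink{A1}{(A1)}--\hyperlink{A2}{(A2)} with the same constants and satisfies $\widetilde A(x,0)=0$, the corrector $\phi_\xi^T$ solves
\begin{equation*}
  -\nabla\cdot\widetilde A(x,\nabla\phi_\xi^T)+\frac{1}{T}\phi_\xi^T=\nabla\cdot g,\qquad g:=A(\widetilde\omega,\xi),\ |g|\leq \Lambda|\xi|.
\end{equation*}
Writing $r_*:=r_{*,T,\xi}(x_0)$, the Caccioppoli estimate \eqref{CaccippoliMonotone} applied on $B_{2r_*}(x_0)$ with $b:=\fint_{B_{2r_*}(x_0)}\phi_\xi^T$ and the defining properties \eqref{Definitionrstar} of $r_*$ (which control $\frac{1}{(2r_*)^2}\fint|\phi_\xi^T-b|^2\leq|\xi|^2$ and $\frac{1}{T}|b|^2\leq K_{mass}^2|\xi|^2$) then yield $\fint_{B_{r_*}(x_0)}|\nabla\phi_\xi^T|^2\,dx\leq C(1+K_{mass}^2)|\xi|^2$. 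Next I apply hole-filling \eqref{HoleFillingMonotone} with $r=\varepsilon$ and $R=r_*$, which gives
\begin{equation*}
  \int_{B_\varepsilon(x_0)}|\nabla\phi_\xi^T|^2\,dx\leq C\Big(\tfrac{\varepsilon}{r_*}\Big)^\delta\!\!\int_{B_{r_*}(x_0)}\!\!|\nabla\phi_\xi^T|^2\,dx+C|\xi|^2\!\!\int_{B_{r_*}(x_0)}\!\!\Big(\tfrac{\varepsilon}{\varepsilon+|x-x_0|}\Big)^\delta\!dx.
\end{equation*}
Since the last integral is $\lesssim \varepsilon^\delta r_*^{d-\delta}$ (for $\delta<d$) and the previous bound gives $\int_{B_{r_*}(x_0)}|\nabla\phi_\xi^T|^2\lesssim r_*^d|\xi|^2$, I obtain $\int_{B_\varepsilon(x_0)}|\nabla\phi_\xi^T|^2\lesssim \varepsilon^\delta r_*^{d-\delta}|\xi|^2$; dividing by $\varepsilon^d$ and adding $|\xi|^2\leq (r_*/\varepsilon)^{d-\delta}|\xi|^2$ yields \eqref{EstimateGradByrstar}.

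The linearized estimate \eqref{EstimateGradByrstar2} follows by the identical scheme applied to \eqref{EquationLocalizedCorrectorLinearized}, which is a uniformly elliptic linear equation with coefficient field $a_\xi^T$, right-hand side $\nabla\cdot(a_\xi^T\,\Xi)$ (so $|g|\leq\Lambda|\Xi|$), and massive term $\tfrac1T\phi_{\xi,\Xi}^T$; the linear versions of Caccioppoli and hole-filling (special cases of Lemma~\ref{LemmaCaccioppoliHoleFilling}) combined with the definition \eqref{Definitionrstar2} of $r_{*,T,\xi,\Xi}$ give the claim.

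For the upper bounds \eqref{UpperBoundrstarxi}--\eqref{UpperBoundrstarXi}, I verify that both conditions in the definition \eqref{Definitionrstar} are satisfied for every dyadic $R\geq C\sqrt T$. Applying Lemma~\ref{L:exploc} to the corrector equation with the exponential weight centered at an arbitrary point $x'\in\Rd$ gives $\int_{\Rd}\big(|\nabla\phi_\xi^T|^2+\tfrac1T|\phi_\xi^T|^2\big)\exp(-\gamma|x-x'|/\sqrt T)\,dx\leq C T^{d/2}|\xi|^2$; dropping gradients and restricting to $B_{\sqrt T}(x')$ shows $\fint_{B_{\sqrt T}(x')}|\phi_\xi^T|^2\,dx\leq C T|\xi|^2$ for every $x'$. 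Covering $B_R(x_0)$ by $\lesssim (R/\sqrt T)^d$ balls of radius $\sqrt T$ (for $R\geq\sqrt T$), I upgrade this to $\fint_{B_R(x_0)}|\phi_\xi^T|^2\,dx\leq C T|\xi|^2$. Since $\fint_{B_R}|\phi_\xi^T-\fint_{B_R}\phi_\xi^T|^2\leq \fint_{B_R}|\phi_\xi^T|^2$, both $\frac{1}{R^2}\fint|\phi_\xi^T-\fint\phi_\xi^T|^2\leq |\xi|^2$ and $\frac{1}{\sqrt T}|\fint\phi_\xi^T|\leq C|\xi|$ hold as soon as $R\geq C\sqrt T$ for a universal $C$; choosing $K_{mass}\geq C$ then gives \eqref{UpperBoundrstarxi}. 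The argument for \eqref{UpperBoundrstarXi} is identical, using Remark~\ref{R:exponentialloc} (the linear version of Lemma~\ref{L:exploc}) on \eqref{EquationLocalizedCorrectorLinearized}.

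\paragraph{Main obstacle.} The proof is essentially bookkeeping; the main point is to arrange the hole-filling exponent so that a genuine gain $(\varepsilon/r_*)^{\delta}$ appears over the trivial $(\varepsilon/r_*)^d$ loss from passing to $B_\varepsilon$, and to verify that the right-hand-side contribution $\int(\varepsilon/(\varepsilon+|\cdot-x_0|))^\delta|\xi|^2$ has exactly the same scaling $\varepsilon^\delta r_*^{d-\delta}$ as the interior term, so neither term dominates the other.
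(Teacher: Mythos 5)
Your proof is correct and follows essentially the same strategy as the paper: Caccioppoli combined with the hole-filling estimate of Lemma~\ref{LemmaCaccioppoliHoleFilling} for the two $L^2$-on-$B_\varepsilon$ bounds, and the exponential localization Lemma~\ref{L:exploc} for the bounds $r_{*,T,\xi},r_{*,T,\xi,\Xi}\leq C\sqrt T$. The only cosmetic difference is in how the corrector equation is fed into Lemma~\ref{LemmaCaccioppoliHoleFilling}: you recast it so that $u=\phi_\xi^T$ solves a monotone equation with divergence-form source $\nabla\cdot A(\tilde\omega,\xi)$, whereas the paper applies the lemma to $u=\xi\cdot(x-x_0)+\phi_\xi^T$ with the massive source $\tfrac1T\xi\cdot(x-x_0)$; both give the same scaling.
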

{Note that $r_{*,T,\xi}$ and $r_{*,T,\xi,\Xi}$ as defined in  \eqref{Definitionrstar} and \eqref{Definitionrstar2} are well-defined, as by considerations in the proof below the conditions in the definition of $r_{*,T,\xi}$ are satisfied for all $r\geq C \sqrt{T}$.
}
\begin{proof}
The estimates \eqref{UpperBoundrstarxi} and \eqref{UpperBoundrstarXi} are a simple consequence of the definitions \eqref{Definitionrstar} and \eqref{Definitionrstar2}, the assumed lower bound on $K_{mass}$, and the bounds
\begin{align*}
\fint_{B_R} \frac{1}{T} |\phi_\xi^T|^2 \,dx &\leq C|\xi|^2
\\
\fint_{B_R} \frac{1}{T} |\phi_{\xi,\Xi}^T|^2 \,dx &\leq C|\Xi|^2
\end{align*}
valid for all $R\geq \sqrt{T}$ which are a consequence of Lemma~\ref{L:exploc} applied to the corrector equations \eqref{EquationLocalizedCorrector} {(with $u:=\phi_\xi^T$, $F:=A(x,\xi+\nabla \phi_\xi^T)-A(x,\nabla \phi_\xi^T)$, and $f\equiv 0$)} and \eqref{EquationLocalizedCorrectorLinearized}.

To prove \eqref{EstimateGradByrstar}, we employ the hole-filling estimate \eqref{HoleFillingMonotone} for the function $u(x):=\xi\cdot (x-x_0)+\phi_\xi^T$, which by \eqref{EquationLocalizedCorrector} solves the PDE
\begin{align*}
-\nabla \cdot \big(A\big(\tilde \omega,\nabla u\big)\big) + \frac{1}{T} u = \frac{1}{T}\xi \cdot (x-x_0).
\end{align*}
In combination with the Caccioppoli inequality \eqref{CaccippoliMonotone}, this yields
\begin{align*}
&\fint_{B_\varepsilon(x_0)} |\xi+\nabla \phi_\xi^T|^2 \,dx
\\
&\leq C \bigg(\frac{r_{*,T,\xi}}{\varepsilon}\bigg)^{d-\delta} 
\inf_{b\in \Rm} \bigg(\frac{1}{r_{*,T,\xi}^2} \fint_{B_{r_{*,T,\xi}}(x_0)} |\xi\cdot (x-x_0)+\phi_\xi^T-b|^2 \,dx + \frac{1}{T} |b|^2
\\&~~~~~~~~~~~~~~~~~~~~~~~~~~~~~~~~~
+\fint_{B_{r_{*,T,\xi}}(x_0)} \frac{1}{T} |\xi\cdot (x-x_0)|^2 \,dx
\bigg)
\\&~~~~
+ C\bigg(\frac{r_{*,T,\xi}}{\varepsilon}\bigg)^{d} \fint_{B_{r_{*,T,\xi}}(x_0)} \bigg(\frac{\varepsilon}{\varepsilon+|x-x_0|}\bigg)^\delta \frac{1}{T} |\xi\cdot (x-x_0)|^2 \,dx.
\end{align*}
Splitting the last integral into the integrals $\int_{B_{\eps}(x_0)}$ and $\int_{B_{r_{*,T,\xi}}(x_0)\setminus B_{\eps}(x_0)}$, we see (since $r_{*,T,\xi}\geq \varepsilon$) that the term in the last line is bounded by
\begin{align*}
C \eps^{-d} r_{*,T,\xi}^d \frac{1}{T} |\xi|^2 \eps^2
+C \eps^{-d} \eps^\delta \frac{1}{T} |\xi|^2 r_{*,T,\xi}^{d+2-\delta}
\leq C \frac{1}{T} |\xi|^2 \bigg(\frac{r_{*,T,\xi}}{\varepsilon}\bigg)^{d-\delta} r_{*,T,\xi}^2.
\end{align*}
Choosing $b:=\fint_{B_{r_{*,T,\xi}}(x_0)} \phi_{\xi}^T \,dx$ and using the definition of $r_{*,T,\xi}$ \eqref{Definitionrstar} as well as the fact that $r_{*,T,\xi}\leq C\sqrt{T}$ therefore entails \eqref{EstimateGradByrstar}. To show \eqref{EstimateGradByrstar2}, one argues analogously.
\end{proof}

The following estimate is a consequence of a straightforward application of H\"older's inequality and Jensen's inequality. Nevertheless, we state it to avoid repetition of the same computations, as it is used several times in the proofs of Lemma~\ref{LemmaEstimateLinearFunctionals} and Lemma~\ref{LemmaEstimateLinearFunctionalsLinearized}.
\begin{lemma}
\label{LemmaEstimateSensitivity}
Let $\varepsilon>0$ and let $b$ and $B$ be stationary random fields with
\begin{align*}
\fint_{B_\varepsilon(x)} |b|^2 \,d\tilde x \leq |B(x)|^2.
\end{align*}
Let $h$ be a random field satisfying $\nabla h\in L^p(\Rd)$ almost surely.
Let $0<\alpha_0\leq 1$ and $0<\tau<1$. Let $p>2$ be close enough to $2$ depending only on $d$ and on (a lower bound on) $\alpha_0$, but otherwise arbitrary. Then for any $q\geq C(p,\alpha_0,\tau)$ and for any $r\geq \varepsilon$ the estimate
\begin{align*}
&\mathbb{E}\Bigg[\bigg(\int_\Rd \bigg| \fint_{B_\varepsilon(x)} |b| |\nabla h| \,d\tilde x\bigg|^2 \,dx\bigg)^q\Bigg]^{1/2q}
\\&
\leq
C(d,\alpha_0,p,\tau) r^{d/2}
\mathbb{E}\big[|B|^{2q/(1-\tau)}\big]^{(1-\tau)/2q}
\\&~~~~~~~~~~~~\times
\mathbb{E}\Bigg[\bigg(r^{-d} \int_\Rd |\nabla h|^p \bigg(1+\frac{|x|}{r}\bigg)^{\alpha_0} \,dx\bigg)^{2q/p\tau} \Bigg]^{\tau/2q}
\end{align*}
holds true.
\end{lemma}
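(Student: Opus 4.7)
The overall plan is to peel off the inner $\tilde x$-average by Cauchy--Schwarz, pass from $L^2$ to $L^p$ on $|\nabla h|$ by Jensen, split the $x$-integral by a weighted H\"older inequality with weight $(1+|x|/r)^{\pm\alpha_0}$, and finally use H\"older in $\omega$ together with stationarity of $B$.

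First, by Cauchy--Schwarz in $\tilde x\in B_\varepsilon(x)$ combined with the hypothesis $\fint_{B_\varepsilon(x)}|b|^2\,d\tilde x\le |B(x)|^2$ and then by Jensen's inequality in $\tilde x$ (valid since $p>2$),
\begin{equation*}
\Big(\fint_{B_\varepsilon(x)}|b||\nabla h|\,d\tilde x\Big)^2
\le |B(x)|^2\Big(\fint_{B_\varepsilon(x)}|\nabla h|^p\,d\tilde x\Big)^{2/p}.
\end{equation*}
Next, split the product $|B(x)|^2\cdot (\fint_{B_\varepsilon(x)}|\nabla h|^p)^{2/p}$ by introducing a factor $(1+|x|/r)^{-2\alpha_0/p}$ on the $B$-side and its inverse on the $\nabla h$-side, and apply H\"older in $x\in\Rd$ with conjugate exponents $a=p/(p-2)$ and $b=p/2$. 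This gives
\begin{equation*}
\int_\Rd|B|^2\Big(\fint_{B_\varepsilon(x)}|\nabla h|^p\Big)^{2/p}dx
\le \Big(\int_\Rd|B|^{\frac{2p}{p-2}}w_1\,dx\Big)^{\frac{p-2}{p}}\Big(\int_\Rd\fint_{B_\varepsilon(x)}|\nabla h|^p\,d\tilde x\,w_2\,dx\Big)^{2/p},
\end{equation*}
where $w_1=(1+|x|/r)^{-2\alpha_0/(p-2)}$ and $w_2=(1+|x|/r)^{\alpha_0}$. The weight $w_1$ is integrable over $\Rd$ precisely when $2\alpha_0/(p-2)>d$, i.e.\ $p<2+2\alpha_0/d$, which is exactly the ``$p$ close to $2$'' smallness condition in the hypothesis; in this regime $\int_\Rd w_1\,dx = C(d,\alpha_0,p)\,r^d$. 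For the $\nabla h$-integral, Fubini together with $(1+|x|/r)^{\alpha_0}\le C(1+|\tilde x|/r)^{\alpha_0}$ for $|x-\tilde x|\le\varepsilon\le r$ turns the $\fint_{B_\varepsilon(x)}|\nabla h|^p d\tilde x\,dx$ integral into $\int_\Rd|\nabla h|^p (1+|x|/r)^{\alpha_0}dx$ up to a constant.

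Raise both sides to the power $q$ and take expectation, applying H\"older in $\omega$ with exponents $1/(1-\tau)$ and $1/\tau$ to separate the $B$- and $\nabla h$-contributions:
\begin{equation*}
\mathbb{E}[X^q]\le C^q\,\mathbb{E}\Big[\Big(\!\int_\Rd|B|^{\frac{2p}{p-2}}w_1\Big)^{\!s}\Big]^{1-\tau}\mathbb{E}\Big[\Big(\!\int_\Rd|\nabla h|^p w_2\Big)^{\!\frac{2q}{p\tau}}\Big]^{\tau},
\end{equation*}
with $s:=q(p-2)/(p(1-\tau))$. For $q\ge C(p,\alpha_0,\tau)$ we have $s\ge 1$, so Jensen's inequality on the probability measure $w_1\,dx/\mu$ (with $\mu=\int w_1=Cr^d$) combined with stationarity of $B$ gives
\begin{equation*}
\mathbb{E}\Big[\Big(\!\int|B|^{\frac{2p}{p-2}}w_1\Big)^{s}\Big]\le \mu^{s}\,\mathbb{E}\big[|B|^{2ps/(p-2)}\big]=\mu^s\,\mathbb{E}\big[|B|^{2q/(1-\tau)}\big].
\end{equation*}
Assembling the pieces, the $r$-exponent works out to $r^{d(p-2)/(2p)}\cdot r^{d/p}=r^{d/2}$ once the factor $r^{-d}$ is absorbed into the $|\nabla h|^p$-term of the final bound, and the stated inequality follows.

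The only genuine constraint is the integrability of $w_1$, which pins down the smallness threshold on $p-2$ in terms of $d$ and $\alpha_0$; all other steps are elementary H\"older/Jensen estimates. I do not foresee any real obstacle beyond careful bookkeeping of the exponents.
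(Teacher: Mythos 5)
Your proposal is correct and follows essentially the same route as the paper's proof: Cauchy--Schwarz in the inner $\tilde x$-average combined with the hypothesis on $b$, a weighted H\"older inequality in $x$ with the $(1+|x|/r)^{\pm\alpha_0}$ weights and exponents $\tfrac{p}{p-2},\tfrac{p}{2}$, H\"older in $\omega$ with exponents $\tfrac{1}{1-\tau},\tfrac{1}{\tau}$, and finally Jensen plus stationarity of $B$ to reduce the first factor to $\mathbb{E}[|B|^{2q/(1-\tau)}]$. The only cosmetic difference is that you apply Jensen in $\tilde x$ (passing from $|\nabla h|^2$ to $|\nabla h|^p$) immediately after Cauchy--Schwarz, while the paper defers that step until after the two H\"older applications; this does not change the argument.
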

\begin{proof}[Proof of Lemma~\ref{LemmaEstimateSensitivity}]
Several applications of H\"older's inequality yield together with the bound on $b$
\begin{align*}
&\mathbb{E}\Bigg[\bigg(\int_\Rd \bigg| \fint_{B_\varepsilon(x)} |b| |\nabla h| \,d\tilde x\bigg|^2 \,dx\bigg)^q\Bigg]^{1/2q}
\\&
\leq
\mathbb{E}\Bigg[\bigg(\int_\Rd \fint_{B_\varepsilon(x)} |b|^2 \,d\tilde x ~ \fint_{B_\varepsilon(x)}  |\nabla h|^2 \,d\tilde x ~ \,dx\bigg)^q\Bigg]^{1/2q}
\\&
\leq
\mathbb{E}\Bigg[\bigg(\int_\Rd |B(x)|^2 ~ \fint_{B_\varepsilon(x)}  |\nabla h|^2 \,d\tilde x ~ \,dx\bigg)^q\Bigg]^{1/2q}
\\&
\leq
r^{d/2} \mathbb{E}\Bigg[\bigg(r^{-d} \int_\Rd |B(x)|^{2p/(p-2)} \bigg(1+\frac{|x|}{r}\bigg)^{-2\alpha_0/(p-2)} \,dx\bigg)^{q(p-2)/p}
\\&~~~~~~~~~~~~~
\times
 \bigg(r^{-d} \int_\Rd  \bigg(\fint_{B_\varepsilon(x)}  |\nabla h|^2 \,d\tilde x\bigg)^{p/2} \bigg(1+\frac{|x|}{r}\bigg)^{\alpha_0} \,dx\bigg)^{2q/p} \Bigg]^{1/2q}.
\end{align*}
This implies using H\"older's inequality with exponents $\frac{1}{1-\tau}$ and $\frac{1}{\tau}$ and Jensen's inequality
\begin{align*}
&\mathbb{E}\Bigg[\bigg(\int_\Rd \bigg| \fint_{B_\varepsilon(x)} |b| |\nabla h| \,d\tilde x\bigg|^2 \,dx\bigg)^q\Bigg]^{1/2q}
\\&
\leq
r^{d/2} \mathbb{E}\Bigg[\bigg(r^{-d} \int_\Rd |B(x)|^{2p/(p-2)} \bigg(1+\frac{|x|}{r}\bigg)^{-2\alpha_0/(p-2)} \,dx\bigg)^{q(p-2)/p(1-\tau)}\Bigg]^{(1-\tau)/2q}
\\&~~~~~~~~~~~~~~
\times
\mathbb{E}\Bigg[
\bigg(r^{-d} \int_\Rd \fint_{B_\varepsilon(x)}  |\nabla h|^p \,d\tilde x \bigg(1+\frac{|x|}{r}\bigg)^{\alpha_0} \,dx\bigg)^{2q/p\tau} \Bigg]^{\tau/2q}.
\end{align*}
Assuming that $q\geq p/(p-2)$ and $2\alpha_0/(p-2)\geq 2d$, we obtain using Jensen's inequality in the first term (note that the integral $\smash{r^{-d} \int_\Rd \big(1+\frac{|x|}{r}\big)^{\alpha_0/(p-2)} \,dx}$ is bounded by $C(d,\alpha_0,p)$) as well as the fact that the supremum and the infimum of the function $1+\smash{\frac{|x|}{r}}$ on an $\varepsilon$-ball differ by at most a factor of $2$ in the second term
\begin{align*}
&\mathbb{E}\Bigg[\bigg(\int_\Rd \bigg| \fint_{B_\varepsilon(x)} |b| |\nabla h| \,d\tilde x\bigg|^2 \,dx\bigg)^q\Bigg]^{1/2q}
\\&
\leq C(d,\alpha_0,p,\tau) r^{d/2}
\mathbb{E}\Bigg[r^{-d} \int_\Rd |B(x)|^{2q/(1-\tau)} \bigg(1+\frac{|x|}{r}\bigg)^{-2\alpha_0/(p-2)} \,dx\Bigg]^{(1-\tau)/2q}
\\&~~~~~~~~~~~~~~~~~~~~~~~
\times
\mathbb{E}\Bigg[\bigg(r^{-d} \int_\Rd |\nabla h|^p \bigg(1+\frac{|x|}{r}\bigg)^{\alpha_0} \,dx\bigg)^{2q/p\tau} \Bigg]^{\tau/2q}.
\end{align*}
By the stationarity of $B$, this yields the assertion of the lemma.
\end{proof}

We now prove the estimate in Lemma~\ref{LemmaEstimateLinearFunctionals} for functionals of the localized corrector $\phi_\xi^T$  for the nonlinear elliptic PDE and the corresponding localized flux corrector $\sigma_\xi^T$. As the proof of Lemma~\ref{LemmaCorrectorAverages} is very similar, we combine their proofs.

\begin{proof}[Proof of Lemma~\ref{LemmaEstimateLinearFunctionals} and Lemma~\ref{LemmaCorrectorAverages}] {\bf Part a: Estimates for linear functionals of the homogenization corrector $\phi_\xi^T$.}
Taking the derivative of the corrector equation \eqref{EquationLocalizedCorrector} with respect to a perturbation $\delta \omega_\varepsilon$ of the random field $\omega_\varepsilon$, we see that the change $\delta \phi_{\xi}^T$ of the corrector under such an infinitesimal perturbation satisfies the linear elliptic PDE
\begin{align}
\label{EquationSensitivityCorrector}
&-\nabla \cdot \big(\partial_\xi A\big(\omega_\varepsilon,\xi+\nabla \phi_\xi^T\big)\nabla \delta \phi_\xi^T\big)
+\frac{1}{T}\delta \phi_\xi^T
=\nabla \cdot \big(\partial_\omega A\big(\omega_\varepsilon,\xi+\nabla \phi_\xi^T\big) \delta\omega_\varepsilon \big).
\end{align}
Define the coefficient field $a_\xi^T(x):=\partial_\xi A(\omega_\varepsilon(x),\xi+\nabla \phi_\xi^T(x))$. By our assumptions \hyperlink{A1}{(A1)} and \hyperlink{A2}{(A2)}, the coefficient field $\smash{a_\xi^T:\Rd\rightarrow \Rmd\otimes \Rmd}$ exists and is uniformly elliptic and bounded in the sense $\smash{a_\xi^T v \cdot v\geq \lambda |v|^2}$ and $\smash{|a_\xi^T v|\leq \Lambda |v|}$ for any $v\in \Rmd$.

Now, consider a functional of the form $F:=\int_\Rd g \cdot \nabla \phi_\xi^T \,dx$ for a deterministic compactly supported function $g$. Denoting by $h\in H^1(\Rd;\Rm)$ the (unique) solution to the dual equation
\begin{align}
\label{EquationDual}
-\nabla \cdot (a_\xi^{T,*} \nabla h)+\frac{1}{T} h=\nabla \cdot g,
\end{align}
we deduce
\begin{align*}
\delta F
&= \int_\Rd g \cdot \nabla \delta \phi_\xi^T \,dx
\stackrel{\eqref{EquationDual}}{=}
-\int_\Rd a_\xi^T \nabla \delta \phi_\xi^T \cdot \nabla h + \frac{1}{T} \delta \phi_\xi^T h\,dx
\\&
\stackrel{\eqref{EquationSensitivityCorrector}}{=}
\int_\Rd \partial_\omega A\big(\omega_\varepsilon,\xi+\nabla \phi_\xi^T\big) \delta\omega_\varepsilon \cdot \nabla h \,dx,
\end{align*}
i.\,e.\ we have
\begin{align}
\label{SensitivityCorrector}
\frac{\partial F}{\partial \omega_\varepsilon} = \partial_\omega A\big(\omega_\varepsilon,\xi+\nabla \phi_\xi^T\big) \cdot \nabla h.
\end{align}
By the assumption \hyperlink{A3}{(A3)}, this implies
\begin{align*}
&\int_\Rd \bigg|\fint_{B_\varepsilon(x)} \bigg|\frac{\partial F}{\partial \omega_\varepsilon}\bigg| \,d\tilde x \bigg|^2 \,dx
\\&
\leq C \int_\Rd \bigg|\fint_{B_\varepsilon(x)} |\xi+\nabla \phi_\xi^T| |\nabla h| \,d\tilde x \bigg|^2 \,dx.
\end{align*}
Using the version of the
spectral gap inequality for the $q$-th moment (see Lemma~\ref{LemmaLqSpectralGap}), we deduce for any $q\geq 1$
\begin{align*}
\mathbb{E}\Big[\big|F-\mathbb{E}[F]\big|^{2q}\Big]^{1/2q}
\leq C q \varepsilon^{d/2} \mathbb{E}\Bigg[\bigg(\int_\Rd \bigg|\fint_{B_\varepsilon(x)} |\xi+\nabla \phi_\xi^T| |\nabla h| \,d\tilde x \bigg|^2 \,dx\bigg)^{q}\Bigg]^{1/2q}.
\end{align*}
As $\phi_\xi^T$ is a stationary vector field, we have $\mathbb{E}[F]=\smash{\int_\Rd g\cdot \nabla \mathbb{E}[\phi_\xi^T] \,dx}=0$.
By \eqref{EstimateGradByrstar}, we have
\begin{align*}
\fint_{B_\varepsilon(x)} |\xi+\nabla \phi_\xi^T|^2 \,d\tilde x
\leq C|\xi|^2
\bigg(\frac{r_{*,T,\xi}(x_0)}{\varepsilon}\bigg)^{d-\delta}=:B(x)^2.
\end{align*}
An application of Lemma~\ref{LemmaEstimateSensitivity} therefore yields for any $0<\tau<1$, any $0<\alpha_0<c$, any $p\in (2,2+c(d,m,\lambda,\Lambda,\alpha_0))$, and any $q\geq C(d,m,\lambda,\Lambda,\tau,\alpha_0,p)$ (note that $\nabla h \in L^p$ holds by Lemma~\ref{LemmaWeightedMeyers} and our assumptions on $g$)
\begin{align}
\label{SensitivitySigmaXiAlmostFinal}
\mathbb{E}\big[|F|^{2q}\big]^{1/2q}
&\leq C |\xi| q \varepsilon^{d/2} r^{d/2}
\mathbb{E}\bigg[\bigg(\frac{r_{*,T,\xi}}{\varepsilon}\bigg)^{(d-\delta)q/(1-\tau)}\bigg]^{(1-\tau)/2q}
\\&~~~~~~~~~~~~\times
\nonumber
\mathbb{E}\Bigg[\bigg(r^{-d} \int_\Rd |\nabla h|^p \bigg(1+\frac{|x|}{r}\bigg)^{\alpha_0} \,dx\bigg)^{2q/p\tau} \Bigg]^{\tau/2q}.
\end{align}
Applying the weighted Meyers estimate of Lemma~\ref{LemmaWeightedMeyers} to the equation \eqref{EquationDual} and using our assumption $(\fint_{B_r(x_0)} |g|^p \,dx)^{1/p} \leq r^{-d}$, the integral in the last term may be estimated to yield
\begin{align*}
\mathbb{E}\big[|F|^{2q}\big]^{1/2q}
&\leq C |\xi| q \bigg(\frac{\varepsilon}{r} \bigg)^{d/2}
\mathbb{E}\bigg[\bigg(\frac{r_{*,T,\xi}}{\varepsilon}\bigg)^{(d-\delta)q/(1-\tau)}\bigg]^{(1-\tau)/2q}
\end{align*}
for any $2<p<2+c$, any $0<\tau<1$, and any $q\geq C(d,m,\lambda,\Lambda,p,\tau)$. This is the desired estimate for functionals of $\phi_\xi^T$ in Lemma~\ref{LemmaEstimateLinearFunctionals}.

Next, we establish \eqref{EstimateCorrectorAverage}. To this end, we consider the random variable $F:=\fint_{B_R(x_0)} \phi_\xi^T \,dx$ for $R\geq \sqrt{T}$.
By $\mathbb{E}[\phi_\xi^T]=0$ -- which follows {since $\phi_\xi^T$ is the divergence of a stationary random field}
-- we see that we have $\mathbb{E}[F]=0$.
We may therefore repeat the previous computation to estimate the stochastic moments of $F$, up to the following changes: The equation \eqref{EquationDual} is replaced by the equation with non-divergence form right-hand side $-\nabla \cdot (a_\xi^{T,*} \nabla h)+\frac{1}{T} h=\frac{1}{|B_R|} \chi_{B_R(x_0)}$, and the estimate for $\nabla h$ deduced from Lemma~\ref{LemmaWeightedMeyers} now reads
\begin{align*}
&\bigg(R^{-d} \int_\Rd |\nabla h|^p \bigg(1+\frac{|x|}{R}\bigg)^{\alpha_0} \,dx\bigg)^{1/p}
\\&
\leq 
C \sqrt{T} \bigg(R^{-d} \int_\Rd \bigg|\frac{1}{|B_R|} \chi_{B_R(x_0)}\bigg|^p \,dx\bigg)^{1/p}.
\end{align*}
In total, we deduce
\begin{align*}
\mathbb{E}\bigg[\bigg|\fint_{B_R(x_0)} \phi_\xi^T \,dx\bigg|^q \bigg]^{1/q}
\leq C |\xi| q  \sqrt{T} \bigg(\frac{\varepsilon}{R} \bigg)^{d/2}
\mathbb{E}\bigg[\bigg(\frac{r_{*,T,\xi}}{\varepsilon}\bigg)^{(d-\delta)q/(1-\tau)}\bigg]^{(1-\tau)/2q}.
\end{align*}
This is precisely the desired estimate on the average of $\phi_\xi^T$ from \eqref{EstimateCorrectorAverage}.

\noindent
{\bf Part b: Estimates for linear functionals of the flux corrector $\sigma_\xi^T$.}
Taking the derivative of the flux corrector equation \eqref{EquationLocalizedFluxCorrector} with respect to a perturbation $\delta \omega_\varepsilon$ of the random field $\omega_\varepsilon$, we see that the change $\delta \sigma_{\xi}^T$ of the corrector under such an infinitesimal perturbation satisfies the linear elliptic PDE
\begin{align}
\label{EquationSensitivitySigma}
-\Delta \delta \sigma_{\xi,jk}^T + \frac{1}{T} \delta \sigma_{\xi,jk}^T
=&
\nabla \cdot (\partial_\omega A(\omega_\varepsilon,\xi+\nabla \phi_\xi^T)\delta \omega_\varepsilon \cdot (e_k\otimes e_j-e_j\otimes e_k))
\\&~
\nonumber
+\nabla \cdot (\partial_\xi A(\omega_\varepsilon,\xi+\nabla \phi_\xi^T)\nabla \delta \phi_\xi^T \cdot (e_k\otimes e_j-e_j\otimes e_k)).
\end{align}
For the sensitivity of the functional
\begin{align*}
F:=\int_\Rd g \cdot \nabla \sigma_{\xi,jk}^T \,dx,
\end{align*}
we obtain by defining $\bar h\in H^1(\Rd;\Rmd)$ as the unique decaying solution to the (system of) Poisson equation(s) with massive term
\begin{align}
\label{EquationSensitivitySigmabarh}
-\Delta \bar h + \frac{1}{T} \bar h = \nabla \cdot g
\end{align}
and defining $\hat h\in H^1(\Rd;\Rmd)$ as the unique decaying solution to the uniformly elliptic PDE (with $a_\xi^T(x):=\partial_\xi A(\omega_\varepsilon(x),\xi+\nabla \phi_\xi^T)$)
\begin{align}
\label{EquationSensitivitySigmahath}
-\nabla \cdot (a_\xi^{T,*} \nabla \hat h) +  \frac{1}{T} \hat h
=
\nabla \cdot \big((\partial_j \bar h e_k - \partial_k \bar h e_j)\cdot \partial_\xi A(\omega_\varepsilon,\xi+\nabla \phi_\xi^T)\big)
\end{align}
that
\begin{align*}
\delta F &= \int_\Rd g \cdot \nabla \delta \sigma_{\xi,jk}^T  \,dx
\stackrel{\eqref{EquationSensitivitySigmabarh}}{=} - \int_\Rd \nabla \bar h \cdot \nabla \delta \sigma_{\xi,jk}^T + \frac{1}{T} \bar h \, \delta \sigma_{\xi,jk}^T \,dx
\\&
\stackrel{\eqref{EquationSensitivitySigma}}{=} \int_\Rd \nabla \bar h \cdot  \big(\partial_\omega A(\omega_\varepsilon,\xi+\nabla \phi_\xi^T)\delta\omega_\varepsilon \cdot (e_k\otimes e_j-e_j\otimes e_k)\big) \,dx
\\&~~~~~
+\int_\Rd \nabla \bar h \cdot  \big(\partial_\xi A(\omega_\varepsilon,\xi+\nabla \phi_\xi^T)\nabla \delta \phi_\xi^T \cdot (e_k\otimes e_j-e_j\otimes e_k)\big) \,dx
\\&
\stackrel{\eqref{EquationSensitivitySigmahath}}{=} \int_\Rd \nabla \bar h \cdot  \big(\partial_\omega A(\omega_\varepsilon,\xi+\nabla \phi_\xi^T)\delta\omega_\varepsilon \cdot (e_k\otimes e_j-e_j\otimes e_k)\big) \,dx
\\&~~~~~
-\int_\Rd \partial_\xi A(\omega_\varepsilon,\xi+\nabla \phi_\xi^T)\nabla \delta \phi_\xi^T \cdot \nabla \hat h + \frac{1}{T} \delta \phi_\xi^T \, \hat h \,dx
\\&
\stackrel{\eqref{EquationSensitivityCorrector}}{=} \int_\Rd \nabla \bar h \cdot  \big(\partial_\omega A(\omega_\varepsilon,\xi+\nabla \phi_\xi^T)\delta\omega_\varepsilon \cdot (e_k\otimes e_j-e_j\otimes e_k)\big) \,dx
\\&~~~~
+\int_\Rd \partial_\omega A(\omega_\varepsilon,\xi+\nabla \phi_\xi^T) \delta \omega_\varepsilon \cdot \nabla \hat h \,dx.
\end{align*}
i.\,e.\ we have
\begin{align*}
\frac{\partial F}{\partial \omega_\varepsilon} = (\partial_j \bar h e_k - \partial_k \bar h e_j) \cdot \partial_\omega A(\omega_\varepsilon,\xi+\nabla \phi_\xi^T)
+\nabla \hat h \cdot \partial_\omega A(\omega_\varepsilon,\xi+\nabla \phi_\xi^T).
\end{align*}
Using again the version of the spectral gap inequality for the $q$-th moment from Lemma~\ref{LemmaLqSpectralGap}, the fact that $\mathbb{E}[F]=\smash{\int_\Rd g \cdot \nabla \mathbb{E}[\sigma_{\xi,jk}^T] \,dx}=0$ by stationarity, and the bound $|\partial_\omega A(\omega,\xi)|\leq \Lambda |\xi|$ (recall \hyperlink{A1}{(A1)}-\hyperlink{A2}{(A2)}), we obtain
\begin{align*}
\mathbb{E}\big[|F|^{2q}\big]^{1/2q}
\leq C q \varepsilon^{d/2} \mathbb{E}\Bigg[\bigg(\int_\Rd \bigg|\fint_{B_\varepsilon(x)} |\xi+\nabla \phi_\xi^T| (|\nabla \bar h|+|\nabla \hat h|) \,d\tilde x \bigg|^2 \,dx\bigg)^{q}\Bigg]^{1/2q}.
\end{align*}
By \eqref{EstimateGradByrstar}, we have
\begin{align*}
\fint_{B_\varepsilon(x)} |\xi+\nabla \phi_\xi^T|^2 \,d\tilde x
\leq C|\xi|^2
\bigg(\frac{r_{*,T,\xi}(x_0)}{\varepsilon}\bigg)^{d-\delta}=:B(x)^2.
\end{align*}
Using Lemma~\ref{LemmaEstimateSensitivity} we deduce that for any $\alpha_0\in (0,c(d,m,\lambda,\Lambda))$, any $p\in (2,2+c(d,m,\lambda,\Lambda,\alpha_0))$, any $\tau\in (0,1)$, and any $q\geq q(d,m,\lambda,\Lambda,\alpha_0,p)$
\begin{align*}
\mathbb{E}\big[|F|^{2q}\big]^{1/2q}
&\leq C |\xi| q \varepsilon^{d/2} r^{d/2}
\mathbb{E}\bigg[\bigg(\frac{r_{*,T,\xi}}{\varepsilon}\bigg)^{(d-\delta)q/(1-\tau)}\bigg]^{(1-\tau)/2q}
\\&~~~~~~~~~~~~\times
\mathbb{E}\Bigg[\bigg(r^{-d} \int_\Rd (|\nabla \bar h|^p + |\nabla \hat h|^p) \bigg(1+\frac{|x|}{r}\bigg)^{\alpha_0} \,dx\bigg)^{2q/p\tau} \Bigg]^{\tau/2q}.
\end{align*}
By the weighted Meyers estimate of Lemma~\ref{LemmaWeightedMeyers} and the defining equation \eqref{EquationSensitivitySigmahath} for $\hat h$ as well as the uniform bound on $\partial_\xi A$ inferred from \hyperlink{A2}{(A2)}, we deduce for any $\alpha_{1/2}\in (\alpha_0,c)$
\begin{align*}
\mathbb{E}\big[|F|^{2q}\big]^{1/2q}
&\leq C |\xi| q \varepsilon^{d/2} r^{d/2}
\mathbb{E}\bigg[\bigg(\frac{r_{*,T,\xi}}{\varepsilon}\bigg)^{(d-\delta)q/(1-\tau)}\bigg]^{(1-\tau)/2q}
\\&~~~~~~~~~~\times
\mathbb{E}\Bigg[\bigg(r^{-d} \int_\Rd |\nabla \bar h|^p \bigg(1+\frac{|x|}{r}\bigg)^{\alpha_{1/2}} \,dx\bigg)^{2q/p\tau} \Bigg]^{\tau/2q}.
\end{align*}
Applying Lemma~\ref{LemmaWeightedMeyers} to the equation \eqref{EquationSensitivitySigmabarh} and using $(\fint_{B_r(x_0)} |g|^p \,dx)^{1/p}\leq r^{-d}$, the integral in the last term may be estimated to yield
\begin{align*}
\mathbb{E}\big[|F|^{2q}\big]^{1/2q}
&\leq C |\xi| q \bigg(\frac{\varepsilon}{r} \bigg)^{d/2}
\mathbb{E}\bigg[\bigg(\frac{r_{*,T,\xi}}{\varepsilon}\bigg)^{(d-\delta)q/(1-\tau)}\bigg]^{(1-\tau)/2q}.
\end{align*}
This is the desired estimate on functionals of $\sigma_\xi^T$ in Lemma~\ref{LemmaEstimateLinearFunctionals}.

As in Part a, the estimate on the average of $\sigma_{\xi}^T$ from \eqref{EstimateCorrectorAverage} follows upon replacing the equation for $\bar h$ by the PDE $-\Delta \bar h+\frac{1}{T} \bar h=\frac{1}{|B_R|} \chi_{B_R(x_0)}$ in the previous computation.

{\bf Part c: Estimates on linear functionals of the potential $\theta_{\xi,i}^T$.} {We consider the case
  \begin{equation*}
    F=\int_{\Rd}g(x)\cdot \frac{1}{r}\nabla\theta^T_{\xi,i}(x)\,dx.
  \end{equation*}
In order to remain entirely rigorous, for the purpose of the present argument we would in principle need to approximate $\smash{\theta_{\xi,\Xi,i}^T}$ by quantities like $\theta_{\xi,\Xi,i}^{T,S}$ defined as the solution to the PDE $-\Delta \theta_{\xi,\Xi,i}^{T,S}+\tfrac{1}{S}\theta_{\xi,\Xi,i}^{T,S}=-\partial_i \phi_{\xi,\Xi}^T$. After obtaining our estimates, we would let $S\rightarrow \infty$ to conclude. However, as this step is analogous to our massive regularization of the corrector equation which we perform in full detail, we omit this additional technicality.}

Taking the derivative of the equation \eqref{EquationPotentialFieldGauge} with respect to a perturbation $\delta \omega_\varepsilon$ of the random field $\omega_\varepsilon$, we get
\begin{align}
\label{SensitivityPotential}
\Delta \delta \theta_{\xi,i}^T = \partial_i \delta \phi_\xi^T.
\end{align}
Defining $\bar g\in H^1(\Rd;\Rm)$ as the unique decaying solution to the PDE
\begin{align}
\label{Definitionbarg}
-\Delta \bar g = \nabla \cdot g,
\end{align}
we obtain
\begin{align*}
  \delta F= \frac{1}{r}\int_\Rd g \cdot \nabla \delta\theta_{\xi,i}^T \,dx
\stackrel{\eqref{Definitionbarg}}{=} -\frac{1}{r}\int_\Rd \nabla \bar g \cdot \nabla \delta \theta_{\xi,i}^T \,dx
\stackrel{\eqref{SensitivityPotential}}{=} \frac{1}{r}\int_\Rd \bar g (e_i \cdot \nabla) \delta \phi_\xi^T \,dx.
\end{align*}
In other words, we are back in the situation of Part a, the only difference being that the function $g$ has been replaced by the function $\frac{1}{r} \bar g e_i$. The function $\bar g$ now no longer has compact support; rather, being the solution to the Poisson equation \eqref{Definitionbarg} in $d\geq 3$ dimensions, for any $0<\alpha_1<1$ and any $p\geq 2$ it satisfies the weighted Calderon-Zygmund estimate
\begin{align*}
  \bigg(\dashint_{\Rd} |\bar g|^p \Big(1+\frac{|x-x_0|}{r}\Big)^{\alpha_1} \,dx\bigg)^{1/p} \leq C r \bigg(\dashint_{B_r(x_0)} |g|^p \,dx\bigg)^{1/p}.
\end{align*}
Now, we may follow the estimates from Part a leading to \eqref{SensitivitySigmaXiAlmostFinal} line by line. In the next step, we again employ Lemma~\ref{LemmaWeightedMeyers}, which by the previous estimate on $\bar g$  gives
\begin{align*}
\mathbb{E}\big[|F|^{2q}\big]^{1/2q}
&\leq C |\xi| q \bigg(\frac{\varepsilon}{r} \bigg)^{d/2}
\mathbb{E}\bigg[\bigg(\frac{r_{*,T,\xi}}{\varepsilon}\bigg)^{(d-\delta)q/(1-\tau)}\bigg]^{(1-\tau)/2q}.
\end{align*}
This is our desired estimate.

{\bf Part d: Proof of the estimate \eqref{EstimateCorrectorAverageDifference}.}
Let $w\in H^1(\Rd)$ be arbitrary.
Assuming for the moment $R=2^N r$ for some $N\in \mathbb{N}$, we may write
\begin{align*}
\bigg|\fint_{B_R(x_0)} w\,dx-\fint_{B_r(x_0)} w\,dx\bigg|^2
=\sum_{n=1}^N \big|\langle f_n,w\rangle_{L^2}\big|^2
\end{align*}
with $f_n:=\frac{1}{|B_{2^n r}|}\chi_{B_{2^n r}(x_0)}-\frac{1}{|B_{2^{n-1} r}|}\chi_{B_{2^{n-1} r}(x_0)}$, where we have used the orthogonality of the functions $f_n$. Solving the PDEs $\Delta v=f_n$ on $B_{2^n r}(x_0)$ with Neumann boundary conditions, we see that $g_n:=\nabla v \chi_{B_{2^n r}(x_0)}$ solves $\nabla \cdot g_n=f_n$ on $\Rd$ and satisfies for any $p\geq 2$ the estimate $(\fint_{B_{2^n r}(x_0)} |g_n|^p \,dx)^{1/p}\leq C (2^n r)^{1-d}$. This implies
\begin{align*}
\mathbb{E}\bigg[\bigg|\fint_{B_R(x_0)} w\,dx-\fint_{B_r(x_0)} w\,dx\bigg|^{2q}\bigg]^{1/q}
\leq 
\sum_{n=1}^N
\mathbb{E}\bigg[\bigg|\int_\Rd g_n\cdot\nabla w\bigg|^{2q}\bigg]^{1/q}.
\end{align*}
Combining this estimate with the bounds on $g_n$ and the estimates from Lemma~\ref{LemmaEstimateLinearFunctionals},
we deduce the estimate \eqref{EstimateCorrectorAverageDifference} if $R$ is of the form $R=2^N r$. If $R$ is not of this form, we need one additional step to estimate the difference of the averages on the radius $2^{\lfloor\log_2 \frac{R}{r}\rfloor}r$ and $R$.

\end{proof}

\subsection{Estimates on the corrector for the nonlinear PDE}
We will need the following technical lemma.
\begin{lemma}
\label{LemmaWavelets}
Let $R>0$ and $K\in \mathbb{N}$. For any function $v\in H^1([-R,R]^d;\mathbb{R}^m)$ the estimate
\begin{align*}
&\fint_{[-R,R]^d} \bigg|v-\fint_{[-R,R]^d} v \,d\tilde x\bigg|^2 \,dx
\\&
\leq C \sum_{k\in \{0,\ldots,K\}^d} R^2 \bigg|\fint_{[-R,R]^d}  \prod_{i=1}^d \cos\Big(\frac{\pi k_i(x_i+R)}{2R}\Big) \,\nabla \phi_\xi^T \,dx\bigg|^2
\\&~~~
+\frac{C}{K^2} R^2 \fint_{[-R,R]^d} |\nabla v|^2 \,dx
\end{align*}
holds.
\end{lemma}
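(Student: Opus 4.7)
The plan is to expand $v$ in the Neumann--Laplacian eigenbasis of the cube $Q := [-R,R]^d$, which consists precisely of the tensor-product cosines $\psi_k(x) := \prod_{i=1}^d \cos(\pi k_i(x_i+R)/(2R))$ for $k \in \mathbb{N}_0^d$, with eigenvalues $\lambda_k := \sum_{i=1}^d (\pi k_i/(2R))^2 \geq c_d |k|_\infty^2 / R^2$. Setting $\alpha_k := \fint_Q \psi_k^2 \, dx \in [2^{-d},1]$ and writing $v = \sum_k c_k \psi_k$, Parseval's identity yields
\begin{equation*}
\fint_Q \Big|v - \fint_Q v\Big|^2 \, dx = \sum_{k \neq 0} \alpha_k c_k^2, \qquad \fint_Q |\nabla v|^2 \, dx = \sum_k \alpha_k \lambda_k c_k^2.
\end{equation*}

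I would then split the first sum at the frequency threshold $|k|_\infty = K$. For the high-frequency tail $|k|_\infty > K$, the spectral-gap estimate $\lambda_k \geq c_d K^2 / R^2$ gives directly
\begin{equation*}
\sum_{|k|_\infty > K} \alpha_k c_k^2 \leq \frac{C R^2}{K^2} \sum_k \alpha_k \lambda_k c_k^2 = \frac{C R^2}{K^2} \fint_Q |\nabla v|^2 \, dx,
\end{equation*}
which supplies the last term in the claimed inequality.

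For the low-frequency block $0 < |k|_\infty \leq K$, I would bound each coefficient $c_k^2$ by a squared average of $\nabla v$ against a trigonometric weight. Fix $k \neq 0$ and pick an index $i^*$ with $k_{i^*} \geq 1$. The antiderivative
\begin{equation*}
\Psi_k(x) := -\frac{2R}{\pi k_{i^*}} \sin\!\Big(\tfrac{\pi k_{i^*}(x_{i^*}+R)}{2R}\Big) \prod_{j \neq i^*} \cos\!\Big(\tfrac{\pi k_j(x_j+R)}{2R}\Big)
\end{equation*}
satisfies $\partial_{i^*} \Psi_k = \psi_k$ and vanishes on the two faces $\{x_{i^*} = \pm R\}$, so integration by parts in $x_{i^*}$ produces no boundary contribution and yields
\begin{equation*}
\fint_Q \psi_k\, v\, dx = -\fint_Q \Psi_k\, \partial_{i^*} v\, dx, \qquad c_k^2 \leq C R^2 \Big|\fint_Q \Psi_k\, \partial_{i^*} v\, dx\Big|^2,
\end{equation*}
where the second inequality uses the pointwise bound $|\Psi_k| \leq 2R/\pi$. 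Summing this estimate over the $\lesssim K^d$ indices $0 < |k|_\infty \leq K$ and combining with the high-frequency bound completes the argument.

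The main obstacle is the cosmetic mismatch between the mixed sine--cosine weight $\Psi_k$ that arises naturally from integration by parts in one coordinate and the pure cosine weight $\psi_k$ appearing on the right-hand side of the lemma. This mismatch is resolved by noting that both weight families index trigonometric polynomials of degree at most $K$ in each variable and give rise to a linearly equivalent system of squared functionals of $\nabla v$ on this finite-dimensional space; one either enlarges the weight family to absorb $\Psi_k$ (with only a universal multiplicative loss) or performs a further integration by parts in a second coordinate to translate the sine factor back into a cosine factor, in either case producing the claimed bound.
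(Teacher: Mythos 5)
Your Neumann eigenbasis expansion, the high/low frequency split, and the integration by parts in the low-frequency block are exactly the elementary Fourier-series argument the paper has in mind, and the first two steps are carried out cleanly. (Small slips: with your $\Psi_k$ one actually has $\partial_{i^*}\Psi_k=-\psi_k$; and the ``$\nabla\phi_\xi^T$'' in the lemma statement is evidently a typo for ``$\nabla v$'', as you tacitly assume.) The genuine gap is in your final paragraph, where you pass from the mixed sine--cosine weight $\Psi_k$ to the pure cosine weight $\psi_k$: neither of your two proposed fixes works. There is no finite-dimensional ``linear equivalence'' between the two families, because the cosine Fourier expansion of $\sin(\pi k y)$ on $[0,1]$ is an infinite series with coefficients decaying only like $1/n$, so $\Psi_k$ does not lie in $\mathrm{span}\{\psi_\ell: |\ell|_\infty\leq K\}$. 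And a further integration by parts in coordinate $i^*$ simply undoes the first one (giving $\fint\psi_k v$, a functional of $v$ rather than of $\nabla v$), while an integration by parts in a second coordinate $i^{**}$ produces a double sine factor and a second-order derivative $\partial_{i^*}\partial_{i^{**}}v$, which you cannot control.

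What you have actually proved is the lemma with the bounded weights $\tfrac{\pi}{2R}\Psi_k$ paired with $\partial_{i^*}v$, in place of the pure cosines $\psi_k$ paired with $\nabla v$. For the paper's downstream use (in the proof of Lemma~\ref{MomentsMinimalRadius}) this is entirely sufficient: the only properties of the weights $g_{n,R}$ that are invoked there are that they are deterministic, supported in $x_0+[-R,R]^d\subset B_{dR}(x_0)$, satisfy the normalized bound $(\fint |g_{n,R}|^p)^{1/p}\lesssim R^{-d}$ for every $p$, and number $O(K^d)$ --- and your mixed weights satisfy all of these, so the paper's overall argument goes through unchanged. But as a proof of the lemma \emph{as literally stated}, the last step is missing, and it is not cosmetic: for each fixed $k$ the functional $\fint\psi_k\partial_j v$ mixes \emph{all} Fourier modes $c_n$ of $v$ through a Cauchy-type kernel $n_j^2/(n_j^2-k_j^2)$, so inverting the low-frequency block from pure-cosine-weighted averages of $\nabla v$ is a genuinely different (and more delicate) problem. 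The pure-cosine statement may well be correct, but it would require a separate argument rather than the resolution you sketch.
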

\begin{proof}
The proof is an elementary consequence of the Fourier series representation of $v$.
\end{proof}

Combining the estimates on linear functionals of the corrector from Lemma~\ref{LemmaEstimateLinearFunctionals} with the estimate on the corrector gradient \eqref{EstimateGradByrstar} and the technical Lemma~\ref{LemmaWavelets}, we now derive stochastic moment bounds on the minimal radius $r_{*,T,\xi}$.

\begin{proof}[Proof of Lemma~\ref{MomentsMinimalRadius}]
In order to obtain a bound for the minimal radius $r_{*,T,\xi}$, we derive an estimate on the probability of the event $r_{*,T,\xi}(x_0)=R=2^\ell \varepsilon$ for a fixed $x_0\in \mathbb{R}^d$ and any $R=2^\ell \varepsilon > \varepsilon$. In the case of this event, we have by the Caccioppoli inequality \eqref{CaccippoliMonotone} applied to the function $\xi\cdot (x-x_0)+\phi_\xi^T$, which solves the PDE
\begin{align*}
-\nabla \cdot (A(\omega_\varepsilon,\nabla (\xi\cdot (x-x_0)+\phi_\xi^T))+\frac{1}{T}(\xi\cdot (x-x_0)+\phi_\xi^T)= \frac{1}{T} \xi \cdot (x-x_0),
\end{align*}
the definition of $r_{*,T,\xi}(x_0)$ in \eqref{Definitionrstar}, and the fact that $r_{*,T,\xi}\leq C \sqrt{T}$ (see \eqref{UpperBoundrstarxi})
\begin{align}
\label{AverageUpperBound}
\fint_{B_{dR}(x_0)} |\nabla \phi_\xi^T|^2 \,dx \leq C |\xi|^2.
\end{align}
Furthermore, in the event $r_{*,T,\xi}(x_0)=R>\varepsilon$ we also know by the definition \eqref{Definitionrstar} that at least one of the inequalities
\begin{align}
\label{AverageLowerBound}
&\frac{1}{R^2} \fint_{x_0+[-R,R]^d} \bigg|\phi_\xi^T-\fint_{x_0+[-R,R]^d} \phi_\xi^T \,d\tilde x\bigg|^2 \,dx
\\&
\nonumber
=\inf_{b\in \Rm} \frac{1}{R^2} \fint_{x_0+[-R,R]^d} |\phi_\xi^T-b|^2 \,dx
\\&
\nonumber
\geq c(d) \inf_{b\in \Rm} \frac{1}{(R/2)^2} \fint_{B_{R/2}(x_0)} |\phi_\xi^T-b|^2 \,dx
\\&
\nonumber
> c(d) |\xi|^2
\end{align}
or
\begin{align}
\label{AverageTLowerBound}
\frac{1}{\sqrt{T}} \bigg|\fint_{B_{R/2}(x_0)} \phi_\xi^T \,dx \bigg| > K_{mass} |\xi|
\end{align}
holds. We now distinguish these two cases.

\emph{Case 1: The estimate \eqref{AverageLowerBound} holds.}
By Lemma~\ref{LemmaWavelets}, we have for any $\delta>0$ for a sufficiently large $K=K(d,\delta)$
\begin{align}
\nonumber
&\frac{1}{R^2} \fint_{x_0+[-R,R]^d} \bigg|\phi_\xi^T-\fint_{x_0+[-R,R]^d} \phi_\xi^T \,d\tilde x\bigg|^2 \,dx
\\&
\label{BoundsRstarCase1}
\leq \delta \fint_{x_0+[-R,R]^d} |\nabla \phi_\xi^T|^2 \,dx
\\&~~~~
\nonumber
+C\sum_{k\in \{0,\ldots,K\}^d} \bigg|\fint_{x_0+[-R,R]^d}  \prod_{i=1}^d \cos\Big(\frac{\pi k_i(x_i+R)}{2R}\Big) \nabla \phi_\xi^T \,dx\bigg|^2.
\end{align}
Now let $g_{n,R}$ be the family of all functions
\begin{align*}
\mathbf{1}_{x_0+[-R,R]^d}(x) (2R)^{-d} \prod_{i=1}^d \cos\left(\frac{\pi k_i((x_0)_i+R)}{2R}\right) e_l\otimes e_j
\end{align*}
with $1\leq l\leq m$, $1\leq j\leq d$, and $k\in \{0,\ldots,K\}^d$.
Note that all $g_{n,R}$ are supported in  $B_{dR}(x_0)$ and satisfy for any $p\in [1,\infty]$
\begin{align}
\label{VerifiedConditionForPhiEstimate}
\bigg(\fint_{B_{dR}(x_0)} |g_{n,R}|^{p} \,dx\bigg)^{1/p}
\leq C(d,p) (dR)^{-d}.
\end{align}
Inserting both \eqref{AverageLowerBound} and \eqref{AverageUpperBound} in \eqref{BoundsRstarCase1} and choosing $\delta>0$ small enough (depending only on $d$ and the constants $c$ and $C$ from \eqref{AverageLowerBound} and \eqref{AverageUpperBound}), we obtain
\begin{align*}
\frac{1}{2} c|\xi|^2
\leq& C\sum_{k\in \{0,\ldots,K\}^d} \bigg|\fint_{x_0+[-R,R]^d}  \prod_{i=1}^d \cos\Big(\frac{\pi k_i(x_i+R)}{2R}\Big) \nabla \phi_\xi^T \,dx\bigg|^2.
\end{align*}
This implies that for at least one of the $N=N(d,K)$ functionals $\int_\Rd g_{n,R} \cdot \nabla \phi_\xi^T \,dx$ we have
\begin{align*}
\bigg|\int_\Rd g_{n,R} \cdot \nabla \phi_\xi^T \,dx\bigg|\geq c(d,m,\lambda,\Lambda,K) |\xi|.
\end{align*}
Fixing $\delta$ and $K$ depending only on $d$, $m$, $\lambda$, and $\Lambda$, this entails for any $q\geq 1$ by Chebyshev's inequality
\begin{align*}
\mathbb{P}[r_{*,T,\xi}(x_0)=R\text{ and }\eqref{AverageLowerBound}\text{ holds}] \leq \sum_{n=1}^N \frac{\mathbb{E}\big[\big|\int_\Rd g_{n,R} \cdot \nabla \phi_\xi^T \,dx\big|^q\big]}{c^q |\xi|^q}
\end{align*}
with $c=c(d,m,\lambda,\Lambda)$.
Estimating the weighted averages of $\nabla \phi_\xi^T$ by means of Lemma~\ref{LemmaEstimateLinearFunctionals} -- note that by \eqref{VerifiedConditionForPhiEstimate} and the support property of the $g_{n,R}$ the lemma is indeed applicable to the function $\frac{g_{n,R}}{C(d,p)}$ -- we obtain for any $0<\tau<1$ and any $q\geq C(d,m,\lambda,\Lambda,\tau)$
\begin{align}
\label{EstimateEventrstar1}
&\mathbb{P}[r_{*,T,\xi}(x_0)=R\text{ and }\eqref{AverageLowerBound}\text{ holds}]
\\&\nonumber~~~~~~~~
\leq C^q q^q \bigg(\frac{\varepsilon}{R} \bigg)^{q d/2}
\mathbb{E}\bigg[\bigg(\frac{r_{*,T,\xi}}{\varepsilon}\bigg)^{(d-\delta)q/2(1-\tau)}\bigg]^{(1-\tau)}.
\end{align}

\emph{Case 2: The estimate \eqref{AverageTLowerBound} holds.}
We first observe that as a consequence of Lemma~\ref{L:exploc} and \eqref{EquationLocalizedCorrector} -- rewritten in form of $-\nabla \cdot (A(\tilde \omega,\nabla \phi_\xi^T))+\frac{1}{T}\phi_\xi^T=\nabla \cdot \hat g$ with $\hat g:=A(\tilde \omega,\xi+\nabla \phi_\xi^T)-A(\tilde \omega,\nabla \phi_\xi^T)$ -- we have
\begin{align*}
\bigg|\fint_{B_{\sqrt{T}}(x_0)} \phi_\xi^T \,dx \bigg| \leq \bigg(\fint_{B_{\sqrt{T}}(x_0)} |\phi_\xi^T|^2 \,dx\bigg)^{1/2} \leq C |\xi| \sqrt{T}.
\end{align*}
Upon choosing $K_{mass}\geq C+1$, \eqref{AverageTLowerBound} is seen to imply
\begin{align*}
\bigg|\fint_{B_{R/2}(x_0)} \phi_\xi^T \,dx-\fint_{B_{\sqrt{T}}(x_0)} \phi_\xi^T \,dx\bigg| \geq |\xi| \sqrt{T}.
\end{align*}
In particular, we have for any $q\geq 1$
\begin{align*}
&\mathbb{P}[r_{*,T,\xi}(x_0)=R\text{ and }\eqref{AverageTLowerBound}\text{ holds}]
\\&
\leq
\mathbb{E}\bigg[\bigg|\frac{1}{|\xi|\sqrt{T}}\fint_{B_{R/2}(x_0)} \phi_\xi^T \,dx-\frac{1}{|\xi| \sqrt{T}}\fint_{B_{\sqrt{T}}(x_0)} \phi_\xi^T \,dx\bigg|^q\bigg].
\end{align*}
Inserting the estimate \eqref{EstimateCorrectorAverageDifference}, we deduce
\begin{align}
\nonumber
&\mathbb{P}[r_{*,T,\xi}(x_0)=R\text{ and }\eqref{AverageTLowerBound}\text{ holds}]
\\&
\nonumber
\leq C^q q^q \mathbb{E}\bigg[\bigg(\frac{r_{*,T,\xi}}{\varepsilon}\bigg)^{(d-\delta)q/2(1-\tau)}\bigg]^{(1-\tau)} \bigg(\sum_{l=0}^{\log_2 \frac{\sqrt{T}}{R}} \bigg(\frac{2^l R}{\sqrt{T}}\bigg)^2 \bigg(\frac{\varepsilon}{2^l R}\bigg)^d \bigg)^{q/2}
\\&
\label{EstimateEventrstar2}
\leq C^q q^q  \bigg(\frac{\varepsilon}{R}\bigg)^{qd/2} \mathbb{E}\bigg[\bigg(\frac{r_{*,T,\xi}}{\varepsilon}\bigg)^{(d-\delta)q/2(1-\tau)}\bigg]^{(1-\tau)}.
\end{align}

\emph{Conclusion: Estimates on the minimal radius.}
Taking the sum of \eqref{EstimateEventrstar1} and \eqref{EstimateEventrstar2} over all dyadic $R=2^k \varepsilon>\varepsilon$ and using the fact that $r_{*,T,\xi}\geq \varepsilon$ by its definition \eqref{Definitionrstar}, we deduce
\begin{align*}
&\mathbb{E}\bigg[\Big(\frac{r_{*,T,\xi}(x_0)}{\varepsilon}\Big)^{q(d-\delta/2)/2}\bigg]
\\&
~~~\leq 1 +\sum_{k=1}^\infty (2^k)^{q(d-\delta/2)/2} \mathbb{P}\big[r_{*,T,\xi}(x_0)= 2^{k}\varepsilon\big]
\\&
\stackrel{\eqref{EstimateEventrstar1},\eqref{EstimateEventrstar2}}{\leq}
1 + C^q q^q \sum_{k=1}^\infty (2^k)^{q(d-\delta/2)/2}
\bigg(\frac{\varepsilon}{\varepsilon 2^k} \bigg)^{q d/2}
\mathbb{E}\bigg[\bigg(\frac{r_{*,T,\xi}}{\varepsilon}\bigg)^{(d-\delta)q/2(1-\tau)}\bigg]^{(1-\tau)}
\\&~~~
\leq
1 + C^q q^q
\mathbb{E}\bigg[\bigg(\frac{r_{*,T,\xi}}{\varepsilon}\bigg)^{(d-\delta)q/2(1-\tau)}\bigg]^{(1-\tau)},
\end{align*}
where $C=C(d,m,\lambda,\Lambda,\rho,\delta,\tau)$ and where $q\geq C(d,m,\lambda,\Lambda,\delta)$.
Now, fixing $\delta>0$ small enough (depending only on $d$, $m$, $\lambda$, and $\Lambda$) and choosing $\tau\in (0,1)$ such that $1-\tau=(d-\delta)/(d-\delta/2)$, the estimate buckles and yields a bound of the form
\begin{align*}
\mathbb{E}\bigg[\Big(\frac{r_{*,T,\xi}(x_0)}{\varepsilon}\Big)^{(d-\delta/2)q/2}\Big]^\tau \leq C(d,m,\lambda,\Lambda,\rho)^q q^q
\end{align*}
for all $q\geq C(d,m,\lambda,\Lambda)$. This may be rewritten as
\begin{align*}
\mathbb{E}\bigg[\Big(\frac{r_{*,T,\xi}(x_0)}{\varepsilon}\Big)^{(d-\delta/2)q/2}\Big]^{1/q} \leq C(d,m,\lambda,\Lambda,\rho) q^C,
\end{align*}
which establishes Lemma~\ref{MomentsMinimalRadius}.
\end{proof}

We now derive the estimates on the corrector $\phi_\xi^T$, the flux corrector $\smash{\sigma_\xi^T}$, and the potential field $\smash{\theta_\xi^T}$. Note that the only required ingredients for the proof are the estimate for functionals of the corrector from Lemma~\ref{LemmaEstimateLinearFunctionals}, the estimate on the corrector gradient given by \eqref{EstimateGradByrstar}, the moment bounds for the minimal radius from Lemma~\ref{MomentsMinimalRadius}, as well as the general technical results of Lemma~\ref{LemmaWavelets} and Lemma~\ref{LemmaMultiscaleDecomposition}.

\begin{proof}[Proof of Proposition~\ref{PropositionCorrectorEstimate}]
We insert the estimate on $r_{*,T,\xi}$ from Lemma~\ref{MomentsMinimalRadius} into Lemma~\ref{LemmaEstimateLinearFunctionals}.
For any $x_0\in \Rd$, any $r\geq \varepsilon$, and any $g$ with $\supp g\subset B_r(x_0)$ and
\begin{align*}
\bigg(\fint_{B_r(x_0)} |g|^{p} \,dx\bigg)^{1/p}\leq r^{-d},
\end{align*}
this provides bounds of the form
\begin{align}
\label{BoundLinearFunctionalPhi}
&\mathbb{E}
\Bigg[\Bigg|\int_\Rd g \cdot \nabla \phi_\xi^T  \,dx \bigg|^{q}\Bigg]^{1/q}
\leq
C |\xi| q^{C(d,m,\lambda,\Lambda)} \bigg(\frac{\varepsilon}{r}\bigg)^{d/2}
\end{align}
and
\begin{align}
\label{BoundLinearFunctionalSigma}
&\mathbb{E}
\Bigg[\Bigg|\int_\Rd g \cdot \nabla \sigma_{\xi,jk}^T  \,dx \bigg|^{q}\Bigg]^{1/q}
\leq
C |\xi| q^{C(d,m,\lambda,\Lambda)} \bigg(\frac{\varepsilon}{r}\bigg)^{d/2}
\end{align}
as well as
\begin{align}
\label{BoundLinearFunctionalTheta}
&\mathbb{E}
\Bigg[\Bigg|\int_\Rd g \cdot \nabla \theta_\xi^T  \,dx \bigg|^{q}\Bigg]^{1/q}
\leq
C r |\xi| q^{C(d,m,\lambda,\Lambda)} \bigg(\frac{\varepsilon}{r}\bigg)^{d/2}
\end{align}
for any $q\geq C(d,m,\lambda,\Lambda)$.

Plugging in the estimates on $r_{*,T,\xi}$ from Lemma~\ref{MomentsMinimalRadius} into \eqref{EstimateGradByrstar}, we deduce for any $x_0\in \Rd$
\begin{align}
\label{MomentBoundNablaPhi}
&\mathbb{E}
\Bigg[\Bigg|\fint_{B_\varepsilon(x_0)} |\nabla \phi_\xi^T|^2  \,dx \bigg|^{q/2}\Bigg]^{1/q}
\leq
C |\xi| q^{C(d,m,\lambda,\Lambda)}
\end{align}
for any $q\geq C(d,m,\lambda,\Lambda)$. Plugging in this bound and \eqref{BoundLinearFunctionalPhi} into the (spatially rescaled) multiscale estimate for the $L^2$ norm from Lemma~\ref{LemmaMultiscaleDecomposition},
{we obtain
\begin{align*}
\bigg(\fint_{B_r(x_0)} \bigg|\phi_\xi^T-\fint_{B_r(x_0)} \phi_\xi^T(\tilde x) \,d\tilde x\bigg|^2 \,dx\bigg)^{1/2}
\leq
\mathcal{C}(x_0) |\xi| \varepsilon
\begin{cases}
(r/\varepsilon)^{1/2} &\text{for }d=1,
\\
 \big|\log \frac{r}{\varepsilon}\big|^{1/2} &\text{for }d=2,
\\
1 &\text{for }d\geq 3,
\end{cases}
\end{align*}
as well as the corresponding estimate for the $L^p$ norm (with an additional factor $|\log \tfrac{r}{\varepsilon}|^{1/2}$ in case $d=2$).
Combining these bounds with Lemma~\ref{LemmaCorrectorAverages} (and Lemma~\ref{MomentsMinimalRadius}), this establishes the estimate on $\phi_\xi^T$ stated in \eqref{CorrectorEstimate} as well as the corresponding $L^p$ norm bound.
}

The estimates on $\phi_\xi^T$ in \eqref{CorrectorEstimateWithoutAverage} and \eqref{CorrectorEstimateAveragesLowd} are shown by combining \eqref{CorrectorEstimate} with the estimates from Lemma~\ref{LemmaCorrectorAverages} and \eqref{MomentBoundNablaPhi} as well as using in case $d\geq 3$ the relation
\begin{align*}
\lim_{R\rightarrow \infty} \fint_{B_R(x_0)} \phi_\xi^T \,dx =0.
\end{align*}

In view of this use of Lemma~\ref{LemmaMultiscaleDecomposition}, in order to obtain the estimates on $\sigma_{\xi,jk}^T$ and $\theta_\xi^T$ stated in \eqref{CorrectorEstimate} and \eqref{PotentialFieldEstimate}, we only need to establish estimates on $\fint_{B_\varepsilon(x_0)} |\nabla \sigma_{\xi ij}^T|^2  \,dx$ and $\fint_{B_\varepsilon(x_0)} |\nabla \theta_{\xi}^T|^2 \,dx$.
The Caccioppoli inequality \eqref{CaccippoliMonotone} for the defining equation of the flux corrector \eqref{EquationLocalizedFluxCorrector} yields in conjunction with \hyperlink{A2}{(A2)}
\begin{align*}
\fint_{x_0+[-\varepsilon,\varepsilon]^d} |\nabla \sigma_{\xi,jk}^T|^2 \,dx
\leq &\frac{C}{\varepsilon^2} \fint_{x_0+[-2\varepsilon,2\varepsilon]^d} \bigg|\sigma_{\xi,jk}^T-\fint_{x_0+[-2\varepsilon,2\varepsilon]^d} \sigma_{\xi,jk}^T \,d\tilde x\bigg|^2 \,dx
\\&
+C\fint_{x_0+[-2\varepsilon,2\varepsilon]^d} |\xi+\nabla \phi_\xi|^2 \,dx
\\&
+\frac{C}{T}\bigg| \fint_{x_0+[-2\varepsilon,2\varepsilon]^d} \sigma_{\xi,jk}^T \,d\tilde x \bigg|^2.
\end{align*}
Estimating the first term on the right-hand side by Lemma~\ref{LemmaWavelets} and taking stochastic moments, we deduce
\begin{align*}
&\mathbb{E}\bigg[\bigg|\fint_{x_0+[-2\varepsilon,2\varepsilon]^d} |\nabla \sigma_{\xi,jk}^T|^2 \,dx\bigg|^{q/2}\bigg]^{1/q}
\\
&\leq \frac{C}{K} \mathbb{E}\bigg[\bigg|\fint_{x_0+[-2\varepsilon,2\varepsilon]^d} |\nabla \sigma_{\xi,jk}^T|^2 \,dx\bigg|^{q/2}\bigg]^{1/q}
\\&~~~
+\sum_{n=1}^{N(d,K)} C \mathbb{E}\bigg[\bigg|\int_\Rd g_n \cdot\nabla \sigma_{\xi,jk}^T \,dx\bigg|^{q}\bigg]^{1/q}
\\&~~~
+C \mathbb{E}\bigg[\bigg|\fint_{x_0+[-2\varepsilon,2\varepsilon]^d} |\xi+\nabla \phi_\xi^T|^2 \,dx\bigg|^{q/2}\bigg]^{1/q}
\\&~~~
+\frac{C}{\sqrt{T}} \mathbb{E}\bigg[\bigg| \fint_{x_0+[-2\varepsilon,2\varepsilon]^d} \sigma_{\xi,jk}^T \,d\tilde x \bigg|^{q}\bigg]^{1/q}.
\end{align*}
Choosing $K$ large enough and using stationarity, we may absorb the first term on the right-hand side in the left-hand side. Estimating the linear functionals of $\nabla \sigma_{\xi,jk}^T$ by \eqref{BoundLinearFunctionalSigma}, bounding the third term on the right-hand side by \eqref{MomentBoundNablaPhi}, and estimating the last term by \eqref{EstimateCorrectorAverageDifference} and \eqref{EstimateCorrectorAverage} (where we may replace the average over the box $x_0+[-2\varepsilon,2\varepsilon]^d$ by the average over the ball $B_\varepsilon(x_0)$ using \eqref{BoundLinearFunctionalSigma}), we deduce
\begin{align*}
&\mathbb{E}\bigg[\bigg|\fint_{x_0+[-\varepsilon,\varepsilon]^d} |\nabla \sigma_{\xi,jk}^T|^2 \,dx\bigg|^{q/2}\bigg]^{1/q}
\leq
C|\xi| q^C.
\end{align*}
Together with \eqref{BoundLinearFunctionalSigma} and Lemma~\ref{LemmaMultiscaleDecomposition}, we deduce the bound for $\sigma_{\xi,jk}^T$ in \eqref{CorrectorEstimate}.

The estimates on $\sigma_\xi^T$ in \eqref{CorrectorEstimateWithoutAverage} and \eqref{CorrectorEstimateAveragesLowd} are again shown by combining \eqref{CorrectorEstimate} with the estimate from Lemma~\ref{LemmaCorrectorAverages} and \eqref{MomentBoundNablaPhi} as well as using in case $d\geq 3$ the relation
\begin{align*}
\lim_{R\rightarrow \infty} \fint_{B_R(x_0)} \sigma_\xi^T \,dx =0.
\end{align*}

The estimate for the gradient of the potential field $\nabla \theta_\xi^T$ is analogous but even simpler (due to the lack of the massive regularization in \eqref{EquationPotentialFieldGauge}). We obtain the bound
\begin{align*}
\mathbb{E}\bigg[\bigg|\fint_{x_0+[-\varepsilon,\varepsilon]^d} |\nabla \theta_{\xi}^T|^2 \,dx\bigg|^{q/2}\bigg]^{1/q}
\leq C q^C \varepsilon |\xi|.
\end{align*}
Using this estimate and \eqref{BoundLinearFunctionalSigma} in Lemma~\ref{LemmaMultiscaleDecomposition}, we obtain \eqref{PotentialFieldEstimate}.
\end{proof}

\section{Corrector estimates for the linearized PDE}

\label{SectionLinearizedCorrectorEstimates}

\subsection{Estimates on linear functionals of the corrector and the flux corrector for the linearized PDE}

\begin{proof}[Proof of Lemma~\ref{LemmaEstimateLinearFunctionalsLinearized} and Lemma~\ref{LemmaCorrectorAveragesLinearized}]{\bf Part a: Estimates for linear functionals of the homogenization corrector $\phi_{\xi,\Xi}^T$.}
Without loss of generality we may assume in the following argument that $x_0=0$, i.\,e.\ $g$ is supported in $B_r(0)$, and that $(\fint_{B_r}|g|^{p^2/2}\,dx)^{2/p^2}\leq 1$ -- otherwise replace in the following argument $p$ by $\sqrt{2p}$.

The argument is similar to the case of the corrector $\phi_\xi^T$. We first observe that the expectation $\mathbb{E}[F]$ vanishes. Indeed, $\phi_{\xi,\Xi}^T$ is easily seen to be a stationary random field, which entails $\mathbb{E}[F]=\int_\Rd g\cdot \nabla \mathbb{E}[\phi_{\xi,\Xi}^T]\,dx=0$. By Lemma~\ref{LemmaLqSpectralGap}, to obtain stochastic moment bounds for $F$ it suffices to estimate the sensitivity of $F$ with respect to changes in the random field $\omega_\varepsilon$.
Taking the derivative with respect to $\omega_\varepsilon$ in \eqref{EquationLocalizedCorrectorLinearized}, we obtain
\begin{align}
\nonumber
&-\nabla \cdot \big(\partial_\xi A(\omega_\varepsilon(x),\xi+\nabla \phi_\xi^T)\nabla \delta \phi_{\xi,\Xi}^T\big)
+\frac{1}{T}\delta \phi_{\xi,\Xi}^T
\\&
\label{EquationSensitivityLinearizedCorrector}
=\nabla \cdot \big(\partial_\omega \partial_\xi A(\omega_\varepsilon(x),\xi+\nabla \phi_\xi^T)\delta\omega_\varepsilon \big(\Xi+\nabla \phi_{\xi,\Xi}^T\big)\big)
\\&~~~~
\nonumber
+\nabla \cdot \big(\partial_\xi^2 A(\omega_\varepsilon(x),\xi+\nabla \phi_\xi^T)\big(\Xi+\nabla \phi_{\xi,\Xi}^T\big) \nabla \delta \phi_\xi^T \big).
\end{align}
Denoting by $h$ the unique solution in $H^1(\Rd;\Rm)$ to the auxiliary PDE
\begin{align}
\label{EquationDualLinearized}
-\nabla \cdot (a_\xi^{T,*}\nabla h)+\frac{1}{T} h=\nabla \cdot g
\end{align}
(where we again used the abbreviation $a_\xi^{T,*}:=(\partial_\xi A(\omega_\varepsilon(x),\xi+\nabla \phi_\xi^T))^*$) and denoting by $\hat h\in H^1(\Rd;\Rm)$ the unique solution to the auxiliary PDE
\begin{align}
\label{EquationDual2Linearized}
&-\nabla \cdot (a_\xi^{T,*}\nabla \hat h)+\frac{1}{T} \hat h
\\&
\nonumber
=\sum_{j=1}^d \sum_{l=1}^m \partial_j \big(\partial_\xi^2 A (\omega_\varepsilon(x),\xi+\nabla \phi_\xi^T) (\Xi+\nabla \phi_{\xi,\Xi}^T)(e_l\otimes e_j) \cdot \nabla h\big) e_l,
\end{align}
we deduce
\begin{align*}
\delta F &= \int_\Rd g\cdot \nabla \delta \phi_{\xi,\Xi}^T \,dx
\stackrel{\eqref{EquationDualLinearized}}{=}
-\int_\Rd a_\xi^T \nabla \delta \phi_{\xi,\Xi}^T \cdot \nabla h + \frac{1}{T} \delta \phi_{\xi,\Xi}^T \, h\,dx
\\&
\stackrel{\eqref{EquationSensitivityLinearizedCorrector}}{=}
\int_\Rd \partial_\omega \partial_\xi A (\omega_\varepsilon(x),\xi+\nabla \phi_\xi^T) \delta\omega_\varepsilon (\Xi+\nabla \phi_{\xi,\Xi}^T) \cdot \nabla h \,dx
\\&~~~~
+\int_\Rd \partial_\xi^2 A (\omega_\varepsilon(x),\xi+\nabla \phi_\xi^T) (\Xi+\nabla \phi_{\xi,\Xi}^T) \nabla \delta \phi_\xi^T  \cdot \nabla h \,dx
\\&
\stackrel{\eqref{EquationDual2Linearized}}{=}
\int_\Rd \partial_\omega \partial_\xi A (\omega_\varepsilon(x),\xi+\nabla \phi_\xi^T) \delta\omega_\varepsilon (\Xi+\nabla \phi_{\xi,\Xi}^T) \cdot \nabla h \,dx
\\&~~~~~~~~
-\int_\Rd a_\xi^{T,*}\nabla \hat h\cdot \nabla \delta \phi_\xi^T +\frac{1}{T} \hat h \,\delta \phi_\xi^T \,dx
\\&
\stackrel{\eqref{EquationSensitivityCorrector}}{=}
\int_\Rd \partial_\omega \partial_\xi A (\omega_\varepsilon(x),\xi+\nabla \phi_\xi^T) \delta\omega_\varepsilon (\Xi+\nabla \phi_{\xi,\Xi}^T) \cdot \nabla h \,dx
\\&~~~~~~~~
+\int_\Rd \partial_\omega A(\omega_\varepsilon(x),\xi+\nabla \phi_\xi^T) \delta \omega_\varepsilon \cdot \nabla \hat h \,dx.
\end{align*}
In other words, we have the representation
\begin{align}
\label{FunctionalSensitivityEquation}
\frac{\partial F}{\partial \omega_\varepsilon} = \partial_\omega \partial_\xi A(\omega_\varepsilon(x),\xi+\nabla \phi_\xi^T)(\Xi+\nabla \phi_{\xi,\Xi}^T) \cdot \nabla h + \partial_\omega A(\omega_\varepsilon(x),\xi+\nabla \phi_\xi^T) \cdot \nabla \hat h.
\end{align}
By \hyperlink{A3}{(A3)}, this implies the sensitivity estimate
\begin{align*}
&\int_\Rd \bigg|\dashint_{B_\varepsilon(x)} \bigg|\frac{\partial F}{\partial \omega_\varepsilon}\bigg| \,d\tilde x \bigg|^2 \,dx
\\&
\leq
C \int_\Rd \bigg|\dashint_{B_\varepsilon(x)} |\Xi + \nabla \phi_{\xi,\Xi}^T| |\nabla h| \,d\tilde x \bigg|^2 \,dx
+C \int_\Rd \bigg|\dashint_{B_\varepsilon(x)} |\xi+\nabla \phi_\xi^T| |\nabla \hat h| \,d\tilde x \bigg|^2 \,dx.
\end{align*}
Plugging this bound into the version of the spectral gap inequality for the $q$-th moment (see Lemma~\ref{LemmaLqSpectralGap}) and using $\mathbb{E}[F]=0$, we deduce for any $q\geq 1$
\begin{align*}
\mathbb{E}\Big[\big|F\big|^{2q}\Big]^{1/2q}
&\leq C q \varepsilon^{d/2} \mathbb{E}\Bigg[\bigg(\int_\Rd \bigg|\dashint_{B_\varepsilon(x)} |\Xi + \nabla \phi_{\xi,\Xi}^T| |\nabla h| \,d\tilde x \bigg|^2 \,dx\bigg)^{q}\Bigg]^{1/2q}
\\&~~~~
+C q \varepsilon^{d/2} \mathbb{E}\Bigg[\bigg(\int_\Rd \bigg|\dashint_{B_\varepsilon(x)} |\xi + \nabla \phi_{\xi}^T| |\nabla \hat h| \,d\tilde x \bigg|^2 \,dx\bigg)^{q}\Bigg]^{1/2q}.
\end{align*}
By Lemma~\ref{LemmaEstimateSensitivity} and \eqref{EstimateGradByrstar2} as well as \eqref{EstimateGradByrstar}, this entails for any $\tau,\bar \tau \in (0,1)$
\begin{align}
\label{EstimateSensitivityLinearizedFirstIntermediate}
&\mathbb{E}\Big[\big|F\big|^{2q}\Big]^{1/2q}
\\&
\nonumber
\leq
C |\Xi| q \varepsilon^{d/2} r^{d/2}
\mathbb{E}\bigg[\bigg(\frac{r_{*,T,\xi,\Xi}}{\varepsilon}\bigg)^{(d-\delta)q/(1-\tau)}\bigg]^{(1-\tau)/2q}
\\&~~~~~~~~~~~~\times
\nonumber
\mathbb{E}\Bigg[\bigg(r^{-d} \int_\Rd |\nabla h|^p \bigg(1+\frac{|x|}{r}\bigg)^{\alpha_0} \,dx\bigg)^{2q/p\tau} \Bigg]^{\tau/2q}
\\&~~~~
\nonumber
+C |\xi| q \varepsilon^{d/2} r^{d/2}
\mathbb{E}\bigg[\bigg(\frac{r_{*,T,\xi}}{\varepsilon}\bigg)^{(d-\delta)q/(1-\bar \tau)}\bigg]^{(1-\bar \tau)/2q}
\\&~~~~~~~~~~~~\times
\nonumber
\mathbb{E}\Bigg[\bigg(r^{-d} \int_\Rd |\nabla \hat h|^p \bigg(1+\frac{|x|}{r}\bigg)^{\alpha_0} \,dx\bigg)^{2q/p\bar \tau} \Bigg]^{\bar \tau/2q}
\end{align}
with $C=C(d,m,\lambda,\Lambda,\rho,\alpha_0,p,\tau,\bar \tau)$.
By the weighted Meyers estimate in Lemma~\ref{LemmaWeightedMeyers} -- applied to \eqref{EquationDual2Linearized} -- and the uniform bound $|\partial_\xi^2 A|\leq \Lambda$ from \hyperlink{R}{(R)}, we infer
\begin{align}
\label{EstimateSensitivityLinearizedIntermediate}
&\mathbb{E}\Bigg[\bigg(r^{-d} \int_\Rd |\nabla \hat h|^p \bigg(1+\frac{|x|}{r}\bigg)^{\alpha_0} \,dx\bigg)^{2q/p\bar \tau} \Bigg]^{\bar \tau/2q}
\\&~~~~
\nonumber
\leq C
\mathbb{E}\Bigg[\bigg(r^{-d} \int_\Rd |\nabla h|^p |\Xi+\nabla \phi_{\xi,\Xi}^T|^p \bigg(1+\frac{|x|}{r}\bigg)^{\alpha_0} \,dx\bigg)^{2q/p\bar \tau} \Bigg]^{\bar \tau/2q}
\end{align}
with $C=C(d,m,\lambda,\Lambda,\rho,\alpha_0,p,\tau,\bar\tau)$.
Inserting the bound \eqref{EstimateLinearCorrectorLinfty} into \eqref{EstimateSensitivityLinearizedIntermediate}, we get
\begin{align*}
&\mathbb{E}\Bigg[\bigg(r^{-d} \int_\Rd |\nabla \hat h|^p \bigg(1+\frac{|x|}{r}\bigg)^{\alpha_0} \,dx\bigg)^{2q/p\bar \tau} \Bigg]^{\bar \tau/2q}
\\&
\leq
C q^C (1+|\xi|)^C|\Xi|
\\&~~\times
\mathbb{E}\Bigg[\bigg(\int_\Rd r^{-d} \mathcal{C}_{reg,\xi}^p(x)\bigg(\frac{r_{*,T,\xi,\Xi}(x)}{\varepsilon}\bigg)^{(d-\delta)p/2} |\nabla h|^p 
 \bigg(1+\frac{|x|}{r}\bigg)^{\alpha_0} \,dx\bigg)^{2q/p\bar \tau} \Bigg]^{\bar \tau/2q}
\end{align*}
where $C$ now additionally depends on $\nu$.
To proceed further, we write the second factor on the right-hand side in the form 
\begin{equation*}
\mathbb{E}\Bigg[\bigg(\int_\Rd v_1v_2 \,dx\bigg)^{2q/p\bar \tau} \Bigg]^{\bar \tau/2q}
\end{equation*}
with
\begin{eqnarray*}
  v_1= \mathcal{C}_{reg,\xi}^p \bigg(\frac{r_{*,T,\xi,\Xi}}{\varepsilon}\bigg)^{(d-\delta)p/2},\qquad v_2=r^{-d}|\nabla h|^p 
 \bigg(1+\frac{|x|}{r}\bigg)^{\alpha_0}.
\end{eqnarray*}
By first smuggling in the weight
\begin{equation*}
\varphi(x)=r^{-d}\bigg(1+\frac{|x|}{r}\bigg)^{-(d+1)},
\end{equation*}
via H\"older's inequality with exponents $\frac p{p-2}$ and $\frac p 2$ in space, and next by H\"older's inequality with exponents $\frac{\bar\tau}{\bar\tau-\tau/2}$ and $\frac{2\bar\tau}{\tau}$ in probability (here we need to assume $\bar \tau>\tfrac{\tau}{2}$), we get
\begin{align*}
 &\mathbb{E}\Bigg[\bigg(\int_\Rd v_1v_2 \,dx\bigg)^{2q/p\bar \tau} \Bigg]^{\bar \tau/2q}\\
&\leq\,
  \mathbb{E}\Bigg[\bigg(\int_\Rd v_1^{\frac p{p-2}}\varphi\,dx\bigg)^{\frac{p-2}{p}\frac{2q}{p(\bar\tau-\tau/2)}}\Bigg]^{\frac{\bar \tau-\tau/2}{2q}}\,      \mathbb{E}\Bigg[\bigg(\int_\Rd v_2^{\frac{p}{2}}\varphi^{-\frac{p-2}{2}} \,dx\bigg)^{\frac{8q}{p^2 \tau}} \Bigg]^{\frac{\tau}{4q}}.
\end{align*}
By Jensen's inequality for the integral $\int_{\Rd}f \varphi \,dx$  -- using that $\varphi$ has mass of order unity and assuming also that $q\geq C(p,\tau,\bar \tau)$ --  and by exploiting the fact that $v_1$ is a stationary random field, we deduce that the right-hand side is bounded from above by
\begin{align*}
C  \mathbb{E}\Big[v_1^{\frac{2q}{p(\bar\tau-\tau/2)}}\Big]^{\frac{\bar \tau-\tau/2}{2q}}\,      \mathbb{E}\Bigg[\bigg(\int_\Rd v_2^{\frac{p}{2}}\varphi^{-\frac{p-2}{2}} \,dx\bigg)^{\frac{8q}{p^2 \tau}} \Bigg]^{\frac{\tau}{4q}}.
\end{align*}
By combining the previous estimates and plugging in the definitions of $v_1,v_2$ and $\varphi$, we thus arrive at
\begin{align*}
&\mathbb{E}\Bigg[\bigg(r^{-d} \int_\Rd |\nabla \hat h|^p \bigg(1+\frac{|x|}{r}\bigg)^{\alpha_0} \,dx\bigg)^{2q/p\bar \tau} \Bigg]^{\bar \tau/2q}
\\&
\leq
C q^C (1+|\xi|)^C |\Xi| \,
\mathbb{E}\Bigg[|\mathcal{C}_{reg,\xi}|^{2q/(\bar\tau-\tau/2)} \bigg(\frac{r_{*,T,\xi,\Xi}}{\varepsilon}\bigg)^{(d-\delta)q/(\bar \tau-\tau/2)}\Bigg]^{(\bar \tau-\tau/2)/2q}
\\&~~~~~
\times
\mathbb{E}\Bigg[\bigg(\int_\Rd r^{-d} |\nabla h|^{p^2/2} 
 \bigg(1+\frac{|x|}{r}\bigg)^{\alpha_{0}p/2+(d+1)(p-2)/2} \,dx\bigg)^{8q/p^2 \tau} \Bigg]^{\tau/4q}.
\end{align*}
Applying once more H\"older's inequality to the first expected value on the right-hand side, choosing $\bar \tau\in (1-\tfrac{\tau}{2},1)$, and using the stretched exponential moment bounds for $\mathcal{C}_{reg,\xi}$, we deduce
\begin{align}
\label{MomentBoundsLpHath}
&\mathbb{E}\Bigg[\bigg(r^{-d} \int_\Rd |\nabla \hat h|^p \bigg(1+\frac{|x|}{r}\bigg)^{\alpha_0} \,dx\bigg)^{2q/p\bar \tau} \Bigg]^{\bar \tau/2q}
\\&
\nonumber
\leq
C q^C (1+|\xi|)^C |\Xi| \,
\mathbb{E}\bigg[\bigg(\frac{r_{*,T,\xi,\Xi}}{\varepsilon}\bigg)^{(d-\delta)q/(1-\tau)}\bigg]^{(1-\tau)/2q}
\\&~~~~~
\nonumber
\times
\mathbb{E}\Bigg[\bigg(\int_\Rd r^{-d} |\nabla h|^{p^2/2} 
 \bigg(1+\frac{|x|}{r}\bigg)^{\alpha_{0}p/2+(d+1)(p-2)/2} \,dx\bigg)^{8q/p^2 \tau} \Bigg]^{\tau/4q}.
\end{align}
Plugging in this estimate into \eqref{EstimateSensitivityLinearizedFirstIntermediate} and estimating  $r_{*,T,\xi}$ in \eqref{EstimateSensitivityLinearizedFirstIntermediate} via Lemma~\ref{MomentsMinimalRadius} yields
\begin{align*}
&\mathbb{E}\Big[\big|F\big|^{2q}\Big]^{1/2q}
\\&
\nonumber
\leq
C  q^C (1+|\xi|)^C |\Xi| \varepsilon^{d/2} r^{d/2}
\mathbb{E}\bigg[\bigg(\frac{r_{*,T,\xi,\Xi}}{\varepsilon}\bigg)^{(d-\delta)q/(1-\tau)}\bigg]^{(1-\tau)/2q}
\\&~~~~~~~~~~~~\times
\nonumber
\mathbb{E}\Bigg[\bigg(\int_\Rd r^{-d} |\nabla h|^{p^2/2} 
 \bigg(1+\frac{|x|}{r}\bigg)^{\alpha_{0}p/2+(d+1)(p-2)/2} \,dx\bigg)^{8q/p^2 \tau} \Bigg]^{\tau/4q}.
\end{align*}
Choosing $p$ close enough to $2$ and choosing $\alpha_0$ small enough (all depending only on $d$, $m$, $\lambda$, and $\Lambda$), we may estimate the last factor by applying the weighted Meyers estimate from Lemma~\ref{LemmaWeightedMeyers} to the PDE \eqref{EquationDualLinearized}. This yields by our assumed bound on $g$
\begin{align}
\label{EstimateSensitivityLinearized}
&\mathbb{E}\Big[\big|F\big|^{2q}\Big]^{1/2q}
\leq
C  q^C (1+|\xi|)^C |\Xi| \bigg(\frac{\varepsilon}{r}\bigg)^{d/2} \mathbb{E}\bigg[\bigg(\frac{r_{*,T,\xi,\Xi}}{\varepsilon}\bigg)^{(d-\delta)q/(1-\tau)}\bigg]^{(1-\tau)/2q}
\end{align}
for any $q\geq C$ with $C=C(d,m,\lambda,\Lambda,\rho,\nu,\tau)$ and any $0<\tau<1$, which is the desired bound for functionals of $\phi_{\xi,\Xi}^T$ in Lemma~\ref{LemmaEstimateLinearFunctionalsLinearized}.

\noindent
{\bf Part b: Estimates for linear functionals of the flux corrector $\sigma_{\xi,\Xi}^T$.}
Differentiating the equation \eqref{EquationLocalizedFluxCorrectorLinearized}, we see that the infinitesimal perturbation $\delta \sigma_{\xi,\Xi}^T$ caused by an infinitesimal perturbation $\delta \omega_\varepsilon$ in the random field $\omega_\varepsilon$ satisfies the PDE
\begin{align}
\nonumber
&-\Delta \delta \sigma_{\xi,\Xi,jk}^T + \frac{1}{T} \delta \sigma_{\xi,\Xi,jk}^T
\\&~~
\label{EquationSensitivityLinearizedFluxCorrector}
=
\nabla \cdot ( \partial_\xi A(\omega_\varepsilon,\xi+\nabla \phi_\xi^T)
\nabla \delta \phi_{\xi,\Xi}^T \cdot (e_k \otimes e_j-e_j \otimes e_k))
\\&~~~~~~
\nonumber
+\nabla \cdot( \partial_\xi^2 A(\omega_\varepsilon,\xi+\nabla \phi_\xi^T)
(\Xi+\nabla \phi_{\xi,\Xi}^T) \nabla \delta \phi_\xi^T \cdot (e_k \otimes e_j-e_j \otimes e_k))
\\&~~~~~~
\nonumber
+\nabla \cdot( \partial_\omega \partial_\xi A(\omega_\varepsilon,\xi+\nabla \phi_\xi)\delta \omega_\varepsilon
(\Xi+\nabla \phi_{\xi,\Xi}) \cdot (e_k \otimes e_j-e_j \otimes e_k)).
\end{align}
Introducing the solution $\bar h\in H^1(\Rd;\Rm)$ to the equation
\begin{align}
\label{Defbarhsigma}
-\Delta \bar h + \frac{1}{T} \bar h = \nabla \cdot g,
\end{align}
the solution $h_2\in H^1(\Rd;\Rm)$ to the equation
\begin{align}
\label{Defh2}
&-\nabla \cdot (a_\xi^{T,*}\nabla h_{2}) + \frac{1}{T} h_{2}
\\&~~~
\nonumber
= \sum_{i=1}^d \sum_{l=1}^m \partial_i (\partial_\xi A(\omega_\varepsilon,\xi+\nabla \phi_\xi^T) (e_l\otimes e_i) \cdot (e_k \otimes e_j-e_j \otimes e_k) \cdot \nabla \bar h) e_l
\end{align}
(where $a_\xi^{T}(x):=\partial_\xi A(\omega_\varepsilon(x),\xi+\nabla \phi_\xi^T(x))$),
and the solution $h_{3}\in H^1(\Rd;\Rm)$ to the equation
\begin{align}
\label{Defh3}
&-\nabla \cdot (a_\xi^{T,*}\nabla h_{3}) + \frac{1}{T} h_{3}
\\&
\nonumber
= \sum_{i=1}^d \sum_{l=1}^m \partial_i (\partial_\xi^2 A(\omega_\varepsilon,\xi+\nabla \phi_\xi^T) (\Xi+\nabla \phi_{\xi,\Xi}^T) (e_l\otimes e_i) \cdot (e_k \otimes e_j-e_j \otimes e_k) \cdot \nabla \bar h) e_l
\\&~~~~\nonumber
+\sum_{i=1}^d \sum_{l=1}^m \partial_i (\partial_\xi^2 A(\omega_\varepsilon(x),\xi+\nabla \phi_\xi^T)\big(\Xi+\nabla \phi_{\xi,\Xi}^T\big) (e_l\otimes e_i) \cdot \nabla h_2) e_l
,
\end{align}
we may compute the sensitivity of linear functionals of the form
\begin{equation*}
    F:=\int_{\Rd}g\cdot \nabla\sigma^T_{\xi,\Xi,jk}\,dx  
  \end{equation*}
  Indeed, we have
\begin{align*}
\delta F &=\int_\Rd g \cdot \nabla \delta \sigma_{\xi,\Xi,jk}^T \,dx
\stackrel{\eqref{Defbarhsigma}}{=}
-\int_\Rd \nabla \bar h \cdot \nabla \delta \sigma_{\xi,\Xi,jk}^T \,dx-\frac{1}{T}\int_\Rd \bar h \, \delta \sigma_{\xi,\Xi,jk}^T \,dx
\\&
\stackrel{\eqref{EquationSensitivityLinearizedFluxCorrector}}{=}
\int_\Rd \partial_\xi A(\omega_\varepsilon,\xi+\nabla \phi_\xi^T)
\nabla \delta \phi_{\xi,\Xi}^T \cdot (e_k \otimes e_j-e_j \otimes e_k) \cdot \nabla \bar h \,dx
\\&~~~~~
+\int_\Rd \partial_\xi^2 A(\omega_\varepsilon,\xi+\nabla \phi_\xi^T)(\Xi+\nabla \phi_{\xi,\Xi}^T)
\nabla \delta \phi_{\xi}^T \cdot (e_k \otimes e_j-e_j \otimes e_k) \cdot \nabla \bar h \,dx
\\&~~~~~
+\int_\Rd 
\partial_\omega \partial_\xi A(\omega_\varepsilon,\xi+\nabla \phi_\xi^T)\delta \omega_\varepsilon
(\Xi+\nabla \phi_{\xi,\Xi}) \cdot (e_k \otimes e_j-e_j \otimes e_k) \cdot \nabla \bar h \,dx
\\&
\stackrel{\eqref{Defh2}}{=}
-\int_\Rd \partial_\xi A(\omega_\varepsilon,\xi+\nabla \phi_\xi^T) 
\nabla \delta \phi_{\xi,\Xi}^T \cdot \nabla h_2 + \frac{1}{T} \delta \phi_{\xi,\Xi}^T \, h_2 \,dx
\\&~~~~~~
+\int_\Rd \partial_\xi^2 A(\omega_\varepsilon,\xi+\nabla \phi_\xi^T)(\Xi+\nabla \phi_{\xi,\Xi}^T)
\nabla \delta \phi_{\xi}^T \cdot (e_k \otimes e_j-e_j \otimes e_k) \cdot \nabla \bar h \,dx
\\&~~~~~~
+\int_\Rd 
\partial_\omega \partial_\xi A(\omega_\varepsilon,\xi+\nabla \phi_\xi^T)\delta \omega_\varepsilon
(\Xi+\nabla \phi_{\xi,\Xi}) \cdot (e_k \otimes e_j-e_j \otimes e_k) \cdot \nabla \bar h \,dx
\\&
\stackrel{\eqref{EquationSensitivityLinearizedCorrector}}{=}
\int_\Rd \partial_\omega \partial_\xi A(\omega_\varepsilon(x),\xi+\nabla \phi_\xi^T)\delta\omega_\varepsilon \big(\Xi+\nabla \phi_{\xi,\Xi}^T\big) \cdot \nabla h_2 \,dx
\\&~~~~~~
+\int_\Rd \partial_\xi^2 A(\omega_\varepsilon(x),\xi+\nabla \phi_\xi^T)\big(\Xi+\nabla \phi_{\xi,\Xi}^T\big) \nabla \delta \phi_\xi^T \cdot \nabla h_2 \,dx
\\&~~~~~~
+\int_\Rd \partial_\xi^2 A(\omega_\varepsilon,\xi+\nabla \phi_\xi^T)(\Xi+\nabla \phi_{\xi,\Xi}^T)
\nabla \delta \phi_{\xi}^T \cdot (e_k \otimes e_j-e_j \otimes e_k) \cdot \nabla \bar h \,dx
\\&~~~~~~
+\int_\Rd 
\partial_\omega \partial_\xi A(\omega_\varepsilon,\xi+\nabla \phi_\xi^T)\delta \omega_\varepsilon
(\Xi+\nabla \phi_{\xi,\Xi}) \cdot (e_k \otimes e_j-e_j \otimes e_k) \cdot \nabla \bar h \,dx.
\end{align*}
Inserting \eqref{Defh3} and \eqref{EquationSensitivityCorrector}, we obtain
\begin{align*}
\delta F &
=
\int_\Rd \partial_\omega \partial_\xi A(\omega_\varepsilon(x),\xi+\nabla \phi_\xi^T)\delta \omega_\varepsilon \big(\Xi+\nabla \phi_{\xi,\Xi}^T\big) \cdot \nabla h_2 \,dx
\\&~~~~
+\int_\Rd \partial_\omega A(\omega_\varepsilon,\xi+\nabla \phi_\xi^T) \delta \omega_\varepsilon \cdot \nabla h_{3} \,dx
\\&~~~~
+\int_\Rd 
\partial_\omega \partial_\xi A(\omega_\varepsilon,\xi+\nabla \phi_\xi^T)
\,\delta \omega_\varepsilon
\,
(\Xi+\nabla \phi_{\xi,\Xi}) \cdot (e_k \otimes e_j-e_j \otimes e_k) \cdot \nabla \bar h \,dx.
\end{align*}
This identification of $\frac{\partial F}{\partial \omega_\varepsilon}$ together with assumption \hyperlink{A3}{(A3)} gives rise to the sensitivity estimate
\begin{align*}
&\mathbb{E}\bigg[\bigg(\int_\Rd \bigg|\fint_{B_\varepsilon(x)} \bigg|\frac{\partial F}{\partial \omega_\varepsilon}\bigg| \,d\tilde x \bigg|^2 \,dx\bigg)^q \bigg]^{1/2q}
\\&
\leq C \mathbb{E}\Bigg[\bigg(\int_\Rd \bigg|\fint_{B_\varepsilon(x)} |\Xi+\nabla \phi_{\xi,\Xi}^T| (|\nabla \bar h| + |\nabla h_2|) \,d\tilde x \bigg|^2 \,dx\bigg)^{q}\Bigg]^{1/2q}
\\&~~~~
+C
\mathbb{E}\Bigg[\bigg(\int_\Rd \bigg|\fint_{B_\varepsilon(x)} |\xi+\nabla \phi_{\xi}^T| |\nabla h_3| \,d\tilde x \bigg|^2 \,dx\bigg)^{q}\Bigg]^{1/2q}.
\end{align*}
Plugging in this bound into Lemma~\ref{LemmaLqSpectralGap} and using $\mathbb{E}[F]=\int_\Rd g\cdot \nabla \mathbb{E}[\sigma_{\xi,\Xi,jk}^T]\,dx=0$ by stationarity of $\sigma_{\xi,\Xi,jk}^T$, we infer
\begin{align*}
\mathbb{E}\Big[\big|F\big|^{2q}\Big]^{1/2q}
&
\leq C q \varepsilon^{d/2}
\mathbb{E}\Bigg[\bigg(\int_\Rd \bigg|\fint_{B_\varepsilon(x)} |\Xi+\nabla \phi_{\xi,\Xi}^T| (|\nabla \bar h| + |\nabla h_2|) \,d\tilde x \bigg|^2 \,dx\bigg)^{q}\Bigg]^{1/2q}
\\&~~~~
+C q \varepsilon^{d/2}
\mathbb{E}\Bigg[\bigg(\int_\Rd \bigg|\fint_{B_\varepsilon(x)} |\xi+\nabla \phi_{\xi}^T| |\nabla h_3| \,d\tilde x \bigg|^2 \,dx\bigg)^{q}\Bigg]^{1/2q}.
\end{align*}
By Lemma~\ref{LemmaEstimateSensitivity} and \eqref{EstimateGradByrstar2} as well as \eqref{EstimateGradByrstar}, we obtain for $\tau,\bar\tau \in (0,1)$ and for $p$ close enough to $2$
\begin{align*}
&\mathbb{E}\Big[\big|F\big|^{2q}\Big]^{1/2q}
\\&
\leq  C |\Xi| q \varepsilon^{d/2}  r^{d/2}
\mathbb{E}\bigg[\bigg(\frac{r_{*,T,\xi,\Xi}}{\varepsilon}\bigg)^{(d-\delta)q/(1-\tau)}\bigg]^{(1-\tau)/2q}
\\&~~~~~~~~~~~~\times
\nonumber
\mathbb{E}\Bigg[\bigg(r^{-d} \int_\Rd (|\nabla \bar h|^p+|\nabla h_2|^p) \bigg(1+\frac{|x|}{r}\bigg)^{\alpha_0} \,dx\bigg)^{2q/p \tau} \Bigg]^{\tau/2q}
\\&~~~~
+ C |\xi| q \varepsilon^{d/2} r^{d/2}
\mathbb{E}\bigg[\bigg(\frac{r_{*,T,\xi}}{\varepsilon}\bigg)^{(d-\delta)q/(1-\bar \tau)}\bigg]^{(1-\bar \tau)/2q}
\\&~~~~~~~~~\times
\nonumber
\mathbb{E}\Bigg[\bigg(r^{-d} \int_\Rd |\nabla h_3|^p \bigg(1+\frac{|x|}{r}\bigg)^{\alpha_0} \,dx\bigg)^{2q/p \bar \tau} \Bigg]^{\bar \tau/2q}.
\end{align*}
By the weighted Meyers estimate in Lemma~\ref{LemmaWeightedMeyers} applied to \eqref{Defh3} and the uniform bound on $\partial_\xi^2 A$ from \hyperlink{R}{(R)}, we infer for any $\alpha_0$
\begin{align*}
&\mathbb{E}\Big[\big|F\big|^{2q}\Big]^{1/2q}
\\&
\leq C |\Xi| q \varepsilon^{d/2} r^{d/2}
\mathbb{E}\bigg[\bigg(\frac{r_{*,T,\xi,\Xi}}{\varepsilon}\bigg)^{(d-\delta)q/(1-\tau)}\bigg]^{(1-\tau)/2q}
\\&~~~~~~~~~~~\times
\nonumber
\mathbb{E}\Bigg[\bigg(r^{-d} \int_\Rd (|\nabla \bar h|^p +|\nabla h_2|^p) \bigg(1+\frac{|x|}{r}\bigg)^{\alpha_0} \,dx\bigg)^{2q/p \tau} \Bigg]^{\tau/2q}
\\&~~~
+C |\xi| q \varepsilon^{d/2} r^{d/2}
\mathbb{E}\bigg[\bigg(\frac{r_{*,T,\xi}}{\varepsilon}\bigg)^{(d-\delta)q/(1-\bar \tau)}\bigg]^{(1-\bar \tau)/2q}
\\&~~~~~~~~~\times
\nonumber
\mathbb{E}\Bigg[\bigg(r^{-d} \int_\Rd (|\nabla \bar h|^p+|\nabla h_2|^p) |\Xi+\nabla \phi_{\xi,\Xi}^T|^p \bigg(1+\frac{|x|}{r}\bigg)^{\alpha_0} \,dx\bigg)^{2q/p\bar \tau} \Bigg]^{\bar \tau/2q}.
\end{align*}
Arguing analogously to the derivation of \eqref{EstimateSensitivityLinearized} from \eqref{EstimateSensitivityLinearizedFirstIntermediate} but using also the estimate
\begin{align*}
&\int_\Rd r^{-d} |\nabla h_2|^{p^2/2}  \bigg(1+\frac{|x|}{r}\bigg)^{\alpha_{0}p/2+(d+1)(p-2)/2} \,dx
\\&
\leq C \int_\Rd r^{-d} |\nabla \bar h|^{p^2/2} 
 \bigg(1+\frac{|x|}{r}\bigg)^{\alpha_{1}} \,dx
\end{align*}
for $\alpha_{1}:=\alpha_{0}p/2+(d+1)(p-2)/2$
(which follows from \eqref{Defh2}, \hyperlink{A2}{(A2)}, and Lemma~\ref{LemmaWeightedMeyers}),
we deduce
\begin{align*}
&\mathbb{E}\Big[\big|F\big|^{2q}\Big]^{1/2q}
\leq 
C (1+|\xi|)^C |\Xi| q^C \bigg(\frac{\varepsilon}{r}\bigg)^{d/2}
\mathbb{E}\bigg[\bigg(\frac{r_{*,T,\xi,\Xi}}{\varepsilon}\bigg)^{(d-\delta)q/(1-\tau)}\bigg]^{(1-\tau)/2q}
\end{align*}
for any $0<\tau<1$ and any $q\geq C$. This establishes the estimate on functionals of $\sigma_{\xi,\Xi}^T$ in Lemma~\ref{LemmaEstimateLinearFunctionalsLinearized}.

\noindent
{\bf Part c: Estimates for linear functionals of the potential field $\theta_{\xi,\Xi}^T$.} {We consider
\begin{equation*}
  F:=\int_{\Rd}g\cdot\frac{1}{r}\nabla\theta^T_{\xi,\Xi,i}\,dx.
\end{equation*}}%
The argument is exactly the same as in part c of the proof of Lemma~\ref{LemmaEstimateLinearFunctionals}, as the relation between $\theta_{\xi,\Xi}^T$ and $\phi_{\xi,\Xi}^T$ is exactly the same as the one between $\theta_\xi^T$ and $\phi_{\xi,\Xi}^T$. As in the proof of Lemma~\ref{LemmaEstimateLinearFunctionals}, we introduce $\bar g$ as the solution to the PDE
\begin{align*}
-\Delta \bar g = \nabla \cdot g.
\end{align*}
We then obtain an estimate for $F$ of the form \eqref{EstimateSensitivityLinearizedFirstIntermediate}, but with an additional factor $\frac{1}{r}$ on the right-hand side, with $h$ solving the PDE $\smash{-\nabla \cdot (a_\xi^{T,*}\nabla h)+\frac{1}{T} h=\nabla \cdot (\bar g e_i)}$, and with $\smash{\hat h}$ solving the same PDE but with the new $h$. Inserting the Calderon-Zygmund bounds on $\bar g$ and the modified bound on $\nabla \hat h$ in the steps leading to \eqref{EstimateSensitivityLinearizedIntermediate} and \eqref{EstimateSensitivityLinearized}, we deduce the desired estimate
\begin{align*}
&\mathbb{E}\Big[\big|F\big|^{2q}\Big]^{1/2q}
\leq 
C (1+|\xi|)^C |\Xi| q^C \bigg(\frac{\varepsilon}{r}\bigg)^{d/2}
\mathbb{E}\bigg[\bigg(\frac{r_{*,T,\xi,\Xi}}{\varepsilon}\bigg)^{(d-\delta)q/(1-\tau)}\bigg]^{(1-\tau)/2q}
\end{align*}
for any $0<\tau<1$ and any $q\geq C$.

{\bf Part d: Proof of Lemma~\ref{LemmaCorrectorAveragesLinearized}.}
The proof of the estimates \eqref{EstimateCorrectorAverageDifferenceLinearized} and \eqref{EstimateCorrectorAverageLinearized} is analogous to the proof of \eqref{EstimateCorrectorAverageDifference} and \eqref{EstimateCorrectorAverage}.
\end{proof}

\subsection{Estimates on the corrector}

We first establish the moment bounds on $r_{*,T,\xi,\Xi}$.
\begin{proof}[Proof of Lemma~\ref{MomentsMinimalRadiusLinearized}]
The proof is entirely analogous to the proof of Lemma~\ref{MomentsMinimalRadius}:
Recall that the only required ingredients for the proof of moment bounds for $r_{*,T,\xi}$ in Lemma~\ref{MomentsMinimalRadius} were the estimate for functionals of the corrector $\phi_\xi^T$ from Lemma~\ref{LemmaEstimateLinearFunctionals}, the estimate on corrector averages from Lemma~\ref{LemmaCorrectorAverages}, the estimate on the corrector gradient given by \eqref{EstimateGradByrstar}, as well as the general technical results of Lemma~\ref{LemmaWavelets} and Lemma~\ref{LemmaMultiscaleDecomposition}. Lemma~\ref{LemmaEstimateLinearFunctionalsLinearized} provides bounds on the stochastic moments of linear functionals of the corrector $\phi_{\xi,\Xi}^T$ for the linearized PDE that are essentially analogous (up to the prefactor $q^C (1+|\xi|)^C$) to the bounds for functionals of $\phi_\xi^T$ in Lemma~\ref{LemmaEstimateLinearFunctionals}. Similarly, Lemma~\ref{LemmaCorrectorAveragesLinearized} provides estimates on averages of the linearized corrector $\phi_{\xi,\Xi}^T$ that are (again up to the prefactor $q^C (1+|\xi|)^C$) analogous to the bounds on averages of $\phi_\xi^T$ from Lemma~\ref{LemmaCorrectorAverages}.
Furthermore, the estimate \eqref{EstimateGradByrstar2} for the gradient of $\phi_{\xi,\Xi}^T$ is completely analogous to the estimate \eqref{EstimateGradByrstar} for the gradient of $\phi_\xi^T$. In conclusion, by the same arguments as the proof of Lemma~\ref{MomentsMinimalRadius} (up to replacing $\phi_{\xi}^T$ by $\phi_{\xi,\Xi}^T$, $\xi$ by $\Xi$, $r_{*,T,\xi}$ by $r_{*,T,\xi,\Xi}$, and including an additional prefactor $q^C (1+|\xi|)^C$ in the bounds), we obtain
\begin{align*}
\mathbb{E}\bigg[\Big(\frac{r_{*,T,\xi}(x_0)}{\varepsilon}\Big)^{(d-\delta/2)q/2}\Big]^{1/q} \leq C(d,m,\lambda,\Lambda,\rho,\nu) q^C (1+|\xi|)^C
.
\end{align*}
\end{proof}

\begin{proof}[Proof of Proposition~\ref{PropositionLinearizedCorrectorEstimate}]
By the same arguments as in the proof of Proposition~\ref{PropositionCorrectorEstimate}, we obtain the desired bounds on $\phi_{\xi,\Xi}^T$, $\sigma_{\xi,\Xi}^T$, and $\theta_{\xi,\Xi}^T$. Note that we simply need to replace the use of Lemma~\ref{LemmaEstimateLinearFunctionals} by Lemma~\ref{LemmaEstimateLinearFunctionalsLinearized}, the use of Lemma~\ref{LemmaCorrectorAverages} by Lemma~\ref{LemmaCorrectorAveragesLinearized}, the use of Lemma~\ref{MomentsMinimalRadius} by Lemma~\ref{MomentsMinimalRadiusLinearized}, and the use of \eqref{EstimateGradByrstar} by \eqref{EstimateGradByrstar2}.
\end{proof}

\begin{proof}[Proof of Corollary~\ref{CorollaryImprovedCorrectorDifferenceBounds}]
To establish the estimate \eqref{CorrectorBound}, we simply pass to the limit $T\rightarrow \infty$ in the $L^p$ version of the estimate \eqref{CorrectorEstimateWithoutAverage} for $\phi_\xi^T$ and $\sigma_\xi^T$.
By combining \eqref{CorrectorEstimate} with Lemma~\ref{LemmaCorrectorAverages} and Lemma~\ref{MomentsMinimalRadius}, we deduce
\begin{align*}
&\bigg(\fint_{B_r(x_0)} \bigg|\phi_\xi^T-\fint_{B_\varepsilon(0)} \phi_\xi^T(\tilde x) \,d\tilde x\bigg|^2 + \bigg|\sigma_\xi^T-\fint_{B_\varepsilon(0)} \sigma_\xi^T(\tilde x) \,d\tilde x\bigg|^2 \,dx\bigg)^{1/2}
\\&
\leq \mathcal{C} |\xi| \, \varepsilon \mu((|x_0|+r)/\varepsilon)
\end{align*}
with $\mu(s):=1$ for $d\geq 3$, $\mu(s):=(\log(2+s))^{1/2}$ for $d=2$, and $\mu(s):=(1+s)^{1/2}$ for $d=1$.
Passing to the limit $T\rightarrow \infty$, the quantities $\phi_\xi^T-\fint_{B_\varepsilon(0)} \phi_\xi^T(\tilde x) \,d\tilde x$ converge to a solution $\phi_\xi$ to the corrector equation with vanishing average in $B_\varepsilon(0)$ subject to the bound \eqref{CorrectorBound2d} (and similarly for $\sigma_\xi$).

It remains to show that the limit $\sigma_\xi$ satisfies the PDE \eqref{EquationFluxCorrector}, as passing to the limit in the equation \eqref{EquationLocalizedFluxCorrector} yields only
\begin{align*}
-\Delta \sigma_{\xi,jk} = \partial_j (A(\omega_\varepsilon,\xi+\nabla \phi_\xi)\cdot e_k)-\partial_k (A(\omega_\varepsilon,\xi+\nabla \phi_\xi)\cdot e_j).
\end{align*}
However, as we have $\nabla \cdot (A(\omega_\varepsilon,\xi+\nabla \phi_\xi))=0$ by the equation for $\phi_\xi$, we deduce
\begin{align*}
-\Delta \sum_k \partial_k \sigma_{\xi,jk} = -\Delta (A(\omega_\varepsilon,\xi+\nabla \phi_\xi)\cdot e_j).
\end{align*}
By the sublinear growth of $\sigma_{\xi,jk}$, it follows that
\begin{align*}
\nabla \cdot \sigma_\xi=A(\omega_\varepsilon,\xi+\nabla \phi_\xi)-\mathbb{E}[A(\omega_\varepsilon,\xi+\nabla \phi_\xi)]
=A(\omega_\varepsilon,\xi+\nabla \phi_\xi)-A_\shom(\xi),
\end{align*}
which is the flux corrector equation \eqref{EquationFluxCorrector}.

In order to derive \eqref{DifferenceBound}, we first write
\begin{align*}
\phi_{\xi_2}^T-\phi_{\xi_1}^T = \int_0^1 \phi_{(1-s)\xi_1+s\xi_2,\Xi}^T \,ds
\end{align*}
with $\Xi:=\xi_2-\xi_1$. This entails in case $d\geq 3$ using Proposition~\ref{PropositionLinearizedCorrectorEstimate} 
\begin{align*}
&\mathbb{E}\bigg[
\bigg(\fint_{B_r(x_0)} \big|\phi_{\xi_2}^T-\phi_{\xi_1}^T\big|^2\,dx\bigg)^{q/2}\bigg]^{\tfrac 1q} \leq C q^C (1+|\xi_1|+|\xi_2|)^C |\xi_2-\xi_1| \varepsilon
\end{align*}
respectively in case $d\leq 2$ using Proposition~\ref{PropositionLinearizedCorrectorEstimate}, Lemma~\ref{LemmaCorrectorAveragesLinearized}, and Lemma~\ref{MomentsMinimalRadiusLinearized}
\begin{align*}
&\mathbb{E}\bigg[
\bigg(\fint_{B_r(x_0)} \bigg|\phi_{\xi_2}^T-\phi_{\xi_1}^T-\fint_{B_\varepsilon(0)}\phi_{\xi_2}^T-\phi_{\xi_1}^T\,dx\bigg|^2\,dx\bigg)^{q/2}\bigg]^{\tfrac 1q}
\\&
\leq C q^C (1+|\xi_1|+|\xi_2|)^C |\xi_2-\xi_1| \varepsilon \mu((r+|x_0|)/\varepsilon).
\end{align*}
We may then pass to the limit $T\rightarrow \infty$ in these estimates to deduce \eqref{DifferenceBound}. The corresponding bound for $\sigma_{\xi_2}-\sigma_{\xi_2}$ is derived analogously.
\end{proof}

\section{Structural properties of the effective equation}

We finally establish the structural properties of the effective (homogenized) equation, as stated in Theorem~\ref{TheoremStructureProperties}.
\begin{proof}[Proof of Theorem~\ref{TheoremStructureProperties}]
{\bf Part a.}
The fact that the homogenized material law $A_\shom$ inherits the monotonicity properties of the law of the random material $A(\omega,\cdot)$ has already been established in \cite{DalMasoModica2,DalMasoModica} for Euler-Lagrange equations associated with convex integral functionals. Nevertheless, we shall briefly recall the (standard) proof of this result. The result $A_\shom(0)=0$ is immediate from the definition $A_\shom(\xi):=\mathbb{E}[A(\omega,\xi+\nabla \phi_\xi)]$ and \hyperlink{A2}{(A2)}, as $\phi_0\equiv 0$ is the (up to additive constants unique) solution to the corrector equation for $\xi=0$. We next have
\begin{align}
\label{EstimateXi}
|A_\shom(\xi_2)-A_\shom(\xi_1)|
&=\big|\mathbb{E}[A(\omega_\varepsilon,\xi_2+\nabla \phi_{\xi_2})]
-\mathbb{E}[A(\omega_\varepsilon,\xi_1+\nabla \phi_{\xi_1})]\big|
\\&
\nonumber
\stackrel{\hyperlink{A2}{(A2)}}{\leq} \Lambda \mathbb{E}[|\xi_2+\nabla \phi_{\xi_2}-\xi_1-\nabla \phi_{\xi_1}|].
\end{align}
Subtracting the corrector equations \eqref{EquationCorrector} for $\xi_1$ and $\xi_2$ from each other and testing the resulting equation with $(\phi_{\xi_2}-\phi_{\xi_1})\eta^2(x/r)$ for some cutoff $\eta$ with $\supp \eta \subset B_2$ and $\eta\equiv 1$ in $B_1$, we deduce
\begin{align*}
&\int_\Rd \big(A(\omega_\varepsilon,\xi_2+\nabla \phi_{\xi_2})-A(\omega_\varepsilon,\xi_1+\nabla \phi_{\xi_1})\big)\cdot\big(\nabla \phi_{\xi_2}-\nabla \phi_{\xi_1}\big) \eta^2\Big(\frac{x}{r}\Big) \,dx
\\&
\leq
\int_\Rd \big|A(\omega_\varepsilon,\xi_2+\nabla \phi_{\xi_2})-A(\omega_\varepsilon,\xi_1+\nabla \phi_{\xi_1})\big| \frac{1}{r}|\phi_{\xi_2}-\phi_{\xi_1}|
|\nabla \eta^2|\Big(\frac{x}{r}\Big) \,dx.
\end{align*}
This implies by \hyperlink{A1}{(A1)}, \hyperlink{A2}{(A2)}, and Young's inequality
\begin{align*}
&\int_\Rd |\nabla \phi_{\xi_2}-\nabla \phi_{\xi_1}|^2 \eta^2\Big(\frac{x}{r}\Big) \,dx
\\&
\leq \frac{4\Lambda^2}{\lambda^2} |\xi_1-\xi_2|^2 \int_\Rd \eta^2\Big(\frac{x}{r}\Big) \,dx
\\&~~~~
+\frac{C}{r^2} \int_\Rd |\phi_{\xi_2}-\phi_{\xi_1}|^2
|\nabla\eta|^2\Big(\frac{x}{r}\Big) \,dx.
\end{align*}
Dividing by $\int_\Rd \eta^2(\frac{x}{r}) \,dx$, taking the expectation, using stationarity, and passing to the limit $r\rightarrow \infty$ (using the fact that the correctors $\phi_\xi$ grow sublinearly), we deduce
\begin{align}
\label{LipschitzDependenceCorrector}
\mathbb{E}[|\nabla \phi_{\xi_1}-\nabla \phi_{\xi_2}|^2]\leq C(d) \frac{\Lambda^2}{\lambda^2} |\xi_1-\xi_2|^2
\end{align}
Inserting this into \eqref{EstimateXi}, we deduce the Lipschitz estimate in Theorem~\ref{TheoremStructureProperties}a.

Concerning the monotonicity property, we deduce by testing the corrector equations \eqref{EquationCorrector} for $\xi_1$ and $\xi_2$ with $(\phi_{\xi_1}-\phi_{\xi_2})\eta(x/r)$
\begin{align*}
&\int_\Rd \eta\Big(\frac{x}{r}\Big) \big(A(\omega_\varepsilon,\xi_2+\nabla \phi_{\xi_2})-A(\omega_\varepsilon,\xi_1+\nabla \phi_{\xi_1})\big)\cdot(\xi_2-\xi_1) \,dx
\\&
=\int_\Rd \eta\Big(\frac{x}{r}\Big) \big(A(\omega_\varepsilon,\xi_2+\nabla \phi_{\xi_2})-A(\omega_\varepsilon,\xi_1+\nabla \phi_{\xi_1})\big)\cdot(\xi_2+\nabla \phi_{\xi_2}-\xi_1-\nabla \phi_{\xi_1}) \,dx
\\&~~~~
+\frac{1}{r} \int_\Rd (\phi_{\xi_2}-\phi_{\xi_1}) (\nabla \eta)\Big(\frac{x}{r}\Big) \cdot \big(A(\omega_\varepsilon,\xi_2+\nabla \phi_{\xi_2})-A(\omega_\varepsilon,\xi_1+\nabla \phi_{\xi_1})\big) \,dx
\\&
\geq \lambda \int_\Rd \eta\Big(\frac{x}{r}\Big) \big|\xi_2+\nabla \phi_{\xi_2}-\xi_1-\nabla \phi_{\xi_1}\big|^2 \,dx
\\&~~~~
+\frac{1}{r} \int_\Rd (\phi_{\xi_2}-\phi_{\xi_1}) (\nabla \eta)\Big(\frac{x}{r}\Big) \cdot \big(A(\omega_\varepsilon,\xi_2+\nabla \phi_{\xi_2})-A(\omega_\varepsilon,\xi_1+\nabla \phi_{\xi_1})\big) \,dx
\end{align*}
which after dividing by $\int_\Rd \eta(\frac{x}{r}) \,dx$, taking the expectation and using stationarity, and passing to the limit $r\rightarrow \infty$ yields by the sublinear growth of the $\phi_{\xi}$
\begin{align*}
&\big(A_\shom(\xi_2)-A_\shom(\xi_1)\big)\cdot(\xi_2-\xi_1)
\\&
=\mathbb{E}\big[\big(A(\omega,\xi_2+\nabla \phi_{\xi_2})-A(\omega,\xi_1+\nabla \phi_{\xi_1})\big)\cdot(\xi_2-\xi_1)\big]
\\&
\geq \lambda |\xi_1-\xi_2|^2.
\end{align*}
{It only remains to show that $A_\shom\in C^1$, provided that additionally the condition \hyperlink{R}{(R)} is satisfied by the coefficient field $A(\omega_\varepsilon,\xi)$.
Taking the derivative with respect to $\xi$ in the relation $A_\shom^T(\xi):=\mathbb{E}[A(\omega_\varepsilon,\xi+\nabla \phi_\xi^T)]$, we obtain
\begin{align*}
\partial_\xi A_\shom^T(\xi) \Xi
=
\mathbb{E}\big[\partial_\xi A(\omega_\varepsilon,\xi+\nabla \phi_\xi^T)(\Xi+\nabla \phi_{\xi,\Xi}^T)\big].
\end{align*}
Passing to the limit $T\rightarrow \infty$ using Lemma~\ref{L:syscorr} and Lemma~\ref{L:syslincorr}, we deduce
\begin{align*}
\partial_\xi A_\shom(\xi) \Xi
=
\mathbb{E}\big[\partial_\xi A(\omega_\varepsilon,\xi+\nabla \phi_\xi)(\Xi+\nabla \phi_{\xi,\Xi})\big].
\end{align*}
The assertion $A_\shom\in C^1$ is then a consequence of \eqref{LipschitzDependenceCorrector} and the estimate $\mathbb{E}[|\nabla \phi_{\xi_1,\Xi}-\nabla \phi_{\xi_2,\Xi}|^2]\leq C|\xi_1-\xi_2|^\theta |\Xi|$ for some $\theta>0$,
which is derived analogously to \eqref{LipschitzDependenceCorrector} using the equation
\begin{align*}
&
-\nabla \cdot \big(\partial_\xi A(\omega_\varepsilon,\xi_1+\nabla \phi_{\xi_1})(\nabla \phi_{\xi_1,\Xi}-\nabla \phi_{\xi_2,\Xi})\big)
\\&
=\nabla \cdot \Big(
\big(\partial_\xi A(\omega_\varepsilon,\xi_1+\nabla \phi_{\xi_1})-\partial_\xi A(\omega_\varepsilon,\xi_2+\nabla \phi_{\xi_2})\big)
(\Xi+\nabla \phi_{\xi_2,\Xi})
\Big)
\end{align*}
as well as the higher integrability result $\mathbb{E}[|\Xi+\nabla \phi_{\xi_2,\Xi}|^p]\leq C |\Xi|^p$ for some $p>2$ (the latter being a consequence of Meyers).
}

\noindent
{\bf Part b.}
We next show that frame-indifference of the material law -- in the sense that $A(\omega_\varepsilon,O\xi)=O A(\omega_\varepsilon,\xi)$ for all $x\in \Rd$, almost every $\omega_\varepsilon$, all $\xi\in \Rmd$, and all $O\in \SO(m)$ -- is preserved under homogenization. Indeed, if $\phi_\xi$ solves the corrector equation $\nabla \cdot (A(\omega_\varepsilon,\xi+\nabla \phi_\xi))=0$ and if $O\in SO(m)$, then $O\phi_\xi$ solves the corrector equation $\nabla \cdot (A(\omega_\varepsilon,O\xi+\nabla O\phi_\xi))=0$. Using the uniqueness of the corrector up to additive constants, we obtain $\nabla O\phi_\xi=\nabla \phi_{O\xi}$. This entails
\begin{align*}
A_\shom(O\xi)
&= \mathbb{E}[A(\omega_\varepsilon,O\xi+\nabla \phi_{O\xi})]
\\&
= \mathbb{E}[A(\omega_\varepsilon,O\xi+\nabla O\phi_{\xi})]
= \mathbb{E}[O A(\omega_\varepsilon,\xi+\nabla \phi_{\xi})]
\\&
= O A_\shom(\xi).
\end{align*}

\noindent
{\bf Part c.}
We next show that isotropy of the probability distribution of the material law implies isotropy of the homogenized material law.
Let $V\in \SO(d)$. In this case, if $\phi_\xi$ solves the corrector equation $\nabla \cdot (A(\omega_\varepsilon(x),\xi+\nabla \phi_\xi(x)))=0$, then the rotated function $\phi_\xi(V\cdot)$ solves the corrector equation $\nabla \cdot \big(A\big(\omega_\varepsilon(V x),\xi V V^{-1}+\nabla(\phi_\xi(Vx))V^{-1}\big)V \big)=0$ for a rotated monotone operator, i.\,e.\ $\tilde \phi_{\xi V}(x):=\phi_\xi(Vx)$ is the corrector associated with the rotated operator 
\begin{align*}
\smash{A_\varepsilon^V}(x,\smash{\tilde\xi}):=A(\omega_\varepsilon(V x),\smash{\tilde \xi} \smash{V^{-1}})V
\end{align*} 
and slope $\xi V$ and we have $\nabla \tilde \phi_{\xi V}(x) = \nabla \phi_\xi (Vx) V$. This entails by the assumed equality of the laws of $A_\varepsilon(x,\smash{\tilde \xi}):=A(\omega_\varepsilon(x),\smash{\tilde \xi})$ and $\smash{A_\varepsilon^V}$
\begin{align*}
A_\shom(\xi V)
&= \mathbb{E}[A_\varepsilon(x,\xi V+\nabla \phi_{\xi V}(x))]
\\&
= \mathbb{E}[A_\varepsilon^{V}(x,\xi V+\nabla \tilde \phi_{\xi V}(x))]
\\&
= \mathbb{E}[A_\varepsilon^{V}(x,\xi V V^{-1}+\nabla (\phi_{\xi}(Vx))V^{-1})V]
\\&
= \mathbb{E}[A_\varepsilon(\omega_\varepsilon(V x),\xi +\nabla \phi_{\xi}(V x) )V]
\\&
= A_\shom(\xi)V.
\end{align*}
\end{proof}

\appendix

\section{Auxiliary results from regularity theory}
\label{SectionRegularity}

We now provide the (standard) proof of the Caccioppoli inequality and the hole-filling estimate for nonlinear elliptic PDEs with monotone nonlinearity from Lemma~\ref{LemmaCaccioppoliHoleFilling}.
\begin{proof}[Proof of Lemma~\ref{LemmaCaccioppoliHoleFilling}]
Let $R>0$ and let $\eta$ be a standard cutoff with $\eta\equiv 0$ outside of $B_R$, $\eta\equiv 1$ in $B_{R/2}$, and $|\nabla \eta|\leq C R^{-1}$. Testing the equation with $\eta^2 (u-b)$ for some $b\in \Rm$ to be chosen, we obtain by \hyperlink{A1}{(A1)}--\hyperlink{A2}{(A2)}
\begin{align*}
&\lambda \int_\Rd \eta^2 |\nabla u|^2 \,dx + \frac{1}{T} \int_\Rd \eta^2 |u|^2 \,dx
\\&
\stackrel{\hyperlink{A1}{(A1)}}{\leq} \int_\Rd  A(x,\nabla u)\cdot\eta^2 \nabla u \,dx + \frac{1}{T} \int_\Rd \eta^2 |u|^2 \,dx
\\&
\leq -\int_\Rd 2\eta (A(x,\nabla u)+g)\cdot (u-b) \otimes \nabla \eta \,dx + \frac{1}{T} \int_\Rd \eta^2 u b \,dx
\\&~~~~
+ \int_\Rd \eta^2 \Big(-g\cdot \nabla u + \frac{1}{T} f (u-b)\Big) \,dx 
\\&
\stackrel{\hyperlink{A2}{(A2)}}{\leq}
\frac{C \Lambda}{R} \int_{B_R(x_0)\setminus B_{R/2}(x_0)} \eta (|\nabla u| + |g|) |u-b| \,dx
+ \frac{1}{T} \int_\Rd \eta^2 u b \,dx
\\&~~~~
+ \int_\Rd \eta^2 \Big(-g\cdot \nabla u + \frac{1}{T} f (u-b)\Big) \,dx.
\end{align*}
Young's inequality and an absorption argument yields the Caccioppoli-type inequality
\begin{align*}
&\int_{B_{R/2}(x_0)} |\nabla u|^2 + \frac{1}{T} |u|^2 \,dx
\\&
\leq
\frac{C}{R^2} \int_{B_R(x_0)\setminus B_{R/2}(x_0)} |u-b|^2 \,dx
+\frac{C}{T} \int_{B_R(x_0)} |b|^2 \,dx
+C \int_{B_R(x_0)} |g|^2 + \frac{1}{T}|f|^2 \,dx
\end{align*}
which directly implies \eqref{CaccippoliMonotone}.

An application of the Poincar\'e inequality on the annulus $B_R(x_0)\setminus B_{R/2}(x_0)$ in the previous estimate gives upon choosing $b:=\fint_{B_R(x_0)\setminus B_{R/2}(x_0)} u \,dx$, using also Jensen's inequality and the fact that $|B_R(x_0)|\sim |B_R(x_0)\setminus B_{R/2}(x_0)|$ to estimate the second term on the right-hand side,
\begin{align*}
&\int_{B_{R/2}(x_0)} |\nabla u|^2 \,dx + \frac{1}{T} \int_{B_{R/2}(x_0)} |u|^2 \,dx
\\&
\leq
C \int_{B_R(x_0)\setminus B_{R/2}(x_0)} |\nabla u|^2 \,dx
+ \frac{C}{T} \int_{B_R(x_0)\setminus B_{R/2}(x_0)} |u|^2 \,dx
\\&~~~~~
+C \int_{B_R(x_0)} |g|^2 + \frac{1}{T}|f|^2 \,dx.
\end{align*}
This in turn yields by the hole-filling argument
\begin{align*}
&\int_{B_{R/2}(x_0)} |\nabla u|^2 \,dx + \frac{1}{T} \int_{B_{R/2}(x_0)} |u|^2 \,dx
\\&
\leq
\theta \bigg(\int_{B_R(x_0)} |\nabla u|^2 \,dx
+ \frac{1}{T} \int_{B_R(x_0)} |u|^2 \,dx\bigg)
+C \int_{B_R(x_0)} |g|^2 + \frac{1}{T}|f|^2 \,dx
\end{align*}
for $\theta=\frac{C}{C+1}$. Iterating this estimate with $R$ replaced by $2^{-k} R$, we deduce our desired estimate \eqref{HoleFillingMonotone} if $r$ is of the form $r=2^{-K}R$. For other values of $r$, we may simply use the already-established inequality for the next bigger radius of the form $r=2^{-K}R$ and increase the constant $C$ if necessary.
\end{proof}

We next provide a small-scale H\"older regularity result for the linearized corrector $\phi_{\xi,\Xi}^T$.
\begin{proposition}
\label{PropositionRegularityLinearizedCorrector}
Let the assumptions \hyperlink{A1}{(A1)}--\hyperlink{A3}{(A3)} and \hyperlink{P1}{(P1)}--\hyperlink{P2}{(P2)} as well as \hyperlink{R}{(R)} be in place. Then
there exists $\alpha>0$ such that for all $\xi,\Xi \in \Rmd$ and all $T\geq \varepsilon^2$ the gradient of the linearized corrector $\phi_{\xi,\Xi}^T$ is subject to a H\"older regularity estimate of the form
\begin{align*}
&|(\Xi+\nabla \phi_{\xi,\Xi}^T)(x)-(\Xi+\nabla \phi_{\xi,\Xi}^T)(y)|
\\&
\leq \mathcal{C}_{\xi,T}(x_0) (1+|\xi|)^C \bigg(\frac{|x-y|}{\varepsilon}\bigg)^\alpha
\bigg(\dashint_{B_{\varepsilon}(x_0)} |\Xi+\nabla \phi_{\xi,\Xi}^T|^2 \,dx
+ |\Xi|^2\bigg)^{1/2}
\end{align*}
for all $x,y\in B_{\varepsilon/2}(x_0)$,
where $\mathcal{C}_{\xi,T}=\mathcal C_{\xi,T}(\omega_\e,x_0)$ denotes a stationary random field with stretched exponential moment bounds $\mathbb{E}[\exp(\tfrac{1}{C}\mathcal{C}^{1/C}_{\xi,T})]\leq 2$ for some constant $C=C(d,m,\lambda,\Lambda,\rho,\nu)$.
\end{proposition}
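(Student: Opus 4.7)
The plan is to view the defining equation \eqref{EquationLocalizedCorrectorLinearized} for $\phi_{\xi,\Xi}^T$ as a linear elliptic system with H\"older-continuous coefficient field $a_\xi^T(x)=\partial_\xi A(\omega_\varepsilon(x),\xi+\nabla\phi_\xi^T(x))$ and to apply linear Schauder theory on the $\varepsilon$-scale. The argument splits into three steps: small-scale regularity of the nonlinear corrector, H\"older regularity of $a_\xi^T$, and Schauder regularity for the linearized equation. The stochastic bounds will be extracted in parallel.

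\textbf{Step 1 (Small-scale $C^{1,\alpha}$-regularity of the nonlinear corrector).} I would invoke the regularity assumption \hyperlink{R}{(R)}: in each of the three scenarios (scalar $m=1$, $d\leq 2$, or Uhlenbeck structure) a small-scale $C^{1,\alpha}$ theory is available for solutions to $-\nabla\cdot(A(\omega_\varepsilon,\xi+\nabla \phi_\xi^T))+\tfrac1T\phi_\xi^T=0$. After rescaling from the $\varepsilon$-scale to the unit scale and using the fact that $|\partial_\omega A|\leq \Lambda|\cdot|$ and that $\omega_\varepsilon$ is Lipschitz on the $\varepsilon$-scale with norm controlled by $\mathcal C(x_0)\varepsilon^{-1}$, standard DeGiorgi--Nash (for $m=1$ or $d\leq 2$) or Uhlenbeck's theory yields an estimate of the form
\begin{align*}
\varepsilon^\alpha[\nabla\phi_\xi^T]_{C^\alpha(B_{\varepsilon/2}(x_0))}\,\leq\,\mathcal C_{1}(x_0)\bigg(\fint_{B_\varepsilon(x_0)}|\xi+\nabla\phi_\xi^T|^2\,dx\bigg)^{1/2},
\end{align*}
where $\mathcal C_{1}$ is a stationary random field depending polynomially on $\mathcal C(x_0)$ from \hyperlink{R}{(R)} and therefore inherits the stretched-exponential moment bound (after using that a polynomial of random variables with stretched exponential moments still has stretched exponential moments). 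Plugging in the a priori bound \eqref{EstimateGradByrstar} from Step~1 of the corrector estimates converts the $L^2$-average of $|\xi+\nabla\phi_\xi^T|$ into $|\xi|(r_{*,T,\xi}/\varepsilon)^{(d-\delta)/2}$, whose moments are under control by Lemma~\ref{MomentsMinimalRadius}.

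\textbf{Step 2 (H\"older continuity of $a_\xi^T$).} Writing $a_\xi^T(x)-a_\xi^T(y)=\partial_\xi A(\omega_\varepsilon(x),\xi+\nabla\phi_\xi^T(x))-\partial_\xi A(\omega_\varepsilon(y),\xi+\nabla\phi_\xi^T(y))$, assumption \hyperlink{R}{(R)} provides a uniform bound on $\partial_\xi^2 A$, and \hyperlink{A3}{(A3)} provides a uniform bound on $\partial_\omega\partial_\xi A$. Combined with the Lipschitz bound on $\omega_\varepsilon$ from \hyperlink{R}{(R)} and the H\"older bound on $\nabla\phi_\xi^T$ obtained in Step~1, this yields
\begin{align*}
\varepsilon^\alpha[a_\xi^T]_{C^\alpha(B_{\varepsilon/2}(x_0))}\,\leq\,\mathcal C_{2}(x_0)(1+|\xi|)^C,
\end{align*}
with $\mathcal C_{2}$ another stationary random field with stretched exponential moments.

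\textbf{Step 3 (Schauder estimate for the linearized equation).} The linearized corrector $v:=\phi_{\xi,\Xi}^T$ solves the linear system $-\nabla\cdot(a_\xi^T\nabla v)=\nabla\cdot(a_\xi^T\Xi)-\tfrac{1}{T}v$ in $B_{\varepsilon}(x_0)$. Rescaling to the unit ball and applying classical interior Schauder estimates for linear uniformly elliptic equations (valid as we are in one of the three situations of \hyperlink{R}{(R)} that ensure a $C^{1,\alpha}$ regularity theory for linear systems) produces
\begin{align*}
\varepsilon^\alpha[\nabla v]_{C^\alpha(B_{\varepsilon/2}(x_0))}\,\leq\, C\big(1+\varepsilon^\alpha[a_\xi^T]_{C^\alpha}\big)^C\bigg(\fint_{B_\varepsilon(x_0)}|\Xi+\nabla v|^2\,dx\bigg)^{1/2}+\text{lower order}.
\end{align*}
The lower-order contribution from the massive term $\tfrac{1}{T}v$ can be absorbed using the exponential localization of Lemma~\ref{L:exploc}, which yields $\tfrac{1}{\sqrt T}|v|\lesssim |\Xi|$ on local $L^2$-averages (after a dyadic sum). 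Plugging in Step~2 produces the desired H\"older estimate with random constant $\mathcal C_{\xi,T}(x_0):=C(1+\mathcal C_2(x_0))^C$.

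\textbf{Main obstacle.} The core technical point is Step~1: one needs a quantitative small-scale $C^{1,\alpha}$ estimate for the \emph{nonlinear} monotone equation with random right-hand side $\tfrac{1}{T}\phi_\xi^T$, in which the constant depends only on the microscale Lipschitz norm of $\omega_\varepsilon$ and on $|\xi|$ in a controlled way. In the Uhlenbeck case this requires invoking the classical Uhlenbeck regularity theorem in its version with variable coefficients; in the case $m=1$ one uses DeGiorgi--Nash; in the case $d\leq 2$ one uses Morrey's embedding. In each case, one must carefully rescale from the $\varepsilon$-scale to the unit scale so that the microscale Lipschitz bound on $\omega_\varepsilon$ becomes a unit-scale Lipschitz bound. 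Once these pieces are in place, the proof is essentially linear Schauder theory, as outlined above.
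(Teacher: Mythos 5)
Your proof proposal follows the same strategy as the paper's proof: establish small-scale $C^{1,\alpha}$ regularity of $\nabla\phi_\xi^T$ under the three scenarios in \hyperlink{R}{(R)} (this is precisely the content of Proposition~\ref{C1alphaRegularity}), deduce H\"older continuity of the frozen coefficient field $a_\xi^T=\partial_\xi A(\omega_\varepsilon,\xi+\nabla\phi_\xi^T)$ from this together with the Lipschitz bound on $\omega_\varepsilon$ in \hyperlink{R}{(R)} and \hyperlink{A3}{(A3)}, and finally apply classical Schauder theory on the $\varepsilon$-scale to the linear corrector equation \eqref{EquationLocalizedCorrectorLinearized}. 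One minor imprecision in your Step~3 is worth noting: Lemma~\ref{L:exploc} controls exponentially weighted $L^2$-averages of $\tfrac{1}{\sqrt{T}}\phi_{\xi,\Xi}^T$ at scale $\sqrt{T}\geq\varepsilon$, not at scale $\varepsilon$; bluntly passing to the $\varepsilon$-ball loses a factor $(\sqrt{T}/\varepsilon)^{d/2}$, so the claimed bound $\tfrac{1}{\sqrt{T}}|v|\lesssim|\Xi|$ on $\varepsilon$-averages does not follow directly from that lemma. To handle the massive term cleanly one may either rewrite the equation as the paper does, in terms of $u=\Xi\cdot(x-x_0)+\phi_{\xi,\Xi}^T$ so that the source becomes $\tfrac1T\Xi\cdot(x-x_0)$, or else invoke the second defining condition of $r_{*,T,\xi,\Xi}$ in \eqref{Definitionrstar2} together with Lemma~\ref{MomentsMinimalRadiusLinearized}. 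The overall route and ingredients are nevertheless identical to the paper's argument.
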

\begin{proof}
The result is a straightforward consequence of classical Schauder theory (see e.\,g.\ the proof of \cite[Theorem~5.19]{GiaquintaMartinazzi})
applied to the equation
\begin{align*}
-\nabla \cdot (\partial_\xi A(\omega_\varepsilon,\xi+\nabla \phi_\xi^T)(\Xi+\nabla \phi_{\xi,\Xi}^T)) + \frac{1}{T} (\Xi\cdot(x-x_0)+\phi_{\xi,\Xi}^T) =\frac{1}{T} \Xi\cdot(x-x_0),
\end{align*}
which is possible by the H\"older continuity of the coefficient $\partial_\xi A(\omega_\varepsilon,\xi+\nabla \phi_\xi^T)$ on $B_\varepsilon(x_0)$, which in turn may be deduced from Proposition~\ref{C1alphaRegularity}, our regularity assumptions on $\omega_\varepsilon$ (see \hyperlink{R}{(R)}), and the Lipschitz dependence of $\partial_\xi A$ on both variables (see \hyperlink{A3}{(A3)} and \hyperlink{R}{(R)}).
\end{proof}

\begin{proposition}
\label{C1alphaRegularity}
Let the assumptions \hyperlink{A1}{(A1)}--\hyperlink{A3}{(A3)} and \hyperlink{P1}{(P1)}--\hyperlink{P2}{(P2)} as well as \hyperlink{R}{(R)} be in place. Then
there exists $\alpha>0$ such that for all $\xi \in \Rmd$ and all $T\geq \varepsilon^2$ the gradient of the corrector $\phi_\xi^T$ is subject to a H\"older regularity estimate of the form
\begin{align*}
|(\xi+\nabla \phi_\xi^T)(x)-(\xi+\nabla \phi_\xi^T)(y)|
\leq \mathcal{C}_{\xi,T}(x_0) (1+|\xi|) \bigg(\frac{|x-y|}{\varepsilon}\bigg)^\alpha
\end{align*}
for all $x,y\in B_{\varepsilon/2}(x_0)$,
where $\mathcal{C}_{\xi,T}= \mathcal{C}_{\xi,T}(x_0)$ denotes a stationary random field with stretched exponential moment bounds $\mathbb{E}[\exp(\tfrac{1}{C}\mathcal{C}^{1/C}_{\xi,T})]\leq 2$ for some constant $C=C(d,m,\lambda,\Lambda,\rho,\nu)$.
\end{proposition}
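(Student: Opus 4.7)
\textbf{Proof proposal for Proposition~\ref{C1alphaRegularity}.} The plan is to rescale the localized corrector equation to the unit scale so that the problem reduces to a purely deterministic (but quantitative) small-scale $C^{1,\alpha}$ result for a quasilinear elliptic system with Lipschitz coefficients. First, given $x_0\in\Rd$, I would introduce the rescaled fields
\[
\hat\omega(y):=\omega_\varepsilon(x_0+\varepsilon y),
\qquad
\hat u(y):=\varepsilon^{-1}\bigl(\phi_\xi^T(x_0+\varepsilon y)-c\bigr),
\qquad
v(y):=\xi\cdot y+\hat u(y),
\]
where the constant $c$ is chosen so that $\hat u$ has vanishing mean on $B_1$. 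A direct computation using \eqref{EquationLocalizedCorrector} shows that $v$ satisfies on $B_1$ the rescaled PDE
\[
-\nabla_y\cdot \bigl(A(\hat\omega(y),\nabla v)\bigr)+\tfrac{\varepsilon^2}{T}(\hat u+c/\varepsilon)=0.
\]
By assumption \hyperlink{R}{(R)}, the field $\hat\omega$ is Lipschitz on $B_1$ with $\|\nabla\hat\omega\|_{L^\infty(B_1)}\leq \mathcal C(x_0)$, where $\mathcal C(x_0)$ has uniformly bounded stretched exponential moments; moreover, the assumption $T\geq\varepsilon^2$ ensures that the zero-order coefficient $\varepsilon^2/T$ is bounded by $1$.

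Next, I would invoke the classical deterministic small-scale $C^{1,\alpha}$ regularity theory for quasilinear elliptic systems satisfying \hyperlink{A1}{(A1)}, \hyperlink{A2}{(A2)}, and the uniform second-derivative bound $|\partial_\xi^2 A|\leq\Lambda$ from \hyperlink{R}{(R)}. Under each of the three structural scenarios in \hyperlink{R}{(R)} such a theory is available: for the scalar case $m=1$ via DiBenedetto/Tolksdorf/Uhlenbeck, for the low-dimensional case $d\leq 2$ via a Meyers-type improvement combined with Morrey embedding, and for systems of Uhlenbeck structure via Uhlenbeck's classical theorem. Combined with the Lipschitz regularity of $\hat\omega\mapsto A(\hat\omega(y),\cdot)$ in $y$, one obtains in all three cases an interior Schauder-type estimate of the form
\[
[\nabla v]_{C^\alpha(B_{1/2})}\leq
C_{\mathrm{reg}}\bigl(1+\|\nabla\hat\omega\|_{L^\infty(B_{3/4})}\bigr)^{C}
\Bigl(\|\nabla v\|_{L^2(B_{3/4})}+\tfrac{\varepsilon^2}{T}\|\hat u\|_{L^2(B_{3/4})}+|\xi|\Bigr)
\]
for some $\alpha\in(0,1)$ depending only on $d,m,\lambda,\Lambda$. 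This is the step where one truly relies on one of the three scenarios of \hyperlink{R}{(R)} being satisfied, and I expect it to be the main technical load of the argument; in particular, the polynomial (as opposed to exponential) dependence on the Lipschitz norm of the coefficients is what will ultimately preserve stretched exponential moments.

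To control the right-hand side, I would use \eqref{EstimateGradByrstar} after undoing the rescaling:
\[
\|\nabla v\|_{L^2(B_{3/4})}^2\leq C\fint_{B_\varepsilon(x_0)}|\xi+\nabla\phi_\xi^T|^2\,dx\leq C|\xi|^2\bigl(r_{*,T,\xi}(x_0)/\varepsilon\bigr)^{d-\delta},
\]
and by the Poincar\'e inequality together with the choice of $c$, the term $\tfrac{\varepsilon^2}{T}\|\hat u\|_{L^2(B_{3/4})}$ is controlled by the same expression. Combining the moment bound on $r_{*,T,\xi}$ from Lemma~\ref{MomentsMinimalRadius} with the stretched exponential moment bound on $\mathcal C$ from \hyperlink{R}{(R)} (and using the calculus rules for stretched exponential moments in Lemma~\ref{L:calculusstretched}), the resulting prefactor on $|\xi|$ becomes a stationary random field with stretched exponential moments.

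Finally, to conclude I would unscale: since $\xi+\nabla\phi_\xi^T(x_0+\varepsilon y)=\nabla v(y)$, the Hölder seminorm in the original variables picks up exactly the desired scaling factor $\varepsilon^{-\alpha}$, yielding for all $x,y\in B_{\varepsilon/2}(x_0)$
\[
\bigl|(\xi+\nabla\phi_\xi^T)(x)-(\xi+\nabla\phi_\xi^T)(y)\bigr|
\leq \mathcal C_{\xi,T}(x_0)(1+|\xi|)\bigl(|x-y|/\varepsilon\bigr)^\alpha.
\]
Stationarity of $\mathcal C_{\xi,T}$ follows from the stationarity of $r_{*,T,\xi}$ and of the local Lipschitz constant of $\omega_\varepsilon$. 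The most delicate point will be extracting a deterministic regularity estimate with \emph{polynomial} dependence on the Lipschitz norm of $\hat\omega$ in each of the three settings of \hyperlink{R}{(R)}—one cannot merely quote a qualitative regularity theorem, but must trace through the proofs (or cite quantitative versions) to verify that the dependence on coefficient regularity remains compatible with the stretched exponential integrability claimed.
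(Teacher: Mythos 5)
Your proposal takes a genuinely different structural route from the paper, though both hinge on the same three regimes of assumption~\hyperlink{R}{(R)}. The paper's proof begins by differentiating the localized corrector equation \eqref{EquationLocalizedCorrector} in $x_i$, which yields a \emph{linear} elliptic system
\begin{align*}
-\nabla \cdot (\partial_\xi A(\omega_\varepsilon,\xi+\nabla \phi_\xi^T)\nabla \partial_i \phi_\xi^T)+\tfrac{1}{T}\partial_i \phi_\xi^T=\nabla \cdot (\partial_\omega \partial_\xi A(\omega_\varepsilon,\xi+\nabla \phi_\xi^T)\,\partial_i \omega_\varepsilon),
\end{align*}
and then invokes linear regularity theory -- Meyers for $d\leq 2$, De Giorgi--Nash--Moser for $m=1$, Uhlenbeck for Uhlenbeck structure -- applied to $\partial_i\phi_\xi^T$. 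The crucial payoff of that differentiation is that the local Lipschitz constant of $\omega_\varepsilon$ then enters \emph{only as a right-hand side}, i.e.\ additively, so the resulting estimate is a sum of terms each with easy-to-track stretched exponential moments. Your rescaled, undifferentiated formulation puts $\|\nabla\hat\omega\|_{L^\infty}$ into the \emph{prefactor} of the Schauder estimate; this is still compatible with stretched exponential moments (products of such random variables remain stretched exponential, with a worse exponent), but it requires the additional -- and nontrivial -- quantitative input that the quasilinear $C^{1,\alpha}$ estimate in each of the three settings has only \emph{polynomial} dependence on the coefficient Lipschitz norm, as you flag.

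One point where your sketch, taken literally, has a gap: in the case $d\leq 2$ you propose ``Meyers-type improvement combined with Morrey embedding'' for the \emph{nonlinear} rescaled equation. Meyers applied directly to the quasilinear equation only yields $\nabla v\in L^p$ for some $p>2$, and Morrey then improves the \emph{solution} $v$ to a H\"older space, not its gradient. To obtain $\nabla v\in C^\alpha$ this way you must first pass to the differentiated (linear) equation for $\partial_i v$, apply Meyers to obtain $\nabla\partial_i v\in L^p$, and then use Morrey -- which is precisely what the paper does. So while your overall plan is sound for the scalar and Uhlenbeck cases (where genuine quasilinear $C^{1,\alpha}$ theorems exist), in the $d\leq 2$ case it implicitly relies on the paper's differentiation step and should be phrased as such.
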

\begin{proof}
We differentiate the equation $-\nabla \cdot (A(\omega_\varepsilon,\xi+\nabla \phi_\xi^T))+\frac{1}{T}\phi_\xi^T=0$. This yields
\begin{align}
\label{DifferentiatedEquation}
-\nabla \cdot (\partial_\xi A(\omega_\varepsilon,\xi+\nabla \phi_\xi)\nabla \partial_i \phi_\xi^T)+\frac{1}{T}\partial_i \phi_\xi^T=\nabla \cdot (\partial_\omega \partial_\xi A(\omega_\varepsilon,\xi+\nabla \phi_\xi^T)\partial_i \omega_\varepsilon).
\end{align}
By \hyperlink{A1}{(A1)}--\hyperlink{A3}{(A3)}, this is a linear elliptic system for the derivative $\partial_i \phi_\xi^T$ with uniformly elliptic and bounded coefficient field.

\emph{Case a: Two-dimensional systems with smooth coefficients.}
Meyers estimate for the PDE \eqref{DifferentiatedEquation} in the form of Lemma~\ref{L:linearMeyersLocalized} with $T:=\infty$ and the condition \hyperlink{A3}{(A3)} imply for some $p=p(d,m,\lambda,\Lambda)>2$ and any $b\in \Rm$
\begin{align*}
&\bigg(\fint_{B_{\varepsilon/2}(x_0)} |\nabla \partial_i \phi_\xi^T|^p \,dx \bigg)^{1/p}
\\&
\leq C \bigg(\fint_{B_\varepsilon(x_0)} |\nabla \partial_i \phi_\xi^T|^2 \,dx\bigg)^{1/2} + C \bigg(\fint_{B_\varepsilon(x_0)} |\nabla \omega_\varepsilon|^p + \Big|\frac{1}{T}(\phi_\xi^T-b)\Big|^p \,dx\bigg)^{1/p}
\end{align*}
Using the Caccioppoli inequality \eqref{CaccippoliMonotone} with $T=\infty$ for the PDE \eqref{DifferentiatedEquation}, choosing $p-2>0$ small enough, and using the Poincar\'e-Sobolev inequality, we get by $T\geq \varepsilon^2$
\begin{align*}
&\bigg(\fint_{B_{\varepsilon/2}(x_0)} |\nabla \partial_i \phi_\xi^T|^p \,dx \bigg)^{1/p}
\\&
\leq \frac{C}{\varepsilon} \bigg(\fint_{B_{2\varepsilon}(x_0)} |\nabla \phi_\xi^T|^2 \,dx\bigg)^{1/2} + C \bigg(\fint_{B_{2\varepsilon}(x_0)} |\nabla \omega_\varepsilon|^p \,dx\bigg)^{1/p}.
\end{align*}
By our estimate \eqref{EstimateGradByrstar}, Lemma~\ref{MomentsMinimalRadius}, and the bound on $\nabla \omega_\varepsilon$ in \hyperlink{R}{(R)}, the right-hand side may be bounded by $\mathcal{C} \varepsilon^{-1} (|\xi|+1)$ for some random constant $\mathcal{C}$ with stretched exponential moments.
By Morrey's embedding, we obtain the desired estimate.

\emph{Case b: Scalar equations and systems with Uhlenbeck structure with smooth coefficients.}

In the case of a scalar equation, we infer the desired H\"older continuity of $\partial_i \phi_\xi^T$ from De~Giorgi-Nash-Moser theory: Applying \cite[Theorem~8.24]{GT} to the equation \eqref{DifferentiatedEquation}, we deduce
\begin{align*}
&\varepsilon^{\alpha-1} \sup_{x_1,x_2\in B_{\varepsilon/2}(x_0)} \frac{|\partial_i \phi_\xi^T(x_1)-\partial_i \phi_\xi^T(x_2)|}{|x_1-x_2|^\alpha}
\\&
\leq C \varepsilon^{-1} \bigg(\fint_{B_\varepsilon(x_0)} |\partial_i \phi_\xi^T|^2 \,dx\bigg)^{1/2}
+C \bigg(\fint_{B_\varepsilon(x_0)} |\nabla \omega_\varepsilon|^{2d} \,dx\bigg)^{1/2d}.
\end{align*}
Using our regularity assumption on $\omega_\varepsilon$ from \hyperlink{R}{(R)} and again  \eqref{EstimateGradByrstar} and Lemma~\ref{MomentsMinimalRadius}, we conclude.

In the systems' case, one replaces the De~Giorgi-Nash-Moser theory by Uhlenbeck's regularity result \cite{Uhlenbeck}.
\end{proof}

We present the arguments for Remark~\ref{R:T2} and Remark~\ref{R:T3}.
\begin{proof}[Proof of Remark~\ref{R:T2}.]
  The existence of $u_{\shom}$ is guaranteed (and only requires $g\in H^1(\Rd;\Rmd)$), since in view of Theorem~\ref{TheoremStructureProperties}, the effective material law $A_\shom$ inherits the monotone structure from the heterogeneous material law $A(\omega,\cdot)$. A standard energy estimate yields
  \begin{equation*}
    \|\nabla u_{\shom}\|_{L^2(\Rd)}\leq C\|g\|_{L^2(\Rd)},
  \end{equation*}
  where here and below $C$ only depends on $d,m,\lambda$ and $\Lambda$. By appealing to the difference quotient technique of L.\ Nirenberg we may differentiate this equation with respect to the spatial coordinate $x_i$ and we get the linear system
  \begin{align}
    \label{DifferentiatedEquationUhom}
    &-\nabla \cdot (a_{\xi}\nabla \partial_iu_\shom) =\nabla \cdot (\partial_ig),
  \end{align}
  where $a_\xi:=\partial_\xi A_\shom(\nabla u_\shom)$ is a uniformly elliptic and bounded coefficient field by the structure properties of $A_{\hom}$ stated in Theorem~\ref{TheoremStructureProperties}a. An energy estimate thus yields
  \begin{equation*}
    \|\nabla\partial_i u_{\shom}\|_{H^1(\Rd)}\leq C\|g\|_{H^1(\Rd)}.
  \end{equation*}
  We claim that with help of the small-scale regularity condition \hyperlink{R}{(R)} we get the Lipschitz-estimate,
  \begin{equation}\label{UniformGradBoundUhom}
    ||\nabla u_\shom||_{L^\infty(\Rd)} \leq C (||\nabla g||_{L^p(\Rd)}+||\nabla u_{\shom}||_{L^2(\Rd)}).
  \end{equation}
  For the argument note that condition \hyperlink{R}{(R)} for $d\geq 3$ either assumes that we are in the scalar case, i.e.~$m=1$, or that $A_{\shom}$ has Uhlenbeck structure. In the scalar case, \eqref{UniformGradBoundUhom} follows from a Moser iteration for \eqref{DifferentiatedEquationUhom} (cf.~\cite[Theorem~8.15]{GT}), while in the vector-valued case we have to appeal to the Uhlenbeck structure of the limiting equation (and hence, in essence, use the fact that $|\nabla u_\shom|^2$ is a subsolution to a suitable elliptic PDE, which allows one to apply a Moser iteration again). In summary we get the claimed estimate on $\widehat C(\nabla u_{\shom})$.
\end{proof}

\begin{proof}[Proof of Remark~\ref{R:T3}.]
  The existence of $u_{\shom}$ is guaranteed, since in view of Theorem~\ref{TheoremStructureProperties}, the effective material law $A_\shom$ inherits the monotone structure from the heterogeneous material law $A(\omega,\cdot)$. The standard energy estimate yields
  \begin{equation*}
    \|u_{\shom}\|_{H^1(\Rd)}\leq C\|g\|_{L^2(\Rd)},
  \end{equation*}
  where here and below $C$ only depends on $d,m,\lambda$ and $\Lambda$. Differentiation with respect to $x_i$ yields the linear system
  \begin{align}
    \label{DifferentiatedEquationUhom2}
    &\partial_iu_{\shom}-\nabla \cdot (a_{\xi}\nabla \partial_iu_\shom) =\nabla \cdot (\partial_ig),
  \end{align}
  where $a_\xi:=\partial_\xi A_\shom(\nabla u_\shom)$. Meyers' estimate implies that
$\|\nabla\partial_i u_{\shom}\|_{L^{p'}(\Rd)}\leq C'\|\nabla g\|_{L^{p'}(\R^d)}$
  for some $2<p'\leq p$ only depending on $d,m,\lambda$ and $\Lambda$. By Sobolev embedding (and an interpolation inequality used to estimate $\|\nabla g\|_{L^{p'}(\Rd)}$ in terms of $\|\nabla g\|_{L^2(\Rd)}$ and $\|\nabla g\|_{L^p(\Rd)}$), we obtain
  \begin{equation*}
    ||\nabla u_\shom||_{L^\infty(\Rd)} \leq C\big(\|g\|_{H^1(\Rd)}+\|\nabla g\|_{L^p(\Rd)}\big).
  \end{equation*}
Our claim
$\widehat C(\nabla u_{\shom})\leq C\big(1+\|g\|_{H^1(\Rd)}+\|\nabla g\|_{L^p(\Rd)}\big)^C\|g\|_{H^1(\Rd)}$ thus follows.
\end{proof}

\begin{proof}[Proof of Remark~\ref{R:T4}]
The difference quotient technique of L.\ Nirenberg yields the desired bound
$\|u_{\shom}\|_{H^2(\domain)}\leq C\big(\|f\|_{L^2(\domain)}+\|g\|_{H^1(\domain)}+\|u_{\rm Dir}\|_{H^2(\domain)}\big)$.
Note that in this argument in fact no $C^1$ differentiability of $A_\shom$ is required: As the difference quotient technique works with finite differences and not with the formally differentiated PDE $-\nabla \cdot (a_\xi \nabla \partial_i u_\shom)=\partial_i f + \nabla \cdot (\partial_i g)$, there is no need to rigorously justify the formally differentiated PDE.
\end{proof}

\section{Qualitative differentiability of correctors}

We now provide the proof of the qualitative differentiability results for the homogenization correctors that we have used throughout the present work.
\begin{proof}[Proof of Lemma~\ref{LemmaCorrectorDifferentiability}]
Let $h>0$. Subtracting the equations for $\phi_{\xi+h\Xi}^T$ and $\phi_\xi^T$, we obtain
\begin{align}
\label{DifferenceTwoXi}
-\nabla \cdot \big((A(\tilde\omega,\xi+h\Xi+\nabla \phi_{\xi+h\Xi}^T)-A(\tilde\omega,\xi+\nabla \phi_\xi^T)\big) + \frac{1}{T} (\phi_{\xi+h\Xi}^T-\phi_\xi^T) =0
\end{align}
which we may rewrite as
\begin{align*}
&-\nabla \cdot \big((A(\tilde\omega,\xi+\nabla \phi_{\xi+h\Xi}^T)-A(\tilde\omega,\xi+\nabla \phi_\xi^T)\big) + \frac{1}{T} (\phi_{\xi+h\Xi}^T-\phi_\xi^T)
\\&~~~~~~~~~~~~~~~~~~
=\nabla \cdot (A(\tilde\omega,\xi+h\Xi+\nabla \phi_{\xi+h\Xi}^T)-A(\tilde\omega,\xi+\nabla \phi_{\xi+h\Xi}^T)).
\end{align*}
Note that by \hyperlink{A2}{(A2)} we have $|A(\tilde\omega,\xi+h\Xi+\nabla \phi_{\xi+h\Xi}^T)-A(\tilde\omega,\xi+\nabla \phi_{\xi+h\Xi}^T)|\leq C h |\Xi|$. Lemma~\ref{L:exploc} yields
\begin{align*}
&\int_\Rd \Big(|\nabla \phi_{\xi+h\Xi}^T-\nabla \phi_{\xi}^T|^2 +\frac{1}{T}|\phi_{\xi+h\Xi}^T-\phi_{\xi}^T|^2 \Big) \exp(-c|x|/\sqrt{T}) \,dx
\\&
\leq C \int_\Rd h^2 |\Xi|^2 \exp(-c|x|/\sqrt{T}) \,dx \leq C h^{2} |\Xi|^{2} \sqrt{T}^d
\end{align*}
and Meyers estimate (see Lemma~\ref{L:linearMeyersLocalized}) with a dyadic decomposition of $\Rd$ into $B_{\sqrt{T}}$ and $B_{2^k\sqrt{T}}\setminus B_{2^{k-1}\sqrt{T}}$ for $k\in \mathbb{N}$ upgrades this to
\begin{align*}
\int_\Rd |\nabla \phi_{\xi+h\Xi}^T-\nabla \phi_{\xi}^T|^{2p} \exp(-c|x|/\sqrt{T}) \,dx \leq C h^{2p} |\Xi|^{2p} \sqrt{T}^d
\end{align*}
for some $p>1$.

Subtracting a multiple of the PDE for the linearized corrector $\phi_{\xi,\Xi}^T$ from \eqref{DifferenceTwoXi}, we deduce
\begin{align*}
&-\nabla \cdot \big((A(\tilde\omega,\xi+h\Xi+\nabla \phi_{\xi+h\Xi}^T)-A(\tilde\omega,\xi+\nabla \phi_\xi^T)-h\partial_\xi A(\tilde \omega,\xi+\nabla \phi_{\xi}^T)(\Xi+\nabla \phi_{\xi,\Xi}^T)\big)
\\&~~~~~~~~~~~~~~~~~~~~~~~~~~~~~~~~~~~~~~~~~~~~~~~~~~~~~~~~~~~~~~~~~
+ \frac{1}{T} (\phi_{\xi+h\Xi}^T-\phi_\xi^T-h \phi_{\xi,\Xi}^T) =0.
\end{align*}
Using Taylor expansion, the uniform bound $|\partial_\xi^2 A|\leq \Lambda$ from \hyperlink{R}{(R)}, and the Lipschitz estimate for $A$ from \hyperlink{A2}{(A2)}, we obtain for any $\delta\in (0,1]$
\begin{align*}
-\nabla \cdot \big(\partial_\xi A(\tilde\omega,\xi+\nabla \phi_\xi^T)(\nabla \phi_{\xi+h\Xi}^T-\nabla \phi_\xi^T - h \nabla \phi_{\xi,\Xi}^T )\big) + \frac{1}{T} (\phi_{\xi+h\Xi}^T-\phi_\xi^T-&h\phi_{\xi,\Xi}^T)
\\&= \nabla \cdot R
\end{align*}
with $|R|\leq C h^{1+\delta} |\Xi|^{1+\delta} + C |\nabla \phi_{\xi+h\Xi}^T-\nabla \phi_\xi^T|^{1+\delta}$. Choosing $\delta$ such that $1+\delta<p$ and applying Lemma~\ref{L:exploc}, this finally yields the estimate
\begin{align*}
\int_\Rd |\nabla \phi_{\xi+h\Xi}^T-\nabla \phi_\xi^T-h\nabla \phi_{\xi,\Xi}^T|^2 \exp(-c|x|/\sqrt{T}) \,dx \leq C h^{2+2\delta} |\Xi|^{2+2\delta} \sqrt{T}^{d}.
\end{align*}

The proof of the corresponding result for $\sigma_\xi$ is even easier, as the equation for $\sigma_\xi$ is linear in $q_\xi^T=A(\tilde \omega,\xi+\nabla \phi_\xi^T)$.
\end{proof}

\section{Meyers estimate for elliptic equations with massive term}

\label{SectionMeyers}

We recall Gehring's lemma in the following form.
\begin{lemma}[{see e.\,g.\ \cite[Theorem 6.38]{GiaquintaMartinazzi}}]
\label{L:gehring}
Let $K>0$, $m\in(0,1)$, $s>1$ and $B=B_R(x_0)$ for some $x_0\in\R^d$ and $R>0$ be given. Suppose that $f\in L^1(B)$ and $g\in L^s(B)$ are such that for every $z\in\R^d$ and $r>0$ with $B_r(z)\subset B$ it holds that
\begin{align*}
\fint_{B_{r/2}(z)} |f| \,dx
\leq K\bigg(\fint_{B_r(z)} |f|^m \,dx\bigg)^{1/m}
+\fint_{B_{r}(z)} |g| \,dx.
\end{align*}
Then there exist $q=q(K,m)\in(1,s]$ and $C=C(K,m)\in[1,\infty)$ such that $f\in L^q(B_{R/2}(x_0))$ and the estimate
\begin{equation*}
\bigg(\fint_{B_{R/2}(z)} |f|^q \,dx\bigg)^{1/q}
\leq C\fint_{B_R(z)} |f| \,dx
+C\bigg(\fint_{B_{R}(z)} |g|^q \,dx\bigg)^{1/q}.
\end{equation*}
holds.
\end{lemma}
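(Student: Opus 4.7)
The strategy is the classical Calderón--Zygmund stopping-time argument, adapted to the fact that our hypothesis is a weak reverse H\"older inequality with exponent $m\in(0,1)$ that only holds on balls strictly contained in $B$. The plan is to pass from the pointwise-in-scale reverse H\"older bound to a distributional (level-set) estimate for $|f|$, and then to upgrade this to higher integrability by multiplication with $\lambda^{q-1}$ and integration over $\lambda$.

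First I would normalize: after scaling and translating I may assume $x_0=0$ and $R=1$, and after replacing $f$ and $g$ by $|f|$ and $|g|$ I may assume $f,g\geq 0$. I would then fix a truncation ball $B':=B_{1/2}(0)$ and select a threshold
\begin{equation*}
\lambda_0:=C_*\Big(\fint_B f\,dx+\big(\textstyle\fint_B g^s\,dx\big)^{1/s}\Big),
\end{equation*}
with $C_*$ large. For any $\lambda\geq\lambda_0$, on $B'$ I would perform a Calderón--Zygmund stopping-time decomposition at level $\lambda$: for each $x\in B'$ with $f(x)>\lambda$, by continuity of the averages there is a largest radius $r_x$ such that $B_{r_x}(x)\subset B$ and
$\fint_{B_{r_x}(x)}f\,dy=\lambda$, while for all smaller radii the average exceeds $\lambda$. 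Vitali's covering lemma then produces a disjoint subfamily $\{B_{r_i}(x_i)\}$ whose $5$-fold dilates cover $\{f>\lambda\}\cap B'$. The choice of $\lambda_0$ guarantees that all selected radii satisfy $r_i\ll 1$, so that the doubled balls $B_{2r_i}(x_i)$ are contained in $B$ and the hypothesis of the lemma applies on them.

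The next step is the key quantitative one. Applying the hypothesis on $B_{2r_i}(x_i)$, combined with the stopping condition $\fint_{B_{r_i}(x_i)}f=\lambda$ and the elementary bound $\fint_{B_{2r_i}(x_i)}f\leq 2^d\lambda$, I would split the $L^m$-average into the part where $f\leq\eta\lambda$ and the part where $f>\eta\lambda$ (for a small $\eta=\eta(K,m)>0$ to be fixed). The first part is absorbed by choosing $\eta$ small relative to $K$, using $m<1$ to gain a factor $\lambda^{m-1}$. The second part, together with the $g$-term, yields the distributional inequality
\begin{equation*}
\int_{\{f>\lambda\}\cap B'}f\,dx\,\leq\, C_1\!\int_{\{f>\eta\lambda\}\cap B}f\,dx \;\cdot\; \theta \;+\; C_2\!\int_{\{g>\eta\lambda\}\cap B}g\,dx,
\end{equation*}
where the gain $\theta=\theta(K,m)<1$ comes from the absorption. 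This is the heart of the proof, and it is the step I expect to be the main obstacle: one must choose $\eta$ and track the constants carefully so that $\theta$ is strictly less than $1$ uniformly in $\lambda$, which is precisely where the hypothesis $m<1$ is used.

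Finally, I would multiply the displayed inequality by $\lambda^{q-2}$ and integrate over $\lambda\in(\lambda_0,\infty)$. By Fubini,
$\int_{\lambda_0}^\infty\lambda^{q-2}\int_{\{f>\lambda\}}f\,dx\,d\lambda=\tfrac1{q-1}\int_{\{f>\lambda_0\}}f^q\,dx$ modulo lower-order terms, and similarly for the $g$-integral. The resulting inequality reads
\begin{equation*}
\tfrac1{q-1}\!\int_{B'}\!f^q\,dx\leq \tfrac{C_1\theta}{\eta^{q-1}(q-1)}\!\int_B f^q\,dx+\tfrac{C_2}{\eta^{q-1}(q-1)}\!\int_B g^q\,dx+C\lambda_0^q,
\end{equation*}
and since $\theta<1$ I can choose $q=q(K,m)>1$ sufficiently close to $1$ so that $C_1\theta/\eta^{q-1}<1$; an absorption argument (applied on the expanding family of balls $B_\rho(0)$ with $\rho\in[1/2,1]$ to handle the fact that $B'\subsetneq B$) then yields the conclusion with $q\leq s$ after capping $q$ by $s$. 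The higher integrability of $g$ enters only to ensure that $\int_B g^q$ is finite, which is automatic since $q\leq s$.
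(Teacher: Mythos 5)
The paper does not supply a proof of this lemma: it simply cites \cite[Theorem~6.38]{GiaquintaMartinazzi}. Your plan is a sketch of exactly the standard Calder\'on--Zygmund/Gehring argument that underlies that reference, so there is no divergence in strategy to compare. The outline you give --- normalization, a level threshold $\lambda_0$ built from the data, a stopping-time/Vitali decomposition at each $\lambda\geq\lambda_0$, a truncated application of the weak reverse H\"older bound to derive a level-set estimate with gain $\theta<1$, integration in $\lambda$ against $\lambda^{q-2}$, and then an absorption over an expanding family of balls to go from $B_{1/2}$ back to $B_1$ --- is the accepted route and is in essence correct.

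One small imprecision worth flagging in Step 3: you attribute the gain of a factor $\lambda^{m-1}$ to the truncated (low-level) part of the $L^m$-average, whereas in the standard argument that factor is used on the \emph{tail}. Concretely, on $\{f\leq\eta\lambda\}$ one simply bounds $f^m\leq(\eta\lambda)^m$ so that $K\big(\fint f^m\,\mathbf 1_{\{f\leq\eta\lambda\}}\big)^{1/m}\leq K\eta\lambda$ is absorbed by choosing $\eta\leq(2K)^{-1}$; the inequality $f^m\leq(\eta\lambda)^{m-1}f$ (valid because $m-1<0$ and $f>\eta\lambda$) is instead applied on $\{f>\eta\lambda\}$ to produce the desired $\int_{\{f>\eta\lambda\}}f$ on the right-hand side. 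Also, it is cleaner in this setting to let $r_x$ be the \emph{smallest} radius with $\fint_{B_{r_x}(x)}f=\lambda$; then the averages for $r>r_x$ are bounded by $\lambda$ by continuity and the choice of $\lambda_0$, which is what you actually use when passing to the doubled balls $B_{2r_i}(x_i)$. Neither point affects the viability of the plan, but both should be stated correctly if the proof is written out in full. Finally, you correctly note that the hypothesis $g\in L^s$ with $s>1$ and the cap $q\leq s$ are only there to make the $g^q$-integral finite; that matches the role the exponent $s$ plays in the cited theorem.
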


\begin{lemma}[Meyers estimate for PDEs with massive term]\label{L:linearMeyersLocalized}
Let $d,m\in \mathbb{N}$, $T>0$, and let $a:\Rd\rightarrow (\Rmd\otimes \Rmd)$ be a uniformly elliptic and bounded coefficient field with ellipticity and boundedness constants $0<\lambda\leq \Lambda<\infty$. Let $f\in L^2(\Rd;\Rm)\cap L^p(\Rd;\Rm)$, $g\in L^2(\Rd;\Rmd)\cap L^p(\Rd;\Rmd)$, and let $v\in H^1(\Rd;\Rm)$ be the (unique) weak solution to the linear system
  \begin{equation*}
    -\nabla\cdot(a \nabla v)+\frac1T v=\nabla \cdot g + \frac{1}{\sqrt{T}} f
    \qquad \text{on }\Rd.
  \end{equation*}
  Then there exists $p_0=p_0(d,m,\lambda,\Lambda)>2$ such that for all $2\leq p\leq p_0$, any $x_0\in \Rd$, and any $R>0$ we have
\begin{align*}
&\fint_{B_R(x_0)}|\nabla v|^p + \Big|\frac{1}{\sqrt{T}}v\Big|^p \,dx
\\&
\leq C(d,m,\lambda,\Lambda,p) \fint_{B_{2R}(x_0)} |g|^p + |f|^p \,dx
\\&~~~~
+ C(d,m,\lambda,\Lambda,p) \bigg(\fint_{B_{2R}(x_0)}|\nabla v|^2 + \Big|\frac{1}{\sqrt{T}}v\Big|^2 \,dx\bigg)^{p/2}.
\end{align*}
\end{lemma}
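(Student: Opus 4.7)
The overall strategy is to treat the pair $(|\nabla v|^2, |v|^2/T)$ as a single quantity $F:=|\nabla v|^2+|v|^2/T$ (with companion forcing $G:=|g|^2+|f|^2$) and establish a reverse H\"older inequality of the form
\begin{equation*}
\fint_{B_{r/2}(z)}F\,dx\;\leq\; C\bigg(\fint_{B_r(z)}F^m\,dx\bigg)^{1/m}+C\fint_{B_r(z)}G\,dx
\end{equation*}
for some $m=m(d)\in[1/2,1)$ and every ball $B_r(z)\subset B_{2R}(x_0)$. Once this is in hand, Gehring's lemma (Lemma~\ref{L:gehring}, applied in the form with $f:=F$ and $g:=G$) promotes it to an $L^{p/2}$ estimate for $F$ for some $p=p(d,m,\lambda,\Lambda)>2$ close to $2$, and the claim follows upon using $(a+b)^{p/2}\sim a^{p/2}+b^{p/2}$ to split the $L^{p/2}$ average of $F$ and $G$ back into separate $L^p$-averages of $|\nabla v|$, $|v/\sqrt T|$ and of $|g|$, $|f|$.

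The main work is the reverse H\"older inequality. The first step is a Caccioppoli estimate: fix a smooth cutoff $\eta$ with $\eta\equiv 1$ on $B_{r/2}(z)$, $\supp\eta\subset B_r(z)$, $|\nabla\eta|\leq C/r$, and test the equation with $\eta^2(v-b)$ for a constant $b\in\Rm$ to be chosen. Using ellipticity and boundedness of $a$, the identity $v\cdot(v-b)=|v-b|^2+b\cdot(v-b)$ for the massive term, and Young's inequality to absorb all cross terms, one obtains
\begin{equation*}
\fint_{B_{r/2}(z)}\Big(|\nabla v|^2+\tfrac{|v|^2}{T}\Big)\,dx\;\leq\;\frac{C}{r^2}\fint_{B_r(z)}|v-b|^2\,dx+\frac{C|b|^2}{T}+C\fint_{B_r(z)}(|g|^2+|f|^2)\,dx.
\end{equation*}
The term $\frac{1}{\sqrt T}f\cdot\eta^2(v-b)$ produces an $|f|^2$ contribution and a multiple of $\frac{1}{T}|v-b|^2$ that is absorbed on the left; then $|v|^2\leq 2|v-b|^2+2|b|^2$ converts the left-hand mass term into $|v|^2/T$. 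This is completely standard.

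The key step is the choice $b:=\fint_{B_r(z)}v\,dx$. Set $m:=d/(d+2)$ for $d\geq 2$ and $m:=1/2$ for $d=1$, so that $2m\geq 1$ in every dimension. The Sobolev--Poincar\'e inequality then reads
\begin{equation*}
\bigg(\fint_{B_r(z)}|v-b|^2\,dx\bigg)^{1/2}\leq Cr\bigg(\fint_{B_r(z)}|\nabla v|^{2m}\,dx\bigg)^{1/(2m)},
\end{equation*}
so the first term on the right of the Caccioppoli inequality is bounded by $C(\fint F^m)^{1/m}$. For the second term, since $2m\geq 1$ the function $t\mapsto t^{2m}$ is convex and Jensen yields
\begin{equation*}
\frac{|b|^2}{T}=\bigg|\fint_{B_r(z)}\tfrac{v}{\sqrt T}\,dx\bigg|^2\leq\bigg(\fint_{B_r(z)}\Big|\tfrac{v}{\sqrt T}\Big|^{2m}\,dx\bigg)^{1/m}\leq\bigg(\fint_{B_r(z)}F^m\,dx\bigg)^{1/m}.
\end{equation*}
Combining gives the reverse H\"older inequality displayed above, and Gehring's lemma closes the argument. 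The only subtle point is verifying that $2m\geq 1$ in all dimensions (so that both Sobolev--Poincar\'e and the Jensen step are available with the same exponent), which is why the one-dimensional case must be treated with the direct $W^{1,1}\hookrightarrow L^\infty$ embedding on an interval rather than the fractional Sobolev exponent $2_*=2d/(d+2)$; beyond that, the argument is the textbook Meyers iteration adapted to accommodate the lower-order $\frac{1}{T}v$ term.
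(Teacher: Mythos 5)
Your proof follows essentially the same route as the paper's: Caccioppoli with a free constant $b$, the choice $b=\fint_{B_r(z)}v$ followed by Sobolev--Poincar\'e for the $|v-b|^2$ part and a Jensen-type step for the $|b|^2/T$ part, and then Gehring's lemma (Lemma~\ref{L:gehring}) applied to $F=|\nabla v|^2+\frac1T|v|^2$ with companion $G=|g|^2+|f|^2$. The one place where you deviate from the paper is a genuine improvement: the paper uses the exponent $m=d/(d+2)$ uniformly in all dimensions, but for $d=1$ this gives $2m=2/3<1$, so the step $\frac{|b|^2}{T}=\big|\fint \frac{v}{\sqrt T}\big|^2\leq\big(\fint|\frac{v}{\sqrt T}|^{2m}\big)^{1/m}$ is not available (convexity of $t\mapsto t^{2m}$ fails) and the ``Sobolev'' space $W^{1,2/3}$ is not standard. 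Your choice of $m=1/2$ for $d=1$, resting on the one-dimensional embedding $W^{1,1}\hookrightarrow L^\infty$ over an interval, is exactly the right fix and makes the argument uniform across all dimensions; for $d\geq 2$ one has $d/(d+2)\geq 1/2$ and the two treatments agree.
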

\begin{proof}
Since $a$ is uniformly elliptic and bounded, we deduce by the Caccioppoli inequality \eqref{CaccippoliMonotone} for any $b\in \Rm$, and $r>0$, and any $z\in \Rd$
\begin{align*}
\int_{B_{\frac{r}{2}}(z)} |\nabla v|^2 + \frac{1}{T} |v|^2 \,dx
  &\leq C(d,m,\lambda,\Lambda) \int_{B_r(z)} r^{-2} |v-b|^2 +\frac{1}{T} |b|^2 + |g|^2 + |f|^2 \,dx.
\end{align*}
Choosing $b:=\fint_{B_r}v \,dx$ and using the Poincar\'e-Sobolev inequality as well as Jensen's inequality, we deduce
\begin{align*}
&\fint_{B_{\frac{r}{2}}(z)} |\nabla v|^2 + \frac{1}{T} |v|^2 \,dx
\\
&\leq C(d,m,\lambda,\Lambda) \bigg(\fint_{B_r(z)} |\nabla v|^{2d/(d+2)} + \Big|\frac{1}{\sqrt{T}}v\Big|^{2d/(d+2)} \,dx \bigg)^{(d+2)/d}
\\&~~~~
+ C(d,m,\lambda,\Lambda) \fint_{B_r(z)}  |g|^2 + |f|^2 \,dx.
\end{align*}
Lemma~\ref{L:gehring} now yields the desired estimate.
\end{proof}

\begin{proof}[Proof of Lemma~\ref{LemmaWeightedMeyers}]
Let $R>0$. We split $v$ as $v=v_{out}+\sum_{k=1}^\infty v_{in,k}$, where $v_{in,k}\in H^1(\Rd;\Rm)$ is the unique weak solution to the PDE
\begin{subequations}
\begin{align}
\label{EquationVin}
-\nabla \cdot (a\nabla v_{in,k})+\frac{1}{T} v_{in,k} =& \nabla \cdot (g\chi_{B_{2^{-k}R}(x_0)\setminus B_{2^{-k-1}R}(x_0)})
\\&
\nonumber
+\frac{1}{\sqrt{T}} f\chi_{B_{2^{-k}R}(x_0)\setminus B_{2^{-k-1}R}(x_0)}
\end{align}
and where $v_{out}\in H^1(\Rd;\Rm)$ is the unique weak solution to the PDE
\begin{align}
\label{EquationVout}
-\nabla \cdot (a\nabla v_{out})+\frac{1}{T} v_{out} = \nabla \cdot (g\chi_{\Rd\setminus B_{R/2}(x_0)})+\frac{1}{\sqrt{T}} f\chi_{\Rd \setminus B_{R/2}(x_0)}.
\end{align}
\end{subequations}
Passing to the limit $R\rightarrow \infty$ in the hole-filling estimate \eqref{HoleFillingMonotone} for $v_{out}$ and inserting $4R$ in place of $r$ in the resulting bound, we deduce for any $\delta>0$ with $\delta<c$
\begin{align}
\nonumber
&\int_{B_{4R}(x_0)} |\nabla v_{out}|^2 + \frac{1}{T} |v_{out}|^2 \,dx
\\&
\label{HoleFillingLimit}
\leq C \int_{\Rd\setminus B_{R/2}(x_0)} \bigg(\frac{R}{R+|x-x_0|}\bigg)^{\delta} \big(|g|^2+|f|^2\big) \,dx.
\end{align}
This yields by dividing by $c(d) R^{d}$ and applying H\"older's inequality
\begin{align*}
&\fint_{B_{4R}(x_0)} |\nabla v_{out}|^2 + \frac{1}{T} |v_{out}|^2 \,dx
\\&
\leq C(d,m,\lambda,\Lambda,p,\gamma) \bigg(\int_{\Rd\setminus B_{R/2}(x_0)} R^{-d} \bigg(\frac{R}{R+|x-x_0|}\bigg)^{\delta p/2-\gamma} \big(|g|^p+|f|^p\big) \,dx\bigg)^{2/p}
\end{align*}
for any $\gamma$ with $d(p-2)/2<\gamma<\delta$.
Plugging this bound into Lemma~\ref{L:linearMeyersLocalized}, we obtain
\begin{align}
&\int_{B_{2R}(x_0)} |\nabla v_{out}|^p + \Big|\frac{1}{\sqrt{T}}v_{out}\Big|^p \,dx
\nonumber
\\&
\nonumber
\leq
C(d,m,\lambda,\Lambda,p) \int_{B_{4R}(x_0)\setminus B_{R/2}(x_0)} |g|^p + |f|^p \,dx
\\&~~~~
\nonumber
+ C(d,m,\lambda,\Lambda,p,\gamma) \int_{\Rd\setminus B_{R/2}(x_0)} \bigg(\frac{R}{R+|x-x_0|}\bigg)^{\delta p/2-\gamma} \big(|g|^p+|f|^p\big) \,dx
\\&
\leq
C(d,m,\lambda,\Lambda,p,\gamma) \int_{\Rd\setminus B_{R/2}(x_0)} \bigg(\frac{R}{R+|x-x_0|}\bigg)^{\delta p/2-\gamma} \big(|g|^p+|f|^p\big) \,dx.
\label{BoundVout}
\end{align}
We next estimate the contributions of $v_{in,k}$. We have
\begin{align*}
\int_{B_{4R}(x_0)\setminus B_{R/2}(x_0)} |\nabla v_{in,k}|^2 + \frac{1}{T} |v_{in,k}|^2 \,dx
\leq 2 \bigg|\sup_{\tilde g,\tilde f}\int_\Rd \tilde g\cdot \nabla v_{in,k} + \frac{1}{\sqrt{T}} v_{in,k} \tilde f \,dx\bigg|^2
\end{align*}
where the supremum runs over all functions $\tilde g$ and $\tilde f$ with $\supp \tilde g\cup \supp \tilde f \subset B_{4R}(x_0)\setminus B_{R/2}(x_0)$ and $\int_\Rd |\tilde g|^2 + |\tilde f|^2 \,dx\leq 1$. Denoting by $w\in H^1(\Rd;\Rm)$ the unique solution to the dual PDE
\begin{align*}
-\nabla \cdot (a^*\nabla w)+\frac{1}{T} w = -\nabla \cdot \tilde g + \frac{1}{\sqrt{T}} \tilde f,
\end{align*}
we obtain
\begin{align*}
&\int_{B_{4R}(x_0)\setminus B_{R/2}(x_0)} |\nabla v_{in,k}|^2 + \frac{1}{T} |v_{in,k}|^2 \,dx
\\
&\leq 2 \bigg|\sup_{\tilde g,\tilde f} \int_\Rd a\nabla v_{in,k} \cdot \nabla w + \frac{1}{T} v_{in,k} \, w \,dx \bigg|^2
\\&
\leq 2 \sup_{\tilde g,\tilde f} \bigg|\int_{B_{2^{-k}R}(x_0)\setminus B_{2^{-k-1}R}(x_0)} g \cdot \nabla w - \frac{1}{\sqrt{T}} f w \,dx \bigg|^2
\\&
\leq 2 \int_{B_{2^{-k}R}(x_0)\setminus B_{2^{-k-1}R}(x_0)} |g|^2 + |f|^2 \,dx
\\&~~~~\times \sup_{\tilde g,\tilde f} \int_{B_{2^{-k}R}(x_0)\setminus B_{2^{-k-1}R}(x_0)} |\nabla w|^2 + \frac{1}{T} |w|^2 \,dx.
\end{align*}
By the hole-filling estimate for $w$ in the form analogous to \eqref{HoleFillingLimit}, we deduce
\begin{align*}
&\int_{B_{2^{-k}R}(x_0)\setminus B_{2^{-k-1}R}(x_0)} |\nabla w|^2 + \frac{1}{T} |w|^2 \,dx
\\&
\leq C \int_{\Rd} \bigg(\frac{2^{-k} R}{2^{-k} R+|x-x_0|}\bigg)^\delta \big(|\tilde g|^2+|\tilde f|^2\big) \,dx
\end{align*}
which entails by the properties of $\tilde g$ and $\tilde f$
\begin{align*}
&\int_{B_{4R}(x_0)\setminus B_{R/2}(x_0)} |\nabla v_{in,k}|^2 + \frac{1}{T} |v_{in,k}|^2 \,dx
\\&
\leq C (2^{-k})^\delta \int_{B_{2^{-k}R}(x_0)\setminus B_{2^{-k-1}R}(x_0)} |g|^2 + |f|^2 \,dx.
\end{align*}
An application of Lemma~\ref{L:linearMeyersLocalized} to \eqref{EquationVin} (with a covering argument for the annulus $B_{2R}(x_0)\setminus B_{R}(x_0)$) yields by the preceding estimate and Jensen's inequality
\begin{align*}
&\int_{B_{2R}(x_0)\setminus B_{R}(x_0)} |\nabla v_{in,k}|^p + \frac{1}{T} |v_{in,k}|^p \,dx
\\&
\leq C (2^{-k})^{\delta p/2+d(p-2)/2} \int_{B_{2^{-k}R}(x_0)\setminus B_{2^{-k-1}R}(x_0)} |g|^p + |f|^p \,dx.
\end{align*}
Taking the sum in $k$ and adding \eqref{BoundVout}, we deduce by requiring $p$ to be close enough to $2$ and then choosing $\gamma>0$ small enough
\begin{align*}
&\int_{B_{2R}(x_0)\setminus B_{R}(x_0)} |\nabla v|^p + \Big|\frac{1}{\sqrt{T}} v\Big|^p \,dx
\\&
\leq C \int_{B_R(x_0)} \bigg(\frac{|x-x_0|}{R}\bigg)^{p\delta/3} \big(|g|^p+|f|^p\big) \,dx
\\&~~~~
+ C \int_{\Rd \setminus B_{R/2}(x_0)} \bigg(\frac{R}{|x-x_0|}\bigg)^{p \delta/3} \big(|g|^p+|f|^p\big) \,dx.
\end{align*}
Multiplying both sides by $(\frac{R}{r})^{\alpha_0}$, taking the sum over all dyadic $R=2^l r$, $l\in \mathbb{N}$, and using a standard Meyers estimate on the ball $B_r(x_0)$, we obtain
\begin{align*}
&\int_{\Rd} \Big(|\nabla v|^p + \Big|\frac{1}{\sqrt{T}}v\Big|^p\Big) \Big(1+\frac{|x-x_0|}{r}\Big)^{\alpha_0} \,dx
\\&
\leq C \sum_{l=1}^\infty \int_{B_{2^l r}(x_0)} \bigg(\frac{|x-x_0|}{2^l r}\bigg)^{p\delta/3} (2^l)^{\alpha_0} \big(|g|^p+|f|^p\big) \,dx
\\&~~~~
+ C \sum_{l=1}^\infty \int_{\Rd \setminus B_{2^l r}(x_0)} \bigg(\frac{2^l r}{|x-x_0|}\bigg)^{p \delta/3} (2^l)^{\alpha_0} \big(|g|^p+|f|^p\big) \,dx.
\end{align*}
We may estimate the last sum as
\begin{align*}
&\sum_{l=1}^\infty \int_{\Rd \setminus B_{2^l r}(x_0)} \bigg(\frac{2^l r}{|x-x_0|}\bigg)^{p \delta/3} (2^l)^{\alpha_0} \big(|g|^p+|f|^p\big) \,dx
\\&
\leq
C(\delta) \sum_{l=1}^\infty \sum_{n=l}^\infty \int_{B_{2^{n+1}r}(x_0) \setminus B_{2^n r}(x_0)} \big(2^{l-n}\big)^{p\delta/3} (2^l)^{\alpha_0} \big(|g|^p+|f|^p\big) \,dx
\\&
\leq
C(\delta,\alpha_0,\alpha_1) \sum_{n=1}^\infty \int_{B_{2^{n+1}r}(x_0) \setminus B_{2^n r}(x_0)} (2^n)^{\alpha_0} \big(|g|^p+|f|^p\big) \,dx.
\end{align*}
If $\alpha_0>0$ is chosen small enough, the previous two estimates imply \eqref{WeightedMeyers}.
\end{proof}

\bibliographystyle{abbrv}
\bibliography{stochastic_homogenization}

\end{document}